\documentclass[11pt]{amsart}
\usepackage[margin=1in]{geometry}
\usepackage[utf8]{inputenc}
\usepackage[T1]{fontenc}
\usepackage{graphicx}
\usepackage{xcolor}
\usepackage{mathrsfs}
\usepackage{amsthm}
\usepackage{amsmath}
\allowdisplaybreaks
\usepackage{amsfonts}
\usepackage{amssymb}
\usepackage{mathtools}
\usepackage{enumitem}
\usepackage[hidelinks]{hyperref}
\usepackage{tikz-cd}
\usepackage{tikz}
\usetikzlibrary{tqft}
\usetikzlibrary{positioning}
\usetikzlibrary{matrix,quotes,arrows.meta,bending}
\usetikzlibrary{decorations.markings}
\usetikzlibrary{calc}
\usepackage[dvipsnames]{xcolor}
\usepackage{hyperref}
\usepackage[expansion=false]{microtype}
 
\definecolor{linknavy}{RGB}{30,60,120}
\hypersetup{colorlinks=true,linkcolor=linknavy,citecolor=linknavy,urlcolor=linknavy}

\theoremstyle{plain}
\newtheorem{Thm}{Theorem}[section]
\newtheorem{Prop}[Thm]{Proposition}
\newtheorem{Lem}[Thm]{Lemma}
\newtheorem{Cor}[Thm]{Corollary}
\newtheorem{Conj}[Thm]{Conjecture}
\theoremstyle{definition}
\newtheorem{Def}[Thm]{Definition}
\newtheorem{Rem}[Thm]{Remark}

\newcommand{\nt}{\text{int}}

\newcommand{\E}{\mathcal{E}}

\newcommand{\defn}[1]{\textbf{\textit{#1}}}

\newcommand{\R}{\mathbb{R}}
\newcommand{\G}{\mathcal{G}}

\newcommand{\g}{\mathfrak{g}}
\newcommand{\bh}{\mathbb{H}}

\newcommand{\too}{\longrightarrow}
\newcommand{\mtoo}{\longmapsto}
\newcommand{\Ad}{\operatorname{Ad}}

\newcommand{\CC}{\mathbb{C}}
\newcommand{\M}{\mathcal{M}}
\newcommand{\N}{\mathcal{N}}
\renewcommand{\t}{\mathfrak{t}}

\renewcommand{\L}{\mathcal{L}}
\newcommand{\F}{\mathcal{F}}

\newcommand{\Z}{\mathcal{Z}}

\newcommand{\A}{\mathcal{A}}

\newcommand{\Ker}{\text{Ker}}
\newcommand{\im}{\text{im}}
\newcommand{\K}{\mathcal{K}}
\newcommand{\PP}{\mathcal{P}}
\newcommand{\grad}{\mathrm{grad}}
\newcommand{\B}{\mathcal{B}}
\newcommand{\sll}[1]{\mkern-4mu\mathbin{/\mkern-5mu/}_{\mkern-4mu{#1}}}
\newcommand{\Lie}{\operatorname{Lie}}
\newcommand{\ad}{\operatorname{ad}}

\title{Nahm--Kirchhoff Moduli Spaces, Hyperkähler Quotients and 2D Field Theories}
\date{\today}
\author{Mohamed Moussadek Maiza}
\address[Mohamed Moussadek Maiza]{D\'{e}partement de math\'{e}matiques \\ Universit\'{e} de Sherbrooke \\ 2500 Bd de l’Universit\'{e} \\ Sherbrooke, QC, J1K 2R1, Canada}
\email{mohamed.moussadek.maiza@usherbrooke.ca}

\begin{document}

\maketitle

\begin{abstract}
To each oriented quiver with boundary $\Gamma$ and compact connected Lie group $G$, we associate a \emph{Nahm--Kirchhoff moduli space} $\M_{\mathbb{H}}(\Gamma)$: the space of solutions to Nahm's equations along the edges, subject to Kirchhoff matching conditions at the interior vertices, modulo the gauge group acting trivially on the boundary. We prove that, for a connected quiver, $\M_{\mathbb{H}}(\Gamma)$ is a finite-dimensional smooth manifold of dimension $4(|E|-|\Gamma_{\mathrm{int}}|)\dim G$, and that it carries a natural hyperkähler structure obtained as an infinite-dimensional hyperkähler quotient. Using a Kempf--Ness argument on the Cartan--Hadamard manifold $G_{\mathbb{C}}/G$, we establish a quiver analogue of Donaldson's theorem, identifying this hyperkähler quotient with the complex-symplectic quotient of solutions to the complex Nahm equations with Kirchhoff matching, modulo the complexified gauge group. We show that the resulting complex moduli space $\mathcal{M}_{\mathbb{C}}(\Gamma)$ is independent of the edge lengths and isomorphic to a finite-dimensional complex-symplectic quotient of $(T^*G_{\mathbb{C}})^E$, while the hyperkähler metric depends on them: a uniform rescaling of the lengths acts by a homothety, and the metrics transported to the fixed complex-symplectic quotient genuinely vary. Through the associated cutting and gluing laws, we show that $\Gamma \mapsto \mathcal{M}_{\mathbb{C}}(\Gamma)$ defines a two-dimensional topological quantum field theory valued in the category of complex Hamiltonian manifolds, in the spirit of Moore--Tachikawa, and we show that the hyperkähler refinement is not functorial on $\mathbf{Cob}_2$: the hyperkähler spaces attached to the spines of a cylinder are pairwise non-isomorphic for different lengths, a datum that $\mathbf{Cob}_2$ does not record. Retaining the edge lengths, the spaces $\M_{\mathbb{H}}(\Gamma)$ assemble into a metric field theory, namely a functor between non-unital monoidal categories of metric cobordisms and of hyperkähler manifolds with composition given by hyperkähler reduction; over each cone of the tropical moduli space $M_{g,n}^{\mathrm{trop}}$ they form a family with fixed complex-symplectic fibre and non-trivially varying hyperkähler metric. Finally, we prove that the harmonic gauge representatives converge as an internal edge collapses, and that on a long edge carrying a regular semisimple Higgs field the Kempf--Ness minimizer is exponentially confined near the centralizer of that element; the corresponding limits of the hyperkähler metrics are discussed as open problems.
\end{abstract}

\section{Introduction}

\subsection*{Context}
Since the work of Moore and Tachikawa \cite{MooreTachikawa}, it has been understood that a two-dimensional topological quantum field theory need not take values in vector spaces: one may instead ask for a symmetric monoidal functor from the cobordism category $\mathbf{Cob}_2$ into a category whose objects are
complex reductive groups and whose morphisms are holomorphic symplectic manifolds equipped with Hamiltonian actions, composition being given by holomorphic symplectic reduction. The existence of such a functor, and the geometric nature of
the spaces it assigns to cobordisms, has since been established from several points of view, most notably through the shifted symplectic formalism of Crooks and Mayrand \cite{CrooksMayrand2024MooreTachikawa}. A recurring theme in this circle of ideas is that the invariants attached to a surface are moduli spaces of solutions to a gauge-theoretic equation on a graph modelling that surface, and that the gluing of surfaces is implemented by symplectic reduction.

The present paper is a continuation of this program. In the companion work \cite{maiza2026lax} we associated to each oriented quiver with boundary a \emph{Lax--Kirchhoff moduli space}, built from Lax (multiplicative, holonomy-type)
data along the edges subject to a Kirchhoff matching condition at the interior vertices, and we showed that these spaces assemble into a Hamiltonian 2D TQFT; the
transition from the multiplicative to the additive picture was taken up in \cite{maiza2026multiplicative}. Here we develop the additive, hyperkähler counterpart of that story, in which the Lax data are replaced by solutions to \emph{Nahm's equations}.

\subsection*{Nahm's equations and hyperkähler geometry}
Nahm's equations, first introduced in the study of magnetic monopoles and given a gauge-theoretic interpretation by Donaldson \cite{Donaldson1984Nahm}, are the
system
\[
\dot{A}_{\bar i} + [A_0, A_{\bar i}] + [A_{\overline{i+1}}, A_{\overline{i+2}}] = 0,
\qquad i \in \{1,2,3\},
\]
for a quadruple $(A_0,A_1,A_2,A_3)$ of $\g$-valued functions on an interval. Their moduli spaces are among the fundamental sources of hyperkähler metrics: Kronheimer
\cite{kronheimer1988hyperkahler,Kronheimer1990coadjoint} realized the cotangent bundle $T^*G_{\CC}$ and the regular semisimple coadjoint orbits of a complex Lie
group as moduli of solutions to Nahm's equations on an interval, and this was later extended and refined by Biquard \cite{Biquard1996Nahm}, Kovalev
\cite{Kovalev1996Nahm}, Dancer--Swann \cite{DancerSwann1996Hyperkahler} and
Bielawski \cite{Bielawski2007Lie}. In each case the hyperkähler structure is produced by an infinite-dimensional hyperkähler quotient in the sense of Hitchin--Karlhede--Lindström--Roček (HKLR) \cite{hitchin1987hyperkahler}, and the
identification of the resulting space with a complex-symplectic quotient rests on a version of Donaldson's theorem, whose analytic core is a Kempf--Ness argument on the non-positively curved symmetric space $H = G_{\CC}/G$.

\subsection*{Nahm--Kirchhoff moduli spaces}
Our starting point is that a connected oriented cobordism deformation-retracts onto an oriented quiver with boundary $\Gamma = (E,V,s,t)$, with the incoming and outgoing circles recorded by the boundary vertices and the topology of the surface
encoded in the interior vertices. To such a $\Gamma$, together with a compact connected Lie group $G$, we attach the space of quadruples of $\g$-valued fields on the edges that solve Nahm's equations, satisfy the Kirchhoff law
\[
\sum_{e \in t^{-1}(v)} A^e_i(b_e) = \sum_{e \in s^{-1}(v)} A^e_i(a_e)
\qquad (v \in \Gamma_{\mathrm{int}},\ i = 1,2,3)
\]
at the interior vertices, and are considered modulo the gauge group
$\G_0(\Gamma)$ acting trivially on the boundary. We call the resulting quotient the \emph{Nahm--Kirchhoff moduli space} $\M_{\mathbb{H}}(\Gamma)$. The residual action
of $G^{\partial\Gamma}$ makes it a Hamiltonian space over the boundary, which is precisely the structure required for the gluing of cobordisms.

\subsection*{Main results}
Our first result establishes that these moduli spaces are as regular as one could hope. Working throughout with Banach manifolds of $C^1$ connections and $C^2$ gauge
transformations, and constructing local slices in the spirit of Kronheimer \cite{kronheimer1988hyperkahler}, we prove (Theorem \ref{smooth quotient}) that for
a connected quiver $\Gamma$ with boundary, $\M_{\mathbb{H}}(\Gamma)$ is a finite-dimensional smooth manifold with
\[
\dim \M_{\mathbb{H}}(\Gamma) = 4\bigl(|E| - |\Gamma_{\mathrm{int}}|\bigr)\dim G.
\]

We then endow $\M_{\mathbb{H}}(\Gamma)$ with a hyperkähler structure. Because the HKLR theorem is stated in finite dimensions, we isolate the analytic hypotheses
that become non-automatic in the infinite-dimensional setting and verify them directly, in the form of a Marsden--Weinstein reduction theorem
(Theorem \ref{infinite red}) of the type used for the Lax--Kirchhoff spaces \cite{maiza2026lax,MarsdenWeinstein}. The one computation that cannot be
transcribed from the multiplicative case is the hyperkähler moment map itself, whose cross terms $[A_b,A_c]$ require the quaternionic bookkeeping carried out in
Proposition \ref{moment iden}; the passage from three symplectic reductions to a single hyperkähler quotient is then governed by the quaternionic slice criterion (Proposition \ref{q slice}).

The heart of the paper is a quiver analogue of Donaldson's theorem
(Theorem \ref{complex q}), identifying the hyperkähler quotient with a complex-symplectic quotient:
\[
\M_{\mathbb{H}}(\Gamma) = \A_{\mathbb{H}}(\Gamma)/\G^0(\Gamma)
\;\xrightarrow{\ \sim\ }\;
\mu_{\CC}^{-1}(0)/\G^0_{\CC}(\Gamma) =: \M_{\CC}(\Gamma),
\]
where the right-hand side is the moduli space of solutions to the complex Nahm equations with Kirchhoff matching, modulo the complexified gauge group. The proof
proceeds through a Kempf--Ness functional whose critical points are the harmonic gauge representatives; after gauging away the connection edge by edge, the problem reduces to minimizing a per-edge energy on the Cartan--Hadamard manifold
$H^{\Gamma_{\mathrm{int}}}$. We prove existence and uniqueness of the minimizer (Lemmas \ref{lem mini}, \ref{lem uni}) and establish that the reduced functional is proper and \emph{strictly} geodesically convex (Theorem \ref{kempf ness}); as in the
Lax--Kirchhoff case, strict convexity is not visible edge by edge and is recovered globally by a propagation argument from the boundary that uses the connectedness of $\Gamma$.

This correspondence has two consequences that we wish to emphasize, as they delineate exactly which part of the geometry is topological. On the one hand, the
complex-symplectic manifold $\M_{\CC}(\Gamma)$ is \emph{independent of the edge
lengths} (Proposition \ref{prop:length-invariance-complex}) and is isomorphic to a finite-dimensional complex-symplectic quotient
\[
\M_{\CC}(\Gamma) \;\cong\; (T^*G_{\CC})^E /\!\!/\!\!/_{\CC} G_{\CC}^{\Gamma_{\mathrm{int}}}
\]
(Theorem \ref{thm:finite-dim-reduction}), obtained by parallel transport along each
edge. On the other hand, the hyperkähler metric \emph{does} depend on the edge lengths (Proposition \ref{prop:metric-length-dependence}). Nahm's equations are invariant under the simultaneous rescaling $t \mapsto \lambda t$, $A_\mu \mapsto \lambda^{-1}A_\mu$ of all four components, so that a uniform rescaling of all edge lengths by $\lambda$ multiplies the hyperkähler metric and the three Kähler forms by $\lambda^{-1}$, up to a fibrewise dilation of the complex-symplectic quotient. Beyond this homothety, the metrics transported to the fixed complex-symplectic manifold $\M_{\CC}(\Gamma)$ are not related by the canonical identifications, as an explicit computation at the origin shows.

\subsection*{The 2D TQFT}
The finite-dimensional model, together with reduction by stages, yields a cutting
and gluing law (Proposition \ref{thm:cutting-gluing}) and invariance of $\M_{\CC}(\Gamma)$ under the homotopy moves relating quivers that model the same
cobordism (Proposition \ref{thm:spine-invariance}). Assembling these facts, we obtain our main theorem (Theorem \ref{thm:2d-tqft-functor}): the assignment
$\Gamma \mapsto \M_{\CC}(\Gamma)$ defines a symmetric monoidal functor
\[
\Z : \mathbf{Cob}_2 \longrightarrow \mathbf{Ham}_{\CC}
\]
into the (Wehrheim--Woodward completed \cite{WehrheimWoodward2010Functoriality})
category of complex Hamiltonian manifolds, in the spirit of Moore--Tachikawa
\cite{MooreTachikawa}. Finally, the length dependence of the metric obstructs a hyperkähler refinement of $\Z$: the spaces attached to the single-edge spines of the cylinder are pairwise non-isomorphic as hyperkähler Hamiltonian $G\times G$-spaces for different lengths (Proposition \ref{prop:cylinder-length}), whereas $\mathbf{Cob}_2$ does not record lengths. Subdividing an edge while preserving its total length is, by contrast, a hyperkähler isometry (Theorem \ref{thm:hklr-metric-gluing}).

\subsection*{Metric cobordisms and tropical families}
The failure of topological invariance suggests retaining the edge lengths as part of the data. We introduce the category $\mathbf{MetCob}_2$ of metric cobordisms, whose morphisms are metric quivers modulo edge subdivisions preserving the total length, together with a hyperkähler target category $\mathbf{HK}$ in which composition is given by the HKLR hyperkähler quotient, completed in the sense of Wehrheim--Woodward \cite{WehrheimWoodward2010Functoriality}. Since gluing adds lengths, neither category has identity morphisms: both are non-unital monoidal categories. Subdividing an edge of length $l_e$ into two sub-edges of lengths $l_{e_1} + l_{e_2} = l_e$ is a hyperkähler isometry, and gluing two metric quivers along a boundary vertex is implemented by hyperkähler reduction (Theorem \ref{thm:hklr-metric-gluing}). As a consequence, the assignment $(\Sigma,\Gamma,\mathbf{l}) \mapsto \M_{\mathbb{H}}(\Gamma,\mathbf{l})$ defines a functor of non-unital monoidal categories
\[
\mathcal{Z}_{\mathbb{H}} : \mathbf{MetCob}_2 \longrightarrow \mathbf{HK}.
\]
Combining the quiver Donaldson theorem with the length-independence of $\M_{\CC}(\Gamma)$, we show that over each open cone of the tropical moduli space $M_{g,n}^{\mathrm{trop}}$ the spaces $\M_{\bh}(\Gamma,\mathbf{l})$ form a family which is trivial as a family of complex-symplectic manifolds, with fibre $\M_{\CC}(\Sigma_{g,n})$, while the hyperkähler metric varies non-trivially (Theorem \ref{thm:tropical-bundle}).

We also study degenerations. When an internal edge collapses, $l_{e_0} \to 0$, we prove that the harmonic gauge representatives provided by the Kempf--Ness functional converge to those of the contracted quiver (Proposition \ref{prop:edge-collapse}); the convergence of the hyperkähler metrics themselves is formulated as a conjecture. When an edge becomes long and its Higgs field is regular semisimple, we prove that the Kempf--Ness minimizer is exponentially confined near the minimum locus of the potential, the centralizer of that element (Lemma \ref{lem:transverse_decay} and Corollary \ref{cor:middle}); we explain why the naive expectation of a splitting into a hyperkähler quotient by the maximal torus cannot hold as stated, and what a correct formulation should involve (Remark \ref{rem:long-edge}).

\subsection*{Organization of the paper}
Section \ref{section 2} recalls the combinatorics of oriented quivers with boundary
and introduces the Nahm--Kirchhoff moduli spaces. Section \ref{section 3} establishes the
smooth structure and the dimension formula through the construction of local
slices. Section \ref{section 4} constructs the hyperkähler structure, proves the Donaldson--Kempf--Ness correspondence, and analyses the dependence of the complex moduli space and of the hyperkähler metric on the edge lengths. Section \ref{sec:2d-tqft-complex}
assembles the cutting, gluing and homotopy-invariance results into the 2D TQFT functor and proves the obstruction to a hyperkähler-valued theory. Section \ref{section 6} introduces the non-unital categories $\mathbf{MetCob}_2$ of metric cobordisms and $\mathbf{HK}$ of hyperkähler manifolds, proves the metric gluing
law via hyperkähler reduction, constructs the metric field theory functor $\Z_{\mathbb{H}}$ and the associated families over the cones of $M_{g,n}^{\mathrm{trop}}$, and studies the edge-collapse and long-edge limits.

\subsection*{Acknowledgments} The author is deeply grateful to Giovanny Soto for many stimulating and generous discussions throughout the preparation of this work, and for his invaluable help with the proofs of Subsection \ref{subsec:kn-edge} on the Kempf--Ness functional on an edge; his insights on the variational formulas and the convexity arguments were essential to the analysis carried out there. The author also thanks Maxence Mayrand for useful discussions, in particular concerning the obstruction to a 2D TQFT with values in the category of hyperkähler manifolds, and the Institut des sciences mathématiques (ISM) and the Ernest-Monga scholarship for their financial support.

\section{Nahm--Kirchhoff moduli spaces} \label{section 2}
In this section, we recall the basic notions of oriented quivers (see also \cite{maiza2026lax}) and we introduce Nahm--Kirchhoff moduli spaces. Throughout this section, $G_{\mathbb{C}}$ is the complexification of a compact Lie group $G$ and $\mathbb{H}$ is the algebra of quaternions.

\subsection{Quivers}
A quiver is a tuple $\Gamma = (E,V,s,t)$ such that \begin{itemize}
    \item $E$ is a finite set of edges,
    \item $V$ is a finite set of vertices,
    \item $s,t: E \too V$ are two maps called the \textit{source} and \textit{target} maps, respectively.
\end{itemize}

We assume that $\Gamma$ has no isolated vertices. The degree of a vertex $v\in V$ is the sum \[
\deg(v) \coloneqq \deg_{\text{in}}(v) + \deg_{\text{out}}(v),
\]
where $\deg_{\text{in}}(v) = |t^{-1}(v)|$ is the number of incoming edges and $\deg_{\text{out}}(v)$ is the number of outgoing edges. The boundary of $\Gamma$, denoted $\partial \Gamma$, is the set of vertices of degree $1.$ It is the disjoint union \[
\partial \Gamma = \partial \Gamma^{+} \sqcup \partial \Gamma^{-},
\]
where \[
\partial \Gamma^{+} \coloneqq \{v\in V, \deg_{\text{in}}(v) = 1\} \text{ and } \partial \Gamma^{-} = \{v\in V, \deg_{\text{out}}(v) = 1\},
\]

are the outgoing and the incoming boundaries, respectively. This matches the convention of Section \ref{sec:2d-tqft-complex}, where the incoming circles of a cobordism correspond to $\partial\Gamma^{-}$ and the outgoing circles to $\partial\Gamma^{+}$. The complement $V- \partial \Gamma$ is the set of interior vertices.\\

Let $(E,V,s,t)$ be a quiver. To each edge $e$ in $E$, we associate a compact interval $[a_e,b_e]$ and define $I \coloneqq \bigsqcup_{e\in E} I_e.$ Consider the Banach space $\B_{\bh}(\Gamma) \coloneqq C^{1}(I,\g^{4}) = \prod_{e\in E} C^{1}(I_{e},\g^{4}),$ whose elements are $A = (A^{e})_{e\in E},$ where each $A^{e} = (A^{e}_0, A^{e}_1, A^{e}_2, A^{e}_3)$  satisfies $A^{e}_i : I_e \too \g$ for $i = {0,1,2,3}.$ 

Inside $\B_{\bh}(\Gamma),$ we define the closed subspace called the \textbf{Kirchhoff} space \begin{equation}
\K_{\bh} \coloneqq \{A \in \B_{\bh}(\Gamma): \quad  \sum_{e \in t^{-1}(v)} A^{e}_{i}(b_e) = \sum_{e \in s^{-1}(v)} A^{e}_{i}(a_e), \text{ for all } v \in \Gamma_{\nt}, i \in \{1,2,3\}\}.
\end{equation}

It is the set of connections $A$ whose Higgs fields $A_1,A_2,A_3$ satisfy the Kirchhoff law. 

Now, we define the \textbf{Nahm} space to be \[
\N_{\bh}(\Gamma) = \{A \in \B_{\bh}(\Gamma), \dot{A}_{\bar{i}} + [A_0,A_{\bar{i}}] + [A_{\overline{i+1}},A_{\overline{i+2}}] = 0\},
\]

where $\bar{k}$ is the class of $k$ modulo $\mathbb{Z}_3$, and the \textbf{Nahm--Kirchhoff} space to be \[
\A_{\bh}(\Gamma) \coloneqq \N_{\bh}(\Gamma) \cap \K_{\bh}(\Gamma).
\]

The Banach space $\B_{\bh}(\Gamma)$ admits a natural action of the Banach Lie group $C^{2}(I,G) = \prod_{e\in E}C^{2}(I_e,G),$ by the gauge transformations \[
g\cdot (A_0,A_1,A_2,A_3) = (gA_{0}g^{-1} - \dot{g}g^{-1}, gA_{1}g^{-1},gA_{2}g^{-1},gA_{3}g^{-1}), \quad g\in C^{2}(I,G), A\in \B_{\bh}(\Gamma).
\]

Let $\G(\Gamma)$ be the closed subgroup of $C^{2}(I,G)$ consisting of elements that match at vertices; that is, an element $g = (g_e)_{e\in E}$ lies in $\G(\Gamma)$ if and only if there exists $g(v) \in G$ for each $v$ such that $g_{e}(b_e) = g(t(e))$ and $g_{e}(a_e) = g(s(e))$ for all $e\in E.$ Then, $\G(\Gamma)$ preserves the Nahm--Kirchhoff space $\A_{\bh}(\Gamma).$ We denote by $\G_{0}(\Gamma)$ its normal subgroup consisting of elements that are trivial at the boundary, i.e. \[
\G_{0}(\Gamma) = \{ g \in \G(\Gamma), g(v) = 1 \text{ for all } v \in \partial \Gamma \}.
\]

\begin{Def}
 \textbf{The Nahm--Kirchhoff} moduli space of a quiver $\Gamma$ is the quotient \[
 \M_{\bh}(\Gamma) \coloneqq \A_{\bh}(\Gamma)/\G_{0}(\Gamma).
 \]

 Finally, note that the action of $\G(\Gamma)$ descends to an action of the quotient group \[
 \G(\Gamma)/\G_{0}(\Gamma) \cong G^{\partial \Gamma}
 \]
 on $\M_{\bh}(\Gamma).$
\end{Def}

\section{The smooth structure} \label{section 3}

In this section, we prove that for a connected quiver $\Gamma$ with boundary, the moduli space $\M_{\bh}(\Gamma)$ is a finite-dimensional smooth manifold.

\begin{Thm} \label{smooth quotient}
The subset $\A_{\bh}(\Gamma)$ is a Banach submanifold of $\B_{\bh}(\Gamma),$ and the quotient \[
\M_{\bh}(\Gamma) \coloneqq \A_{\bh}(\Gamma)/\G_{0}(\Gamma) 
\]
admits a unique smooth structure such that the projection $\A_{\bh}(\Gamma) \too \M_{\bh}(\Gamma)$ is a smooth submersion. Moreover, $\M_{\bh}(\Gamma)$ is finite-dimensional with $\dim(\M_{\bh}(\Gamma)) = 4(|E| - |\Gamma_{\nt}|)\dim(G).$
\end{Thm}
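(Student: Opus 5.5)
We would first show that $\A_{\bh}(\Gamma)$ is a Banach submanifold by parametrizing solutions through initial data along each edge. For fixed $A_0^e \in C^1(I_e,\g)$ and initial value $(A_1^e,A_2^e,A_3^e)(a_e)\in\g^3$, the three Nahm equations form a smooth (polynomial) ODE system in $(A_1,A_2,A_3)$. A priori such solutions exist only on a short interval, since the quadratic terms may blow up. We therefore do not use a global graph chart. Instead we use the implicit function theorem applied to the smooth map
\[
F:\B_{\bh}(\Gamma)\too \prod_{e} C^0(I_e,\g^3)\times (\g^3)^{\Gamma_{\nt}},
\]
whose components are the Nahm expressions on each edge together with the Kirchhoff defects at the interior vertices. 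We have $\A_{\bh}(\Gamma)=F^{-1}(0)$, so it suffices to show that $dF_A$ is surjective with split kernel at every $A\in\A_{\bh}(\Gamma)$.

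The linearized Nahm operator on an edge, $\dot a_i+[A_0,a_i]+[a_0,A_i]+[A_{i+1},a_{i+2}]+[a_{i+1},A_{i+2}]$, is a linear ODE in $(a_1,a_2,a_3)$ with $a_0$ as a parameter. By variation of constants, it is onto $C^0$ already with $a_0=0$ and arbitrary initial values. Its kernel is parametrized by $a_0\in C^1$ and the initial value $(a_1,a_2,a_3)(a_e)\in\g^3$. The Kirchhoff map on this kernel is onto $(\g^3)^{\Gamma_{\nt}}$: for each interior vertex $v$, pick an edge $e$ with $s(e)=v$ or $t(e)=v$ and prescribe the endpoint value there. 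Endpoint values can be chosen freely because the solution operator is invertible, and since $\Gamma$ has no isolated vertices every vertex has an incident edge. Fundamental solution matrices give a closed complement, so the kernel splits. Hence $\A_{\bh}(\Gamma)$ is a Banach submanifold, and its tangent space is the kernel described above.

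For the quotient, we would follow Kronheimer and build local slices. Since $\Gamma$ has boundary and is connected, $\G_0(\Gamma)$ acts freely: a stabilizer element satisfies $\dot g = [g,A_0]$-type parallel transport on each edge, and it is trivial at a boundary vertex, so it propagates to be trivial along every edge and through vertices by connectedness. The action is proper by an Arzelà–Ascoli type argument, using the fact that $G$ is compact and that $\dot g = gA_0 - A_0' g$ bounds $g$ in $C^2$.

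For the slice, we note that every $A$ is gauge equivalent under $\G_0$ to one with $A_0\equiv 0$. On each edge, solve $\dot g=gA_0$ starting from $g=1$ at a chosen root. We then fix the remaining gauge with constant-on-edge-chain freedom as follows. Choose a spanning tree $T$ of $\Gamma$ rooted at the boundary. Then $\G_0$ modulo the group fixing temporal gauge is $G^{\Gamma_{\nt}}$ (the values at interior vertices) together with the twists along edges outside $T$. More cleanly, in temporal gauge $A_0=0$ the residual gauge group consists of piecewise-constant-on-edges elements matching at vertices, which is $G^{\Gamma_{\nt}}$ acting on $(\g^3)^E$ via initial data. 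Hence
\[
\M_{\bh}(\Gamma)\cong \{(A_i^e(a_e), A_i^e(b_e))\}\text{-data}\ \text{with Kirchhoff}\big/ G^{\Gamma_{\nt}}.
\]
This is delicate because Nahm flows may not exist globally. To avoid that issue, we would instead use the Coulomb slice $S_A=\{A+a:\ d_A^*a=0\}\cap\A_{\bh}$, with the infinitesimal action $\xi\mapsto(-\dot\xi-[A_0,\xi],[\xi,A_i])$ for $\xi$ vanishing on $\partial\Gamma$ and continuous at vertices. The gauge-fixing equation $d_A^*d_A\xi=d_A^*a$ is a Laplacian on the metric graph with Dirichlet conditions on $\partial\Gamma$ and Kirchhoff (Neumann) conditions at interior vertices. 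This operator is invertible because $\Gamma$ is connected with nonempty boundary, and the inverse function theorem then gives the slice as a local chart for the quotient. Uniqueness of the smooth structure is the standard consequence of the projection being a submersion.

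Dimension count: the tangent space to $\A_{\bh}$ is $\{a_0\}\oplus(\g^3)^E$ (initial data) minus $3|\Gamma_{\nt}|\dim G$ Kirchhoff conditions. Modulo $\mathrm{Lie}\,\G_0$, the $a_0$-component together with the gauge directions is quotiented to leave $\dim G$ per edge minus $\dim G$ per interior vertex. This is because $\G_0$ modulo temporal gauge is parametrized by $\{\xi$ in $C^2$ with $\xi|_{\partial}=0\}$, whose action on $a_0$ leaves a quotient of edge-holonomy data $\g^E$ modulo vertex values $\g^{\Gamma_{\nt}}$. The total is $3|E|\dim G-3|\Gamma_{\nt}|\dim G+|E|\dim G-|\Gamma_{\nt}|\dim G=4(|E|-|\Gamma_{\nt}|)\dim G$.

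The main obstacle is the slice step: proving that the graph Laplacian with Kirchhoff conditions is an isomorphism between the right $C^2$/$C^0$ spaces, and that the slice map is a homeomorphism onto an open set of the quotient, which uses the properness of the action.
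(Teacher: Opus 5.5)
Your overall architecture is the paper's: $\A_{\bh}(\Gamma)$ as a regular zero set, a free and proper $\G_0(\Gamma)$-action, and a Kronheimer--Coulomb slice $\{A+a : D_A^*a=0\}$ obtained from invertibility of $L_A=D_A^*D_A$, with Dirichlet conditions on $\partial\Gamma$ and Kirchhoff conditions at interior vertices. However, the two steps where the hypothesis ``connected with non-empty boundary'' actually does the work are not established.

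\textbf{The Kirchhoff step.} Your surjectivity argument for the Kirchhoff map, on the kernel of the linearized Nahm operator, fails as stated. Prescribing $x_e$ at one end of $e$ fixes it at the other end, and that value enters the defect at the other vertex. If two interior vertices choose the same edge, you would have to prescribe both endpoint values of a single solution of a first-order ODE. A loop at $v$ contributes $(1-P_e)x_e(a_e)$, which vanishes identically at $A=0$. Your reasoning uses only that every vertex has an incident edge, so it would equally ``prove'' surjectivity for a single interior vertex carrying a loop, where the map is zero. The fix, which is what the paper imports as Lemma~\ref{useful lemma}, is a spanning-tree argument. Root a spanning tree at a boundary vertex and assign to each interior vertex the tree edge to its parent; this assignment is injective. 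Put $x=0$ on all other edges, and cancel the defects from the deepest vertices toward the root, which carries no condition. Your count $3(|E|-|\Gamma_{\nt}|)\dim G$ depends on this.

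\textbf{The Laplacian step.} Connectedness and the boundary give only \emph{injectivity} of $L_A$. Indeed, $\langle L_Au,u\rangle=\|D_Au\|^2$ forces $\nabla_{A_0}u=0$, hence $u=0$ by propagation from $\partial\Gamma$. Surjectivity onto $C^0(I,\g)\times\g^{\Gamma_{\nt}}$ is a separate index-zero statement, and it is exactly the step you flag as the main obstacle and leave open. The paper closes it by writing $L_A=L_0+K$. Here $L_0u=(-\ddot u,(\delta_v(\dot u))_v)$ is an isomorphism (Lemma~\ref{lem 3.9}), and $K$ involves only $u$ and $\dot u$, hence is compact through $C^2\hookrightarrow C^1$. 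The Fredholm alternative then upgrades injectivity to bijectivity. An elementary alternative is shooting on each edge: this turns $L_Au=(f,z)$ into a square linear system in $(u_e(a_e),\dot u_e(a_e))_{e\in E}$ and $(u(v))_{v\in\Gamma_{\nt}}$, with $(2|E|+|\Gamma_{\nt}|)\dim G$ unknowns and as many equations. Bijectivity of $L_A$ also gives $T_A\A_{\bh}=\mathrm{im}\,D_A\oplus(\ker D_A^*\cap T_A\A_{\bh})$, hence surjectivity of $D_A^*$ on $T_A\A_{\bh}$. That surjectivity is what makes your slice a submanifold, a point you do not address.

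\textbf{Two smaller remarks.} First, your discarded temporal-gauge claim is false, not merely delicate. The holonomy along an edge joining two boundary vertices is $\G_0$-invariant, so $A_0$ cannot in general be gauged to zero. Moreover, an edgewise-constant, vertex-matching gauge transformation that is trivial on $\partial\Gamma$ is trivial on a connected quiver, so your description of the residual group is also wrong.

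Second, your count of $T_A\A_{\bh}/\mathrm{im}\,D_A$ (holonomy data $\g^E$ modulo $\g^{\Gamma_{\nt}}$, plus Higgs initial data) is a valid alternative. It requires checking that the vertex values act injectively on the holonomy data, again by propagation from $\partial\Gamma$. The paper gets the dimension in one stroke instead: $\ker D_A^*\cap T_A\A_{\bh}$ is the kernel of a single first-order system in $(Y_0,\dots,Y_3)$ with Kirchhoff conditions on all four components. Lemma~\ref{useful lemma} with $W=\g^4$ then gives $4(|E|-|\Gamma_{\nt}|)\dim G$.
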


The proof proceeds in three steps. First, we show that $\A_{\bh}(\Gamma)$ is a smooth Banach submanifold of $\B_{\bh}(\Gamma).$ Second, we prove that the action of $\G_{0}(\Gamma)$ is free and proper. Finally, as we work with infinite-dimensional spaces, we construct local slices for this action. For more background on Banach manifolds, see, for example, \cite{Lang1999Fundamentals}.

\subsection{The Banach submanifold $\A_{\bh}(\Gamma) \subset \B_{\bh}(\Gamma)$} We consider the map \begin{align*}
    \psi: \B_{\bh}(\Gamma)& \too C^{0}(I,\g)^{3} \times (\g^{3})^{\Gamma_{\nt}}\\
A& \mtoo \Bigl( \dot{A}_{\bar{i}} + [A_0,A_{\bar{i}}] + [A_{\overline{i+1}},A_{\overline{i+2}}], \Bigl( \sum_{e\in s^{-1}(v)} A^{e}_{\bar{i}}(a_e) - \sum_{e \in t^{-1}(v)} A^{e}_{\bar{i}}(b_e) \Bigr)_{v\in \Gamma_{\nt}} \Bigr),
\end{align*}
so that $\psi^{-1}(0) = \A_{\bh}(\Gamma)$. The map $\psi$ will be interpreted as a moment map for the action of $\G_{0}(\Gamma)$ on $\B_{\bh}(\Gamma).$ In order to show that $0$ is a regular value of $\psi,$ we use the following lemma of \cite{maiza2026lax}:

\begin{Lem} [\cite{maiza2026lax}] \label{useful lemma}
    Let $W$ be a finite-dimensional real vector space and $B \in C^{0}(I,\text{End}(W)).$ Define \begin{align*}
    R: C^{1}(I,W)& \too C^{0}(I,W) \times W^{\Gamma_{\nt}}\\ x& \mtoo 
    \Bigl( \dot{x} + Bx, \Bigl( \sum_{e\in s^{-1}(v)} x_{e}(a_e) - \sum_{e \in t^{-1}(v)} x_{e}(b_e) \Bigr)_{v\in \Gamma_{\nt}} \Bigr).  
    \end{align*}
    Then $R$ is surjective and $\dim \Ker(R) = (|E| - |\Gamma_{\nt}|)\dim(W).$
\end{Lem}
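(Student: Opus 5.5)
The plan is to reduce $R$ to a finite-dimensional linear map by solving the ODE edge by edge, and then to check that the Kirchhoff part of that finite-dimensional map is surjective by a duality and propagation argument along the quiver. Throughout I use the standing hypotheses of this section: $\Gamma$ is connected and $\partial\Gamma\neq\emptyset$. The lemma genuinely fails without them; for a closed graph, e.g. a single loop with $B=0$, the defect map below is identically zero.

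\emph{Step 1 (edgewise ODE).} For each $e\in E$, let $\Phi_e\in C^1(I_e,\mathrm{GL}(W))$ be the fundamental solution of $\dot\Phi_e=-B_e\Phi_e$ with $\Phi_e(a_e)=1$. This exists and is invertible by linear ODE theory, since $B_e$ is continuous on a compact interval. Every solution of $\dot x_e+B_ex_e=f_e$ with $f_e\in C^0$ is then
\[
x_e(t)=\Phi_e(t)\Bigl(c_e+\int_{a_e}^t\Phi_e(s)^{-1}f_e(s)\,ds\Bigr),\qquad c_e=x_e(a_e)\in W .
\]
Hence the first component of $R$ is surjective, and its kernel is $\{(\Phi_ec_e)_e : c\in W^E\}\cong W^E$.

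\emph{Step 2 (reduction to a finite-dimensional map).} Restricted to this kernel, the Kirchhoff component of $R$ becomes the linear map
\[
L:W^E\to W^{\Gamma_{\nt}},\qquad
L(c)_v=\sum_{e\in s^{-1}(v)}c_e-\sum_{e\in t^{-1}(v)}\Phi_e(b_e)c_e .
\]
Suppose $L$ is surjective. Then $\Ker R\cong\Ker L$ has dimension $(|E|-|\Gamma_{\nt}|)\dim W$. Surjectivity of $R$ also follows: given $(f,w)$, take the particular solution with all $c_e=0$, compute its Kirchhoff defect $d\in W^{\Gamma_{\nt}}$, and add the homogeneous solution $(\Phi_e c_e)_e$ with $L(c)=w-d$.

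\emph{Step 3 (surjectivity of $L$; the main point).} I would show that the transpose $L^{T}$ is injective. Let $\eta\in (W^*)^{\Gamma_{\nt}}$ annihilate the image of $L$, and set $\eta_v:=0$ for $v\in\partial\Gamma$. Reading off the coefficient of each $c_e$ in $\langle\eta,L(c)\rangle$ gives, for every edge $e$,
\[
\eta_{s(e)}=\Phi_e(b_e)^{T}\,\eta_{t(e)} .
\]
This also holds when an endpoint is a boundary vertex, by the convention $\eta=0$ there. It even holds when both endpoints are boundary vertices, or when $e$ is a loop, although such edges impose no useful constraint.

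Since each $\Phi_e(b_e)^T$ is invertible, $\eta$ vanishes at one endpoint of an edge if and only if it vanishes at the other. Because $\partial\Gamma\neq\emptyset$, we have $\eta=0$ at some vertex. Connectedness of $\Gamma$ then propagates $\eta=0$ along edges (ignoring orientations) to every interior vertex. So $\eta=0$, $L^T$ is injective, and $L$ is surjective. This propagation from the boundary is the step that carries the content and uses the hypotheses; Steps 1 and 2 are routine linear ODE theory.
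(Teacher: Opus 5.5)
Your proof is correct and complete. The paper does not prove this lemma; it imports it from \cite{maiza2026lax}, so there is no in-paper argument to compare against, and yours is a valid self-contained proof.

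Your route first solves edgewise with fundamental solutions, which reduces $R$ to the finite-dimensional Kirchhoff map $L(c)_v=\sum_{e\in s^{-1}(v)}c_e-\sum_{e\in t^{-1}(v)}\Phi_e(b_e)c_e$. You then show $L^{T}$ is injective by propagating $\eta=0$ from $\partial\Gamma$ through the invertible transports $\Phi_e(b_e)^{T}$. This is the linear counterpart of the boundary-propagation argument the paper itself uses for strict convexity in Proposition \ref{sgc}.

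You are also right to make the standing hypotheses explicit, since the statement as quoted omits them. For the conclusion to hold for every $B$, what is needed is that every connected component of $\Gamma$ meets $\partial\Gamma$; your loop example with $B=0$ shows this cannot be dropped.
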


\begin{Prop} \label{regular value}
    The subset $\A_{\bh}(\Gamma) \subset \B_{\bh}(\Gamma)$ is a Banach submanifold.
\end{Prop}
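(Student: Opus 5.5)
I will show that $0$ is a regular value of $\psi$: $\psi$ is smooth, its differential at every $A\in\A_{\bh}(\Gamma)$ is surjective, and its kernel is complemented. The regular value theorem for Banach manifolds then exhibits $\A_{\bh}(\Gamma)=\psi^{-1}(0)$ as a Banach submanifold with tangent space $\Ker(d\psi_A)$.

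\emph{Smoothness.} The map $\psi$ is polynomial of degree two in $A$. It is built from the bounded linear operators $A\mapsto \dot A_i$ (from $C^1$ to $C^0$) and evaluation at endpoints, together with the continuous bilinear maps $(A,B)\mapsto [A_j,B_k]$ (from $C^1\times C^1$ to $C^0$). Hence $\psi$ is smooth.

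\emph{Differential.} At $A$, for a variation $a=(a_0,a_1,a_2,a_3)$,
\[
d\psi_A(a)=\Bigl(\dot a_{\bar i}+[A_0,a_{\bar i}]+[a_0,A_{\bar i}]+[a_{\overline{i+1}},A_{\overline{i+2}}]+[A_{\overline{i+1}},a_{\overline{i+2}}],\ \bigl(\textstyle\sum_{s^{-1}(v)}a^e_{\bar i}(a_e)-\sum_{t^{-1}(v)}a^e_{\bar i}(b_e)\bigr)_v\Bigr).
\]

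\emph{Surjectivity.} Restrict to variations with $a_0=0$. Put $W=\g^3$ and $x=(a_1,a_2,a_3)$. Then $d\psi_A(0,x)=(\dot x+Bx,\ \text{Kirchhoff defect of }x)$, where $B\in C^0(I,\mathrm{End}(\g^3))$ is the pointwise linear map
\[
(x_1,x_2,x_3)\mapsto\bigl([A_0,x_{\bar i}]+[x_{\overline{i+1}},A_{\overline{i+2}}]+[A_{\overline{i+1}},x_{\overline{i+2}}]\bigr)_i .
\]
This $B$ is continuous because $A\in C^1$. Thus $d\psi_A$ restricted to $\{a_0=0\}$ is exactly the operator $R$ of Lemma \ref{useful lemma}, which is surjective. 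Hence $d\psi_A$ is surjective. Note that the Kirchhoff sign convention in $\psi$ matches that of $R$.

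\emph{Complemented kernel.} This is the only point needing care in infinite dimensions. I will write
\[
\Ker(d\psi_A)=\{(a_0,x):\ R x=-(L a_0,0)\},
\]
where $L a_0=([a_0,A_{\bar i}])_i$ is bounded from $C^1$ to $C^0$. Since $R$ is surjective with finite-dimensional kernel, it has a bounded right inverse $S$ (the image of a surjective operator with finite-dimensional kernel admits a splitting). Then the map
\[
(a_0,k)\mapsto \bigl(a_0,\ k - S(La_0,0)\bigr),\qquad k\in\Ker R,
\]
is a topological isomorphism $C^1(I,\g)\times \Ker R\cong \Ker(d\psi_A)$. So the kernel is a closed subspace isomorphic to $C^1(I,\g)\oplus\mathbb{R}^{3(|E|-|\Gamma_{\nt}|)\dim G}$.

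A complement of $\Ker(d\psi_A)$ is $\{0\}\times Y$, where $Y$ is a closed complement of $\Ker R$ in $C^1(I,\g^3)$; such a $Y$ exists because $\Ker R$ is finite-dimensional. Indeed, for any $(a_0,x)$ we can write $x-(-S(La_0,0))=k+y$ with $k\in\Ker R$ and $y\in Y$, which gives the decomposition $(a_0,x)=\bigl(a_0,\,k-S(La_0,0)\bigr)+(0,y)$.

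The regular value theorem then concludes. I expect no real obstacle; the main care point is verifying that the linearization is exactly an operator $R$ of the lemma with a continuous coefficient $B$.
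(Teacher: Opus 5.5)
Your proof is correct and follows the same route as the paper: $0$ is a regular value of $\psi$ because $d\psi_A$ is surjective by Lemma \ref{useful lemma}. You are more careful than the paper's one-line argument, since you make explicit the restriction to $a_0 = 0$ with $W = \g^3$ that lets the lemma apply, and you verify the complemented-kernel condition required by the Banach regular value theorem, which the paper leaves implicit.
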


\begin{proof}
By Lemma \ref{useful lemma}, the differential 
\begin{align*}
    d\psi_{A}: \B_{\bh}(\Gamma)& \too C^{0}(I,\g^{3}) \times (\g^{3})^{\Gamma_{\nt}}\\
    X& \mtoo \Bigl( \dot{X}_{\bar{i}} + [A_0,X_{\bar{i}}] + [X_0,A_{\bar{i}}] + [X_{\overline{i+1}},A_{\overline{i+2}}] + [A_{\overline{i+1}},X_{\overline{i+2}}], \Bigl( \sum_{e\in s^{-1}(v)} X^{e}_{\bar{i}}(a_e) - \sum_{e\in t^{-1}(v)} X^{e}_{\bar{i}} (b_e) \Bigr) \Bigr), 
\end{align*}
is surjective for all $A \in \psi^{-1}(0).$ Hence $0$ is a regular value of $\psi$, and $\A_{\bh}(\Gamma)$ is a Banach submanifold of $\B_{\bh}(\Gamma).$
\end{proof}

\begin{Prop} \label{proper gauge}
    $\G_{0}(\Gamma)$ acts freely and properly on $\A_{\bh}(\Gamma).$
\end{Prop}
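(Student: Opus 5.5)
Freeness and properness both reduce to one fact: the $A_0$-component of the gauge action is a linear ODE for $g$ along each edge. If $g\cdot A=B$, then on each edge
\[
\dot g_e = g_e A^e_0 - B^e_0 g_e .
\]
So $g_e$ is determined by its value at a single point, and depends continuously (in $C^2$) on that value and on $A,B$ in $C^1$ (in fact $C^0$ would suffice for $A_0,B_0$).

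\emph{Freeness.} Suppose $g\in\G_0(\Gamma)$ fixes $A\in\A_{\bh}(\Gamma)$. Then $\dot g_e = g_eA^e_0 - A^e_0g_e$ on each edge, so $g_e$ is determined by its value at one endpoint. Since $\Gamma$ is connected and has nonempty boundary, every vertex is joined to some boundary vertex by a path of edges, traversed with either orientation. I would argue by induction along such paths, starting from the boundary where $g(v)=1$. If $g$ equals $1$ at one endpoint of an edge, then uniqueness of ODE solutions gives $g_e\equiv 1$, since the constant $1$ is a solution. The value at the other endpoint is then $1$, and the argument propagates. Hence $g\equiv 1$. If some component of $\Gamma$ had no boundary vertex this would fail because of constant central gauge transformations, so connectedness and nonempty boundary are exactly what is used here.

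\emph{Properness.} Take sequences $A_n\to A$ and $g_n\cdot A_n = B_n\to B$ with $g_n\in\G_0(\Gamma)$; I need a subsequence with $g_n\to g$ in $C^2$ and $g\cdot A=B$. The same propagation from the boundary works. On an edge $e$ with one endpoint value fixed (initially $1$ at a boundary vertex), $g_{n,e}$ solves the linear ODE above, whose coefficients $A_{n,0}^e$ and $B_{n,0}^e$ converge in $C^1$.

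\begin{itemize}
\item By continuous dependence on coefficients and initial data, $g_{n,e}$ converges in $C^1$ to the solution $g_e$ of the limit ODE.
\item Differentiating the ODE and using $C^1$ convergence of the coefficients upgrades this to $C^2$ convergence.
\item The endpoint values $g_{n,e}$ at the other vertex then converge, which provides convergent initial data for the adjacent edges.
\end{itemize}

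Because each interior vertex value $g(v)$ is reached this way, the limit $g$ matches at vertices and is trivial on the boundary, so $g\in\G_0(\Gamma)$. Closedness of the relation $g\cdot A=B$ gives $g\cdot A=B$. Compactness of $G$ would give subsequences even without the propagation argument, but the ODE argument yields full convergence directly.

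\emph{Consistency of the propagation.} When one vertex is reached along two different paths, the convergent values must agree. They do, because each $g_n$ already satisfies the matching condition, so the two limits are limits of the same sequence $g_n(v)$. This point needs care in the write-up but no new idea. The main obstacle is purely technical: the $C^2$ versus $C^1$ regularity bookkeeping, and checking that the topology on $\G_0(\Gamma)$ makes the convergence of $g_n$ meaningful. Both are handled by the explicit ODE.
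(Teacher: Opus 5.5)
Your proof is correct. For freeness it is essentially the boundary-propagation argument the paper imports from \cite{maiza2026lax}: uniqueness for $\dot g_e = g_eA^e_0 - A^e_0g_e$ forces $g_e\equiv 1$ as soon as $g_e=1$ at one endpoint, and this propagates from the boundary through the connected quiver.

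For properness you take a different route. The paper, as described in Remark~\ref{rem:complex_properness}, relies on compactness of $G$. The $g_n$ are uniformly bounded, the gauge equation then bounds their derivatives, and Arzel\`a--Ascoli with a bootstrap extracts a $C^2$-convergent subsequence edge by edge. That argument uses neither connectivity nor the boundary.

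You instead pin the initial value $1$ at a boundary vertex. You then use continuous dependence of the linear ODE on its coefficients and initial data, and propagate along the quiver. This requires connectivity and non-empty boundary, which freeness needs anyway. In return it yields convergence of the whole sequence, not just a subsequence, and it never uses compactness of $G$. Since $\dot g = g\alpha - (g\cdot\alpha)g$ is again linear, your argument applies verbatim to $\G^{0}_{\CC}(\Gamma)$. The paper treats properness of that action as non-automatic in the same remark, so your route gives an elementary proof of it.

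One small slip: your parenthetical claim that $C^0$ control of $A_0,B_0$ suffices only gives $C^1$ convergence of $g_e$. The $C^2$ upgrade needs $C^1$ convergence of the coefficients, as you correctly use later.
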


\begin{proof}
This follows from \cite[Proposition 3.4]{maiza2026lax} and \cite[Lemma 3.5]{maiza2026lax}.
\end{proof}

\subsection{Existence of Nahm slices} We construct the local slices for the action of $\G_{0}(\Gamma)$ on $\A_{{\bh}}(\Gamma),$ finishing the proof of Theorem \ref{smooth quotient}. Our approach is inspired by Kronheimer \cite{kronheimer1988hyperkahler}. For $A\in \A_{\bh}(\Gamma),$ the tangent space is the set \[
T_{A}\A_{\bh}(\Gamma) \coloneqq \{Y \in \K_{\bh}(\Gamma), \dot{Y}_{\bar{i}} + [A_{0},Y_{\bar{i}}] + [Y_0,A_{\bar{i}}] + [Y_{\overline{i+1}},A_{\overline{i+2}}] + [A_{\overline{i+1}},Y_{\overline{i+2}}] = 0\}.
\]

As in the Lax case \cite{maiza2026lax}, we consider the Lie algebra $C^{2}_{0}(I,\g)$ of $\G_{0}(\Gamma)$, consisting of elements $u \in C^{2}(I,\g)$ for which there exist values $u(v) \in \g$ for each $v$ such that $u_{e}(a_e) = u(s(e))$ and $u_{e}(b_e) = u(t(e))$ for all $e\in E,$ and $u(v) = 0$ for all $v\in \partial \Gamma.$ The infinitesimal action of $\G_{0}(\Gamma)$ at $A$ is the linear map 

\begin{equation} \label{DA}
D_{A}: C^{2}_{0}(I,\g) \too T_{A}\A_{\bh}(\Gamma), \quad u \mtoo ([u,A_0] - \dot{u}, [u,A_1],[u,A_2],[u,A_3]).
\end{equation}

The space $C^{2}_{0}(I,\g)$ admits a continuously non-degenerate pairing with $C^{0}(I,\g) \times \g^{\Gamma_{\nt}}$ defined by \begin{equation}
    C^{2}_{0}(I,\g) \times C^{0}(I,\g) \times \g^{\Gamma_{\nt}} \too \R, \langle u,(Y,Z) \rangle \quad \int_{I} \langle u(t), Y(t) \rangle + \sum_{v\in \Gamma_{\nt}} \langle u(v), Z(v) \rangle 
\end{equation}

With the natural inner product on $T_{A}\A_{\bh}$ defined by \[
\langle Y(t),Z(t) \rangle = \int_{I} \sum_{i=0}^{3} \langle Y_{i}(t),Z_{i}(t) \rangle dt,  
\]

we show that the operator 
\begin{align*}
L: T_{A}\A_{\bh}(\Gamma)& \too C^{0}(I,\g) \times \g^{\Gamma_{\nt}}\\
Y& \mtoo \bigl( \dot{Y}_{0} + [A_0,Y_0] + [A_1,Y_1] + [A_2,Y_2] + [A_3,Y_3], \bigl( \sum_{e\in s^{-1}(v)} Y^{e}_{0}(a_e) - \sum_{e\in t^{-1}(v)} Y^{e}_{0}(b_e) \bigr) \bigr),
\end{align*}
 is an adjoint of $D_A$ for this pairing.
 \begin{Prop}
     We have $$\langle D_{A}u,Y \rangle = \langle u,L(Y) \rangle,$$ for all $u\in C^{2}_{0}(I,\g)$ and $Y \in T_{A}\A_{\bh}(\Gamma).$
 \end{Prop}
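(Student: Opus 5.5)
The identity is a direct computation: unfold the pairing, move the algebraic terms across using $\Ad$-invariance of the inner product, and integrate the derivative term by parts edge by edge. The resulting boundary terms, regrouped vertex by vertex, are exactly the Kirchhoff component of $L(Y)$. The tangency equations defining $T_A\A_{\bh}(\Gamma)$ are not needed. The identity holds for every $Y \in C^1(I,\g^4)$, and only the regularity $Y\in C^1$, $u\in C^2$ is used.

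\emph{Step 1 (algebraic terms).} Write
\[
\langle D_A u, Y\rangle = \sum_{e\in E}\int_{I_e} \Bigl( -\langle \dot u_e, Y^e_0\rangle + \sum_{i=0}^{3} \langle [u_e,A^e_i], Y^e_i\rangle \Bigr)\,dt .
\]
The inner product on $\g$ is $\Ad$-invariant, so $\langle [x,y],z\rangle = \langle x,[y,z]\rangle$. Applied termwise this gives $\langle [u,A_i],Y_i\rangle = \langle u,[A_i,Y_i]\rangle$ for $i=0,1,2,3$. Hence the algebraic part becomes $\int_I \langle u, [A_0,Y_0]+[A_1,Y_1]+[A_2,Y_2]+[A_3,Y_3]\rangle$.

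\emph{Step 2 (integration by parts).} On each edge $I_e=[a_e,b_e]$, since $u_e$ is $C^2$ and $Y^e_0$ is $C^1$,
\[
-\int_{I_e}\langle \dot u_e, Y^e_0\rangle = \int_{I_e}\langle u_e,\dot Y^e_0\rangle - \langle u_e(b_e),Y^e_0(b_e)\rangle + \langle u_e(a_e),Y^e_0(a_e)\rangle .
\]
Combining this with Step 1, the integral part of $\langle D_Au,Y\rangle$ is exactly $\int_I \langle u, \dot Y_0 + \sum_i [A_i,Y_i]\rangle$. That is the pairing of $u$ with the first component of $L(Y)$.

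\emph{Step 3 (boundary bookkeeping).} This is the only place requiring care. Use the matching conditions $u_e(b_e)=u(t(e))$ and $u_e(a_e)=u(s(e))$ to regroup the boundary terms by vertex:
\[
\sum_{e}\Bigl(\langle u(s(e)),Y^e_0(a_e)\rangle - \langle u(t(e)),Y^e_0(b_e)\rangle\Bigr) = \sum_{v\in V}\Bigl\langle u(v), \sum_{e\in s^{-1}(v)}Y^e_0(a_e) - \sum_{e\in t^{-1}(v)} Y^e_0(b_e)\Bigr\rangle .
\]
Since $u(v)=0$ for $v\in\partial\Gamma$, only interior vertices contribute. The result is precisely $\sum_{v\in\Gamma_{\nt}}\langle u(v), Z(v)\rangle$, where $Z$ is the second component of $L(Y)$. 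Adding Steps 2 and 3 gives $\langle D_Au,Y\rangle = \langle u, L(Y)\rangle$.

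The main obstacle is only sign consistency: the orientation of $I_e$ (source at $a_e$, target at $b_e$) must match the sign convention in the vertex term of $L$, and the invariance identity must be used in the form that produces $[A_i,Y_i]$ rather than $[Y_i,A_i]$. Both check out with the conventions above.
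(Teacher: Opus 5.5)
Your proof is correct and follows the paper's own argument: $\Ad$-invariance to rewrite $\langle [u,A_i],Y_i\rangle$ as $\langle u,[A_i,Y_i]\rangle$, edgewise integration by parts of the $\dot u$ term, and regrouping the endpoint terms by vertex using the matching conditions and the vanishing of $u$ on $\partial\Gamma$. Your bookkeeping is slightly cleaner than the paper's display, whose third line drops the $\dot Y_0$ term. Your remark that the tangency equations are never used, so the identity holds for all $Y\in C^1(I,\g^4)$, is also accurate.
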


\begin{proof}
    Using integration by parts and the $\Ad$-invariance of the scalar product, one gets \begin{align*}
        \langle D_{A}u, Y \rangle &= \int_{I} \langle [u,A_0] - \dot{u},Y_{0} \rangle + \langle [u,A_1],Y_1 \rangle + \langle [u,A_2],Y_2 \rangle + \langle [u,A_3],Y_3 \rangle dt\\ =& \int_{I} \langle u,\dot{Y}_{0} \rangle + \sum_{i=0}^{3} \langle [u,A_i],Y_i \rangle dt + \sum_{e\in E} (\langle u_{e}(a_e),Y^{e}_{0}(a_e) \rangle - \langle u_{e}(b_e),Y^{e}_{0}(b_e)\rangle)\\ =& \int_{I} \langle u(t),\sum_{i=0}^{3} [A_{i},Y_i] \rangle dt + \sum_{v\in \Gamma_{\nt}} \langle u(v), \sum_{e\in s^{-1}(v)} Y^{e}_{0}(a_e) - \sum_{e\in t^{-1}(v)} Y^{e}_{0}(b_e) \rangle\\ =& \langle u,L(Y) \rangle = \langle u,D^{*}_{A}Y \rangle.     
    \end{align*}
\end{proof}

A natural candidate for a slice at $A$ is a submanifold $S$ whose tangent space at $A$ is orthogonal to the $\G_{0}(\Gamma)$-orbit through $A.$ Since $\text{im}D_A$ is the infinitesimal orbit, its orthogonal complement is $\text{Ker} D^{*}_{A}.$ To realize this decomposition, we need the following technical result. For $A_0 \in C^{1}(I,\g)$ and $u\in C^{2}(I,\g),$ set $\nabla_{A_0}u = \dot{u} + [A_0,u].$

\begin{Prop} \label{prop 3.6}
The operator \begin{align*}
L_{A}\coloneqq D^{*}_{A}D_{A}: C^{2}_{0}(I,\g)& \too C^{0}(I,\g) \times \g^{\Gamma_{\nt}}\\
u& \too \bigl( -\nabla^{2}_{A_0}u - \sum_{i=1}^{3} \ad^{2}_{A_i}u, (\delta_{v}(\dot{u}))_{v\in \Gamma_{\nt}} \bigr),
\end{align*}
is an isomorphism of Banach spaces, where $\delta_{v}(\dot{u}) = \sum_{e\in s^{-1}(v)} \dot{u}^{e}(a_e) - \sum_{e\in t^{-1}(v)}\dot{u}^{e}(b_e).$ In particular, \[
T_{A}\A_{\bh}(\Gamma) = \im D_A \oplus \ker D^{*}_A,
\]
is a closed, topological direct sum.
\end{Prop}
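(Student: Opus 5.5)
I would first check that the formula for $L_A$ is correct, and then prove bijectivity by treating the operator as a second-order boundary value problem on the graph.

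\emph{The formula.} Take $Y = D_A u = ([u,A_0]-\dot u, [u,A_1],[u,A_2],[u,A_3])$. Then $Y_0 = -\nabla_{A_0}u$, so
\[
\dot Y_0 + [A_0,Y_0] = -\nabla_{A_0}^2 u
\qquad\text{and}\qquad
[A_i,[u,A_i]] = -\ad_{A_i}^2 u .
\]
At an interior vertex $v$, $Y_0^e = -\dot u^e + [u(v),A_0^e]$. The commutator terms contribute $[u(v),\cdot\,]$ applied to the Kirchhoff sum of $A_0$. Two remarks are needed here. First, $A_0$ is not constrained by the Kirchhoff law, so I would record the vertex term as $\delta_v(\dot u)$ up to this zeroth-order term; it is a finite-rank perturbation and does not affect the Fredholm argument below. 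Second, the sign convention between $L$ and $\delta_v$ should be checked at the same time.

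\emph{Bijectivity, reduced to an ODE problem.} Write $P u = -\nabla_{A_0}^2 u - \sum_i \ad_{A_i}^2 u$. This is a linear second-order ODE operator with $C^0$ coefficients on each edge. For each edge, the solution space of $Pu = f$ is an affine space of dimension $2\dim\g$. The map $C^2_0 \to C^0\times \g^{\Gamma_{\nt}}$ is then equivalent to a finite-dimensional linear system. Its unknowns are the two boundary values of $u$ on each edge, or equivalently $u(v)$ and $\dot u$ at the endpoints. The conditions are:
\begin{itemize}
\item continuity at vertices, giving $\dim\g \cdot (2|E| - |V|)$ conditions;
\item vanishing at $\partial\Gamma$, giving $|\partial\Gamma|\dim\g$ conditions;
\item the flux condition $\delta_v(\dot u) = z_v$, giving $|\Gamma_{\nt}|\dim\g$ conditions.
\end{itemize}
The total number of conditions is $2|E|\dim\g$, which equals the number of unknowns. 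So the operator is Fredholm of index $0$, and it is enough to prove injectivity.

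\emph{Injectivity (the key step).} Suppose $L_A u = 0$. Since $L_A = D_A^*D_A$, pairing with $u$ via the adjoint identity of the previous proposition gives $\langle u, L_A u\rangle = \|D_A u\|^2_{L^2}$. Concretely, I would integrate $\langle u, Pu\rangle$ by parts. Using $\Ad$-invariance of the inner product, this gives
\[
\int_I |\nabla_{A_0}u|^2 + \sum_i |[A_i,u]|^2 ,
\]
plus boundary terms. The boundary terms vanish at $\partial\Gamma$ because $u=0$ there. At interior vertices they combine, using continuity of $u$, into $\sum_v \langle u(v), \pm\delta_v(\dot u)\rangle$ together with the $A_0$ correction, and this is zero. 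Hence $\nabla_{A_0}u = 0$ on every edge, so $u$ is covariantly constant along each edge. Since $u$ vanishes at a boundary vertex, it vanishes on that edge. Continuity at vertices then propagates $u=0$ along paths, and connectedness of $\Gamma$ gives $u \equiv 0$.

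\emph{Conclusion.} $L_A$ is an injective index-zero Fredholm map, hence bijective. It is continuous, so by the open mapping theorem it is a Banach isomorphism. For the splitting, given $Y \in T_A\A_{\bh}$, set $u = L_A^{-1}D_A^*Y$. Then $Y = D_A u + (Y - D_A u)$, and the second summand lies in $\ker D_A^*$. The intersection $\im D_A \cap \ker D_A^*$ is trivial, since $D_A^* D_A u = 0$ forces $u=0$. The projection $Y \mapsto D_A L_A^{-1} D_A^* Y$ is continuous, so both summands are closed. This is why the identity $\langle u, L_A u\rangle = \|D_A u\|^2$ is the crux.

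The main obstacle I expect is the bookkeeping at interior vertices. One has to make sure the vertex component of $D_A^*D_A$ is exactly the flux term, and that the boundary terms in the energy identity cancel. This requires the vertex pairing in $\langle u,(Y,Z)\rangle$ to be set up consistently.
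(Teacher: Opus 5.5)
Your proposal is correct, and it reaches bijectivity by a different route than the paper. The paper writes $L_A = L_0 + K$, where $L_0u = (-\ddot u,(\delta_v(\dot u))_v)$ is the model operator whose invertibility is imported from the Lax--Kirchhoff paper. It then notes that $K$ factors through the compact inclusion $C^2_0\hookrightarrow C^1$, and concludes from the Fredholm alternative for $I+L_0^{-1}K$ together with injectivity, which is also cited.

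You instead solve $Pu=f$ edge by edge and reduce $L_Au=(f,z)$ to a square linear system. The unknowns are the $2|E|\dim\g$ Cauchy data $(u^e(a_e),\dot u^e(a_e))$; use these rather than the two Dirichlet values, which would need a separate unique-solvability check. The conditions are $(2|E|-|V|)+|\partial\Gamma|+|\Gamma_{\nt}| = 2|E|$ blocks of size $\dim\g$, so injectivity gives surjectivity directly.

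What each route buys:
\begin{itemize}
\item Yours is self-contained and elementary. It needs neither the invertibility of $L_0$ (which becomes the special case $A=0$) nor Arzel\`a--Ascoli, and it makes the index-zero structure visible as a count of conditions against unknowns.
\item The paper's is insensitive to the exact form of the lower-order and vertex terms, since any compact perturbation is absorbed.
\end{itemize}
Both rest on the same core: the energy identity $\langle u,L_Au\rangle=\|D_Au\|^2_{L^2}$, and the propagation of $\nabla_{A_0}u=0$ from a boundary vertex through the connected quiver. The paper only cites this step; you prove it.

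Your remark about the vertex term is also right. Computing $D_A^*D_Au$ from the stated adjoint $L$ gives the vertex component $-\delta_v(\dot u)+[u(v),\delta_v(A_0)]$, where $\delta_v(A_0)=\sum_{e\in s^{-1}(v)}A_0^e(a_e)-\sum_{e\in t^{-1}(v)}A_0^e(b_e)$ is unconstrained. So the displayed formula, and the claim that $K$ has zero vertex part, are off by a sign and a finite-rank term. This is harmless for both arguments:
\begin{itemize}
\item the extra term is compact;
\item it leaves your condition count unchanged;
\item it drops out of the energy identity, since $\langle u(v),[u(v),\delta_v(A_0)]\rangle=0$ by $\Ad$-invariance.
\end{itemize}
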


To prove this proposition, we use the following sequence of lemmas.

\begin{Lem} \label{oper}
The operator $L = D^{*}_{A}$ is surjective, and \[
\dim \Ker D^{*}_{A} = 4(|E| - |\Gamma_{\nt}|)\dim(\g).
\]    
\end{Lem}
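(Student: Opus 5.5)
The plan is to reduce both claims to Lemma \ref{useful lemma}. We apply it once, to the quaternionic space $W = \g^4 \cong \g \otimes \bh$, and let a single operator absorb both the linearized Nahm–Kirchhoff conditions defining $T_A\A_{\bh}(\Gamma)$ and the operator $L = D_A^*$. Concretely, we consider the combined operator
\[
\Phi = (L, d\psi_A) : \B_{\bh}(\Gamma) = C^1(I,\g^4) \too C^0(I,\g^4) \times (\g^4)^{\Gamma_{\nt}} .
\]
Its $0$-th component is $Y \mapsto \dot Y_0 + \sum_{i=1}^3 [A_i,Y_i]$ (the Higgs term for $i=0$ cancels, since $[A_0,Y_0]$ is already part of the covariant derivative and we may absorb it into the coefficient matrix). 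Its components $i = 1,2,3$ are the linearized Nahm operators $\dot Y_{\bar i} + [A_0,Y_{\bar i}] - [A_{\bar i},Y_0] + [Y_{\overline{i+1}},A_{\overline{i+2}}] + [A_{\overline{i+1}},Y_{\overline{i+2}}]$. At each interior vertex it takes the value $\sum_{e\in s^{-1}(v)} Y^e_j(a_e) - \sum_{e\in t^{-1}(v)} Y^e_j(b_e)$ for all $j = 0,1,2,3$.

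The first step is to check that $\Phi$ has exactly the form of the operator $R$ of Lemma \ref{useful lemma}, namely $Y \mapsto (\dot Y + B Y, \text{vertex defects})$. Here $B \in C^0(I,\operatorname{End}(\g^4))$ is the $4\times 4$ block matrix of $\ad$-operators built from $A$:
\begin{itemize}
\item row $0$ is $(\ad_{A_0}, \ad_{A_1}, \ad_{A_2}, \ad_{A_3})$;
\item row $i$ has $-\ad_{A_i}$ in column $0$, $\ad_{A_0}$ on the diagonal, and $\mp\ad_{A_{\overline{i+2}}}$, $\pm\ad_{A_{\overline{i+1}}}$ in the remaining two slots.
\end{itemize}
$B$ is continuous because $A \in C^1$. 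The vertex term of $\Phi$ uses the same sign convention (sources minus targets) as $R$, applied uniformly to all four components; this is why $L$ was defined with the Kirchhoff defect of $Y_0$ written in that order. This bookkeeping, making sure the $Y_0$ row and its vertex term genuinely fit the template of $R$ with one coherent convention, is the only point where care is needed. Beyond it I expect no real obstacle. The lemma then gives two facts: $\Phi$ is surjective, and $\dim\Ker\Phi = (|E|-|\Gamma_{\nt}|)\dim(\g^4) = 4(|E|-|\Gamma_{\nt}|)\dim\g$.

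The second step is to translate these facts back to $L$. By definition, $T_A\A_{\bh}(\Gamma) = \Ker d\psi_A$ (the linearized Nahm equations together with Kirchhoff on $Y_1,Y_2,Y_3$). Hence
\[
\Ker\Phi = \{Y \in T_A\A_{\bh}(\Gamma) : L(Y) = 0\} = \Ker D_A^*,
\]
which gives the dimension formula. For surjectivity, take any $(f,z) \in C^0(I,\g)\times\g^{\Gamma_{\nt}}$. By surjectivity of $\Phi$, choose $Y \in C^1(I,\g^4)$ with $\Phi(Y) = ((f,0,0,0),(z,0,0,0))$. The vanishing of the last three components and vertex entries says exactly that $d\psi_A(Y) = 0$, so $Y \in T_A\A_{\bh}(\Gamma)$, and $L(Y) = (f,z)$. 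Hence $L = D_A^*$ is surjective on $T_A\A_{\bh}(\Gamma)$.
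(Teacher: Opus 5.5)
Your proposal is correct and follows essentially the same route as the paper. The paper likewise stacks $d_A\psi^1, d_A\psi^2, d_A\psi^3$, the differential part of $D_A^*$, and the vertex defects of all four components into one operator of the form in Lemma \ref{useful lemma} with $W=\g^4$. It then identifies the kernel with $\Ker D_A^*$ and reads off surjectivity and the dimension; your explicit preimage argument for surjectivity spells out what the paper leaves implicit.

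One cosmetic slip: the $0$-th component of $D_A^*$ is $\dot Y_0+\sum_{i=0}^{3}[A_i,Y_i]$, so $[A_0,Y_0]$ does not cancel as your parenthetical remark says. Your block matrix $B$ nevertheless includes $\ad_{A_0}$ in row $0$, so the operator you actually use is the right one.
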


\begin{proof}
Define the operator 
\begin{align*}
R: \B_{\bh}(\Gamma)& \too C^{0}(I,\g^{4})\times (\g^{4})^{\Gamma_{\nt}}\\
Y& \mtoo \bigl(d_{A}\psi^{1}(Y), d_{A}\psi^{2}(Y),d_{A}\psi^{3}(Y),(D^{*}_{A}(Y))_{1}, (\delta_{v}(Y_1),\delta_{v}(Y_2),\delta_{v}(Y_3),\delta_{v}(Y_0))_{v\in \Gamma_{\nt}} \bigr).
\end{align*}
By Lemma \ref{useful lemma}, $R$ is surjective. Moreover, \[
\Ker R = \{ Y\in \B_{\bh}, Y \in T_{A}\A_{\bh}(\Gamma) \text{ and } D^{*}_{A}Y = 0\} = \Ker D^{*}_{A}.
\]

Then, $D^{*}_{A}$ is surjective and $\dim \Ker D^{*}_{A} = \dim \Ker R = 4(|E| - |\Gamma_{\nt}|)\dim(\g).$
\end{proof}

\begin{Lem} \label{injectivity}
    The operator $L_A = D^{*}_{A}D_{A}$ is injective.
\end{Lem}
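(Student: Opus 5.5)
The plan is the standard energy argument: if $L_A u = 0$, pair $L_A u$ against $u$ and use the adjointness proposition above to get $\|D_A u\|^2 = 0$. Then show that $D_A u = 0$ forces $u = 0$, using the boundary condition and connectedness.

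First I will check that $D_A u$ lies in $T_A\A_{\bh}(\Gamma)$, since the adjoint identity is only stated there. The infinitesimal gauge action preserves the Nahm equations. Kirchhoff compatibility comes from the vertex matching $u_e(a_e)=u(s(e))$, $u_e(b_e)=u(t(e))$. At an interior vertex $v$ the sum of $[u(v),A^e_i]$ over incoming edges minus the sum over outgoing edges is $[u(v),\cdot]$ applied to the Kirchhoff defect of $A$, which vanishes. So $D_A u \in T_A\A_{\bh}(\Gamma)$.

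Next I take $u\in C^2_0(I,\g)$ with $L_A u = 0$ and apply the adjointness proposition with $Y = D_A u$:
\[
0 = \langle u, L_A u\rangle = \langle u, D_A^*(D_A u)\rangle = \langle D_A u, D_A u\rangle = \int_I |\nabla_{A_0}u|^2 + \sum_{i=1}^3 |[A_i,u]|^2\, dt .
\]
I should check that the pairing of $u$ with the vertex component $(\delta_v(\dot u))_v$ is exactly what appears in the integration by parts. In the adjointness computation the boundary terms were $\langle u(v), \delta_v(Y_0)\rangle$ with $Y_0 = [u,A_0]-\dot u$. Since $\delta_v([u,A_0]) = [u(v),\cdot]$ applied to a sum, the $A_0$-part needs care: strictly, $D_A^*D_A u$ has vertex component $-\delta_v(\nabla_{A_0}u)$. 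I will simply use the composition $D_A^*D_A$ as it actually arises from the adjointness proposition, so the identity holds by construction. Alternatively, I can compute $\int\langle u, -\nabla_{A_0}^2 u\rangle$ directly by integration by parts, which yields $\int|\nabla_{A_0}u|^2$ plus vertex terms. Those vertex terms cancel against the $\delta_v$ component because $u$ is continuous at vertices and vanishes at the boundary.

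Consequently $\nabla_{A_0}u = 0$ on every edge, so on each edge $u_e(t) = P_e(t)\,u_e(a_e)$, where $P_e(t)$ is the parallel transport of the linear ODE $\dot u = -[A_0,u]$. This transport is an invertible linear map. Hence if $u$ vanishes at one endpoint of an edge it vanishes identically on that edge, and in particular at the other endpoint. Starting from a boundary vertex, where $u=0$ by definition of $C^2_0$, I propagate along edges. Since $\Gamma$ is connected and has nonempty boundary, every vertex is reached, so $u(v)=0$ at all vertices and then $u_e\equiv 0$ on all edges. Thus $u=0$.

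The main obstacle is the bookkeeping of the vertex terms: one must verify that the second component of $L_A$ really pairs with $u(v)$ so as to produce exactly $\|D_Au\|^2$. This follows from the adjointness identity applied to $Y = D_A u$, provided the vertex component is read as $\delta_v$ of $Y_0$ and signs are tracked. The remaining steps are routine.
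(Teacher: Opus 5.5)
Your proof is correct. The paper gives no argument of its own here: it simply invokes Lemma~3.10 of the Lax--Kirchhoff companion \cite{maiza2026lax}, relying on the operator having the same structure in that setting. You instead prove the statement directly. The adjointness identity with $Y = D_Au$ gives $\langle u, D_A^*D_Au\rangle = \|D_Au\|_{L^2}^2$. The $A_0$-component of $D_Au=0$ is $\nabla_{A_0}u = 0$, and uniqueness for this linear ODE propagates $u=0$ from a boundary vertex through the connected quiver.

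This makes explicit two things the citation hides. First, it shows exactly where connectedness and $\partial\Gamma\neq\emptyset$ are used. They are genuinely needed: on a component without boundary vertices and with $A=0$, a nonzero constant $u$ satisfies $D_Au=0$.

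Second, your remark on the vertex term is right. Because $A_0$ obeys no Kirchhoff law, the literal composition has vertex component $-\delta_v(\nabla_{A_0}u) = -\delta_v(\dot u) - [\delta_v(A_0),u(v)]$. This differs from the $\delta_v(\dot u)$ displayed in Proposition~\ref{prop 3.6}. The discrepancy is harmless for the Fredholm argument there, which uses the displayed operator. Suppose that operator kills $u$. Then the interior component of $D_A^*D_Au$ vanishes (it coincides with the displayed one), and $\delta_v(\dot u)=0$ for all $v$. Hence $\langle u, D_A^*D_Au\rangle = -\sum_{v}\langle u(v),[\delta_v(A_0),u(v)]\rangle = 0$ by $\Ad$-invariance, and your argument again yields $u=0$.
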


\begin{proof}
    This follows immediately from \cite[Lemma 3.10]{maiza2026lax}.
\end{proof}

\begin{Lem} \cite[Lemma 3.11]{maiza2026lax} \label{lem 3.9}
Setting $A = (A_0,A_1,A_2,A_3) = 0,$ the operator \begin{align*}
    L_0: C^{2}_{0}(I,\g)& \too C^{0}(I,\g) \times \g^{\Gamma_{\nt}}\\
    u& \mtoo  (-\Ddot{u}, (\delta_{v}(\dot{u}))_{v\in \Gamma_{\nt}})
\end{align*}
is an isomorphism of Banach spaces.
\end{Lem}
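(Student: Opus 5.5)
We must show that $L_0$ is bijective and bounded; the open mapping theorem then gives a bounded inverse. Boundedness is clear, since $C^2 \to C^0$ differentiation and point evaluation of $\dot u$ are continuous.

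\textbf{Injectivity.} Suppose $L_0u=0$. Then $\ddot u^e=0$ on every edge, so each $u^e$ is affine:
\[
u^e(t)=u(s(e))+\frac{t-a_e}{l_e}\bigl(u(t(e))-u(s(e))\bigr), \qquad l_e=b_e-a_e,
\]
with constant slope $\dot u^e=(u(t(e))-u(s(e)))/l_e$. The vertex condition $\delta_v(\dot u)=0$ at each interior $v$ reads
\[
\sum_{e\in s^{-1}(v)}\frac{u(t(e))-u(v)}{l_e}-\sum_{e\in t^{-1}(v)}\frac{u(v)-u(s(e))}{l_e}=0 .
\]
This says that the vertex function $u(\cdot)$ is harmonic for the weighted graph Laplacian with weights $1/l_e$, and $u$ vanishes on $\partial\Gamma$. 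We apply this componentwise in an orthonormal basis of $\g$. Pair with $u(v)$ and sum over interior vertices; a discrete integration by parts, using $u=0$ on the boundary, gives
\[
\sum_e |u(t(e))-u(s(e))|^2/l_e=0 .
\]
So $u$ is constant along every edge. Since $\Gamma$ is connected with nonempty boundary, $u\equiv0$.

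\textbf{Surjectivity.} Given $(f,(z_v))\in C^0(I,\g)\times\g^{\Gamma_{\nt}}$, first take the particular solution on each edge
\[
w^e(t)=-\int_{a_e}^t\int_{a_e}^s f^e(r)\,dr\,ds ,
\]
which is $C^2$, satisfies $-\ddot w^e=f^e$, and vanishes at $a_e$. Its value at $b_e$ may be nonzero, so we correct it by an edge-affine function. Set $u^e=w^e+h^e$, where $h^e$ is affine with $h^e(a_e)=x(s(e))$ and $h^e(b_e)=x(t(e))-w^e(b_e)$, for unknown vertex values $x\in\g^{V}$ with $x|_{\partial\Gamma}=0$. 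Then $u^e(a_e)=x(s(e))$ and $u^e(b_e)=x(t(e))$, so $u$ is continuous at vertices with values $x$ and lies in $C^2_0$.

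The conditions $\delta_v(\dot u)=z_v$ become a square linear system in the interior values $x|_{\Gamma_{\nt}}$. Its matrix is the weighted Dirichlet graph Laplacian from the injectivity step, and the right-hand side is built from $z_v$ and the boundary data of $w$. By the injectivity argument this Laplacian has trivial kernel, hence it is invertible, which gives the required $x$.

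\textbf{Conclusion.} $L_0$ is a continuous bijection between Banach spaces, so it is an isomorphism by the open mapping theorem. The main point of care is the sign and orientation bookkeeping in $\delta_v$, to confirm that the discrete form obtained is genuinely the positive definite Dirichlet Laplacian. Connectedness and $\partial\Gamma\neq\emptyset$ are used exactly there.
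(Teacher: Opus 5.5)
Your proof is correct and complete. The paper gives no argument for this lemma; it is imported from \cite[Lemma 3.11]{maiza2026lax}. So the comparison is with a citation rather than a proof, and your route has the advantage of being fully explicit.

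You write $u=w+h$, with $w$ the edgewise double primitive of $-f$ and $h$ edgewise affine. This reduces both injectivity and surjectivity to the weighted Dirichlet Laplacian
\[
x\longmapsto\Bigl(\sum_{e\in s^{-1}(v)}\frac{x(t(e))-x(v)}{l_e}+\sum_{e\in t^{-1}(v)}\frac{x(s(e))-x(v)}{l_e}\Bigr)_{v\in\Gamma_{\nt}}
\]
on interior vertex values. This shows exactly where the hypotheses left implicit in the statement enter: every component of $\Gamma$ must contain a boundary vertex, since otherwise edgewise constants lie in $\ker L_0$. It also gives the inverse in closed form, since $\|w\|_{C^2}\lesssim\|f\|_{C^0}$ and $x$ comes from inverting a finite matrix. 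So the open mapping theorem is not even needed.

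A further benefit is that you work with $L_0$ exactly as stated, not through the identification $L_0=D_0^*D_0$. With the paper's adjoint formula, $D_0u=(-\dot u,0,0,0)$ gives $D_0^*D_0u=\bigl(-\ddot u,(-\delta_v(\dot u))_{v}\bigr)$, which has the opposite sign on the vertex component. The clean energy identity $\langle u,D_0^*D_0u\rangle=\|\dot u\|_{L^2}^2$ holds for that operator, and restricted to edgewise affine $u$ it is precisely your identity $\sum_e|u(t(e))-u(s(e))|^2/l_e$. The two operators differ by the target isomorphism $(f,z)\mapsto(f,-z)$. Hence they have the same kernel, one is an isomorphism if and only if the other is, and your conclusion is unaffected.
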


\begin{proof}[Proof of Proposition \ref{prop 3.6}]
Write $L_A = L_0 + K,$ where $L_0$ is the operator of Lemma \ref{lem 3.9} and \[
    K(u) = (-2\ad_{A_0}\dot{u} - \ad_{\dot{A_0}}u - \ad^{2}_{A_0}u - \sum_{i=1}^{3}\ad^{2}_{A_i}u,0). 
    \]
    
The only term carrying a derivative is $\ad_{A_0}\dot{u}$ (which involves $A_0,$ not the Higgs fields). So $K$ still factors as \[
C^{2}_{0}(I,\g) \too C^{1}(I,\g) \too C^{0}(I,\g)\times \g^{\Gamma_{\nt}}, 
\]
where the first map is compact by Arzel\`a--Ascoli and the second map is bounded. Moreover, we observe that $L_{A} = L_{0}(I + T),$ where $T = L_{0}^{-1}K,$ which is a compact operator on $C^{2}_{0}(I,\g).$ By the Fredholm alternative, either $I + T$ is invertible or there exists $u\neq 0$ such that $(I + T)u = 0.$ In the latter case, $L_{A}u = 0,$ which contradicts the injectivity of $L_{A}$ (Lemma \ref{injectivity}). Hence $(I + T)$ is invertible, and therefore $L_A$ is an isomorphism.
\end{proof}

Now, we construct a local slice for the $\G_{0}(\Gamma)$-action at a fixed $A\in \A_{\bh}.$ We consider the map \[
F: \G_{0}(\Gamma) \times \A_{\bh}(\Gamma) \too C^{0}(I,\g) \times \g^{\Gamma_{\nt}}, \quad F(g,B) = D^{*}_{A}(g\cdot B - A).
\]

We have that $F(1,B) = 0$ and, for $u\in T_{A}\G_{0}(\Gamma) = C^{2}_{0}(I,\g),$ \[
dF_{(1,A)}(u,0) = D^{*}_{A}D_{A}u.
\] 

By Proposition \ref{prop 3.6}, $D^{*}_{A}D_{A}$ is an isomorphism. Hence, by the implicit function theorem, there exist neighborhoods $U \subset \A_{\bh}(\Gamma)$ of $A$ and $V \subset \G_{0}(\Gamma)$ of the identity $1$ and a unique smooth map $s: U\too V$ such that $F(s(B),B) = 0$ for all $B\in U.$ We define our candidate slice to be \[
S \coloneqq \{ B\in U; F(1,B) = 0\} = \{A+a \in U, D^{*}_{A}a = 0\}.
\] 

By \cite[Lemma 3.12,Lemma 3.13]{maiza2026lax}, $S$ is a Banach submanifold of $\A_{\bh}(\Gamma)$ and the map $\psi: \G_{0}(\Gamma) \times S \too \A_{\bh}, \quad (g,B)\mtoo g\cdot B$ is a diffeomorphism from $\G_{0}(\Gamma)\times S$ onto an open neighborhood $W = \G_{0}(\Gamma)\cdot S$ of $A.$ As a consequence, $S$ is a slice for the $\G_{0}(\Gamma)$-action at $A,$ which completes the construction of the smooth structure of Theorem \ref{smooth quotient}. The dimension formula follows from $T_{A}S = \Ker D^{*}_{A}$ and Lemma \ref{oper}. 

\section{Hyperkähler structure and complex quotient} \label{section 4}

In this section, we give a rigorous construction of the hyperkähler structure on the Nahm--Kirchhoff moduli space $\M_{\bh}(\Gamma)$ associated with a connected oriented quiver with boundary and a compact connected Lie group $G.$ The construction is organized as an infinite-dimensional hyperkähler quotient in the sense of Hitchin--Karlhede--Lindstr\"om--Ro\v{c}ek (HKLR). Because the original HKLR theorem is stated in finite dimensions, we state the analytic hypotheses that the infinite-dimensional setting forces (Section \ref{section 3}), in the form of a Marsden--Weinstein reduction theorem as used in \cite{maiza2026lax} for the Lax--Kirchhoff moduli space. Moreover, we give a rigorous proof of the analogue of Donaldson's theorem for Nahm's equations on $\Gamma.$ This establishes the biholomorphism between the hyperkähler quotient (the moduli space of solutions of Nahm's equations with Kirchhoff matching) and the complex-symplectic quotient (the moduli space of solutions to the complex Nahm equations with Kirchhoff matching, modulo the complexified gauge group). The proof proceeds via a Kempf--Ness argument on the Cartan--Hadamard manifold $H^{\Gamma_{\nt}}$, where $H = G_{\mathbb{C}}/G.$ 

\subsection{Hyperkähler structure}
We recall the infinite-dimensional Banach space $\B_{\bh}(\Gamma) = C^{1}(I,\g^{4})^{E}$, with $I = \bigsqcup_{e\in E} I_{e}, I_{e} = [a_e,b_e].$ We identify $\g^{4} \cong \g\otimes_{\mathbb{R}} \bh$ by writing a quadruple as a quaternion-valued element \begin{equation} \label{eq}
A = A_{0} + A_{1}i + A_{2}j + A_{3}k, \quad A_{i} \in \g.
\end{equation}

\begin{Def} \label{de 4.1}
    Let $I_1,I_2,I_3$ act on $\B_{\bh}(\Gamma)$ by left quaternion multiplication on the second factor of \ref{eq}, i.e., by $i,j \text{ and } k$ respectively. In components, \begin{align*}
        I_{1}(X_0,X_1,X_2,X_3) &= (-X_1,X_0,-X_3,X_2)\\
        I_{2}(X_0,X_1,X_2,X_3) &= (-X_2,X_3,X_0,-X_1)\\
        I_{3}(X_0,X_1,X_2,X_3) &= (-X_3,-X_2,X_1,X_0) 
        \end{align*}
\end{Def}

We equip $\B_{\bh}(\Gamma)$ with the constant (weak) $L^{2}$ metric \[
\langle X,Y \rangle = \int_{I} \sum_{i=0}^{3} \langle X_i (t),Y_i (t) \rangle dt, 
\] 
and the three constant 2-forms $\omega_i (X,Y) = \langle I_i X,Y \rangle.$

\begin{Lem} \label{tri hlm}
    The operators $I_1,I_2 \text{ and } I_3$ satisfy the quaternion relations $I^{2}_{1} = I^{2}_{2} = I^{2}_{3} = -\text{id}$ and $I_{1}I_{2} = I_3, I_{2}I_{3} = I_1 \text{ and } I_{3}I_{1} = I_2;$ each $I_i$ is a $\langle \cdot ,\cdot \rangle$-isometry, and each $\omega_i$ is a constant (hence closed) weakly non-degenerate $2$-form. Consequently, $(\B_{\bh}(\Gamma),\langle \cdot,\cdot \rangle, I_1,I_2,I_3)$ is a hyperkähler Banach manifold. 
\end{Lem}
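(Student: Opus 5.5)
The plan is to check everything pointwise on $\g\otimes_{\R}\bh$ and then integrate over $I$, since all the structures in Definition \ref{de 4.1} are constant and act fibrewise in $t$. First I will show that $I_1,I_2,I_3$ are exactly the operators of left multiplication by $i,j,k$ on $A=A_0+A_1i+A_2j+A_3k$, with the coefficients $A_\mu\in\g$ commuting with the quaternion units. For $I_1$ this is the expansion
\[
i\,(X_0+X_1i+X_2j+X_3k)=-X_1+X_0 i-X_3 j+X_2 k,
\]
which recovers $(-X_1,X_0,-X_3,X_2)$. The analogous expansions with $ji=-k$, $jk=i$, $ki=j$, $kj=-i$ recover the stated formulas for $I_2$ and $I_3$. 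Since left multiplication $q\mapsto L_q$ is an algebra homomorphism $\bh\to\mathrm{End}_{\R}(\g\otimes\bh)$, the relations $i^2=j^2=k^2=-1$, $ij=k$, $jk=i$, $ki=j$ transfer immediately to $I_1^2=I_2^2=I_3^2=-\mathrm{id}$, $I_1I_2=I_3$, $I_2I_3=I_1$, $I_3I_1=I_2$. These operators clearly preserve $C^1(I,\g^4)$ and are bounded.

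Next, each $I_a$ permutes the four components of $X(t)$ up to sign. It therefore preserves the pointwise quantity $\sum_\mu|X_\mu(t)|^2$, and integrating over $I$ shows that $I_a$ is an isometry of the weak $L^2$ metric. Polarizing gives $\langle I_aX,I_aY\rangle=\langle X,Y\rangle$. Combining this with $I_a^2=-\mathrm{id}$ yields
\[
\omega_a(Y,X)=\langle I_aY,X\rangle=\langle I_a^2Y,I_aX\rangle=-\langle I_aX,Y\rangle=-\omega_a(X,Y),
\]
so each $\omega_a$ is a continuous skew bilinear form, i.e.\ a constant $2$-form on the Banach space $\B_{\bh}(\Gamma)$. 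A constant form on a vector space has vanishing exterior derivative, because its coefficients do not depend on the base point, so $d\omega_a=0$. For weak non-degeneracy, given $X\neq0$ I will evaluate $\omega_a(X,-I_aX)=\langle X,X\rangle>0$. This positivity uses that $X$ is continuous and nonzero, so its $L^2$ norm does not vanish. Hence the map $X\mapsto\omega_a(X,\cdot)$ is injective.

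Finally, for the hyperkähler conclusion I will note that on a flat vector space with a constant metric, the Levi-Civita connection is the trivial connection $d$. Constant endomorphisms $I_a$ are therefore parallel, and their Nijenhuis tensors vanish because they are built from derivatives of the $I_a$, which are zero. So each $(\langle\cdot,\cdot\rangle,I_a,\omega_a)$ is a (weak) Kähler structure, and together they give a hyperkähler structure. I expect no step to be a genuine obstacle. The only point needing care is interpretive: ``hyperkähler'' has to be understood in the weak sense, since the $L^2$ metric is not complete on the $C^1$ Banach space. I will state explicitly that parallelism and closedness are verified directly, rather than derived from a finite-dimensional theorem.
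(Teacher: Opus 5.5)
Your proposal is correct and follows the same route as the paper: $I_a$ is left quaternion multiplication, which is an algebra homomorphism; each $I_a$ is a componentwise isometry; and the $\omega_a$ are constant, hence closed, forms. It also supplies details the paper leaves implicit, namely the skew-symmetry of $\omega_a$ and the parallelism and integrability of the constant $I_a$. One harmless sign slip: $\omega_a(X,-I_aX)=\langle I_aX,-I_aX\rangle=-\|X\|^2$, so the non-degeneracy witness should be $Y=I_aX$, giving $\omega_a(X,I_aX)=\|X\|^2>0$; since only non-vanishing is needed, the argument is unaffected.
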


\begin{proof}
    The relations follow because $I_a$ is left multiplication by the $a$-th imaginary unit and left multiplication is an algebra homomorphism $\bh \too \text{End}_{\mathbb{R}}(\bh).$ Thus $I_{a}I_{b} = L_{q_{a}}L_{q_{b}} = L_{q_{a}q_{b}}$, where $(q_1,q_2,q_3) = (i,j,k),$ and $ij = k, \quad i^{2} = -1,$ etc. Each $L_{q_a}$ preserves the standard quaternionic form on $\bh$ and $\Ad$-invariance handles the $\g$-factor, so $I_a$ is an isometry. A constant form is closed; weak non-degeneracy is immediate from $\omega_{a}(X,\cdot) = \langle I_{a}X,\cdot \rangle$ and the invertibility of $I_a.$ 
\end{proof}

\begin{Lem}(Tri-holomorphic isometric action)  $\G_{0}(\Gamma)$ acts on $\B_{\bh}(\Gamma)$ by tri-holomorphic isometries: each $g\in \G_{0}(\Gamma)$ preserves $\langle \cdot,\cdot \rangle$ and commutes with $I_1, I_2,I_3.$
\end{Lem}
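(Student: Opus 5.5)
The plan is to reduce the statement to a pointwise statement about the derivative of the action. The gauge action is affine: $g\cdot A = \Ad_g A - (\dot g g^{-1},0,0,0)$, where $\Ad_g$ acts componentwise on the quadruple $(A_0,A_1,A_2,A_3)$. Its differential at any point is therefore the linear map
\[
X \mtoo \Ad_g X = (gX_0g^{-1}, gX_1g^{-1}, gX_2g^{-1}, gX_3g^{-1}),
\]
independent of the base point. Since $\langle\cdot,\cdot\rangle$ and $I_1,I_2,I_3$ are constant tensors on $\B_{\bh}(\Gamma)$, "isometry" and "tri-holomorphic" mean exactly that this linear map preserves the $L^2$ product and commutes with each $I_a$. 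So the first step is to record this formula for the differential. One should also note that $\Ad_g$ maps $C^1(I,\g^4)$ to itself, because $g\in C^2$.

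For the metric, I will check pointwise in $t$. The invariant inner product on $\g$ satisfies $\langle g(t)X_i(t)g(t)^{-1}, g(t)Y_i(t)g(t)^{-1}\rangle = \langle X_i(t),Y_i(t)\rangle$ for each $i$ and $t$. Summing over $i=0,\dots,3$ and integrating over $I$ gives $\langle \Ad_g X,\Ad_g Y\rangle = \langle X,Y\rangle$. Here $G$ is compact, so the inner product can be chosen $\Ad$-invariant, as the paper already uses.

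For the complex structures, the key observation is that under $\g^4\cong\g\otimes_\R\bh$ the map $\Ad_g$ acts as $\Ad_{g(t)}\otimes \mathrm{id}_{\bh}$. Meanwhile $I_a$ acts as $\mathrm{id}_\g\otimes L_{q_a}$, by Definition \ref{de 4.1}. Operators acting on different tensor factors commute, so $\Ad_g I_a = I_a \Ad_g$. This can equally be verified directly from the component formulas: each component of $I_aX$ is $\pm X_j$ for some $j$, and $\Ad_g$ is linear and applied to the same index in every slot. Consequently $g$ also preserves $\omega_a(X,Y)=\langle I_aX,Y\rangle$.

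There is no real obstacle here. The only point needing care is that the inhomogeneous term $-\dot g g^{-1}$ disappears upon differentiation, so that the action is by affine maps with linear part $\Ad_g$. The statement is about the induced maps on tangent vectors, which is the intended meaning given the constant structures. Nothing about vanishing at the boundary is used, so the argument works for all of $\G(\Gamma)$, indeed for all of $C^2(I,G)$.
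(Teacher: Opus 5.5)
Your proposal is correct and follows essentially the same route as the paper: the inhomogeneous term $-\dot g g^{-1}$ drops out of the differential, leaving $\Ad_g\otimes\mathrm{id}_{\bh}$. This map commutes with each $I_a=\mathrm{id}_{\g}\otimes L_{q_a}$ because the two act on different tensor factors, and it is an $L^2$-isometry by $\Ad$-invariance of the inner product.
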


\begin{proof}
    The term $\dot{g}g^{-1}$ lies only in the $A_0$ part and is independent of $A,$ so it drops out of the differential. Hence \[
    dg_{A}(X) = (\Ad_{g}X_0, \Ad_{g}X_1, \Ad_{g}X_2, \Ad_{g}X_3) = (\Ad_{g} \otimes \mathrm{id}_{\bh})X,
    \]
    under \ref{eq}. Now, $\Ad_{g}$ acts on the $\g$-factor, while $\mathrm{id}_{\bh}$ and each $I_{a} = L_{q_a}$ act on the $\bh$-factor; operators acting on distinct tensor factors commute, so $dg_{A}(X) \circ I_{a} = I_{a} \circ dg_{A}(X).$ Since $\Ad_g$ is orthogonal and acts diagonally on $(X_0,X_1,X_2,X_3),$ $dg_{A}$ is a $\langle \cdot, \cdot \rangle$-isometry.
\end{proof}

We recall the finite-dimensional \textbf{HKLR} theorem, then isolate the hypotheses that become non-automatic in infinite dimensions, and finally state the infinite-dimensional Marsden--Weinstein reduction theorem in the form used for the Lax--Kirchhoff moduli space.

\begin{Thm} [\cite{hitchin1987hyperkahler}, finite-dimensional] Let $(M,g,I_1,I_2,I_3)$ be a finite-dimensional hyperkähler manifold with Kähler forms $\omega_1,\omega_2,\omega_3$, and let $K$ be a Lie group acting by preserving the hyperkähler structure and admitting an equivariant moment map $\mu = (\mu_1,\mu_2,\mu_3): M \too \mathfrak{k}^{*} \otimes \mathbb{R}^{3}, d\langle \mu_{a},\xi \rangle = i_{\xi_M}\omega_{a}.$ If $K$ acts freely on $\mu^{-1}(0)$ and the action is proper, then $M///K \coloneqq \mu^{-1}(0)/K$ is a hyperkähler manifold of dimension $\dim M - 4\dim K.$
    
\end{Thm}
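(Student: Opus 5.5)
The plan is the classical HKLR argument: prove that $N:=\mu^{-1}(0)$ is a submanifold, that $M_0:=N/K$ is a manifold of the right dimension, that the quaternionic structure descends, and that the induced Kähler forms are closed. For the first two points, I would show that $0$ is a regular value of $\mu:M\to\mathfrak{k}^*\otimes\R^3$. The identity $\langle d\mu_a(X),\xi\rangle=\omega_a(\xi_M,X)=\langle I_a\xi_M,X\rangle$ shows that the image of $d\mu_a$ annihilates the isotropy algebra $\mathfrak{k}_x$. Freeness gives $\mathfrak{k}_x=0$, so $d\mu_x$ is surjective and $N$ is a submanifold of dimension $\dim M-3\dim K$. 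Properness and freeness then make $N\to N/K$ a principal $K$-bundle, with $\dim N/K=\dim M-4\dim K$.

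Next I would set up the quaternionic splitting at $x\in N$. Let $\mathcal{V}_x=\{\xi_M(x)\}$ be the orbit directions. The same identity gives
\[
T_xN=\bigcap_{a}\bigl(I_a\mathcal{V}_x\bigr)^{\perp},
\]
so the $g$-orthogonal complement $\mathcal{H}_x$ of $\mathcal{V}_x$ inside $T_xN$ equals $(\mathcal{V}_x+I_1\mathcal{V}_x+I_2\mathcal{V}_x+I_3\mathcal{V}_x)^{\perp}$. Because the $I_a$ satisfy the quaternion relations and are orthogonal, the space $\mathcal{V}_x\oplus I_1\mathcal{V}_x\oplus I_2\mathcal{V}_x\oplus I_3\mathcal{V}_x$ is quaternionic. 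Its orthogonal complement $\mathcal{H}_x$ is therefore $I_a$-invariant. These four pieces are mutually orthogonal: $g(\xi_M,I_a\eta_M)=\omega_a(\eta_M,\xi_M)$ up to sign, which vanishes on $N$ by equivariance, since $d\mu_a(\eta_M)=\mathrm{ad}^*$ of $\mu_a=0$. I would identify $T_{[x]}M_0\cong\mathcal{H}_x$. Since $K$ preserves $g$ and the $I_a$, the induced metric $g_0$ and complex structures $I_a^0$ are well defined, and they satisfy the quaternion relations.

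Finally I would verify the hyperkähler condition. With $\pi:N\to M_0$ and $\iota:N\to M$, the forms $\omega_a^0$ satisfy $\pi^*\omega_a^0=\iota^*\omega_a$. This holds because $\iota^*\omega_a$ is basic: it is $K$-invariant, and $\iota_{\xi_M}\iota^*\omega_a=\iota^*d\langle\mu_a,\xi\rangle=0$. Since $\pi^*$ is injective, each $\omega_a^0$ is closed. By Hitchin's lemma (three closed $2$-forms $\omega_a^0=g_0(I_a^0\cdot,\cdot)$ with quaternionic $I_a^0$ force integrability and parallelism), $(g_0,I_a^0)$ is hyperkähler. I expect the main obstacle to be this last step, because it is the only genuinely non-linear input. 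I would prove Hitchin's lemma directly: first show $I_3^0=(\omega_2^0)^{-1}\omega_1^0$ is integrable, by showing $\omega_2^0+i\omega_3^0$ is a closed $(2,0)$-form for $I_1^0$ whose kernel distributions are involutive. Then deduce $\nabla I_a^0=0$ from the standard formula expressing $\nabla\omega$ through $d\omega$ and the Nijenhuis tensor.
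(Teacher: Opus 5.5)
Your proposal is correct, up to one misordered justification noted below, but it is not the route the paper takes. The paper gives no proof of this finite-dimensional statement; it is quoted from \cite{hitchin1987hyperkahler}. The argument the paper actually carries out for $\M_{\bh}(\Gamma)$ (Lemma~\ref{holo map}, Corollary~\ref{kah mani}, Lemma~\ref{lemma 4.10}, Remark~\ref{rmq 4.12}) is the holomorphic-moment-map form of HKLR:
\begin{itemize}
\item for each cyclic $(a,b,c)$, the map $\mu_b+i\mu_c$ is $I_a$-holomorphic, so $N^a=\mu_b^{-1}(0)\cap\mu_c^{-1}(0)$ is an $I_a$-complex Kähler submanifold;
\item $(\mu^{-1}(0)/K,I_a)$ is the Kähler quotient of $N^a$ by $\mu_a|_{N^a}$;
\item integrability of each reduced $I_a$ is inherited from Kähler reduction.
\end{itemize}
You instead get everything except integrability from linear algebra on $\mu^{-1}(0)$. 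You get closedness from $\pi^*\omega_a^0=\iota^*\omega_a$, and you put the only non-linear input into Hitchin's lemma on the quotient.

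Your route is more economical. It never needs the intermediate level sets $N^a$ to be submanifolds, nor the integrability part of the Kähler reduction theorem, which requires an argument of its own. It would also transfer directly to the paper's setting. There $\M_{\bh}(\Gamma)$ is finite-dimensional, and both of your inputs are already established: the $I_a$-invariance of $T_A\A_{\bh}(\Gamma)\cap\Ker D_A^*$, and $\pi^*\omega_{a,\mathrm{red}}=i^*\omega_a$. Using your route would remove the reliance on a weak Kähler reduction theorem for Banach manifolds. What the $N^a$ route buys in return is the presentation of $(\M_{\bh}(\Gamma),I_1)$ as a quotient of the complex submanifold $N^1=\mu_{\CC}^{-1}(0)$. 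That is the structure on which the later Donaldson--Kempf--Ness comparison with $\mu_{\CC}^{-1}(0)/\G^0_{\CC}(\Gamma)$ is built.

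Two repairs are needed.

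First, in the regular-value step, $\mathfrak{k}_x=0$ only makes each $d\mu_a$ surjective separately. Joint surjectivity of $(d\mu_1,d\mu_2,d\mu_3)$ onto $\mathfrak{k}^*\otimes\R^3$ requires that $\sum_a I_a(\xi_a)_M(x)=0$ force $\xi_1=\xi_2=\xi_3=0$. Equivalently, $I_1\mathcal{V}_x+I_2\mathcal{V}_x+I_3\mathcal{V}_x$ must be a direct sum, and freeness alone does not give this. For example, $Sp(1)$ acting on $\bh$ by left multiplication (triholomorphic for the right-multiplication complex structures) is free on $\bh\setminus\{0\}$, yet $d\mu_x:\R^4\to\R^9$ is never surjective. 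What rescues the argument is the mutual orthogonality you prove in your second paragraph. For $a\neq b$ one has $g(I_a\xi_M,I_b\eta_M)=\pm g(\xi_M,I_c\eta_M)$, which vanishes because $\mu(x)=0$ and $\mu$ is equivariant. That computation must therefore come before the regular-value claim, and it genuinely uses $x\in\mu^{-1}(0)$.

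Second, in your sketch of Hitchin's lemma, the closed $(2,0)$-form $\omega_2^0+i\omega_3^0$ proves integrability of $I_1^0$, not of $I_3^0=(\omega_2^0)^{-1}\omega_1^0$. For $I_3^0$ use $\omega_1^0+i\omega_2^0$. By cyclic symmetry this is only a relabeling.
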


\noindent\textbf{Infinite-dimensional Marsden--Weinstein reduction.} We work with one complex structure at a time. Fix $a$ and write $\omega \coloneqq \omega_a,$ with moment map $\mu = \mu_a$ valued in a Banach space $\mathfrak{k}^{*}$ paired with $\mathfrak{k} = \Lie(K)$ by a jointly continuous non-degenerate pairing.

\begin{Thm}[Marsden--Weinstein, infinite-dimensional form] \label{infinite red} 
Let $(\B,\omega)$ be a weak symplectic Banach manifold carrying a symplectic action of a Banach Lie group $K$ with equivariant moment map $\mu: \B \too \mathfrak{k}^{*}.$ Suppose that \begin{enumerate}
    \item (Regular value). $0$ is a regular value of $\mu,$ so that $ \A = \mu^{-1}(0)$ is a Banach submanifold with $T_{A}\A = \Ker (d_{A}\mu).$
    \item (Free and proper). The $K$-action on $\A$ is free and proper. 
    \item (Slices). The $K$-action on $\A$ admits local slices.
    \item (Orthogonality). For each $A\in \A,$ the orbit tangent space satisfies $T_{A}(K\cdot A) = (T_{A}\A)^{\perp_{\omega}},$ where $W^{\perp_{\omega}} = \{Z: \omega(Z,Y) = 0, \quad \forall Y \in W\}.$ \label{cd 4}
\end{enumerate}
    Then, the quotient $\M = \A/K$ is a smooth manifold and carries a unique symplectic form $\omega_{\text{red}}$ with $\pi^{*}\omega_{\text{red}} = i^{*}\omega,$ where $i: \A \hookrightarrow \B$ and $\pi:\A \too \M.$ 
\end{Thm}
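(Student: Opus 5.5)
The plan is to copy the finite-dimensional Marsden--Weinstein argument, with hypotheses (1)--(4) each replacing a step that is automatic in finite dimensions.

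\emph{Step 1: the manifold structure.} By (1), $\A$ is a Banach submanifold with $T_A\A = \Ker(d_A\mu)$. By (2) and (3), $K$ acts freely and properly on $\A$ and has local slices. A slice $S$ at $A$ gives a $K$-equivariant diffeomorphism $K\times S \cong K\cdot S$ onto an open set. The maps $S\to \M$ then serve as charts, and the transition maps between them are smooth because they are given by the implicit-function map $s$ of the slice construction. Properness gives the Hausdorff property. So $\M=\A/K$ is a smooth manifold, $\pi$ is a submersion, and $T_{[A]}\M \cong T_A\A/T_A(K\cdot A)$. This is exactly how Theorem~\ref{smooth quotient} was deduced, so I will treat it as routine.

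\emph{Step 2: descent of $i^*\omega$.} Define $\omega_{\mathrm{red}}([Y],[Z]) = \omega(Y,Z)$ for $Y,Z\in T_A\A$.
\begin{itemize}
\item Well-definedness in the vectors. Take $\xi\in\mathfrak k$ and $Z\in T_A\A$. Then $\omega(\xi_\B,Z) = \langle d_A\mu(Z),\xi\rangle = 0$, so adding an orbit direction to $Y$ or $Z$ does not change the value.
\item Independence of the point in the orbit. This follows from $K$-invariance of $\omega$, since $dk$ maps $T_A\A$ to $T_{kA}\A$ and intertwines the orbit directions.
\item Smoothness. In a slice chart, $\omega_{\mathrm{red}}$ is the restriction of $i^*\omega$ to $S$, which is smooth.
\item Closedness. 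We have $\pi^*d\omega_{\mathrm{red}} = i^*d\omega = 0$, and $\pi$ is a submersion, so $d\omega_{\mathrm{red}}=0$.
\item Uniqueness. This is immediate because $\pi^*$ is injective on forms when $\pi$ is a surjective submersion.
\end{itemize}

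\emph{Step 3: non-degeneracy.} I expect this to be the main obstacle, because $\omega$ is only weakly non-degenerate. Suppose $\omega(Y,Z)=0$ for all $Z\in T_A\A$. Then $Y\in (T_A\A)^{\perp_\omega}$, and hypothesis (4) says this space equals $T_A(K\cdot A)$. So $[Y]=0$, and $\omega_{\mathrm{red}}$ is weakly non-degenerate.

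The inclusion $T_A(K\cdot A)\subseteq (T_A\A)^{\perp_\omega}$ is the computation already done in Step 2. The reverse inclusion is what fails without (4): in infinite dimensions, a symplectic complement need not be the double annihilator of $\Ker d_A\mu$. That is why (4) is imposed as a hypothesis here.

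In the intended application, where $\M$ is finite-dimensional, weak non-degeneracy already means non-degeneracy. Checking (4) in the Nahm case would go through the closed splitting $T_A\A=\im D_A\oplus\Ker D_A^*$ of Proposition~\ref{prop 3.6}. But for the theorem as stated, (4) is assumed, so the argument closes here.
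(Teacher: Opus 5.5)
Your proof is correct and follows essentially the route the paper intends. The paper gives no separate argument: it defers to Marsden--Weinstein and the Lax--Kirchhoff case for the slice charts and the descent of $i^{*}\omega$. Its remark after the statement makes exactly your Step~3 point: the moment map identity gives $T_A(K\cdot A)\subseteq (T_A\A)^{\perp_\omega}$, and hypothesis (4) supplies the reverse inclusion, which is precisely weak non-degeneracy of $\omega_{\mathrm{red}}$.

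On your closing aside: in the application the paper verifies (4) inside the Kähler submanifold $N^a$, not in all of $\B_{\bh}(\Gamma)$ (Lemma~\ref{lemma 4.10}). This matters, because in the full space the $\omega_a$-complement of $T_A\A_{\bh}(\Gamma)$ also contains the vectors $I_cD_Au$, which are in general not orbit directions.
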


\begin{Rem}
    The definition of the moment map already gives the inclusion $T_{A}(K\cdot A) \subset (T_{A}\A)^{\perp_{\omega}}:$ indeed, $\omega(D_{A}u,Y) = d_{A}\langle \mu,u \rangle(Y) = 0$ for $Y \in \ker d_{A}\mu.$ The content of condition \ref{cd 4} is the reverse inclusion, which fails in general in the weak setting and must be proved. This is precisely the content of \cite[P. 123]{MarsdenWeinstein} and of Lemma \ref{lemma 4.10} below.
\end{Rem}

\noindent\textbf{From three symplectic reductions to one hyperkähler quotient.} Theorem \ref{infinite red} produces, for each $a,$ a symplectic manifold $(\M,\omega_{a,\text{red}})$ on the same underlying space $\A/K$ with $\A = \mu^{-1}(0).$ The remaining task is to assemble the three reduced forms into a hyperkähler structure.

\begin{Prop} [Quaternionic slice criterion] \label{q slice} Suppose, in the situation of Theorem \ref{infinite red} applied simultaneously to $\omega_1,\omega_2$ and $\omega_3,$ that for each $A\in \mu^{-1}(0),$ the metric-orthogonal model of the reduced tangent space \[
T_{[A]}\M_{\bh}(\Gamma) \cong \Ker (D^{*}_{A}) \coloneqq \text{im}(D_{A})^{\perp} \cap T_{A}\mu^{-1}(0)
\]
is invariant under each $I_{a}.$ Then, the reduced forms $\omega_{a,\text{red}},$ the reduced metric, and the induced operators $I_{a,\text{red}}$ define a hyperkähler structure on $\M_{\bh}(\Gamma).$  
\end{Prop}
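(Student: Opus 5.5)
We argue pointwise on $\M_{\bh}(\Gamma)$: assemble the hyperkähler data as linear algebra on each tangent space, then check integrability via closedness of the reduced forms. Fix $A\in\mu^{-1}(0)$ and set $H_A\coloneqq \Ker(D_A^*)\subset T_A\mu^{-1}(0)$. By Proposition~\ref{prop 3.6}, $T_A\mu^{-1}(0)=\im D_A\oplus H_A$ is a topological direct sum, and by Lemma~\ref{oper} $H_A$ is finite-dimensional. Hence $d\pi_A|_{H_A}\colon H_A\to T_{[A]}\M_{\bh}(\Gamma)$ is a linear isomorphism, because $\Ker d\pi_A=\im D_A$ by the slice construction.

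\textbf{Reduced structures.} Define the reduced metric $g_{\mathrm{red}}$ by transporting the restriction of $\langle\cdot,\cdot\rangle$ to $H_A$. Assuming $H_A$ is $I_a$-invariant, define $I_{a,\mathrm{red}}\coloneqq d\pi_A\circ I_a|_{H_A}\circ (d\pi_A|_{H_A})^{-1}$.

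\textbf{Independence of $A$ in the orbit.} For $g\in\G_0(\Gamma)$, the differential $dg_A$ maps $\im D_A$ to $\im D_{g\cdot A}$. Since $dg_A$ is an isometry commuting with the $I_a$ (the tri-holomorphic lemma), it maps $H_A$ isometrically onto $H_{g\cdot A}$ and intertwines the $I_a$. So $g_{\mathrm{red}}$ and $I_{a,\mathrm{red}}$ are well defined. Smoothness follows by using a slice $S$ at $A$ with $T_AS=H_A$ and noting that $B\mapsto H_B$ is a smooth finite-rank subbundle. This uses the continuity of $B\mapsto D_B^*$ and the surjectivity in Lemma~\ref{oper}, which keeps the rank constant.

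\textbf{Quaternion relations and compatibility.} The relations $I_{a,\mathrm{red}}^2=-1$ and $I_1I_2=I_3$ are inherited from Lemma~\ref{tri hlm}, since $H_A$ is invariant and the relations hold on the ambient space. The identity $\omega_{a,\mathrm{red}}(X,Y)=g_{\mathrm{red}}(I_{a,\mathrm{red}}X,Y)$ is the key check. The reduced forms $\omega_{a,\mathrm{red}}$ from Theorem~\ref{infinite red} satisfy $\pi^*\omega_{a,\mathrm{red}}=i^*\omega_a$, so for $X,Y\in H_A$,
\[
\omega_{a,\mathrm{red}}(d\pi X,d\pi Y)=\omega_a(X,Y)=\langle I_aX,Y\rangle=g_{\mathrm{red}}(d\pi\, I_aX,d\pi Y),
\]
where the last equality uses $I_aX\in H_A$. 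Thus each $I_{a,\mathrm{red}}$ is $g_{\mathrm{red}}$-orthogonal, and $\omega_{a,\mathrm{red}}$ is its Kähler form.

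\textbf{Integrability.} By Theorem~\ref{infinite red}, each $\omega_{a,\mathrm{red}}$ is closed and non-degenerate. The reduced space is finite-dimensional, so Hitchin's lemma applies: an almost hyperhermitian structure whose three Kähler forms are closed is hyperkähler. In particular the $I_{a,\mathrm{red}}$ are integrable and parallel. This completes the argument.

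\textbf{Main obstacle.} The delicate point is that Theorem~\ref{infinite red} defines $\omega_{a,\mathrm{red}}$ via an arbitrary complement of $\im D_A$, whereas we use the specific metric complement $H_A$. The two agree because $i^*\omega_a$ vanishes on $\im D_A$, by condition~(\ref{cd 4}) (orthogonality). Establishing that condition, i.e.\ $(T_A\mu^{-1}(0))^{\perp_{\omega_a}}=\im D_A$, would be done by the argument of Lemma~\ref{lemma 4.10}. I would treat it as a hypothesis here, in line with the phrasing ``in the situation of Theorem~\ref{infinite red}''.
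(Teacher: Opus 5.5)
Your proof is correct, but it obtains integrability by a different mechanism than the paper.

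The paper's argument is the paragraph after Remark~\ref{rmq 4.12}, and it runs three separate Kähler reductions. For each $a$, Lemma~\ref{holo map} and Corollary~\ref{kah mani} make $N^a=(\mu_b+i\mu_c)^{-1}(0)$ an $I_a$-complex Kähler Banach submanifold. This is reduced by $\mu_a|_{N^a}$ using the relative orthogonality of Lemma~\ref{lemma 4.10}. Each $I_{a,\mathrm{red}}$ is then integrable as the complex structure of a Kähler quotient, and the common metric and horizontal space give the quaternion relations.

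You instead work pointwise on $H_A$. You take closedness of $\omega_{a,\mathrm{red}}$ from $\pi^*\omega_{a,\mathrm{red}}=i^*\omega_a$, and you get integrability (indeed parallelism) from Hitchin's lemma on the finite-dimensional quotient.

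Comparing the two routes:
\begin{itemize}
\item Your route needs neither the holomorphic moment maps nor an infinite-dimensional Kähler reduction theorem. The paper invokes the latter only by name; Theorem~\ref{infinite red} produces symplectic forms, not complex structures.
\item Your smoothness argument makes explicit what the paper leaves implicit. There you should say \emph{smooth} rather than continuous dependence. The cleanest justification is that $D_B$ is affine in $B$ and $L_B$ is invertible for every $B$ (Proposition~\ref{prop 3.6}), so the projector $\Pi_B=D_BL_B^{-1}D_B^*$ of Lemma~\ref{lemma 4.10} depends smoothly on $B$.
\item The paper's route, in exchange, realizes $(\M_{\bh}(\Gamma),I_1)$ as a quotient of the complex submanifold $N^1$, which it reuses in the proof of Theorem~\ref{complex q}.
\end{itemize}

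Finally, what you call the main obstacle is not one. Descending $i^*\omega_a$ through $\im D_A$ needs only the easy inclusion $\omega_a(D_Au,Y)=d_A\langle\mu_a,u\rangle(Y)=0$ for $Y\in T_A\A_{\bh}(\Gamma)$. Non-degeneracy of $\omega_{a,\mathrm{red}}$ is automatic in your setup, because $\omega_{a,\mathrm{red}}=g_{\mathrm{red}}(I_{a,\mathrm{red}}\cdot,\cdot)$ with $g_{\mathrm{red}}$ positive definite. So under the $I_a$-invariance hypothesis, the hard inclusion of condition~\ref{cd 4} is a consequence rather than an input.
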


The plan is the following: verify the moment map conditions, verify Condition \ref{cd 4} (the first three conditions were verified in Section \ref{section 3}), verify the quaternionic criterion, and then conclude.\\

\noindent\textbf{The hyperkähler moment map.} This is the one computation that cannot be transcribed from the Lax--Kirchhoff case: the cross terms $[A_b,A_c]$ require the quaternionic bookkeeping. We fix once and for all the cyclic convention $(a,b,c) \in \{(1,2,3),(2,3,1),(3,1,2)\}.$\\

Recall that the moment map $\mu: \B_{\bh}(\Gamma) \too (C^{0}(I,\g) \times \g^{\Gamma_{\nt}})^{3}$ is defined by \[
\mu_{a}(A) = \bigl( \dot{A}_{a} + [A_0,A_a] + [A_b,A_c], \bigl( \sum_{e\in s^{-1}(v)} A^{e}_{a}(a_e) - \sum_{e\in t^{-1}(v)} A^{e}_{a}(b_e) \bigr)_{v\in \Gamma_{\nt}} \bigr),
\]

paired with $u\in \Lie(\G_{0}(\Gamma))$ via \[
\langle \mu_{a},u \rangle(A) = \int_{I} \langle u, \dot{A}_{a} + [A_0,A_a] + [A_b,A_c] \rangle dt + \underbrace{\sum_{v\in \Gamma_{\nt}} \bigg\langle u(v), \sum_{e\in s^{-1}(v)} A^{e}_{a}(a_e) - \sum_{e\in t^{-1}(v)} A^{e}_{a}(b_e) \bigg\rangle}_{\text{the vertex part}}
\]

\begin{Prop} [Moment map identity] \label{moment iden} For every $a \in \{1,2,3\},$ every $u\in \Lie(\G_{0}(\Gamma))$ and every $A\in \B_{\bh}(\Gamma),$ \[
d_{A}\langle \mu_{a},u \rangle(X) = \omega_{a}(D_{A}u,X) \quad \text{ for all } X \in T_{A}\B_{\bh}(\Gamma).
\]
Hence, $\mu$ is a hyperkähler moment map for the $\G_{0}(\Gamma)$-action.   
\end{Prop}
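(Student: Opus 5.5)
The plan is a direct computation of both sides of the identity, organized so that the vertex part of $\langle \mu_a,u\rangle$ is absorbed by the boundary terms from an integration by parts. First I differentiate $\langle\mu_a,u\rangle$ at $A$ in the direction $X$. Since $u$ does not depend on $A$ and the pairing is bilinear, the derivative is
\[
\int_I \bigl\langle u,\ \dot X_a + [X_0,A_a] + [A_0,X_a] + [X_b,A_c] + [A_b,X_c]\bigr\rangle\,dt \;+\; \sum_{v\in\Gamma_{\nt}}\Bigl\langle u(v),\ \sum_{e\in s^{-1}(v)}X^e_a(a_e) - \sum_{e\in t^{-1}(v)}X^e_a(b_e)\Bigr\rangle .
\]

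Second, I integrate $\int_I\langle u,\dot X_a\rangle$ by parts edge by edge. This gives $-\int_I\langle \dot u,X_a\rangle$ plus the boundary terms $\sum_e\bigl(\langle u_e(b_e),X^e_a(b_e)\rangle-\langle u_e(a_e),X^e_a(a_e)\rangle\bigr)$. I regroup these boundary terms by vertex, using the matching condition $u_e(a_e)=u(s(e))$, $u_e(b_e)=u(t(e))$. The contributions of boundary vertices vanish because $u(v)=0$ on $\partial\Gamma$. What remains is exactly the negative of the vertex part, so the two cancel; this is the same mechanism as in the proof that $L$ is the adjoint of $D_A$. The cancellation is why the Kirchhoff term was included in $\mu_a$ in the first place.

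Third, I move $u$ across the brackets using $\Ad$-invariance, $\langle u,[Y,Z]\rangle=\langle [u,Y],Z\rangle$. This gives
\[
\langle u,[A_0,X_a]\rangle=\langle[u,A_0],X_a\rangle,\quad \langle u,[X_0,A_a]\rangle=-\langle[u,A_a],X_0\rangle,
\]
\[
\langle u,[X_b,A_c]\rangle=-\langle[u,A_c],X_b\rangle,\quad \langle u,[A_b,X_c]\rangle=\langle[u,A_b],X_c\rangle.
\]
Hence the derivative equals
\[
\int_I \langle[u,A_0]-\dot u, X_a\rangle-\langle[u,A_a],X_0\rangle-\langle[u,A_c],X_b\rangle+\langle[u,A_b],X_c\rangle\,dt .
\]

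Fourth, I compute $\omega_a(D_Au,X)=\langle I_aD_Au,X\rangle$ from Definition \ref{de 4.1}, with $Y=D_Au=([u,A_0]-\dot u,[u,A_1],[u,A_2],[u,A_3])$. For $a=1$ one has $I_1Y=(-Y_1,Y_0,-Y_3,Y_2)$. Pairing with $X$ gives
\[
-\langle[u,A_1],X_0\rangle+\langle[u,A_0]-\dot u,X_1\rangle-\langle[u,A_3],X_2\rangle+\langle[u,A_2],X_3\rangle,
\]
which matches the expression above for $(a,b,c)=(1,2,3)$. The cases $a=2$ (with $I_2Y=(-Y_2,Y_3,Y_0,-Y_1)$) and $a=3$ are checked in the same way. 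Since $L_{q_a}$ acts cyclically on $(i,j,k)$, I expect a uniform formula: the $X_0$ slot gets $-Y_a$, the $X_a$ slot gets $Y_0$, the $X_b$ slot gets $-Y_c$, and the $X_c$ slot gets $Y_b$. I would verify this once for all cyclic triples rather than case by case.

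The main obstacle is purely sign bookkeeping. This means the orientation convention in the Kirchhoff term relative to the boundary terms from integration by parts, and the matching of the signs of the cross terms $[A_b,A_c]$ with the quaternion multiplication table. Equivariance of $\mu$ follows from $\Ad$-invariance and the gauge transformation formula, as in the Lax case, and I would only remark on it. With the identity and equivariance in hand, $\mu$ is a hyperkähler moment map.
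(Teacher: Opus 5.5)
Your proposal is correct and follows essentially the same route as the paper: differentiate $\langle\mu_a,u\rangle$, integrate $\int_I\langle u,\dot X_a\rangle$ by parts so that the edge boundary terms cancel the vertex part (using the matching of $u$ and $u|_{\partial\Gamma}=0$), move $u$ across the brackets by $\Ad$-invariance, and match the result against $\langle I_aD_Au,X\rangle$. Your uniform slot rule for cyclic $(a,b,c)$ is correct; it agrees with the explicit $I_2$ and $I_3$ of Definition \ref{de 4.1}, and it spells out what the paper leaves as ``identical after relabeling.''
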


\begin{proof}
    We give the full proof in the case $a = 1;$ the cases $a=2,3$ are identical after relabeling. Differentiating the first component of $\langle \mu_1,u\rangle$ yields 
\begin{equation}
    d_{A}\bigr( \int_{I} \langle u,\dot{A}_{1} + [A_0,A_1] + [A_2,A_3] \rangle \bigl)(X) = \int_{I} \langle u, \dot{X}_{1} + [X_0,A_1] + [A_0,X_1] + [X_2,A_3] + [A_2,X_3] \rangle dt.
\end{equation}
Using integration by parts over each edge $e$, the term $\int_{I} \langle u,\dot{X}_1 \rangle dt$ becomes \begin{equation}
    \int_{I} \langle u, \dot{X}_1 \rangle dt = \sum_{e\in E} \bigr( \langle u^{e}(b_e),X_{1}(b_e) \rangle - \langle u^{e}(a_e),X_1(a_e) \rangle \bigl) - \int_{I} \langle \dot{u},X_1 \rangle dt . 
\end{equation}

Using the matching conditions $u^{e}(a_e) = u(s(e)), u^{e}(b_e) = u(t(e))$ and $u(v) = 0$ on $\partial \Gamma,$ the boundary sum equals \[
\sum_{v\in \Gamma_{\nt}} \bigg\langle u(v), \sum_{e\in t^{-1}(v)} X^{e}_{1}(b_e) - \sum_{e\in s^{-1}(v)} X^{e}_{1}(a_e) \bigg\rangle = -d_{A}(\text{  vertex part of } \langle \mu_1,u \rangle)(X).
\]

Thus, \begin{align*}
d_{A}\langle \mu_1,u \rangle (X) = \int_{I} \bigr( -\langle \dot{u},X_1 \rangle + \langle u,[X_0,A_1] + [A_0,X_1] + [X_2,A_3] + [A_2,X_3] \rangle dt \bigl). 
\end{align*}

By reorganizing each term using $\Ad$-invariance in the form $\langle u,[Y,Z] \rangle = \langle [u,Y],Z \rangle$ and the skew-symmetry $\langle u,[Y,Z] \rangle  = -\langle u,[Z,Y] \rangle :$

\begin{align*}
\langle u,[X_0,A_1] \rangle &= -\langle [u,A_1],X_0 \rangle, \quad \langle u,[A_0,X_1] \rangle = \langle [u,A_0],X_1 \rangle,\\
\langle u,[X_2,A_3] \rangle &= -\langle [u,A_3],X_2 \rangle, \quad \langle u,[A_2,X_3] \rangle = \langle [u,A_2],X_3 \rangle, 
\end{align*}

one gets \[
d_{A}\langle \mu_1,u \rangle(X) = \int_{I} \bigr( -\langle [u,A_1],X_0 \rangle + \langle [u,A_0] - \dot{u},X_1 \rangle - \langle [u,A_3],X_2 \rangle + \langle [u,A_2],X_3 \rangle \bigl).
\]

On the other hand, by Definition \ref{de 4.1} and \ref{DA}, \[
I_{1}D_{A}u = (-[u,A_1],[u,A_0] - \dot{u},-[u,A_3],[u,A_2]),
\]
so that \[
\omega_{1}(D_{A}u,X) = \langle I_{1}D_{A}u,X \rangle = \int_{I} \bigr( -\langle [u,A_1],X_0 \rangle + \langle [u,A_0] - \dot{u},X_1 \rangle - \langle [u,A_3],X_2  \rangle + \langle [u,A_2],X_3 \rangle \bigl) dt.
\]

The two expressions agree, proving $d_{A}\langle \mu_1,u \rangle = \omega_1(D_{A}u,\cdot).$ Equivariance of $\mu$ follows from the $\Ad$-invariance of the inner product together with Lemma \ref{tri hlm}.
\end{proof}

We adapt the hyperkähler reduction approach of \cite[Section D]{hitchin1987hyperkahler} to the infinite-dimensional case to show that the Nahm--Kirchhoff moduli space admits a hyperkähler structure. The first ingredient is that, for each complex structure $I_a,$ the pair $(\mu_{b},\mu_c)$ assembles into an $I_a$-holomorphic moment map whose zero locus is a complex Kähler submanifold of $\B_{\mathbb{H}}(\Gamma).$ The second is the weak symplectic orthogonality, condition \ref{cd 4} of Theorem \ref{infinite red}, on this submanifold. Throughout, $(a,b,c)$ denotes a cyclic permutation of $(1,2,3).$

\begin{Lem}[Holomorphic moment map] \label{holo map}
For each cyclic $(a,b,c) \in \{(1,2,3),(2,3,1),(3,1,2)\}$, set \[
\mu_{+}^{a} \coloneqq \mu_{b} + i\mu_{c}: \B_{\mathbb{H}}(\Gamma) \too C^{0}(I,\g_{\mathbb{C}}) \times \g^{\Gamma_{\nt}}_{\mathbb{C}}.
\]     
Then, $\mu_{+}^{a}$ is holomorphic with respect to the complex structure $I_{a}$ on $\B_{\bh}(\Gamma).$ Equivalently, for any $A\in \B_{\bh}(\Gamma), X\in T_{A}\B_{\bh}(\Gamma)$ and $u\in C^{2}_{0}(I,\g),$ \begin{equation} \label{eq 7}
    d_{A}\langle \mu^{a}_{+},u \rangle (I_{a}X) = id_{A}\langle \mu^{a}_{+}, u \rangle(X).
\end{equation}
\end{Lem}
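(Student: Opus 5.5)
The plan is to derive \eqref{eq 7} directly from the moment map identity of Proposition \ref{moment iden} together with the quaternion relations of Lemma \ref{tri hlm}, without recomputing any derivatives. Proposition \ref{moment iden} holds at every $A\in\B_{\bh}(\Gamma)$, not only on $\mu^{-1}(0)$, and it already absorbs the vertex (Kirchhoff) part of $\mu_a$. So for each $a$ and each $u\in C^{2}_{0}(I,\g)$ we may write
\[
d_A\langle \mu_a,u\rangle(X) = \langle I_a D_A u, X\rangle .
\]
The whole statement then becomes linear algebra in the quaternionic operators.

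First, each $I_a$ is skew-adjoint for the $L^2$ metric. It is an isometry by Lemma \ref{tri hlm} and satisfies $I_a^2=-\mathrm{id}$, so $\langle Y, I_aX\rangle = \langle I_aY, I_a^2X\rangle = -\langle I_aY,X\rangle$. Next, for the cyclic triple $(a,b,c)$ we use $I_aI_b=I_c$, and $I_aI_c=-I_cI_a=-I_b$; the latter follows from anticommutation of the imaginary units together with $I_cI_a=I_b$. Writing $Y=D_Au$, we compute
\[
d_A\langle\mu_b,u\rangle(I_aX)=\langle I_bY,I_aX\rangle=-\langle I_aI_bY,X\rangle=-\langle I_cY,X\rangle=-d_A\langle\mu_c,u\rangle(X),
\]
\[
d_A\langle\mu_c,u\rangle(I_aX)=-\langle I_aI_cY,X\rangle=\langle I_bY,X\rangle=d_A\langle\mu_b,u\rangle(X).
\]
Combining the two lines gives
\[
d_A\langle\mu_+^a,u\rangle(I_aX)=-d_A\langle\mu_c,u\rangle(X)+i\,d_A\langle\mu_b,u\rangle(X)=i\,d_A\langle\mu_+^a,u\rangle(X),
\]
which is exactly \eqref{eq 7}.

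Finally, we pass from the tested identity to holomorphicity of $\mu_+^a$ itself. The pairing of $C^{2}_{0}(I,\g)$ with $C^{0}(I,\g)\times\g^{\Gamma_{\nt}}$ is non-degenerate, and we extend it complex-bilinearly to the $\g_{\CC}$-valued target. Identity \eqref{eq 7} holding for all real $u$ then forces $d_A\mu_+^a\circ I_a = i\, d_A\mu_+^a$ as maps into $C^{0}(I,\g_{\CC})\times\g_{\CC}^{\Gamma_{\nt}}$. Since $\mu_+^a$ is a smooth (polynomial) map of Banach spaces, this is complex-linearity of its differential, i.e.\ $I_a$-holomorphicity.

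The main point requiring care is the skew-adjointness step. In the weak $L^2$ setting it must be justified by the pointwise quaternionic isometry on $\g\otimes\bh$ rather than by appealing to a strong Hilbert structure. A second point is the sign bookkeeping in $I_aI_c=-I_b$, which is where the cyclic convention is used. The non-degeneracy needed in the last step is the same one already used in Section \ref{section 3}.
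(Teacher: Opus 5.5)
Your proof is correct and is essentially the paper's argument. Both apply the moment map identity of Proposition~\ref{moment iden} to write $d_A\langle\mu_d,u\rangle(X)=\langle I_dD_Au,X\rangle$, and then use $I_a^*=-I_a$ together with the relations $I_aI_b=I_c$ and $I_aI_c=-I_b$ to obtain \eqref{eq 7}. Your closing step, which uses non-degeneracy of the pairing with $C^2_0(I,\g)$ to upgrade the tested identity to $d_A\mu_+^a\circ I_a=i\,d_A\mu_+^a$, is a small justification that the paper leaves implicit in the word ``Equivalently''.
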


\begin{proof}
    By the moment map identity of Proposition \ref{moment iden}, $d_{A}\langle \mu_{d},u \rangle (Y) = \omega_{d}(D_{A}u,Y) = \langle I_{d}D_{A}u,Y\rangle$ for every $d\in \{1,2,3\}.$ Hence, \[
    d_{A}\langle \mu^{a}_{+},u \rangle(X) = \langle I_{b}D_{A}u,X \rangle + i\langle I_{c}D_{A}u,X\rangle.
    \]
Replacing $X$ by $I_{a}X$ and using that each $I_{d}$ is a $\langle ,\rangle$-isometry (Lemma \ref{tri hlm}), we have $I_{a}^{*} = I_{a}^{-1} = -I_{a},$ so that $\langle I_{d}D_{A}u,I_{a}X \rangle = - \langle I_{a}I_{d}D_{A}u,X \rangle.$ The quaternion relations $I_{a}I_{b} = I_c$ and $I_{a}I_{c} = -I_{b}$ give \[
\langle I_{b}D_{A}u,I_{a}X \rangle = -\langle I_{c}D_{A}u,X \rangle, \quad \langle I_{c}D_{A}u,I_{a}X \rangle = \langle I_{b}D_{A}u,X\rangle.
\]

Therefore, \[
d_{A}\langle \mu^{a}_{+},u \rangle(I_{a}X) = -\langle I_{c}D_{A}u,X \rangle + i\langle I_{b}D_{A}u,X \rangle = i(\langle I_{b}D_{A}u,X \rangle + i\langle I_{c}D_{A}u,X \rangle),
\]
which is $\ref{eq 7}.$
    
\end{proof}

\begin{Cor}[Kähler Banach submanifold] \label{kah mani}For each $a\in \{1,2,3\},$ the level set \[
N^{a} \coloneqq (\mu_{+}^{a})^{-1}(0) = \mu_{b}^{-1}(0) \cap \mu_{c}^{-1}(0)
\]
is a complex Kähler Banach submanifold of $(\B_{\bh}(\Gamma),I_{a}),$ and the restrictions $$(N^{a},\langle \cdot,\cdot \rangle_{|N^{a}},I_{a}|_{N^{a}},\omega_{a}|_{N^{a}})$$ form a (weak) Kähler Banach manifold. The action of $\G_{0}(\Gamma)$ preserves $N^{a}$ and is by Kähler isometries, with moment map $\mu_{a}|_{N^{a}}.$ Moreover, $\A_{\bh}(\Gamma) = (\mu_{a}|_{N^{a}})^{-1}(0).$
\end{Cor}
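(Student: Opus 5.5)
To prove Corollary \ref{kah mani}, I would first show that $N^{a}$ is a Banach submanifold, then that its tangent spaces are $I_a$-complex, and finally read off the Kähler and moment-map statements.

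\textbf{Submanifold.} I would show that $0$ is a regular value of $\mu^{a}_{+} = (\mu_b,\mu_c)$. The components $\mu_b,\mu_c$ are two of the three component families of the map $\psi$ of Section \ref{section 3}, so the surjectivity argument of Proposition \ref{regular value} applies. For fixed $X_0$ and $X_a$, the linearization in $(X_b,X_c)$ is $(\dot{X}_b + [A_0,X_b] + [A_c,X_a] + [X_c,A_a]$, and the analogous expression for $c)$, together with the Kirchhoff defects in the $b$ and $c$ components. This has the form $\dot{x}+Bx$ plus vertex sums, with $W=\g^{2}$ and some $B\in C^{0}(I,\mathrm{End}(W))$. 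Lemma \ref{useful lemma} then gives surjectivity, and hence a closed complemented kernel (finite codimension modulo boundary data). The implicit function theorem then makes $N^{a}$ a Banach submanifold with
\[
T_{A}N^{a} = \Ker d_{A}\mu_b\cap\Ker d_{A}\mu_c = \Ker d_{A}\mu^{a}_{+}.
\]

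\textbf{Complex subspace.} By Lemma \ref{holo map}, $d_A\mu^a_+(I_aX)=i\,d_A\mu^a_+(X)$. So if $X\in\Ker d_A\mu^a_+$, then $I_aX$ lies in the kernel too. Thus $T_AN^a$ is $I_a$-invariant, and $I_a|_{N^a}$ is an almost complex structure. It is integrable because it is the restriction of a constant complex structure to a submanifold that is complex in this sense. Concretely, $N^a$ is locally the graph of a holomorphic map over $T_AN^a$: apply the holomorphic implicit function theorem to the $I_a$-holomorphic map $\mu^a_+$, viewed between complex Banach spaces. The restricted metric, $I_a$ and $\omega_a|_{N^a}=\langle I_a\cdot,\cdot\rangle$ are then compatible, and $\omega_a|_{N^a}$ is closed because it is the pullback of a closed form. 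It is weakly non-degenerate because $I_a$ preserves $T_AN^a$: for $X\neq 0$, $\omega_a(X,-I_aX)=\|X\|^2\neq 0$. This gives the weak Kähler Banach manifold.

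\textbf{Action, moment map, and zero set.} Equivariance of $\mu$ (Proposition \ref{moment iden}) shows that $\G_0(\Gamma)$ preserves the zero set of $\mu^a_+$. The action is by tri-holomorphic isometries, hence by Kähler isometries of $(N^a,I_a,\omega_a)$. Restricting the identity $d\langle\mu_a,u\rangle=\omega_a(D_Au,\cdot)$ to $T N^a$ shows that $\mu_a|_{N^a}$ is a moment map; here $D_Au\in T_AN^a$ because the orbit lies in $N^a$. Finally, $(\mu_a|_{N^a})^{-1}(0)=\mu_1^{-1}(0)\cap\mu_2^{-1}(0)\cap\mu_3^{-1}(0)=\psi^{-1}(0)=\A_{\bh}(\Gamma)$.

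\textbf{Main obstacle.} The delicate step is the surjectivity of $d\mu^a_+$ in the Banach setting, together with the existence of a closed complement of the kernel. Both are needed to apply the implicit function theorem, and I expect to handle them via Lemma \ref{useful lemma} as described above. Holomorphic integrability is routine once the graph description is in place.
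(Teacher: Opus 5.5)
Your proposal is correct and follows the paper's proof essentially step for step. Regularity of $\mu^a_+$ comes from Lemma~\ref{useful lemma} and $I_a$-stability of $T_AN^a$ from Lemma~\ref{holo map}; you then restrict the metric, $\omega_a$, the tri-holomorphic isometric action and the moment map identity, with $\A_{\bh}(\Gamma)=\mu_a^{-1}(0)\cap N^a$ as a tautology.

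Applying Lemma~\ref{useful lemma} directly to the $(X_b,X_c)$ block with $W=\g^2$ is slightly more careful than the paper's appeal to Proposition~\ref{regular value}, which is stated only on $\psi^{-1}(0)$. Your version gives surjectivity of $d_A\mu^a_+$ at every point of $N^a$, and, through the finite-dimensional kernel, a bounded right inverse there. The only slips are harmless: the sign, since in fact $\omega_a(X,-I_aX)=-\|X\|^2$, and the parenthetical about ``finite codimension'', which is false but never used.
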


\begin{proof}
By Lemma \ref{holo map}, $\mu_+^{a}$ is
$I_a$-holomorphic. By Proposition \ref{regular value}, $0$ is a regular value of the combined map $\mu=(\mu_1,\mu_2,\mu_3)$; since the projection onto the $(\mu_b,\mu_c)$-components preserves surjectivity of the differential, $0$ is a regular value of $\mu_+^{a}$, so $N^a$ is a Banach submanifold of $\B_{\bh}(\Gamma)$ with
\(T_AN^{a} = \ker d_A\mu_b \cap \ker d_A\mu_c\). The
holomorphicity \eqref{eq 7} forces $T_AN^{a}$ to be
$I_a$-stable, so $N^a$ inherits an integrable complex structure
(the complex structure $I_a$ on the ambient space $\B_{\bh}(\Gamma)$ is constant, hence trivially integrable, and a complex submanifold of a complex Banach manifold is complex).
 
The restriction $\omega_a|_{N^{a}}$ is the closed, weakly
non-degenerate form $\langle I_a\cdot,\cdot\rangle$ on the
complex submanifold; compatibility with the restricted metric is
inherited from $\B_{\bh}(\Gamma)$. Since $\G_0(\Gamma)$ acts on $\B_{\bh}(\Gamma)$ by tri-holomorphic isometries (Lemma \ref{tri hlm}) and preserves
$\mu_b^{-1}(0)\cap\mu_c^{-1}(0)$ by equivariance of $\mu_b,\mu_c$, it
preserves $N^a$ and acts by Kähler isometries. The restriction of
the hyperkähler moment map identity (Proposition \ref{moment iden}) to $T_AN^{a}$
gives \(d_A\langle\mu_a,u\rangle(X) = \omega_a(D_Au,X)\) for
$X\in T_AN^{a}$, so $\mu_a|_{N^{a}}$ is a moment map for the
$\G_0(\Gamma)$-action on the Kähler manifold $N^{a}$. The final
identity is the tautology
$\A_{\bh}(\Gamma) = \mu^{-1}(0) = \mu_a^{-1}(0)\cap N^{a}$.
\end{proof}

We now state and prove the weak symplectic orthogonality of the orbit and the level set inside $N^a,$ in the spirit of \cite[p. 123]{MarsdenWeinstein}.

\begin{Lem}[Orthogonality on $N^a$] \label{lemma 4.10} For every $A\in \A_{\bh}(\Gamma)$ and every $a\in \{1,2,3\},$ \begin{equation} \label{eq 8}
T_{A}(\G_{0}(\Gamma)\cdot A) = (T_{A}\A_{\bh}(\Gamma))^{\perp_{\omega_{a}}} \cap T_{A}N^a,
\end{equation}
where the symplectic orthogonal is computed with respect to $\omega_{a}|_{N^a}.$    
\end{Lem}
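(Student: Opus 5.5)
The inclusion ``$\subset$'' is formal, so the work is the reverse inclusion; the plan is to reduce it to the orthogonal decomposition of Proposition \ref{prop 3.6}. First, $D_Au$ is tangent to the orbit, and the orbit lies in $\A_{\bh}(\Gamma)\subset N^a$, so $D_Au\in T_AN^a$. By Corollary \ref{kah mani}, $\mu_a|_{N^a}$ is a moment map. Hence for $Y\in T_A\A_{\bh}(\Gamma)=\Ker d_A\mu_a\cap T_AN^a$ we have $\omega_a(D_Au,Y)=d_A\langle\mu_a,u\rangle(Y)=0$. This gives $T_A(\G_0(\Gamma)\cdot A)=\im D_A\subset (T_A\A_{\bh})^{\perp_{\omega_a}}\cap T_AN^a$.

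For the reverse inclusion, take $Z\in T_AN^a$ with $\omega_a(Z,Y)=0$ for all $Y\in T_A\A_{\bh}(\Gamma)$. The key step is to describe $T_AN^a$ in terms of $T_A\A_{\bh}$ and $I_a\,\im D_A$. I claim
\[
T_AN^a = T_A\A_{\bh}(\Gamma)\oplus I_a\,\im D_A .
\]
To see that $I_aD_Au\in T_AN^a$: by Lemma \ref{holo map}, $T_AN^a$ is $I_a$-stable and contains $\im D_A$, so it contains $I_a\,\im D_A$. For the splitting, consider the map $d_A\mu_a: T_AN^a\to C^0(I,\g)\times\g^{\Gamma_{\nt}}$, whose kernel is $T_A\A_{\bh}$. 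Restricted to $I_a\im D_A$, it sends $I_aD_Au$ to $\langle I_aD_Au,\cdot\rangle$-type pairings. Concretely, $d_A\langle\mu_a,w\rangle(I_aD_Au)=\langle I_aD_Aw,I_aD_Au\rangle=\langle D_Aw,D_Au\rangle=\langle w,D_A^*D_Au\rangle$, so $d_A\mu_a\circ I_a D_A$ is identified with $L_A=D_A^*D_A$ via the non-degenerate pairing. By Proposition \ref{prop 3.6}, $L_A$ is a Banach isomorphism. Moreover, $d_A\mu_a$ is surjective on $T_AN^a$ by regularity (Proposition \ref{regular value}). I expect this identification to be the main obstacle: one has to check carefully that $d_A\mu_a(I_aD_Au)$ literally equals $L_Au$, including the vertex component $\delta_v(\dot u)$. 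I would verify this directly, by computing $d_A\mu_a$ on $I_aD_Au$ componentwise, using the explicit formula for $I_a$ and the Nahm equations, rather than relying only on the weak pairing. With this in hand, every $X\in T_AN^a$ decomposes as $X=(X-I_aD_AL_A^{-1}d_A\mu_a X)+I_aD_AL_A^{-1}d_A\mu_aX$, and the sum is direct and topological.

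Now write $Z=Y_0+I_aD_Au$ with $Y_0\in T_A\A_{\bh}$. Using Proposition \ref{prop 3.6} again, split $Y_0=D_Av+k$ with $k\in\Ker D_A^*$. Since $D_Av$ already lies in the symplectic orthogonal, it suffices to show $k+I_aD_Au$ vanishes modulo $\im D_A$. Pairing with $Y=k\in T_A\A_{\bh}$ gives $0=\omega_a(k+I_aD_Au,k)=\langle I_ak,k\rangle+\langle I_aI_aD_Au,k\rangle=0-\langle D_Au,k\rangle=0$, which is automatic and gives no information. So instead I pair with $Y=I_ak$. For this I need $I_ak\in T_A\A_{\bh}$, which I would verify from the $I_a$-invariance of $\Ker D_A^*$, as in the quaternionic criterion (Proposition \ref{q slice}); this is the second delicate point, and I would check it by a direct computation from the linearized equations. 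Then $\omega_a(k,I_ak)=-\|k\|^2$ and $\omega_a(I_aD_Au,I_ak)=\langle D_Au,k\rangle\cdot(-1)=0$, so $k=0$. Finally, pairing with $Y=D_Aw$ gives $\omega_a(I_aD_Au,D_Aw)=-\langle D_Au,D_Aw\rangle=-\langle L_Au,w\rangle=0$ for all $w$. Hence $u=0$ by Proposition \ref{prop 3.6}, and $Z=D_Av\in\im D_A$, proving \eqref{eq 8}.
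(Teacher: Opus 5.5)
Your argument is correct, but it is organized differently from the paper's. The paper uses only the metric splitting $T_AN^a=\im D_A\oplus(\Ker D_A^*\cap T_AN^a)$ given by the projector $D_AL_A^{-1}D_A^*$. It then disposes of the complementary component $Z$ in one stroke: for $Z\in\Ker D_A^*\cap T_AN^a$ it checks that $I_aZ\in T_A\A_{\bh}(\Gamma)$, taking the $\mu_a$-condition from $D_A^*Z=0$ and the $\mu_b,\mu_c$-conditions from $Z\in T_AN^a$. Testing against $Y=I_aZ$ then gives $\|I_aZ\|^2=0$.

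You instead first prove the Kähler-reduction splitting $T_AN^a=T_A\A_{\bh}(\Gamma)\oplus I_a\,\im D_A$, using $d_A\mu_a\circ I_aD_A=D_A^*D_A$. You then split $T_A\A_{\bh}(\Gamma)$ metrically and need two test vectors: $\im D_A$ kills the $I_a\,\im D_A$-component (via injectivity of $L_A$), and $I_ak$ kills the horizontal one. This costs an extra step but yields more. It shows that $d_A\mu_a|_{T_AN^a}$ is onto, with $I_a\,\im D_A$ as an explicit complement of $T_A\A_{\bh}(\Gamma)$. In other words, inside $N^a$ the level set is transverse to the $I_a$-complexified orbit directions, which is the infinitesimal counterpart of Theorem \ref{complex q}. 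The paper's route is shorter and never has to identify $d_A\mu_a\circ I_aD_A$.

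Some minor points.

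\emph{The identification you flag as the main obstacle is harmless.} The identity $\langle w,d_A\mu_a(I_aD_Au)\rangle=\langle w,D_A^*D_Au\rangle$ for all $w\in C^2_0(I,\g)$ already forces equality. The pairing with $C^0(I,\g)\times\g^{\Gamma_{\nt}}$ is non-degenerate: test first with $w$ supported in edge interiors, then with arbitrary values at interior vertices.

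\emph{If you check componentwise, compare with the composite $D_A^*(D_Au)$, not the displayed formula.} Since $(I_aY)_a=Y_0$, the vertex part of $d_A\mu_a(I_aD_Au)$ is $\delta_v\bigl((D_Au)_0\bigr)=[u(v),\delta_v(A_0)]-\delta_v(\dot u)$. This is not literally the $\delta_v(\dot u)$ displayed in Proposition \ref{prop 3.6}. Only the composite matters, and that is what the proposition asserts to be an isomorphism.

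\emph{The invariance $I_ak\in T_A\A_{\bh}(\Gamma)$ cannot be cited from Proposition \ref{q slice}.} That proposition assumes this invariance as a hypothesis. The direct check you propose is exactly the paper's Step 3(ii), applied to $k\in T_A\A_{\bh}(\Gamma)\cap\Ker D_A^*$.

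\emph{Two sign slips, neither affecting the conclusion.} First, $\omega_a(k,I_ak)=\langle I_ak,I_ak\rangle=+\|k\|^2$. Second, $\omega_a(I_aD_Au,I_ak)=\langle I_aD_Au,k\rangle=\omega_a(D_Au,k)$, which vanishes because $k\in T_A\A_{\bh}(\Gamma)$; it is not $-\langle D_Au,k\rangle$, though that also happens to vanish.
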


\begin{proof}

\noindent\textbf{Step 1: Topological splitting of} $T_{A}N^a.$ We extend Proposition \ref{prop 3.6} from $T_{A}\A_{\bh}(\Gamma).$ The operator \[
L_{A} \coloneqq D_{A}^{*}D_{A}: C^{2}_{0}(I,\g) \too C^{0}(I,\g) \times \g^{\Gamma_{\nt}}
    \]
is an isomorphism of Banach spaces (Proposition \ref{prop 3.6}); write $\Pi \coloneqq D_{A}L_{A}^{-1}D^{*}_{A}$ for the associated bounded projector on $T_{A}\B_{\bh}(\Gamma),$ so that $\text{im}(\Pi) = \text{im}(D_{A})$ and $\text{im}(1 - \Pi) = \Ker(D^{*}_{A}).$ Since the tangent space to the orbit $T_{A}(\G_{0}(\Gamma)\cdot A) = \text{im}(D_A)$ is contained in $T_{A}\A_{\bh}(\Gamma) \subset T_{A}N^a,$ both $\Pi$ and $1 - \Pi$ preserve $T_{A}N^a.$ Therefore,
\begin{equation} \label{eq 9}
    T_{A}N^a = \text{im} D_{A} \oplus (\Ker D^{*}_{A} \cap T_{A}N^a),
\end{equation}
    as a closed topological sum.\\

\noindent\textbf{Step 2: Easy inclusion} ($\supseteq$ in \ref{eq 8}). For $u\in C^{2}_{0}(I,\g)$ and $Y\in T_{A}\A_{\bh}(\Gamma),$ the moment map identity (Proposition \ref{moment iden}) and $Y \in \ker d_{A}\mu_{a}$ give \[
\omega_{a}(D_{A}u,Y) = d_{A}\langle \mu_{a},u \rangle(Y) = 0,
\]
so $\text{im}(D_{A}) \subseteq (T_{A}\A_{\bh}(\Gamma))^{\perp_{\omega_a}} \cap T_{A}N^a.$\\

\noindent\textbf{Step 3: Hard inclusion} ($\subseteq$ in \ref{eq 8}). Let $X \in T_{A}N^a$ with $\omega_{a}(X,Y) = 0$ for all $Y \in T_{A}\A_{\bh}(\Gamma).$ By \ref{eq 9}, we write \[
X = D_{A}u + Z, \quad u = L_{A}^{-1}(D^{*}_{A}X) \in C^{2}_{0}(I,\g), \quad Z \in \Ker D^{*}_{A} \cap T_{A}N^a.
\]
We claim that $Z = 0.$ By Step 2, $\omega_{a}(D_{A}u,Y) = 0$ for all $Y\in T_{A}\A_{\bh}(\Gamma),$ hence \begin{equation} \label{eq 10}
    \omega_{a}(Z,Y) = 0, \text{ for every } Y \in T_{A}\A_{\bh}(\Gamma).
\end{equation}
Now, we verify three properties of $Z:$ 
\begin{itemize}
    \item \noindent (i) $D^{*}_{A}(I_{a}Z) = 0.$ For every $v\in C^{2}_{0}(I,\g), D_{A}v \in \text{im} D_{A} \subset T_{A}\A_{\bh}(\Gamma),$ so \ref{eq 10} gives \[
    0 = \omega_{a}(Z,D_{A}v) = \langle I_{a}Z,D_{A}v \rangle = \langle D^{*}_{A}(I_{a}Z),v \rangle.
    \] 
    The pairing on the right-hand side, between $C^{0}(I,\g)\times \g^{\Gamma_{\nt}}$ and $C^{2}_{0}(I,\g),$ is continuously non-degenerate, so $D^{*}_{A}(I_{a}Z) = 0.$
    \item \noindent (ii) $I_{a}Z \in T_{A}\A_{\bh}(\Gamma):$ we verify the three conditions $d_{A}\mu_{d}(I_{a}Z) = 0$ for $d = a,b,c.$ For every $v\in C^{2}_{0}(I,\g),$ using that the $I_d$ are isometries ($I_{d}^{*} = -I_{d}$) and the quaternion relations $I_{a}I_{b} = I_{c}$ and $I_{a}I_{c} = -I_{b}:$  
    
\begin{align*}
  d_A\langle\mu_a,v\rangle(I_aZ)
  &= \langle I_aD_Av,I_aZ\rangle = \langle D_Av,Z\rangle
   = \langle v,D_A^*Z\rangle = 0,
  \quad\text{since }Z\in\ker D_A^*; \\
  d_A\langle\mu_b,v\rangle(I_aZ)
  &= \langle I_bD_Av,I_aZ\rangle = -\langle I_aI_bD_Av,Z\rangle
   = -\langle I_cD_Av,Z\rangle \\
  &= -d_A\langle\mu_c,v\rangle(Z) = 0,
  \quad\text{since }Z\in T_AN^{a}\subset\ker d_A\mu_c; \\
  d_A\langle\mu_c,v\rangle(I_aZ)
  &= \langle I_cD_Av,I_aZ\rangle = -\langle I_aI_cD_Av,Z\rangle
   = \langle I_bD_Av,Z\rangle \\
  &= d_A\langle\mu_b,v\rangle(Z) = 0,
  \quad\text{since }Z\in T_AN^{a}\subset\ker d_A\mu_b.
\end{align*}

\item \noindent (iii) $Z\in T_{A}\A_{\bh}(\Gamma):$ it remains to verify that
$d_A\mu_a(Z)=0$. For every $u\in C^2_0(I,\mathfrak g)$, using $I_a^*=-I_a$
and~(i),
\[
  d_A\langle\mu_a,u\rangle(Z)
  = \omega_a(D_Au,Z) = \langle I_aD_Au,Z\rangle
  = -\langle D_Au,I_aZ\rangle = -\langle u,D_A^*(I_aZ)\rangle = 0.
\]
\end{itemize}
We now conclude. By (ii), $I_{a}Z \in T_{A}\A_{\bh}(\Gamma).$ Applying \eqref{eq 10} to $Y = I_{a}Z$ gives \[
0 = \omega_{a}(Z,I_{a}Z) = \langle I_{a}Z,I_{a}Z \rangle = \|I_{a}Z\|^{2}_{L_2}.
\]    
Since the $L_{2}$ inner product is positive definite, $I_{a}Z = 0,$ hence $Z = 0.$ Thus, $X = D_{A}u \in \text{im}D_{A} = T_{A}(\G_{0}(\Gamma)\cdot A),$ which completes the proof of \ref{eq 8}.
\end{proof}

\begin{Rem} \label{rmq 4.12}
    Step 3(ii) of the proof of Lemma \ref{lemma 4.10} establishes that $I_{a}$ maps $T_{A}N^a \cap \Ker(D^{*}_{A})$ into $T_{A}\A_{\bh}(\Gamma) \cap \Ker(D^{*}_{A});$ the analogous computation with $Z \in T_{A}\A_{\bh}(\Gamma) \cap \Ker(D^{*}_{A})$ in place of $Z\in T_{A}N^{a}$ shows that \[
    I_{a}(T_{A}\A_{\bh}(\Gamma) \cap \Ker(D^{*}_{A})) \subset T_{A}\A_{\bh}(\Gamma) \cap \Ker(D^{*}_{A}) \quad (a = 1,2,3),
    \]
    which is precisely the hypothesis of Proposition \ref{q slice} (quaternionic slice criterion). This hypothesis is therefore a consequence of Lemmas \ref{holo map}--\ref{lemma 4.10} and the topological splitting of Proposition \ref{prop 3.6}. 
\end{Rem}

With Lemmas \ref{holo map}--\ref{lemma 4.10} in hand, the hyperkähler structure on $\M_{\bh}(\Gamma)$ is assembled by applying the (weak) Kähler reduction theorem to each Kähler Banach manifold $(N^a,\omega_{a}|_{N^{a}})$ with moment map $\mu_{a}|_{N^a}$ for $a = 1,2,3,$ in the spirit of \cite{hitchin1987hyperkahler}. The three reductions share the same underlying quotient $\A_{\bh}(\Gamma),$ the same descended $L^{2}$-metric, and the same horizontal subspace $T_{A}\A_{\bh}(\Gamma) \cap \Ker D^{*}_{A}$ at each $A$; the descended complex structures $I_{a,\text{red}}$ inherit the quaternion relations from the ambient $I_{a}$ by Remark \ref{rmq 4.12}.

\subsection{The complex quotient} We prove an infinite-dimensional quiver version of the equivalence between hyperkähler quotients and complex-symplectic quotients. We fix the first complex structure $I_1$ and consider $\alpha = A_{0} + iA_{1}, \beta = A_{2} + iA_{3}$, viewed as maps in $C^{1}(I,\g_{\mathbb{C}}).$ Equivalently, we identify $\B_{\mathbb{H}}(\Gamma)$ with $\B^{2}_{\mathbb{C}}(\Gamma)$, where the latter is an infinite-dimensional complex-symplectic manifold. Then, splitting the hyperkähler moment map into a complex part $\mu_{\mathbb{C}} \coloneqq \mu_{2} + i\mu_{3}: \B_{\mathbb{C}}(\Gamma) \too C^{0}(I,\g_{\mathbb{C}}) \times \g_{\mathbb{C}}^{\Gamma_{\nt}}$ and a real part $\mu_{1}: \B_{\bh}(\Gamma) \too C^{0}(I,\g) \times \g^{\Gamma_{\nt}},$ the Nahm--Kirchhoff equations are equivalent to the following: \begin{align*}
    \dot{\beta} + [\alpha,\beta] &= 0, \quad \beta \textbf{ satisfies Kirchhoff matching}\\
    \dot{\alpha} +\dot{\alpha}^{*} + [\alpha,\alpha^{*}] + [\beta,\beta^{*}] &= 0, \quad A_1 \textbf{ satisfies Kirchhoff matching}.
\end{align*} 

The first part is called \textit{the complex Nahm equation} and the second \textit{the real Nahm equation}. As expected from the finite-dimensional case, $\mu_{\mathbb{C}}$ is a complex moment map for the action of the complexified gauge group $\G^{0}_{\mathbb{C}}(\Gamma)$, which acts by \[
g\cdot (\alpha,\beta) = (\Ad_{g}\alpha - \dot{g}g^{-1}, \Ad_{g}\beta).
\]

Hence, we wish to prove that the quotient $\mu^{-1}(0)/\G^{0}(\Gamma)$ is biholomorphic to $\mu^{-1}_{\mathbb{C}}(0)/\G^{0}_{\mathbb{C}}(\Gamma).$ In other words, we want to prove the generalization of Donaldson's theorem (see \cite{hitchin1987hyperkahler}) to the case of quivers.

\begin{Thm}[Donaldson \cite{hitchin1987hyperkahler}-Quiver] \label{complex q}
    Let $\Gamma$ be an oriented connected quiver with boundary and $G$ a compact connected Lie group. For any solution $(\alpha,\beta)$ to the complex Nahm equations, there exists $g \in \G^{0}_{\mathbb{C}}(\Gamma)$ such that $g\cdot (\alpha,\beta )$ is a solution to the real Nahm equation. Moreover, $g$ is unique up to left multiplication by $\G^{0}(\Gamma).$ In other terms, the inclusion $\A_{\bh}(\Gamma) \hookrightarrow \A_{\mathbb{C}}(\Gamma)$ descends to a biholomorphism \[
    \M_{\bh}(\Gamma) \coloneqq \A_{\bh}(\Gamma)/\G^{0}(\Gamma) \too \A_{\mathbb{C}}(\Gamma)/\G^{0}_{\mathbb{C}}(\Gamma) \coloneqq \mu^{-1}_{\mathbb{C}}(0)/\G^{0}_{\mathbb{C}}(\Gamma).
    \]
\end{Thm}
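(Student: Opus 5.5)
The plan is to run the standard Kempf--Ness argument, using the introduction's outline: gauge away $\alpha$ edge by edge, then minimise a convex functional on $H^{\Gamma_{\nt}}$, $H=G_{\CC}/G$.

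\emph{Reduction to a vertex problem.} Given a solution $(\alpha,\beta)$ of the complex Nahm equations with $\beta$ Kirchhoff, we look for $g\in\G^0_{\CC}(\Gamma)$ with $g\cdot(\alpha,\beta)$ satisfying the real equation and the Kirchhoff law for $A_1$. Since $\G^0(\Gamma)$ preserves the real equation, only the class $h=g^*g$ in $\G^0_{\CC}(\Gamma)/\G^0(\Gamma)$, i.e.\ a path in $H$ on each edge, is relevant. On each edge we first solve $\dot g_e g_e^{-1}=\Ad_{g_e}\alpha^e$ with $g_e(a_e)=1$, which gives $\alpha^e=0$ after gauge; the residual datum is the parallel transport. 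For $\alpha=0$ and $\beta$ constant on the edge, the real equation for $g\cdot(0,\beta)$ becomes a second-order ODE for $h_e$ in $H$: a geodesic-type equation with potential $|\Ad_{h_e^{1/2}}\beta|^2$. Its boundary data are prescribed points of $H$ at the two ends, namely the identity at boundary vertices and a free point $x_v\in H$ at each interior vertex. The Kirchhoff condition on $A_1$ is then exactly the Euler--Lagrange condition at interior vertices: the sum of the outgoing derivatives of $h_e$ vanishes. We therefore define the functional
\[
\Phi(x)=\sum_{e\in E}\ \inf_{h_e:\,h_e(a_e)=x_{s(e)},\,h_e(b_e)=x_{t(e)}} \int_{I_e}\Bigl(\tfrac12|h_e^{-1}\dot h_e|^2+\tfrac12|\Ad_{h_e^{1/2}}\beta^e|^2\Bigr)\,dt,
\]
with $x\in H^{\Gamma_{\nt}}$ and $x_v=1$ on $\partial\Gamma$. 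Here $\alpha$ enters via the parallel transports that identify the endpoints.

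\emph{Per-edge analysis.} Let $E_e(x,y)$ be the inner infimum. We show it is attained and unique (Lemmas \ref{lem mini}, \ref{lem uni}), by the direct method in $H^1$ plus elliptic regularity, and by geodesic convexity of the energy in $H$, which is Cartan--Hadamard. The potential term is convex along geodesics because $t\mapsto|\Ad_{\exp(t\xi)}\beta|^2$ is a sum of $\cosh$-type terms. Consequently $E_e$ is geodesically convex in $(x,y)$, and its first variation at the endpoints is the boundary derivative of the minimiser. This is what identifies critical points of $\Phi$ with solutions of the vertex Kirchhoff law for $A_1$.

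\emph{Main obstacle: properness and strict convexity of $\Phi$.} The kinetic term gives $E_e(x,y)\ge d_H(x,y)^2/(2l_e)$. Since $\Gamma$ is connected with nonempty boundary, every interior vertex is joined to a boundary vertex (fixed at $1$) by a path of length at most $|E|$. The triangle inequality then bounds $\max_v d_H(1,x_v)^2$ by a constant times $\Phi(x)$, so $\Phi$ is proper. Edgewise, strict convexity fails along directions where $E_e$ is flat. The plan is to propagate from the boundary instead: if $\Phi$ is affine along a geodesic $x(s)$, every $E_e$ is affine along it. On an edge with one endpoint fixed at $1$, the kinetic term then forces the other endpoint to stay constant. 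Inducting along paths to $\partial\Gamma$, using connectedness, shows $x(s)$ is constant. Proper plus strictly convex on a Cartan--Hadamard manifold gives a unique minimiser, hence existence of $g$ and uniqueness modulo $\G^0(\Gamma)$.

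\emph{Conclusion.} The map $\M_{\bh}\to\M_{\CC}$ is then a bijection. It is holomorphic for $I_1$ because the inclusion $\A_{\bh}\subset\A_{\CC}$ is $I_1$-holomorphic and $\G^0$-equivariant. Its inverse is smooth by the implicit function theorem, since the Hessian of $\Phi$ at the minimiser is nondegenerate. The residual analytic point is regularity ($C^2$ gauge), which follows from the ODE.
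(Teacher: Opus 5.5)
Your proposal follows essentially the same route as the paper: gauge $\alpha$ away edge by edge, reduce to a per-edge action on $H=G_{\CC}/G$ with unique minimisers, and minimise the resulting proper, jointly geodesically convex vertex functional on $H^{\Gamma_{\nt}}$, with strict convexity propagated from the boundary through the connected quiver and smoothness of the inverse from the implicit function theorem. You also make explicit that the Kirchhoff law for $A_1$ is the vertex stationarity condition, which the paper leaves implicit; the one step to tighten is the nondegenerate Hessian you invoke at the end. That needs the infinitesimal form of your propagation argument, as in Proposition~\ref{sgc}: if the second variation vanishes at a single $s_0$, each variation field $J^e$ is parallel, and since $J^e$ vanishes at an endpoint fixed by the boundary data, it vanishes identically. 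Non-affineness along geodesic segments alone does not give a nondegenerate Hessian.
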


Before we treat the quiver case (Theorem \ref{complex q}), we give a proof of Donaldson's theorem in the case of a single edge. Recall that $H = G_{\mathbb{C}}/G$ is a Riemannian manifold such that: \begin{enumerate}
    \item $H$ is a Cartan--Hadamard manifold: simply connected, complete and of non-positive sectional curvature; 
    \item geodesics through the identity coset $G$ are of the form $\gamma(s) = G\exp(isw), \textbf{ for } w \in \g;$ 
    \item the sectional curvature satisfies $K(u,v) = -\dfrac{1}{4} \| [u,v]\|^{2} \leq 0;$
    \item between any two points of $H$ there exists a unique minimizing geodesic. 
\end{enumerate}

\begin{Def}
    A smooth function $\phi: H\too \R$ is \textit{geodesically convex} if for any geodesic $\gamma: \R \too H,$ the composition $\phi \circ \gamma$ is convex in the usual sense: $(\phi \circ \gamma)^{''}(t) \geq 0, \text{ for all } t.$ It is \textit{strictly geodesically convex} if $(\phi \circ \gamma)^{''}(t) > 0$ for all $t.$
\end{Def}

\begin{Prop} \label{potential}
    For each $x\in \g_{\mathbb{C}},$ the function $V_{x}: H\too \R$ defined by \[
    V_{x}(h) = \|\Ad_{h}(x)\|^{2},
    \]
    where $h\in G_{\mathbb{C}}$ is any representative of the coset and the norm is computed using the $\Ad_{G}$-invariant inner product on $\g$ extended to $\g_{\mathbb{C}},$ is well-defined, non-negative, smooth, and geodesically convex.
\end{Prop}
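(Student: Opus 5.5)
The plan is to reduce everything to a one-variable computation along geodesics through the identity coset, using the homogeneity of $H$ and the spectral decomposition of a self-adjoint operator on $\g_{\CC}$. Throughout, the inner product on $\g_{\CC}$ is the Hermitian extension of the $\Ad_G$-invariant inner product on $\g$, and $\|\cdot\|$ is its norm.

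\emph{Well-definedness, non-negativity and smoothness.} Following the description of geodesics as $G\exp(isw)$, I read points of $H$ as cosets $Gh$. Replacing $h$ by $kh$ with $k\in G$ gives $\Ad_{kh}x=\Ad_k(\Ad_h x)$. Since $\Ad_k$ is unitary for the Hermitian extension, the norm is unchanged, so $V_x$ is well defined. Non-negativity is clear. The map $\tilde V_x: G_{\CC}\too \R$, $h\mapsto \|\Ad_h x\|^2$, is smooth and $G$-invariant. The projection $G_{\CC}\too H$ is a smooth submersion (a principal $G$-bundle), so $V_x$ is smooth.

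\emph{Reduction to geodesics through the base point.} Right multiplication $R_{h_0}: Gh\mapsto Ghh_0$ is an isometry of $H$ for the symmetric metric. It maps geodesics through $G$ to geodesics through $Gh_0$. Since geodesics are determined by their initial point and velocity, every geodesic of $H$ has the form $\gamma(s)=G\exp(isw)h_0$ with $w\in\g$. Therefore
\[
V_x(\gamma(s))=\|\Ad_{\exp(isw)}y\|^2=\|e^{sT}y\|^2,\qquad y\coloneqq\Ad_{h_0}x,\quad T\coloneqq i\,\ad_w .
\]

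\emph{Convexity.} By $\Ad$-invariance, $\ad_w$ is skew-Hermitian on $\g_{\CC}$, so $T=i\ad_w$ is Hermitian. Diagonalize $T$ in an orthonormal eigenbasis with real eigenvalues $\lambda_k$, and write $y=\sum y_k$ accordingly. Then
\[
f(s)\coloneqq V_x(\gamma(s))=\sum_k e^{2s\lambda_k}\|y_k\|^2,\qquad f''(s)=4\sum_k\lambda_k^2e^{2s\lambda_k}\|y_k\|^2=4\|Te^{sT}y\|^2\ge 0 .
\]
This gives geodesic convexity.

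The only delicate point is the coset convention. The left/right choice must be matched with the description of geodesics so that both well-definedness and the homogeneity argument work. With cosets $Gh$ and $G_{\CC}$ acting by isometries on the right, both go through. With the other convention one would use $\|\Ad_{h^{-1}}x\|$ instead.

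Note also that the formula for $f''$ shows strict convexity can fail exactly when $[w,\Ad_{h_0}x]=0$. This is consistent with the statement only claiming (non-strict) geodesic convexity.
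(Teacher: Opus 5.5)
Your proposal is correct and takes essentially the paper's route: both write an arbitrary geodesic as $G\exp(isw)h_0$ and show that the second derivative of $V_x$ along it is a non-negative multiple of $\|[w,\Ad_{\exp(isw)h_0}x]\|^2$. Your spectral computation gives the factor $4$, which is the correct constant; the paper's display has $2$, a harmless slip. Your explicit choice of right cosets $Gh$, with $G_{\CC}$ acting isometrically on the right, is also the convention the paper itself adopts in its long-edge analysis.
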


\begin{proof}
Well-definedness follows from the $\Ad_{G}$-invariance of the norm. Non-negativity and smoothness are immediate. For geodesic convexity, let $\gamma(t) = G\exp(itw)g$ be a geodesic in $H$ for $w\in \g$ and $g\in G_{\mathbb{C}}.$ Then, \[
\dfrac{d^{2}}{dt^{2}} V_{x}(\gamma(t)) = 2 \|[w,\Ad_{\exp(itw)g}x]\|^{2} \geq 0.
\]  
This establishes geodesic convexity.
\end{proof}

Now, for Donaldson's proof to work, a Kähler potential for the first complex structure $I_1$ is needed. Here we consider \begin{equation} \label{eq 11}
f: \A_{\bh} = \A^{2}_{\mathbb{C}} \too \R, \quad f(\alpha,\beta) = \dfrac{1}{8} \int_{I} \|\alpha + \alpha^{*}\|^2 + 2 \|\beta\|^{2},
\end{equation} 

or \[
f(A) = \dfrac{1}{4} \int_{I} 2\|A_1\|^2 + \|A_2\|^2 + \|A_3\|^2.
\]

The function $f$ is a global Kähler potential such that $\mu_{I_{1}}(A)(x) = df(I_{1}x^{*}_{A})$ for $x\in \Lie(\G^{0}).$ Now, fix a solution $(\alpha,\beta)$ of the complex Nahm equation and consider the functional \[
F: \G^{0}_{\mathbb{C}} \too \R, \quad F(g) = f(g\cdot (\alpha,\beta)).
\]
Then, $F$ is \textit{a Kempf--Ness functional} in the sense that $g$ is a critical point of $F$ if and only if $g\cdot (\alpha,\beta)$ is a solution to the real Nahm equation. Moreover, $F$ is invariant under the action of $\G^{0},$ so we can view $F$ as a function on $C^{2}(I,H)^{0},$ i.e., the set of maps $h$ with values in $H$ such that $h(0) = h(1) = 0.$ As before, there exists $g\in \G_{\mathbb{C}}$ such that $g\cdot (\alpha,\beta) = (0,x)$ for some $x\in \g_{\mathbb{C}}.$ We may therefore assume that $(\alpha,\beta) = (0,x)$, but we must then find a minimum $h$ with fixed boundary points $h(0) = Gg(0)^{-1}, h(1) = Gg(1)^{-1}.$ Thus, we reduce the problem to showing that, for all boundary points $p,q \in H,$ the functional 
\begin{align*}
F: C^{2}(I,H)& \too \R\\
h& \mtoo f(h\cdot (0,x)) = \int_{I} \|\dot{h}\|_{H}^{2} + V_{x}(h),
\end{align*}
has a unique minimum with $h(0) = p, h(1) = q.$ This is a variational problem that can be reduced to the finite-dimensional case. In other terms, for $x\in \g_{\mathbb{C}}$ and $p,q \in H$ we consider the \textbf{per-edge action functional} \[
E_{x}(p,q) = \text{inf} \Big\{ \int_{0}^1 \|\dot{h}\|^{2} + V_{x}(h(t)) dt: h\in C^{2}([0,1],H), h(0) = p , h(1) = q \Big\}.  
\]

Then, $F$ admits a unique minimizer $h^{*}$ if and only if $F(h^{*}) = E_{x}(p,q).$ In the next lemmas, we prove the existence and uniqueness of $h^{*}$ and that the assignment $(p,q) \mtoo h^{*}_{p,q}$ is smooth. Since the proof relies on the variational formulas for the per-edge action functional, we derive these formulas here.

\subsection{Kempf--Ness functional on an edge} \label{subsec:kn-edge}
The goal of this subsection is to study the functional
\[
S(h) := \int_{0}^{1} \|\dot{h}\|_{H}^{2} + V_{x}(h(t))\, dt,
\]
and to establish the existence, uniqueness, and smooth dependence on the endpoints of its minimizer, together with the variational formulas that drive the convexity argument. Let us start with the following basic result.

\begin{Lem} \label{lem mini}
For every $p,q \in H$ and $x\in \g_{\mathbb{C}},$ there exists $h^{*}\in C^{2}([0,1],H)$ with $h^{*}(0) = p,\ h^{*}(1) = q$ minimizing the action
\[
S(h) \coloneqq \int_{0}^{1} \|\dot{h}\|_{H}^{2} + V_{x}(h(t)) \, dt.
\]
The minimizer satisfies the Euler--Lagrange equation
\begin{equation} \label{euler eq}
2\nabla_{\dot{h}^{*}}\dot{h}^{*} = \grad V_{x}(h^{*}).
\end{equation}
\end{Lem}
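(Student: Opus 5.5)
We will prove existence by the direct method of the calculus of variations on the Cartan--Hadamard manifold $H$, and then upgrade regularity through the Euler--Lagrange equation. The first step is to enlarge the admissible class to $W^{1,2}([0,1],H)$ with $h(0)=p$, $h(1)=q$. The infimum is unchanged by density of smooth paths: approximate a $W^{1,2}$ path in $W^{1,2}$ by smooth paths with the same endpoints, for instance via a normal chart around a piecewise-geodesic approximation. The functional $S$ is bounded below by $0$, since $V_x\geq 0$ by Proposition \ref{potential}. It is also finite on the minimizing geodesic from $p$ to $q$, which exists by property (4) of $H$. So we may take a minimizing sequence $(h_n)$ with $S(h_n)\to m:=\inf S$.

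Next we obtain compactness. The energy bound $\int_0^1\|\dot h_n\|^2\leq S(h_n)\leq C$ gives a uniform $\frac12$-Hölder bound $d_H(h_n(s),h_n(t))\leq C^{1/2}|t-s|^{1/2}$. Together with $h_n(0)=p$, this confines all $h_n$ to a fixed closed ball around $p$. That ball is compact because $H$ is complete and finite-dimensional (Hopf--Rinow). By Arzelà--Ascoli, a subsequence converges uniformly to a continuous $h^*$ with the correct endpoints. Embedding a neighbourhood of the ball isometrically, or working through $\exp_p^{-1}$, which is a global diffeomorphism $T_pH\to H$ since $H$ is Cartan--Hadamard, we get the same subsequence converging weakly in $W^{1,2}$.

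For lower semicontinuity, the kinetic term is weakly lower semicontinuous, being a convex quadratic form in $\dot h$ whose coefficients are continuous in $h$ and converge uniformly. The potential term converges outright, since $V_x$ is continuous and the convergence is uniform. Hence $S(h^*)\leq m$, so $h^*$ is a $W^{1,2}$ minimizer.

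It remains to show regularity and derive the Euler--Lagrange equation; this is the step needing care. Take variations $h_\epsilon=\exp_{h^*}(\epsilon\xi)$ with $\xi$ a smooth compactly supported vector field along $h^*$. Differentiating at $\epsilon=0$ gives the weak equation
\[
\int_0^1 2\langle \dot h^*,\nabla_t\xi\rangle+\langle \grad V_x(h^*),\xi\rangle\,dt=0 .
\]
In normal coordinates this reads $2\ddot h^{k}=-2\Gamma^k_{ij}(h)\dot h^i\dot h^j+(\grad V_x)^k(h)$ in the distributional sense. We then bootstrap. Since $\dot h\in L^2$, the right-hand side lies in $L^1$, so $\dot h$ is in $W^{1,1}\subset C^0$. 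The right-hand side is then continuous, so $h\in C^2$, and in fact $h\in C^\infty$ because the data are smooth. This yields \eqref{euler eq} classically, and $h^*\in C^2([0,1],H)$ is a minimizer within the original class as well.

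The main obstacle is this regularity step: passing from the weak Euler--Lagrange equation to a classical $C^2$ solution. The nonlinear Christoffel term is quadratic in $\dot h$, so one must check that the first bootstrap step, from $L^2$ to $L^1$ to continuity of $\dot h$, is legitimate in one dimension. Everything else is standard compactness, using completeness of $H$ and non-negativity of $V_x$.
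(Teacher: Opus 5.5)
Your proposal is correct and follows essentially the same route as the paper: you take a minimizing sequence with bounded Dirichlet energy, use the $\tfrac12$-H\"older bound to confine it to a compact ball, apply Arzel\`a--Ascoli, and combine lower semicontinuity of the kinetic term with convergence of the potential term. You set this up in $W^{1,2}$ through the global chart $\exp_p^{-1}$, much as in Remark \ref{weak H1 existence}, and you spell out the regularity bootstrap that the paper only calls standard.

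The first bootstrap step you flag is legitimate. Write the weak equation in divergence form, $\frac{d}{dt}\bigl(2g_{kj}(h)\dot h^{j}\bigr) = \partial_k g_{ij}(h)\,\dot h^{i}\dot h^{j} + \partial_k V_x(h)$, whose right-hand side lies in $L^1$. Then $g(h)\dot h$ is absolutely continuous, so $\dot h$ is continuous, and the rest of the bootstrap proceeds as you describe.
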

\begin{proof}
    The fact that the critical points of $S$ satisfy the Euler--Lagrange equation $\eqref{euler eq}$ is straightforward. To show that minimizers exist, we proceed as follows.
     Consider a minimizing sequence $(h_n)_{n\in \mathbb{N}}$ for $S,$ i.e. $S(h_n) \too \inf S.$ Since $V_x \geq 0,$
\[
\int_{0}^{1} \|\dot{h}_{n}\|^{2}\,dt \leq \int_{0}^{1} \|\dot{h}_{n}\|^{2} + V_x(h_n(t))\,dt = S(h_n),
\]
so there is a constant $C>0$ with $\int_{0}^{1} \|\dot{h}_{n}\|^{2}\,dt < C.$ By Cauchy--Schwarz,
\[
d_H(h_n(t),p) \leq \int_{0}^{t} \|\dot{h}_{n}(s)\|\,ds \leq \Big(\int_{0}^{1} \|\dot{h}_{n}\|^{2}\,dt\Big)^{\frac{1}{2}} \leq C^{\frac{1}{2}},
\]
so $h_n([0,1]) \subset \overline{B_H}(p,C^{\frac{1}{2}}).$ The curves are moreover equicontinuous:
\[
d_H(h_n(t),h_n(s)) \leq \int_{s}^{t} \|\dot{h}_{n}(r)\|\,dr \leq C^{\frac{1}{2}}|t-s|^{\frac{1}{2}}.
\]
By Arzel\`a--Ascoli, there is a uniformly convergent subsequence $h_{n_k} \too h_*$ with $h_*(0)=p$ and $h_*(1)=q.$ Then $\int_{0}^1 V_{x}(h_{n_k}) \too \int_{0}^1 V_{x}(h_*)$ and, by lower semicontinuity of the Dirichlet energy for curves into a Riemannian manifold, $\int_{0}^{1} \|\dot{h}_{*}\|^{2} \leq \liminf \int_{0}^{1} \|\dot{h}_{n_k}\|^{2}.$ Hence
\[
S(h_*) = \int_{0}^1 \|\dot{h}_{*}(t)\|^{2} + V_{x}(h_{*}(t))\,dt \leq \liminf \int_{0}^1 \|\dot{h}_{n_k}\|^2 + V_{x}(h_{n_k})\,dt = \liminf S(h_{n_k}) \leq \inf S,
\]
so $h_*$ is a minimizer and a weak solution of \eqref{euler eq}, which is an ODE on $H.$ A standard bootstrap argument shows that $h_*$ is smooth.
\end{proof}

To show that the minimizers of the action functional $S$ are unique, we will use the following convexity property.\\

We consider two geodesics
$\gamma_{1},\gamma_{2}: \R \too H.$ For each $s\in \R,$ we consider the two-parameter
family of smooth curves in $H$
\[
\Phi:\R \times [0,1] \too H, \quad \Phi(s,t):= h_{s}(t),
\]
and \textbf{the variational field} along $h_{s}$
\[
J(t) \coloneqq \dfrac{\partial \Phi}{\partial s}(s,t) \in T_{h_{s}(t)}H,
\]
which is a smooth vector field along $h_s$ satisfying the boundary conditions
\begin{equation} \label{boundary conditions}
J(0) = \dot{\gamma}_{1}(s), \quad J(1) = \dot{\gamma}_{2}(s).
\end{equation}

\begin{Prop}[First variation] \label{first var}
Consider $\phi(s)=S(h_s)$. Then
\[
\phi^{'}(s) =\delta(S)_{h_s}(\dot{h}_s)= 2\langle \dot{h}_{s}(1),\dot{\gamma}_{2}(s) \rangle - 2\langle \dot{h}_{s}(0),\dot{\gamma}_{1}(s) \rangle,
\]
where $\delta(S)_{h_s}(\dot{h}_s)$ is the variation of $S$ at $h_s$ in the direction of $\dot{h}_s$.
\end{Prop}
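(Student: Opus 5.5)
We differentiate $\phi(s)=\int_0^1 \langle \partial_t\Phi,\partial_t\Phi\rangle + V_x(\Phi(s,t))\,dt$ under the integral sign, which is legitimate since $\Phi$ is smooth on $\R\times[0,1]$. We use the Levi-Civita connection $\nabla$ of $H$ and its two standard properties. First, metric compatibility gives $\partial_s\|\partial_t\Phi\|^2 = 2\langle \nabla_s\partial_t\Phi,\partial_t\Phi\rangle$. Second, torsion-freeness gives the symmetry lemma $\nabla_s\partial_t\Phi=\nabla_t\partial_s\Phi=\nabla_t J$. The potential term differentiates to $\langle \grad V_x(h_s),J\rangle$. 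Together,
\[
\phi'(s)=\int_0^1 2\langle \nabla_t J,\dot h_s\rangle + \langle \grad V_x(h_s),J\rangle\,dt .
\]

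Next we integrate the first term by parts along $t$. Since $\partial_t\langle J,\dot h_s\rangle = \langle\nabla_tJ,\dot h_s\rangle+\langle J,\nabla_t\dot h_s\rangle$, we obtain
\[
\phi'(s)= 2\langle J(1),\dot h_s(1)\rangle-2\langle J(0),\dot h_s(0)\rangle+\int_0^1\bigl\langle -2\nabla_{\dot h_s}\dot h_s+\grad V_x(h_s),\,J\bigr\rangle\,dt .
\]
This expression is the first variation $\delta(S)_{h_s}$ applied to $J$. The statement writes this variation as "in the direction of $\dot h_s$", meaning the $s$-derivative of the family $h_s$, i.e. $J$.

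The key step is to kill the integral, and it is the one place where a hypothesis on the family is needed. The statement is only correct if each $h_s$ is the minimizer $h^*_{\gamma_1(s),\gamma_2(s)}$ supplied by Lemma \ref{lem mini}, or at least a critical point of $S$ with endpoints $\gamma_1(s),\gamma_2(s)$. Under this reading, the Euler--Lagrange equation \eqref{euler eq} holds, $2\nabla_{\dot h_s}\dot h_s=\grad V_x(h_s)$, so the integrand vanishes identically. Substituting the boundary conditions \eqref{boundary conditions}, $J(0)=\dot\gamma_1(s)$ and $J(1)=\dot\gamma_2(s)$, then yields exactly $\phi'(s)=2\langle\dot h_s(1),\dot\gamma_2(s)\rangle-2\langle\dot h_s(0),\dot\gamma_1(s)\rangle$.

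The main obstacle is not the computation but justifying that $\Phi$ is smooth in $s$. Smoothness of the endpoint-to-minimizer map is only asserted at this point; uniqueness and smooth dependence are proved later. If this is not yet available, the proof should simply assume that $(h_s)$ is a smooth family of solutions of \eqref{euler eq} with the prescribed endpoints. The identity above holds for any such family, and indeed for any smooth variation once the integral term is retained.
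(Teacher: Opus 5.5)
Your proof is correct and follows the same route as the paper. You differentiate under the integral, use the symmetry lemma $\nabla_s\partial_t\Phi=\nabla_t J$, integrate by parts in $t$, cancel the interior term with the Euler--Lagrange equation \eqref{euler eq}, and substitute the boundary conditions \eqref{boundary conditions}. Your caveat that each $h_s$ must be a critical point with endpoints $\gamma_1(s),\gamma_2(s)$, forming a family smooth in $s$, is exactly the hypothesis the paper uses without stating it when it invokes the Euler--Lagrange equation for every $h_s$.
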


\begin{proof}
Differentiating,
\[
\dot{\phi}(s) = \int_{0}^1 2\langle \nabla_{s}\dot{h}_{s},\dot{h}_{s} \rangle + dV_{x}(J) \, dt.
\]
Since the Levi-Civita connection is torsion-free and $[\partial_{s},\partial_t] = 0,$
\begin{equation} \label{equ torsion free}
\nabla_{s}\dot{h}_{s} = \nabla_{s}\partial_{t}\Phi = \nabla_{t}\partial_{s}\Phi = \nabla_t J.
\end{equation}
Hence $\phi^{'}(s) = \int_{0}^{1} 2\langle \nabla_{t}J,\dot{h}_{s} \rangle + dV_{x}(J)\,dt.$ Integrating the first term by parts,
\[
\int_{0}^{1} 2\langle \nabla_{t}J,\dot{h}_{s} \rangle \, dt = 2[\langle J,\dot{h}_{s} \rangle]^{1}_{0} - \int_{0}^{1} 2\langle J,\nabla_{t}\dot{h}_s \rangle \, dt.
\]
Using the Euler--Lagrange equation $2\nabla_{t}\dot{h}_{s} = \grad V_{x}(h_s)$ and $dV_{x}(J) = \langle \grad V_x, J\rangle,$
\[
\phi^{'}(s) = 2[\langle J,\dot{h}_{s} \rangle]^{1}_{0} -\int_{0}^{1} \langle \grad V_x, J \rangle + \int_{0}^{1} \langle \grad V_x, J\rangle \, dt = 2[\langle J, \dot{h}_s \rangle]^{1}_{0}.
\]
Substituting the boundary conditions \eqref{boundary conditions} gives the result.
\end{proof}

\begin{Prop}[Second variation] \label{s va}
We have
\[
\phi^{''}(s) = \int_{0}^{1} 2\|\nabla_{t}J \|^{2} + 2\langle R(J,\dot{h}_{s})J,\dot{h}_{s} \rangle + \mathrm{Hess}(V_x)(J,J)\,dt,
\]
where $R$ is the Riemann curvature tensor of $H.$
\end{Prop}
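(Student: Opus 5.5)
The plan is to differentiate the first-variation integrand once more in $s$, keeping the integral form rather than the boundary form of Proposition \ref{first var}. Write $\phi(s)=\int_0^1 \|\partial_t\Phi\|^2+V_x(\Phi)\,dt$. Differentiating under the integral sign and using metric compatibility gives
\[
\phi'(s)=\int_0^1 2\langle \nabla_s\partial_t\Phi,\partial_t\Phi\rangle + \langle \grad V_x(h_s),J\rangle\,dt .
\]
By the torsion-free identity \eqref{equ torsion free}, this is $\int_0^1 2\langle\nabla_t J,\dot h_s\rangle+\langle\grad V_x,J\rangle\,dt$. I will differentiate this expression in $s$ and not the boundary expression. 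The reason is that the integrand form does not use the Euler--Lagrange equation, so the formula holds for the whole family without assuming each $h_s$ is critical.

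Differentiating again gives
\[
\phi''(s)=\int_0^1 2\langle\nabla_s\nabla_tJ,\dot h_s\rangle+2\langle\nabla_tJ,\nabla_s\dot h_s\rangle+\partial_s\bigl(dV_x(J)\bigr)\,dt .
\]
I then treat the three pieces separately.

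First piece. Since $\nabla_s\dot h_s=\nabla_tJ$ by \eqref{equ torsion free}, the middle term is $2\|\nabla_tJ\|^2$.

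Second piece. For the first term, I will commute covariant derivatives using the curvature: $\nabla_s\nabla_tJ=\nabla_t\nabla_sJ+R(\partial_s\Phi,\partial_t\Phi)J=\nabla_t\nabla_sJ+R(J,\dot h_s)J$. This splits it into $2\langle R(J,\dot h_s)J,\dot h_s\rangle$ plus $2\langle\nabla_t\nabla_sJ,\dot h_s\rangle$.

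Third piece. The potential term gives $\partial_s(dV_x(J))=\mathrm{Hess}(V_x)(J,J)+\langle\grad V_x,\nabla_sJ\rangle$.

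The remaining task is to dispose of the two terms containing $\nabla_sJ$, namely $\int_0^1 2\langle\nabla_t\nabla_sJ,\dot h_s\rangle+\langle\grad V_x,\nabla_sJ\rangle\,dt$. Integrating the first by parts turns this into
\[
2\bigl[\langle\nabla_sJ,\dot h_s\rangle\bigr]_0^1+\int_0^1\langle\nabla_sJ,\,-2\nabla_t\dot h_s+\grad V_x\rangle\,dt .
\]
The integral vanishes by the Euler--Lagrange equation \eqref{euler eq}, since each $h_s$ is the minimizer with endpoints $\gamma_1(s),\gamma_2(s)$. The boundary term vanishes because $\nabla_sJ(0)=\nabla_s\dot\gamma_1(s)=0$ and $\nabla_sJ(1)=\nabla_s\dot\gamma_2(s)=0$, the endpoints moving along geodesics.

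The main obstacle is justifying the smooth dependence of $h_s$ on $s$, so that the family $\Phi$ is smooth and differentiation under the integral is legitimate. I would take this from the smooth dependence of the minimizer on endpoints, via the implicit function theorem applied to the boundary-value problem for \eqref{euler eq}. I would also fix the sign convention of $R$ so that $\langle R(J,\dot h)J,\dot h\rangle$ appears with a plus sign, matching the stated formula.
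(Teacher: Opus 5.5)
Your proposal is correct and follows the paper's proof. Both differentiate the integral form $\int_0^1 2\langle\nabla_tJ,\dot h_s\rangle+dV_x(J)\,dt$, commute $\nabla_s\nabla_t$ through the curvature tensor, and then eliminate the $\nabla_sJ$ terms by integration by parts, the Euler--Lagrange equation, and the vanishing of $\nabla_sJ$ at the endpoints (since $\gamma_1,\gamma_2$ are geodesics). The smoothness of the family $\Phi$ in $s$, which you flag, is assumed without comment at this point in the paper and is only supplied later by the smooth-dependence part of Lemma \ref{lem uni}.
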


\begin{proof}
We differentiate $\phi^{'}(s) = \int_{0}^{1} 2\langle \nabla_{t}J,\dot{h}_s \rangle$ with respect to $s,$ treating each term separately. Set $K_{s} \coloneqq \nabla_{s}J.$

\noindent\textbf{Term 1.} For the kinetic term,
\[
\dfrac{d}{ds} \langle \nabla_{t}J,\dot{h}_{s} \rangle = \langle \nabla_{s}\nabla_{t}J,\dot{h}_{s} \rangle + \langle \nabla_{t}J,\nabla_{s}\dot{h}_s \rangle.
\]
By definition of the Riemann curvature tensor, $\nabla_{s}\nabla_{t}J - \nabla_{t}\nabla_{s}J = R(\partial_{s}\Phi,\partial_{t}\Phi)J = R(J,\dot{h}_{s})J.$ Combined with \eqref{equ torsion free},
\[
\dfrac{d}{ds} \langle \nabla_{t}J,\dot{h}_s \rangle = \langle \nabla_{t}K_s,\dot{h}_{s} \rangle + \langle R(J,\dot{h}_{s})J,\dot{h}_s \rangle + \|\nabla_{t}J\|^{2}.
\]

\noindent\textbf{Term 2.} For the potential term,
\[
\dfrac{d}{ds} dV_{x}(J) = \langle \nabla_{s} \grad V_{x},J \rangle + \langle \grad V_{x},K_{s} \rangle = \mathrm{Hess}(V_x)(J,J) + dV_{x}(K_s).
\]
Integrating and adding gives
\[
\phi^{''}(s) = \int_{0}^{1} \Big( 2\| \nabla_{t}J \|^{2} + 2\langle R(J,\dot{h}_{s})J,\dot{h}_s \rangle + \mathrm{Hess}(V_x)(J,J) \Big)dt + \underbrace{\int_{0}^{1} 2\langle \nabla_{t}K_s,\dot{h}_{s} \rangle + dV_{x}(K_s)\,dt}_{(*)}.
\]
We claim that $(*) = 0.$ Integrating the first part of $(*)$ by parts,
\[
\int_{0}^{1} 2\langle \nabla_{t}K_{s},\dot{h}_{s}\rangle \, dt = 2[\langle K_{s},\dot{h}_{s} \rangle]^{1}_{0} - \int_{0}^{1} 2\langle K_{s},\nabla_{t}\dot{h}_{s} \rangle = 2[\langle K_{s},\dot{h}_{s} \rangle]^{1}_{0} - \int_{0}^{1} dV_{x}(K_s)\,dt,
\]
where we used the Euler--Lagrange equation in the second step. Hence $(*) = 2[\langle K_{s},\dot{h}_{s} \rangle]^{1}_{0}.$ Finally, at $t = 0,$ $\Phi(s,0) = \gamma_{1}(s),$ so $K_{s}(0) = \nabla_{s}\dot{\gamma}_{1}(s) = 0$ since $\gamma_{1}$ is a geodesic; at $t = 1,$ similarly, $K_{s}(1) = 0.$ Therefore $(*) = 0,$ which completes the proof.
\end{proof}

If we fix the endpoints of our curves to be $\gamma_i(0)=p$ and $\gamma_i(1)=q$ for $i=1,2$, we obtain a simplified expression for the two variations.

\begin{Cor} \label{fixed endpoints}
For two smooth geodesics $\gamma_1,\gamma_2$ with $\gamma_i(0)=p$ and $\gamma_i(1)=q$ for $i=1,2$, we have $\phi^{'}(s)=0$ and
\[
\phi^{''}(s) =\operatorname{Hess}(S)(J,J)= \int_{0}^{1} 2\|\nabla_{t}J \|^{2} + 2\langle R(J,\dot{h}_{s})J,\dot{h}_{s} \rangle + \mathrm{Hess}(V_x)(J,J)\,dt.
\]
\end{Cor}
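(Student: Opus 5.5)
The plan is to specialize the two variational formulas to the case of fixed endpoints. Corollary \ref{fixed endpoints} should follow from Proposition \ref{first var} and Proposition \ref{s va}. The idea is to observe that constant curves are geodesics, and that this choice kills every boundary contribution.

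Concretely, take $\gamma_1 \equiv p$ and $\gamma_2 \equiv q$. These are the geodesics with $\gamma_i(0)=\gamma_i(1)$, read as constant-speed geodesics through the fixed endpoint; in general the hypothesis should be understood as $\gamma_1$ and $\gamma_2$ being stationary at the fixed endpoints, so that $\dot\gamma_1(s)=\dot\gamma_2(s)=0$. Then $h_s$ is a family of curves all sharing the endpoints $p$ and $q$.

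First, the first-variation formula. The boundary conditions \eqref{boundary conditions} give $J(0)=0$ and $J(1)=0$. Plugging this into Proposition \ref{first var}, both boundary terms $2\langle\dot h_s(1),\dot\gamma_2(s)\rangle$ and $2\langle \dot h_s(0),\dot\gamma_1(s)\rangle$ vanish, so $\phi'(s)=0$. I will also note the meaning of this: $h_s$ is a critical point of $S$ among curves with fixed endpoints. This is consistent with the Euler--Lagrange equation \eqref{euler eq} that was used in deriving the formula.

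Second, the second-variation formula. The expression in Proposition \ref{s va} was obtained after showing that the term $(*)=2[\langle K_s,\dot h_s\rangle]_0^1$ vanishes, with $K_s=\nabla_sJ$. With constant endpoint curves, $K_s(0)=\nabla_s\dot\gamma_1=0$ and $K_s(1)=0$ hold trivially. So the displayed integral formula holds verbatim.

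It remains to identify that integral with $\operatorname{Hess}(S)(J,J)$. For this, recall that at a critical point of a functional on the space of curves with fixed endpoints, the second derivative along any variation $s\mapsto h_s$ depends only on the variation field $J$. The acceleration term $\nabla_sJ$ contributes $\delta S_{h_s}(K_s)$, and this vanishes because $h_s$ is critical and $K_s$ vanishes at the endpoints. Hence $\phi''(s)$ is the Hessian quadratic form evaluated on $J$.

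The only subtle point, which I expect to be the main obstacle, is the interpretation of the hypothesis: a geodesic with $\gamma(0)=\gamma(1)=p$ must be constant, since geodesics in the Cartan--Hadamard manifold $H$ are injective unless constant. I would state this explicitly so that $\dot\gamma_i\equiv 0$, and then the rest of the argument is direct bookkeeping.
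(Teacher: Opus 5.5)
Your argument is essentially the paper's: the corollary is just Propositions \ref{first var} and \ref{s va} specialized to stationary endpoint curves. There $J(0)=J(1)=0$ and $K_s(0)=K_s(1)=0$ kill all boundary terms, and your observation that the $\nabla_s J$ contribution is $\delta S_{h_s}(K_s)=0$ at a critical point correctly justifies writing $\phi''(s)$ as $\operatorname{Hess}(S)(J,J)$.

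One correction: the stated hypothesis $\gamma_i(0)=p$, $\gamma_i(1)=q$ does not give $\gamma_i(0)=\gamma_i(1)$. Read literally, it forces $\gamma_1=\gamma_2$ to be the non-constant geodesic from $p$ to $q$, so your injectivity argument is not what yields $\dot\gamma_i\equiv 0$. The intended meaning is simply fixed endpoints, $\gamma_1\equiv p$ and $\gamma_2\equiv q$, which is how the corollary is used in Lemma \ref{lem uni}, where $X(0)=X(1)=0$.
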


This allows us to establish the convexity of the Kempf--Ness functional along smooth geodesics.

\begin{Lem} \label{convexity thm}
With the notation above,
\begin{equation}\label{kt}
\| \nabla_{t}J \|^{2} \geq 0,
\end{equation}
with equality at $t$ if and only if $J$ is parallel along $h_s$ at that point, and
\begin{equation}\label{ct}
\langle R(J,\dot{h}_{s})J,\dot{h}_s \rangle \geq 0,
\end{equation}
with equality at $t$ if and only if either $J$ and $\dot{h}_{s}$ are linearly dependent or the sectional curvature of the plane they span vanishes. In particular, the Kempf--Ness functional is convex along smooth geodesics.
\end{Lem}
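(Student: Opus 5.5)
The plan is to read off the three integrands in the second-variation formula of Corollary \ref{fixed endpoints} (equivalently Proposition \ref{s va}) and show each is pointwise non-negative. The kinetic term \eqref{kt} is trivial: $\|\nabla_t J\|^2$ is the squared norm of a vector in $T_{h_s(t)}H$ for a positive-definite Riemannian metric. Hence it is non-negative and vanishes at $t$ exactly when $\nabla_t J(t)=0$, which is what "parallel at that point" means.

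For the curvature term \eqref{ct}, I first fix the sign convention. The paper defines $R$ through $\nabla_s\nabla_t - \nabla_t\nabla_s = R(\partial_s,\partial_t)$, so that $\langle R(X,Y)Y,X\rangle$ is the sectional-curvature numerator. By the antisymmetries of $R$,
\[
\langle R(J,\dot h_s)J,\dot h_s\rangle = -\langle R(J,\dot h_s)\dot h_s,J\rangle = -K(J,\dot h_s)\bigl(\|J\|^2\|\dot h_s\|^2-\langle J,\dot h_s\rangle^2\bigr)
\]
whenever $J,\dot h_s$ are independent, and the expression is $0$ otherwise. Property (3) of $H$ gives $K(u,v) = -\tfrac14\|[u,v]\|^2\le 0$, and the Gram factor is non-negative by Cauchy--Schwarz. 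So the product is $\ge 0$. It vanishes iff either the Gram determinant vanishes, i.e.\ $J,\dot h_s$ are linearly dependent, or $K(J,\dot h_s)=0$. This is the stated equality case.

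The main point I expect to need care is this sign bookkeeping. With the other common convention the formula in Proposition \ref{s va} would carry the opposite sign, and I will check it against the derivation of that proposition, where $R(\partial_s\Phi,\partial_t\Phi)J$ appears. I will also double-check it against the known fact that the energy of geodesics is convex on a non-positively curved space (the case $V_x=0$). That consistency check fixes the sign so that the curvature contribution is $-K\cdot(\text{area})^2 \ge 0$.

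Finally, for convexity, Proposition \ref{potential} shows $V_x$ is geodesically convex, so $\mathrm{Hess}(V_x)(J,J) = (V_x\circ\sigma)''(0)\ge 0$ along the geodesic $\sigma$ with $\dot\sigma(0)=J$. The three integrands of $\phi''(s)$ are then pointwise non-negative, so $\phi''(s)\ge 0$ for every $s$. Thus $s\mapsto S(h_s)$ is convex along smooth geodesic variations, which is the asserted convexity of the Kempf--Ness functional.
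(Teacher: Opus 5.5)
Your proposal is correct and follows the paper's proof essentially verbatim: the kinetic term is trivially non-negative, and the curvature term is rewritten, via antisymmetry of $R$ in its last two slots, as $-K(\sigma)\bigl(\|J\|^2\|\dot h_s\|^2-\langle J,\dot h_s\rangle^2\bigr)$, which is $\ge 0$ because $K\le 0$ on $H$. Your explicit appeal to Proposition~\ref{potential} for $\mathrm{Hess}(V_x)(J,J)\ge 0$ is slightly more complete than the paper's proof of this lemma, which mentions only the first two integrands and handles the potential term later, in Proposition~\ref{jgc}.
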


\begin{proof}
The sectional curvature of the plane $\sigma = \mathrm{span}(J,\dot{h}_{s})$ is
\[
K_{\sigma} = \dfrac{\langle R(J,\dot{h}_{s})\dot{h}_{s},J\rangle}{\|J\|^{2} \|\dot{h}_{s}\|^{2} - \langle J,\dot{h}_{s}\rangle^{2}}.
\]
By the symmetries of the curvature tensor,
\[
\langle R(J,\dot{h}_{s})J,\dot{h}_{s} \rangle = -K(\sigma)\big(\|J\|^{2}\|\dot{h}_{s}\|^{2} - \langle J,\dot{h}_{s} \rangle^{2}\big).
\]
Since $K(\sigma) \leq 0$ on $H$ and the Cauchy--Schwarz factor is non-negative, \eqref{ct} follows. Inequality \eqref{kt} is immediate. Since both integrands are non-negative, $\phi^{''}(s)\ge 0$ by Corollary \ref{fixed endpoints}.
\end{proof}

\begin{Prop}[Joint geodesic convexity] \label{jgc}
For each $x\in \g_{\mathbb{C}},$ the function $E_{x}: H\times H \too \mathbb{R}$ is jointly geodesically convex: for any geodesic $\Gamma(s) = (\gamma_{1}(s),\gamma_{2}(s))$ in $H\times H,$ the composition $s\too E_{x}(\gamma_{1}(s),\gamma_{2}(s))$ is convex.
\end{Prop}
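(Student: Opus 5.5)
The plan is to realize $E_x$ along the geodesic $\Gamma(s)=(\gamma_1(s),\gamma_2(s))$ as the value $\phi(s)=S(h_s)$ of a family of minimizers, and then read convexity off the second variation formula (Proposition \ref{s va}) together with Lemma \ref{convexity thm}.

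\emph{Step 1 (the family).} For each $s$, Lemma \ref{lem mini} gives a minimizer $h_s$ with endpoints $\gamma_1(s),\gamma_2(s)$, so $E_x(\gamma_1(s),\gamma_2(s))=S(h_s)$. I would like $s\mapsto h_s$ to be smooth, since this is what Proposition \ref{s va} requires. The intended argument is that $h_s$ solves the boundary value problem for the Euler--Lagrange ODE \eqref{euler eq} on $H$. The linearization of this problem at a minimizer is the Jacobi operator appearing in Corollary \ref{fixed endpoints}, and by Lemma \ref{convexity thm} this operator is nonnegative. I would upgrade this to invertibility with Dirichlet conditions: a Jacobi field $J$ with $J(0)=J(1)=0$ and $\mathrm{Hess}\,S(J,J)=0$ forces $\nabla_tJ\equiv 0$, and then the boundary condition gives $J\equiv0$. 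The implicit function theorem then yields smooth dependence of $h_s$ on $(\gamma_1(s),\gamma_2(s))$. This also gives local uniqueness, which is why the smooth family can be chosen consistently.

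\emph{Step 2 (avoiding smoothness).} To sidestep the smoothness issue altogether, I would use a comparison argument instead. Fix $s_0$ and the minimizer $h_{s_0}$. For nearby $s$, build the competitor curves $\tilde h_s(t)=\exp_{h_{s_0}(t)}\bigl(W_s(t)\bigr)$, where $W_s(t)$ interpolates the endpoint displacements, for example by parallel transport along $h_{s_0}$ combined with geodesic interpolation. Then $E_x(\Gamma(s))\le S(\tilde h_s)$ with equality at $s_0$. If I can show $\tfrac{d^2}{ds^2}S(\tilde h_s)|_{s_0}\ge0$, then $E_x\circ\Gamma$ is a function that at every point admits a smooth upper support function with nonnegative second derivative. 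A continuous function with this property is convex, by the standard viscosity or "touching from above" argument. Continuity of $E_x$ itself follows from the bounds in the proof of Lemma \ref{lem mini}.

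\emph{Step 3 (the second variation).} For the smooth family, Proposition \ref{s va} computes $\phi''$. Its proof uses the Euler--Lagrange equation only at each $h_s$, together with $K_s(0)=K_s(1)=0$, which holds because $\gamma_1$ and $\gamma_2$ are geodesics. For a competitor family, the same computation leaves the boundary term $2[\langle \nabla_s\partial_s\tilde h,\dot h\rangle]_0^1$. I would choose the interpolation so that $s\mapsto\tilde h_s(0),\tilde h_s(1)$ are exactly the geodesics $\gamma_i$, which makes this term vanish, and the Euler--Lagrange term is handled at $s_0$. In the resulting expression $2\|\nabla_tJ\|^2\ge0$, the curvature term is $\ge0$ by Lemma \ref{convexity thm}, and $\mathrm{Hess}(V_x)(J,J)\ge0$ because $V_x$ is geodesically convex (Proposition \ref{potential}). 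Hence $\phi''\ge0$.

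The main obstacle is the regularity issue in Step 1: justifying that the minimizers depend smoothly on the endpoints, or else correctly running the upper-support argument. The points to check carefully are the invertibility of the Jacobi operator, which needs strictness from the kinetic term, and the vanishing of the boundary terms.
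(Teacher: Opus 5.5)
Your Steps 1 and 3 follow the paper's route. The paper proves Proposition \ref{jgc} by applying the second-variation formula of Proposition \ref{s va} to the family of minimizers $h_s$ from $\gamma_1(s)$ to $\gamma_2(s)$, and noting that the three integrands are non-negative. It takes the smoothness of $s\mapsto h_s$ for granted; that smoothness is supplied later by the Jacobi-operator and implicit-function argument of Lemma \ref{lem uni}, essentially as you sketch.

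\textbf{The gap in Step 1.} The weak point is how you identify the implicit-function branch with the minimizers. The computation gives $\phi''\ge 0$ for $\phi(s)=S(h_s)$. This says something about $E_x\circ\Gamma$ only if $S(h_s)=E_x(\Gamma(s))$ for all $s$ near $s_0$, not just at $s_0$. Local uniqueness of Euler--Lagrange solutions near $h_{s_0}$ does not give this. If there were two minimizing branches at $s_0$, then $E_x\circ\Gamma$ would be the minimum of two smooth functions with non-negative second derivatives, and such a minimum can have a concave corner. What you need is that every Euler--Lagrange solution with given endpoints is a minimizer. This follows from the ingredient the paper uses for uniqueness in Lemma \ref{lem uni}. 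Along the pointwise geodesic interpolation $h_\tau$ between two paths, $\tau\mapsto S(h_\tau)$ is convex. The reason is that $\partial_t h_\tau(t)$ is a Jacobi field along the geodesic $\tau\mapsto h_\tau(t)$, whose squared norm is convex in non-positive curvature, and $V_x$ is geodesically convex. When the endpoints are fixed and $h_0$ solves the Euler--Lagrange equation, the derivative at $\tau=0$ vanishes, so $S(h_0)\le S(h_1)$.

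\textbf{Step 2 does not work as a substitute.} A competitor only yields $E_x(\Gamma(s))\le S(\tilde h_s)$, that is, some $C^2$ upper support at $s_0$. A continuous function admitting such a support with non-negative second derivative at every point need not be convex. For example, $f(s)=\min\bigl((s-1)^2,(s+1)^2\bigr)$ is touched from above at $0$ by $(s-1)^2$, and is locally smooth with $f''=2$ elsewhere. Yet $f(0)=1>0=\tfrac12\bigl(f(-1)+f(1)\bigr)$. The viscosity characterisation of $f''\ge 0$ requires $\varphi''\ge 0$ for \emph{every} $C^2$ function $\varphi$ touching from above. The barrier characterisation instead uses supports from \emph{below}. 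Competitors supply neither.

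\textbf{How to avoid regularity.} Interpolate between two minimizers instead of perturbing one. For $s_0<s_1$ and $s_\tau=(1-\tau)s_0+\tau s_1$, let $h_\tau$ be the pointwise geodesic interpolation between $h_{s_0}$ and $h_{s_1}$. Since geodesics in $H$ are unique, $h_\tau$ has endpoints exactly $\gamma_1(s_\tau)$ and $\gamma_2(s_\tau)$. By the same convexity,
\[
E_x(\Gamma(s_\tau))\le S(h_\tau)\le (1-\tau)\,E_x(\Gamma(s_0))+\tau\,E_x(\Gamma(s_1)).
\]
This is the joint convexity, and it requires no smoothness of $E_x$ or of the minimizers.
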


\begin{proof}
By Corollary \ref{fixed endpoints}, the three integrands of $\phi^{''}(s)$ are non-negative: the first two by \eqref{kt}--\eqref{ct}, and $\mathrm{Hess}(V_x)(J,J)\ge 0$ by the geodesic convexity of $V_x$ (Proposition \ref{potential}). Hence $\phi^{''}(s)\ge 0$ and $\phi$ is convex.
\end{proof}

\begin{Rem}
All the previous results remain true if we replace $[0,1]$ by an arbitrary compact interval $[a,b].$
\end{Rem}

\begin{Rem}
Joint convexity cannot be upgraded to \textit{joint strict convexity} at the level of a single edge. If $\phi^{''}(s) = 0,$ then $\nabla_{t}J = 0,$ so $J$ is parallel along $h_{s_0}.$ With the boundary conditions $J(0) = \dot{\gamma}_{1}(s_{0})$ and $J(1) = \dot{\gamma}_{2}(s_0),$ neither endpoint forces $J = 0$: the parallel transport of $\dot{\gamma}_{1}(s_0)$ along $h_{s_0}$ from $t=0$ to $t=1$ may equal $\dot{\gamma}_{2}(s_0),$ making the term vanish even when both geodesics are non-constant. The strict convexity for the quiver problem is recovered globally in the next subsection via a propagation argument from the boundary.
\end{Rem}

We now establish uniqueness and smooth dependence of the minimizer.

\begin{Lem} \label{lem uni}
For every $p,q \in H$ and $x\in \g_{\mathbb{C}},$ the solution $h^{*}\in C^{2}([0,1],H)$ with $h^{*}(0) = p,\ h^{*}(1) = q$ to the Euler--Lagrange equation 
\begin{equation}
2\nabla_{\dot{h}^{*}}\dot{h}^{*} = \grad V_{x}(h^{*}),
\end{equation}
minimizing the action
\[
S(h) \coloneqq \int_{0}^{1} \|\dot{h}\|_{H}^{2} + V_{x}(h(t)) \, dt,
\]
is unique, and the map $(p,q) \too h^{*}_{p,q}$ depends smoothly on the endpoints.
\end{Lem}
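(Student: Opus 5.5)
Uniqueness will come from strict convexity of $S$ on the space of curves with fixed endpoints, and smooth dependence from the implicit function theorem applied to the Euler--Lagrange boundary value problem. Let $h_0,h_1$ be two solutions of \eqref{euler eq} with $h_i(0)=p$, $h_i(1)=q$. Since $H$ is Cartan--Hadamard, I join them pointwise by the unique geodesic,
\[
\Phi(s,t)=\exp_{h_0(t)}\bigl(s\,\exp_{h_0(t)}^{-1}h_1(t)\bigr),\qquad s\in[0,1],
\]
which is smooth in $(s,t)$ because $\exp^{-1}$ is a global diffeomorphism. At $t=0,1$ the family is constant, so $J(0)=J(1)=0$, and every curve $\Phi(s,\cdot)$ has the same endpoints $p,q$. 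The second-variation computation of Proposition \ref{s va} does not use the Euler--Lagrange equation for the intermediate curves, provided one keeps the term $(*)$. Here $\Phi(\cdot,t)$ is a geodesic for each $t$, so $K_s=\nabla_sJ=0$ identically and $(*)=0$. Hence
\[
\phi''(s)=\int_0^1 2\|\nabla_tJ\|^2+2\langle R(J,\dot h_s)J,\dot h_s\rangle+\mathrm{Hess}(V_x)(J,J)\,dt\ \ge 0
\]
by Lemma \ref{convexity thm} and Proposition \ref{potential}.

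Since $h_0$ and $h_1$ are critical points of $S$ with fixed endpoints, $\phi'(0)=\phi'(1)=0$. A convex function with vanishing derivative at both ends is constant, so $\phi''\equiv0$ on $[0,1]$. In particular $\nabla_tJ=0$ along $h_0$, so $J$ is parallel. But $J(0)=0$, so $J\equiv0$, i.e.\ $\exp_{h_0(t)}^{-1}h_1(t)=0$ for all $t$, and therefore $h_0=h_1$. This argument also shows that every critical point is the minimizer of Lemma \ref{lem mini}, so the minimizer $h^*_{p,q}$ is unique.

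For smooth dependence, I consider the map
\[
\mathcal{F}:C^2([0,1],H)\to C^0([0,1],TH)\times H\times H,\qquad
h\mapsto\bigl(2\nabla_{\dot h}\dot h-\operatorname{grad}V_x(h),\,h(0),\,h(1)\bigr),
\]
or equivalently the shooting map $(p,v)\mapsto(p,\pi\circ\Psi_1(p,v))$, where $\Psi_t$ is the flow of the second-order ODE \eqref{euler eq} on $TH$. Its linearization at $h^*$ is the Jacobi operator $J\mapsto -2\nabla_t^2J+2R(J,\dot h)\dot h+\nabla_J\operatorname{grad}V_x$ together with the boundary values $J(0),J(1)$. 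This operator is injective on fields with $J(0)=J(1)=0$: pairing with $J$ and integrating by parts gives exactly the nonnegative integrand above, which forces $\nabla_tJ=0$ and hence $J=0$. Since the operator is a Fredholm boundary value problem of index zero (a compact perturbation of $-\ddot J$ with Dirichlet data, as in Lemma \ref{lem 3.9}), injectivity gives invertibility. Equivalently, the shooting map is a local diffeomorphism. The implicit function theorem then yields a smooth local family $(p,q)\mapsto h^*_{p,q}$, and by uniqueness these local families patch into a global smooth map.

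The main obstacle is global existence of the shooting inverse, but Lemma \ref{lem mini} already supplies existence, so only the local invertibility is needed. A subtler point is checking that the Jacobi operator, written with the curvature sign conventions of Proposition \ref{s va}, is coercive. I would verify this by matching it with the Hessian in Corollary \ref{fixed endpoints}, whose nonnegativity was just established and whose kernel is trivial by the parallel-transport argument.
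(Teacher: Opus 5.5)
Your proposal is correct and follows essentially the paper's route. For uniqueness you use pointwise geodesic interpolation between two solutions, the second-variation formula, and the parallel-transport argument ($\nabla_t J\equiv 0$ with $J(0)=0$); for smooth dependence you apply the implicit function theorem to $h\mapsto \bigl(h(0),h(1),2\nabla_{\dot h}\dot h-\operatorname{grad}V_x(h)\bigr)$, whose Jacobi linearization is injective by the same Hessian computation. Your refinements only make explicit what the paper asserts: $K_s=\nabla_sJ\equiv 0$ makes the second-variation formula valid for every $s$ without the Euler--Lagrange equation, $\phi'(0)=\phi'(1)=0$ gives uniqueness among all critical points rather than only minimizers, and the index-zero Fredholm remark supplies surjectivity. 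The one fix is the curvature sign in your negated Jacobi operator: with the paper's convention $\nabla_s\nabla_t-\nabla_t\nabla_s=R(\partial_s\Phi,\partial_t\Phi)$, it should read $-2R(J,\dot h)\dot h$.
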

\begin{proof}
\noindent\textbf{Uniqueness.} Suppose $h^{*}_{0},h^{*}_1$ are two minimizers. For $\tau \in [0,1],$ let
\[
h_{\tau}(t) = \exp_{h^{*}_{0}(t)}\!\big(\tau \exp^{-1}_{h^{*}_{0}(t)}(h^{*}_{1}(t))\big)
\]
be the pointwise geodesic interpolation, so that $h_0 = h^{*}_{0},\ h_1 = h^{*}_{1},$ and each $h_\tau$ has endpoints $p,q.$ By convexity of the energy density under geodesic interpolation in non-positive curvature, together with the geodesic convexity of $V_x$ (Proposition \ref{potential}),
\[
S(h_{\tau}) \leq (1-\tau)S(h^{*}_{0}) + \tau S(h^{*}_1) \leq S(h^{*}_{0}).
\]
Since $h^{*}_{0}$ is a minimizer, equality holds throughout and $\tau \mapsto S(h_\tau)$ is constant. Let $X(t) = \partial_\tau h_\tau(t)|_{\tau = 0} = \exp^{-1}_{h^{*}_{0}(t)}(h^{*}_{1}(t)),$ so that $X(0) = X(1) = 0.$ Constancy and Corollary \ref{fixed endpoints} give
\[
0=\tfrac{d^2}{d\tau^2}S(h_\tau)\big|_{\tau=0} = \int_{0}^{1} 2\|\nabla_{t}X\|^{2} + 2\langle R(X,\dot{h}^{*}_0)X,\dot{h}^{*}_0\rangle + \mathrm{Hess}(V_x)(X,X)\,dt.
\]
Each integrand is non-negative by \eqref{kt}--\eqref{ct} and Proposition \ref{potential}, so $\nabla_{t}X \equiv 0$; since $X(0) = 0,$ parallelism forces $X \equiv 0,$ hence $h^{*}_{0} = h^{*}_{1}.$\\

\noindent\textbf{Smooth dependence.} Consider the Banach space $\PP = \{ h \in C^{2}([0,1],H)\}$ and the map
\[
\F: \PP \too H \times H \times \E, \quad h \mtoo \big(h(0),\,h(1),\,2\nabla_{\dot{h}}\dot{h} - \grad V_{x}(h)\big),
\]
so that $\F(h^{*}_{p,q}) = (p,q,0)$. Define $\L: H \times H \times \PP \too H \times H \times \E$ by $\L(p,q,h) \coloneqq \F(h) - (p,q,0)$. With this definition, we have $\L(p,q,h^{*}_{p,q}) = 0.$ A tangent vector to $\PP$ at $h^{*}_{p,q}$ is a $C^2$ vector field $J$ along $h^{*}_{p,q},$ and the derivative of $2\nabla_{\dot{h}}\dot{h} - \grad V_{x}(h)$ is the Jacobi-type operator
\[
LJ \coloneqq 2\nabla^{2}_{t}J + 2R(J,\dot{h}^{*})\dot{h}^{*} - \nabla_{J}\grad V_{x},
\]
so that
\[
D_{h^{*}_{p,q}}\F(J) = \big(J(0),\,J(1),\,LJ\big).
\]
For $v_{0},v_1 \in T_{p}H \times T_{q}H$ and $f$ a vector field along $h^{*}_{p,q},$ the equation $D_{h^{*}_{p,q}}\F(J) = (v_0,v_1,f)$ is uniquely solvable if and only if the homogeneous Dirichlet problem $LJ = 0,\ J(0) = J(1) = 0$ has only the trivial solution. Pairing $LJ$ with $J$ and integrating by parts on the interval $[0,1],$ we obtain
\[
0= \int_{0}^{1} \langle LJ,J \rangle \, dt =-\operatorname{Hess}(S)(J,J) = \int_{0}^{1} 2\|\nabla_{t}J\|^{2} + 2\langle R(J,\dot{h}^{*})J,\dot{h}^{*}\rangle + \mathrm{Hess}(V_x)(J,J) \, dt.
\]
By the convexity argument above, we obtain $\nabla_{t}J \equiv 0,$ and $J(0) = 0$ forces $J \equiv 0.$ Hence $D_{h^{*}_{p,q}}\F$ is a Banach-space isomorphism, and by the implicit function theorem there is a smooth map $(p',q') \too h^{*}_{p',q'}$ near $(p,q)$ solving $\F(h) = (p',q',0).$ By uniqueness of the minimizer, this is $h^{*}_{p',q'},$ which establishes smooth dependence.
\end{proof}

\begin{Rem} \label{weak H1 existence}
The existence of the minimizer in Lemma \ref{lem mini} can equally be obtained by weak compactness. Since $V_x \geq 0,$ the functional $S$ is bounded below by $\int_{0}^{1} \|\dot{h}\|^{2} \geq d_{H}(p,q)^{2},$ and a minimizing sequence satisfies $\int_{0}^{1} \|\dot{h}_{n}\|^{2} \leq C.$ Since $H$ is complete, simply connected, and of non-positive curvature, the Sobolev space $H^{1}([0,1],H)$ with fixed endpoints is weakly compact (see, e.g., Jost \cite{jost2005riemannian}), so $h_n$ converges weakly to some $h^{*}$ in $H^{1}.$ The compact embedding $H^{1}([0,1],H) \hookrightarrow C^{0}([0,1],H)$ upgrades this to uniform convergence, whence $\int_{0}^1 V_{x}(h_n) \too \int_{0}^1 V_{x}(h^{*})$ by continuity of $V_x$ and dominated convergence; weak lower semicontinuity of the Dirichlet energy then gives $S(h^{*}) \leq \liminf S(h_n) \leq \inf S.$
\end{Rem}

\subsection{The Kempf--Ness functional on the quiver}
We seek $g\in \G^{0}_{\mathbb{C}}(\Gamma)$ such that $g\cdot(0,x_{e})$ satisfies the real Nahm equation on each edge. Since $f$ is $\G_{0}(\Gamma)$-invariant, the Kempf--Ness functional $F(g) = f(g\cdot (0,x))$ descends to a function on paths $(h_{e})_{e\in E}$ in $H^{|E|}$ with prescribed boundary values and matching at interior vertices.

\begin{Def}
    The \textbf{quiver Kempf--Ness functional} is \begin{equation}
        F((h_e)_{e\in E}) = \sum_{e\in E} \int_{a_e}^{b_e} (\|\dot{h}_{e}\|^{2}_{H} + V_{x_{e}}(h_{e}(t)))dt,
    \end{equation}

    defined on the space $\mathcal{P}$ of tuples $(h_{e})_{e\in E}$ with $h_{e} \in C^{2}([0,1],H)$ such that: \begin{itemize}
        \item at each boundary vertex $v\in \partial \Gamma,$ the endpoint value is prescribed by the boundary data;
        \item at each interior vertex $v\in \Gamma_{\nt}$, $h_{e_1}(b_e) = h_{e_{2}}(a_e)$ for all $e_{1},e_{2}$ meeting at $v.$
    \end{itemize}
\end{Def}

The Kempf--Ness functional reduces to a finite-dimensional problem through the following definition. \begin{Def}
    Let $\Gamma = (E,V,s,t)$ be a connected quiver with non-empty boundary and interior vertices $\Gamma_{\nt} = V-{\partial \Gamma}.$ Fix $x^{e}\in \g_{\mathbb{C}}.$ The \textbf{reduced Kempf--Ness functional} is \[
    \bar{F}: H^{|\Gamma_{\nt}|} \too \R, \quad \bar{F}(p) \coloneqq \sum_{e\in E} E_{x_{e}}(p_{s(e)},p_{t(e)}),
    \]
    where $p = (p_{v})_{v\in \Gamma_{\nt}},$ and boundary values are used when $s(e)$ or $t(e)$ lies in $\partial \Gamma$.
\end{Def}

\begin{Thm} \label{kempf ness}
    Let $\Gamma$ be a connected quiver with non-empty boundary, and fix $x^e \in \g_{\mathbb{C}}$ for each edge and boundary values for each $v\in \partial \Gamma.$ Then $\bar{F}$ is strictly geodesically convex. In particular, $\bar{F}$ has a unique minimum.
\end{Thm}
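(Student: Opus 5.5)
Fix a geodesic $s\mapsto p(s)=(p_v(s))_{v\in\Gamma_{\nt}}$ in the product $H^{|\Gamma_{\nt}|}$, which is again Cartan--Hadamard, and set $\Phi(s)=\bar F(p(s))=\sum_{e}E_{x_e}(p_{s(e)}(s),p_{t(e)}(s))$. For a boundary vertex the corresponding curve is constant, which is still a (constant) geodesic of $H$. Each summand is therefore the composition of $E_{x_e}$ with a geodesic of $H\times H$. By Proposition \ref{jgc} (with the remark allowing arbitrary intervals $[a_e,b_e]$), each summand is convex in $s$, so $\Phi''\ge 0$. Smoothness of $\bar F$ comes from Lemma \ref{lem uni}: $E_{x_e}(p,q)=S(h^*_{p,q})$ and $(p,q)\mapsto h^*_{p,q}$ is smooth. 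Moreover, Corollary \ref{fixed endpoints} generalizes to moving endpoints in the form
\[
\frac{d^2}{ds^2}E_{x_e}=\int_{a_e}^{b_e}2\|\nabla_t J_e\|^2+2\langle R(J_e,\dot h_e)J_e,\dot h_e\rangle+\mathrm{Hess}(V_{x_e})(J_e,J_e)\,dt .
\]
Here $J_e$ is the variation field of the family of minimizers $h^*_{e,s}$, with $J_e(a_e)=\dot p_{s(e)}(s)$ and $J_e(b_e)=\dot p_{t(e)}(s)$. This is exactly Proposition \ref{s va}, because the family of minimizers is a smooth variation with geodesic endpoints, so the boundary term $(*)$ vanishes.

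For \emph{strict} convexity, suppose $\Phi''(s_0)=0$ for a non-constant geodesic, that is, $\dot p(s_0)\neq0$. Then every edge contributes zero, and since all three integrands are non-negative, $\nabla_tJ_e\equiv0$ on every edge. So $J_e$ is parallel, and in particular $\|J_e\|$ is constant along each edge. At a boundary vertex $w$ the endpoint is fixed, so $J_e=0$ there for any edge $e$ incident to $w$. Parallelism then forces $J_e\equiv 0$ on $e$, hence $\dot p_v(s_0)=0$ at the other endpoint $v$ of $e$. Now propagate. If $\dot p_v(s_0)=0$ at an interior vertex $v$, then every edge incident to $v$ has $J=0$ at that end, so $J\equiv0$ on it, and the tangent vector vanishes at its other endpoint too. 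Since $\Gamma$ is connected and $\partial\Gamma\neq\emptyset$, every interior vertex is reached by a path from the boundary. Thus $\dot p(s_0)=0$, a contradiction. Hence $\Phi''>0$ along every non-constant geodesic.

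Finally, existence and uniqueness of the minimum. Uniqueness follows from strict convexity: two minima joined by the unique geodesic of $H^{|\Gamma_{\nt}|}$ would make $\Phi$ constant there. For existence I will prove properness (coercivity). I use the bound $E_{x}(p,q)\ge \frac{1}{b_e-a_e}d_H(p,q)^2$ from Cauchy--Schwarz together with $V_x\ge0$. Choose for every interior vertex $v$ a path from a boundary vertex $w$ to $v$ of length at most $|E|$. The triangle inequality gives $d_H(p_v,p_w)\le\sum_{e\in\text{path}}d_H(p_{s(e)},p_{t(e)})$, and hence $d_H(p_v,p_w)^2\le C\,\bar F(p)$. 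So sublevel sets of $\bar F$ are bounded, and they are closed, hence compact by Hopf--Rinow in the complete manifold $H^{|\Gamma_{\nt}|}$. A minimum therefore exists.

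The main obstacle I expect is justifying the second-variation formula for $E_{x_e}$ as a function of endpoints. This requires that the minimizers $h^*_{e,s}$ vary smoothly in $s$, which Lemma \ref{lem uni} provides, and that the moving-endpoint version of Proposition \ref{s va} holds with the boundary term killed by the geodesic condition on the endpoint curves. Once that is in place, the propagation argument is purely combinatorial.
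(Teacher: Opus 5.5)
Your proposal is correct and follows essentially the same route as the paper. It gets edgewise convexity from Proposition \ref{jgc} and the second variation formula of Proposition \ref{s va}, strict convexity by propagating $\nabla_t J^e \equiv 0$ from the fixed boundary values through the connected quiver (Proposition \ref{sgc}), existence from properness (Proposition \ref{proper}), and uniqueness from strict convexity; your properness step, via $E_{x}(p,q)\ge d_H(p,q)^2/(b_e-a_e)$ and the triangle inequality along a path to the boundary giving $d_H(p_v,p_w)^2\le C\,\bar F(p)$, is a slightly more careful, quantitative version of the paper's argument, which tacitly uses unit-length intervals.
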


\begin{Prop}
    The Riemannian product $H^{N}$ (for any $N \geq 1$) is a simply connected, complete Riemannian manifold of non-positive sectional curvature. In particular, it is a Cartan--Hadamard manifold.
\end{Prop}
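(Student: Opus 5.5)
The plan is to check the three defining properties of a Cartan--Hadamard manifold one at a time for the Riemannian product $H^{N} = H \times \cdots \times H$. Each property passes to finite products by a standard argument, and the single-factor facts are items (1)--(4) listed before Proposition \ref{potential}.

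\emph{Simple connectivity.} Since $\pi_1$ commutes with finite products, $\pi_1(H^{N}) \cong \pi_1(H)^{N} = 1$. Connectedness of $H^N$ likewise follows from that of $H$.

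\emph{Completeness.} I will argue metrically. The product distance satisfies
\[
d_{H^{N}}(p,q)^2 = \sum_{k=1}^N d_H(p_k,q_k)^2 .
\]
So a Cauchy sequence in $H^N$ is Cauchy in every factor, and it converges componentwise because each factor is complete; componentwise convergence is convergence in $H^N$. Alternatively, I can use Hopf--Rinow: geodesics of the product are exactly the tuples $(\gamma_1,\dots,\gamma_N)$ of geodesics in the factors, because the Levi-Civita connection of a product is the product connection. Each $\gamma_k$ extends to all of $\R$, so every product geodesic does too.

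\emph{Non-positive curvature.} This is the only step needing a computation, though it is short. The curvature tensor of a Riemannian product splits as $R = \bigoplus_k R_k$, with mixed terms vanishing. Take $X = (X_k)$ and $Y = (Y_k)$ spanning a plane. Then
\[
\langle R(X,Y)Y,X\rangle = \sum_k \langle R_k(X_k,Y_k)Y_k,X_k\rangle = \sum_k K_H(X_k,Y_k)\bigl(\|X_k\|^2\|Y_k\|^2-\langle X_k,Y_k\rangle^2\bigr) \le 0 ,
\]
where terms with $X_k,Y_k$ dependent vanish. Each remaining term is non-positive by item (3), $K(u,v) = -\tfrac14\|[u,v]\|^2 \le 0$, together with Cauchy--Schwarz. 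Dividing by the positive Gram determinant of $X,Y$ gives $K_{H^N}(X,Y)\le 0$.

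The three properties together show that $H^{N}$ is Cartan--Hadamard. By the Cartan--Hadamard theorem, $\exp_p$ is then a diffeomorphism at every point, and any two points are joined by a unique geodesic. That uniqueness is the fact used afterwards for the convexity of $\bar F$ in Theorem \ref{kempf ness}. I expect no real obstacle here; the only point needing care is that product curvature has no mixed terms, which is standard (for example, O'Neill or do Carmo).
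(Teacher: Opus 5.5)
Your proposal is correct and follows essentially the same route as the paper. Simple connectivity and completeness are inherited factorwise, and non-positive curvature comes from the product splitting of the curvature tensor: this gives $\langle R(X,Y)Y,X\rangle=\sum_k K_H(X_k,Y_k)\,\|X_k\wedge Y_k\|^2\le 0$, which is then normalized by $\|X\wedge Y\|^2$ (the paper writes that denominator out explicitly as $\sum_i\|u_i\wedge v_i\|^2+\sum_{i<j}\|u_i\wedge v_j+u_j\wedge v_i\|^2$, while your justifications of completeness via the $\ell^2$ product distance or Hopf--Rinow just spell out what the paper asserts in one line).
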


\begin{proof}
    Simple connectedness and completeness follow from the corresponding properties of $H.$ For the curvature, let $(u_1,...,u_N)$ and $(v_1,...,v_N)$ be orthonormal tangent vectors at a point of $H^{N}.$ The sectional curvature of the product is \[
    K((u)_i,(v)_i) = \dfrac{\sum_{i=1}^N K_{i}(u_i,v_i)\| u_{i} \wedge v_{i}\|^{2}}{\sum_{i=1}^N \|u_{i}\wedge v_{i}\|^2 + \sum_{i<j} \| u_{i}\wedge v_{j} + u_{j}\wedge v_{i}\|^{2}} \leq 0,
    \]
    since each $K_i \leq 0.$
\end{proof}

\begin{Prop} \label{gc}
    The reduced Kempf--Ness functional $\bar{F}: H^{\Gamma_{\nt}} \too \R$ is geodesically convex.
\end{Prop}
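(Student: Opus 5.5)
The plan is to reduce the statement to the joint geodesic convexity of the per-edge functionals (Proposition \ref{jgc}), using two facts: geodesics of a Riemannian product are products of geodesics, and a sum of convex functions is convex.

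First, fix a geodesic $c(s) = (c_v(s))_{v\in\Gamma_{\nt}}$ in $H^{\Gamma_{\nt}}$. Since $H^{\Gamma_{\nt}}$ carries the product metric, each component $c_v$ is a geodesic of $H$ (possibly constant). For each boundary vertex $v\in\partial\Gamma$, set $c_v(s)\equiv p_v$, the prescribed boundary value. A constant curve is trivially a geodesic, so every $c_v$, $v\in V$, is a geodesic of $H$.

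Next, for each edge $e$, the pair $(c_{s(e)}(s),c_{t(e)}(s))$ is a geodesic of $H\times H$, again because geodesics of a product are products of geodesics. There is a degenerate case: a loop with $s(e)=t(e)$. There the pair is the diagonal curve $(c_v,c_v)$, which is still a geodesic of $H\times H$. So Proposition \ref{jgc}, applied on the interval $[a_e,b_e]$ (allowed by the remark following it), shows that
\[
s\mapsto E_{x_e}\bigl(c_{s(e)}(s),c_{t(e)}(s)\bigr)
\]
is convex.

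Summing over $e\in E$, the map $s\mapsto\bar F(c(s))$ is a finite sum of convex functions, hence convex. Since $c$ was arbitrary, $\bar F$ is geodesically convex.

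The one point needing care is that Proposition \ref{jgc} was derived by differentiating twice along the family of minimizers $h_s=h^*_{c_{s(e)}(s),c_{t(e)}(s)}$. This requires $E_{x_e}$ to be $C^2$, which I would justify from the smooth dependence of the minimizer on its endpoints (Lemma \ref{lem uni}). If one prefers to avoid any smoothness discussion, there is an alternative route. Take three parameter values $s_0<s_1<s_2$ and interpolate pointwise-geodesically between the minimizers at $s_0$ and $s_2$. Convexity of the energy density in nonpositive curvature, together with convexity of $V_x$ (Proposition \ref{potential}), then gives the midpoint convexity inequality for $E_{x_e}$ directly. I would present the first route and mention the second as a remark.
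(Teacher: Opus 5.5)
Your proposal is correct and follows essentially the same argument as the paper: the components of a geodesic in the product $H^{\Gamma_{\nt}}$ are geodesics, and boundary vertices contribute constant geodesics. Each edge term is then convex by Proposition~\ref{jgc}, and a finite sum of convex functions is convex. Your extra remarks are sound refinements that the paper leaves implicit: the loop case, and the $C^2$ regularity of $E_{x_e}$ (via Lemma~\ref{lem uni}, or via the interpolation alternative).
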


\begin{proof}
    Geodesics on the Riemannian product $H^{|\Gamma_{\nt}|}$ are products of geodesics: if $\gamma(s) = (\gamma_{v}(s))_{v\in \Gamma_{\nt}},$ each $\gamma_{v}$ is a geodesic in $H.$ For each edge $e\in E,$ the values $(\gamma_{s(e)}(s),\gamma_{t(e)}(s))$ trace a geodesic in $H \times H,$ where boundary vertices contribute constant components (a constant path is a geodesic with zero velocity). By Proposition \ref{jgc}, each summand $s\too E_{x_{e}}(\gamma_{s(e)}(s),\gamma_{t(e)}(s))$ is convex. Since a sum of convex functions is convex, \[
    \dfrac{d^2}{ds^{2}} \bar{F}(\gamma(s)) = \sum_{e\in E} \dfrac{d^{2}}{ds^{2}} E_{x^{e}}(\gamma_{s(e)}(s),\gamma_{(t(e))}(s)) \geq 0.
    \]
\end{proof}

We now prove the strict joint convexity of $\bar{F}.$ The argument uses the connectedness of $\Gamma$ and the boundary conditions in a way that mirrors the freeness argument in the Lax--Kirchhoff paper \cite{maiza2026lax}.

\begin{Prop} \label{sgc}
    The reduced functional $\bar{F}: H^{\Gamma_{\nt}} \too \R$ is strictly geodesically convex: for every non-constant geodesic $\gamma(s)$ in $H^{\Gamma_{\nt}}, \dfrac{d^{2}}{ds^{2}} \bar{F}(\gamma(s)) > 0.$
\end{Prop}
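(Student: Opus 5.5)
We argue by contradiction, using the second variation formula edge by edge. Let $\gamma(s) = (\gamma_v(s))_{v\in\Gamma_{\nt}}$ be a non-constant geodesic in $H^{\Gamma_{\nt}}$, extended by constant paths $\gamma_v \equiv p_v$ at the boundary vertices. Fix $s_0$ and suppose $\frac{d^2}{ds^2}\bar F(\gamma(s))|_{s=s_0} = 0$. By Proposition \ref{gc} each summand $s\mapsto E_{x_e}(\gamma_{s(e)}(s),\gamma_{t(e)}(s))$ has non-negative second derivative, so each of them vanishes at $s_0$.

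For each edge $e$, let $h^e_s$ be the minimizer of Lemma \ref{lem uni} with endpoints $\gamma_{s(e)}(s)$ and $\gamma_{t(e)}(s)$. By smooth dependence on the endpoints, $\Phi_e(s,t) = h^e_s(t)$ is a smooth two-parameter family, and $E_{x_e}(\gamma_{s(e)}(s),\gamma_{t(e)}(s)) = S(h^e_s)$. Proposition \ref{s va} then expresses the second derivative as
\[
\int_{I_e} 2\|\nabla_t J_e\|^2 + 2\langle R(J_e,\dot h^e_{s})J_e,\dot h^e_{s}\rangle + \mathrm{Hess}(V_{x_e})(J_e,J_e)\,dt ,
\]
with the boundary term $(*)$ vanishing because $\gamma_{s(e)},\gamma_{t(e)}$ are geodesics (constant ones at boundary vertices). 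The variational field satisfies $J_e(a_e) = \dot\gamma_{s(e)}(s_0)$ and $J_e(b_e) = \dot\gamma_{t(e)}(s_0)$. All three integrands are non-negative by Lemma \ref{convexity thm} and Proposition \ref{potential}, so each vanishes identically; in particular $\nabla_t J_e \equiv 0$. Since parallel transport is an isometry, $\|\dot\gamma_{s(e)}(s_0)\| = \|\dot\gamma_{t(e)}(s_0)\|$. More precisely, $J_e \equiv 0$ as soon as either endpoint value is zero.

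We now propagate from the boundary. Let $Z = \{v\in V : \dot\gamma_v(s_0) = 0\}$. This set contains $\partial\Gamma$, which is non-empty by hypothesis. If $v\in Z$ and $e$ is an edge joining $v$ to $w$, then $J_e$ is parallel and vanishes at one end, so $J_e \equiv 0$ and $\dot\gamma_w(s_0) = 0$, i.e. $w\in Z$. Since $\Gamma$ is connected, $Z = V$, so $\dot\gamma(s_0) = 0$. A geodesic with zero velocity at one time is constant, which contradicts our assumption. Hence the second derivative is strictly positive for every $s$.

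For the uniqueness of the minimum, we use properness. Note that $\bar F(p) \ge \sum_e d_H(p_{s(e)},p_{t(e)})^2/\ell_e$. By connectedness every interior vertex is joined to a boundary vertex by a path of length at most $|E|$, so the triangle inequality gives that $\bar F(p)\to\infty$ as $p\to\infty$. Continuity of $\bar F$ (it is even smooth, by Lemma \ref{lem uni}) together with completeness of $H^{\Gamma_{\nt}}$ then yields a minimizer. Uniqueness follows from strict convexity, since $H^{\Gamma_{\nt}}$ is Cartan--Hadamard and any two points are joined by a geodesic.

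The main obstacle is justifying that the second derivative of $E_{x_e}$ along the endpoint geodesic equals the second variation of $S$ along the family of minimizers, since Proposition \ref{s va} presupposes a smooth family of Euler--Lagrange solutions. I would address this through the smooth dependence in Lemma \ref{lem uni} and the first variation formula, Proposition \ref{first var}, which is exactly what makes the envelope computation legitimate.
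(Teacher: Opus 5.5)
Your proposal is correct and follows essentially the same route as the paper. The paper also applies the second variation edge by edge, deduces that $\nabla_t J^e \equiv 0$ on every edge, and propagates $\dot\gamma_v(s_0)=0$ from the boundary through the connected quiver. Your closing paragraph on properness, existence and uniqueness of the minimizer goes beyond this proposition; in the paper that material is Proposition \ref{proper} and the proof of Theorem \ref{kempf ness}.
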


\begin{proof}
    Let $\gamma(s) = (\gamma_{v}(s))_{v\in \Gamma_{\nt}}$ be a non-constant geodesic, so that at least one component $\gamma_{v_{*}}$ is non-constant. For each edge $e\in E,$ let $h^{e}_{s}$ be the minimizing path from $\gamma_{s(e)}(s)$ to $\gamma_{t(e)}(s)$ (using the fixed boundary value when $s(e)$ or $t(e)$ lies in $\partial \Gamma$, which counts as a constant geodesic). Let $J^{e} \coloneqq \partial_{s}h^{e}_s$ be the variation field along $h^{e}_{s}.$\\ By Proposition \ref{s va} applied to each edge, \[
    \dfrac{d^2}{ds^{2}}\bar{F}(\gamma(s)) = \sum_{e\in E} \int_{a_e}^{b_e} 2\|\nabla_{t}J^{e}\|^{2} + 2\langle R(J^{e},\dot{h}_{e})J^{e},\dot{h}^{e}_{s} \rangle + \mathrm{Hess}(V_x)(J^{e},J^{e})dt.
    \] 

Each term in the sum is non-negative by Proposition \ref{potential} and Lemma \ref{convexity thm}. Suppose, for contradiction, that the total vanishes at some $s_{0}.$ Then for every edge: \begin{itemize}
    \item [(a)] $\nabla_{t}J^{e} \equiv 0, i.e., J^e$ is parallel along $h^{e}_{s_0};$ 
    \item [(b)] $\langle R(J^e,\dot{h}^{e}_{s_0})J^e,\dot{h}^{e}_{s_0} \rangle \equiv 0$ on $[a_e,b_e];$
    \item [(c)] $\mathrm{Hess}(V_{x_e})(J^e,J^e) \equiv 0$ on $[a_e,b_e].$
\end{itemize}
We use only condition $(a)$ and the boundary conditions to derive a contradiction.\\

\noindent\textbf{Propagation from the boundary.} Since $\Gamma$ is connected with non-empty boundary, choose any boundary vertex $v_0 \in \partial \Gamma$ and the unique edge $e_0$ adjacent to $v_0.$ We treat the case $s(e_0) = v_0$ (the case $t(e_0) = v_0$ is analogous). The boundary value at $s(e_0)$ is fixed, so \[
J^{e_0}(a_e) = \dot{\gamma}_{s(e_0)}(s_0) = 0.
\]
Combined with $(a),$ $J^{e_0}$ is parallel along $h^{e_0}_{s_0}$ with $J^{e_0} = 0,$ hence $J^{e_0}\equiv 0$ on $[a_e,b_e].$ In particular, \[
J^{e_0}(b_e) = \dot{\gamma}_{t(e_0)}(s_0) = 0.
\]

We have shown that $\dot{\gamma}_{t(e_0)}(s_0) = 0.$

\noindent\textbf{Inductive step.} We propagate this conclusion through the quiver. Let $W \subset V$ be the set of vertices $v$ for which we have established $\dot{\gamma}_{v}(s_0) = 0$ (where $\dot{\gamma}_{v}(s_0) = 0$ holds trivially for $v\in \partial \Gamma$). After the previous step, $\partial \Gamma \cup \{t(e_0)\} \subset W.$\\

Suppose $W \neq V.$ Since $\Gamma$ is connected, there exists an edge $e_1\in E$ with one endpoint in $W$ and the other not in $W.$ Without loss of generality, $s(e_1)\in W$ and $t(e_1) \notin W$ (the opposite case is symmetric). Then $J^{e_1}(a_e) = \dot{\gamma}_{s(e_1)}(s_0) = 0.$ By $(a),$ $J^{e_1}$ is parallel with $J^{e_1}(a_e) = 0,$ hence $J^{e_1}\equiv 0.$ In particular, $\dot{\gamma}_{t(e_1)}(s_0) = J^{e_1}(b_e) = 0,$ so $t(e_1)\in W.$ This enlarges $W.$\\

Iterating, we obtain $W = V$ in finitely many steps (since $V$ is finite). Therefore $\dot{\gamma}_{v}(s_0) = 0$ for every $v\in V,$ in particular for $v_{*},$ contradicting the assumption that $\gamma_{v_{*}}$ is non-constant.
\end{proof}

\begin{Prop} \label{proper}
    The reduced functional $\bar{F}$ is proper: $\bar{F}(p) \too \infty$ as $p\too \infty$ in $H^{|\Gamma_{\nt}|}.$
\end{Prop}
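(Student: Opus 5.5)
The plan is to bound $\bar F$ from below by a sum of squared distances and then use connectedness of $\Gamma$ together with the fixed boundary values. The lower bound comes from discarding the potential. Since $V_{x_e}\ge 0$, each per-edge action satisfies
\[
E_{x_e}(p,q)\;\ge\;\inf_{h(a_e)=p,\,h(b_e)=q}\int_{a_e}^{b_e}\|\dot h\|^2\,dt\;\ge\;\frac{d_H(p,q)^2}{\,l_e\,},
\]
where $l_e=b_e-a_e$. The second inequality is Cauchy--Schwarz, exactly as in the proof of Lemma \ref{lem mini} and Remark \ref{weak H1 existence}. Summing over edges gives
\[
\bar F(p)\;\ge\;\frac{1}{l_{\max}}\sum_{e\in E} d_H\bigl(p_{s(e)},p_{t(e)}\bigr)^2 ,
\]
with the convention that $p_v$ is the prescribed boundary value for $v\in\partial\Gamma$.

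Next, fix a boundary vertex $v_0\in\partial\Gamma$ with prescribed value $p_{v_0}$. Since $\Gamma$ is connected, every interior vertex $v$ is joined to $v_0$ by an unoriented path of edges $e_1,\dots,e_k$ with $k\le |E|$. The triangle inequality in $H$ and Cauchy--Schwarz give
\[
d_H(p_{v_0},p_v)^2\;\le\;\Bigl(\sum_{j=1}^k d_H(\text{endpoints of }e_j)\Bigr)^2\;\le\;|E|\sum_{e\in E} d_H\bigl(p_{s(e)},p_{t(e)}\bigr)^2 .
\]
Combining the two displays yields the coercive estimate
\[
\bar F(p)\;\ge\;c\,\max_{v\in\Gamma_{\nt}} d_H(p_{v_0},p_v)^2,\qquad c=\frac{1}{l_{\max}|E|}>0 .
\]

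Finally, $p\to\infty$ in $H^{|\Gamma_{\nt}|}$ means that the product distance to the fixed point $(p_{v_0},\dots,p_{v_0})$ tends to infinity. That product distance is comparable to $\max_v d_H(p_{v_0},p_v)$, so the estimate gives $\bar F(p)\to\infty$. Equivalently, every sublevel set of $\bar F$ is closed and bounded, hence compact by Hopf--Rinow, since $H^{N}$ is complete (Cartan--Hadamard, proposition above). For closedness, continuity of $\bar F$ follows from the smooth dependence of the minimizer in Lemma \ref{lem uni}; lower semicontinuity alone would suffice.

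There is no real obstacle here; the only point needing care is the role of the boundary. The estimate relies essentially on the existence of at least one boundary vertex with a fixed value. Without one, $\bar F$ would be invariant-ish under a simultaneous translation of all vertices and properness could fail. This is exactly where the hypothesis $\partial\Gamma\neq\emptyset$ and connectedness enter, just as in Proposition \ref{sgc}. Properness combined with the strict convexity of Proposition \ref{sgc} then yields the unique minimum claimed in Theorem \ref{kempf ness}.
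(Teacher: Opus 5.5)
Your proof is correct and follows the same route as the paper: discard $V_{x_e}\ge 0$ to bound each edge term below by the squared distance between its endpoint values, then use connectedness of $\Gamma$ and a fixed boundary value to force divergence. Your triangle-inequality and Cauchy--Schwarz bound $\bar F(p)\ge c\max_{v} d_H(p_{v_0},p_v)^2$ is a quantitative form of the paper's claim that some edge along a path to the boundary must have divergent length; it is slightly more careful, since it keeps the factor $1/l_e$ and avoids passing to subsequences.
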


\begin{proof}
For any $p,q \in H$ and $x\in \g_{\mathbb{C}},$ the lower bound $V_x \geq 0$ gives \[
E_{x}(p,q) \geq \inf \Big\{ \int_{a}^{b} \|\dot{h}\|^2 dt : h(a) = p,  h(b) = q \Big\} = d_{H}(p,q)^2,
\]

the energy of the unique minimizing geodesic from $p$ to $q.$ Suppose $p = (p_v)_{v\in \Gamma_{\nt}}$ tends to infinity in $H^{\Gamma_{\nt}}$, i.e., $\|p_{v_{*}}\| \too \infty$ for some $v_{*} \in \Gamma_{\nt}.$ Since $\Gamma$ is connected with non-empty boundary, there exists a path in $\Gamma$ from $v_{*}$ to a boundary vertex. Along this path, the values $p_v$ go from a divergent value at $v_{*}$ to a fixed value in $\partial \Gamma,$ so at least one edge $e$ along the path satisfies $d_{H}(p_{s(e)},p_{t(e)})^2 \too \infty.$ Then \[
\bar{F}(p) \geq E_{x^{e}}(p_{s(e)},p_{t(e)}) \geq d_{H}(p_{s(e)},p_{t(e)})^{2} \too \infty.
\]
\end{proof}

\begin{Rem}[Properness of the complexified gauge action and Kempf--Ness stability]
\label{rem:complex_properness}
In contrast to the unitary gauge group $\G_{0}(\Gamma)$, whose action on $\A_{\bh}(\Gamma)$ is proper by virtue of the compactness of the target Lie group $G$ combined with an Arzel\`a--Ascoli bootstrap on the gauge differential equation (Proposition \ref{proper gauge}), the complexified gauge group $\G_{\CC}^{0}(\Gamma)$ is modeled on the non-compact complex reductive Lie group $G_{\CC}$. Consequently, properness of the $\G_{\CC}^{0}(\Gamma)$-action on the complex locus $\A_{\CC}(\Gamma) = \mu_{\CC}^{-1}(0)$ does not follow from elementary compactness arguments and is not automatic \emph{a priori}.

This is precisely where the Kempf--Ness framework on the Cartan--Hadamard manifold $H^{|\Gamma_{\mathrm{int}}|}$ (with $H = G_{\CC}/G$) becomes essential. For any solution $(\alpha, \beta) \in \A_{\CC}(\Gamma)$, the reduced functional $\overline{F} : H^{|\Gamma_{\mathrm{int}}|} \to \mathbb{R}$ is proper (Proposition \ref{proper}) and strictly geodesically convex via the boundary propagation principle (Proposition \ref{sgc} and Theorem \ref{kempf ness}). Strict convexity ensures the existence and uniqueness of a global harmonic minimizer $p^*$, establishing that each $\mathcal{G}_{\mathbb{C}}^{0}(\Gamma)$-orbit contains a unique $\mathcal{G}_{0}(\Gamma)$-orbit of solutions lying in the unitary locus $\mathcal{A}_{\mathbb{H}}(\Gamma)$.

Furthermore, the properness of $\overline{F}$ guarantees that any gauge sequence $(g_n) \subset \mathcal{G}_{\mathbb{C}}^{0}(\Gamma)$ along which configurations converge remains confined to a compact set modulo $\mathcal{G}_{0}(\Gamma)$. The action of $\mathcal{G}_{\mathbb{C}}^{0}(\Gamma)$ on the stable locus $\mathcal{A}_{\mathbb{C}}(\Gamma)$ is therefore proper, its orbits are closed, and the resulting complex quotient $\mathcal{M}_{\mathbb{C}}(\Gamma) = \mathcal{A}_{\mathbb{C}}(\Gamma)/\mathcal{G}_{\mathbb{C}}^{0}(\Gamma)$ inherits a canonical smooth complex manifold structure for which the projection map is a holomorphic submersion.
\end{Rem}

\begin{proof}[Proof of Theorem \ref{kempf ness}]
    By Proposition \ref{gc}, $\bar{F}$ is geodesically convex. By Proposition \ref{sgc}, it is strictly geodesically convex. By Proposition \ref{proper}, it is proper. A proper, strictly geodesically convex function on a Cartan--Hadamard manifold attains its infimum at a unique point: existence follows from properness, and uniqueness follows from strict convexity (two minima on a geodesic would contradict strict convexity along the geodesic).
\end{proof}

We are now ready to prove the Donaldson--quiver theorem.
\begin{proof}[Proof of Theorem \ref{complex q}]
Given $(\alpha,\beta)$ satisfying the complex Nahm equation on each edge and Kirchhoff matching for $\beta,$ we gauge $\alpha^e$ to zero on each edge, producing constants $x^e \in \g_{\mathbb{C}}$ and holonomies $a^e \in G_{\mathbb{C}}.$ The quiver Kempf--Ness functional $\bar{F}$ on $H$ admits a unique minimum $p^{*} = (p^{*}_{v})_{v\in \Gamma_{\nt}}$ by Theorem \ref{kempf ness}. For each edge $e,$ the minimizing path $h^{*}_{e}: [0,1]\too H$ from $p^{*}_{s(e)}$ to $p^{*}_{t(e)}$ (with boundary values as appropriate) lifts to a path $g^e \in C^{2}([0,1],G_{\mathbb{C}}).$ The Euler--Lagrange equation for $h^{*}_{e}$ translates precisely into the real Nahm equation for $g^{e} \cdot (0,x^e).$ The matching at interior vertices ensures that $g = (g^e)_{e\in E} \in \G_{\mathbb{C}}(\Gamma),$ and the boundary conditions give $g\in \G^{0}_{\mathbb{C}}(\Gamma)$ (up to the initial gauging by $g_0$). For uniqueness, suppose that $g,g^{'} \in \G^{0}_{\mathbb{C}}(\Gamma)$ produce solutions to the full Nahm equations. Then, $F(g) = F(g^{'})$ implies that the corresponding junction points in $H^{\Gamma_{\nt}}$ are both critical points of $\bar{F}.$ By uniqueness of the minimizer, they agree. On each edge, the minimizing path is unique, so $g$ and $g^{'}$ project to the same paths in $H^{E},$ i.e., $g^{'} = kg$ for some $k\in \G_{0}(\Gamma).$ \\

Now for the biholomorphism: the construction yields a bijection \[
\Phi: \M_{\bh}(\Gamma) \too \A_{\mathbb{C}}(\Gamma)\sll{}\G^{0}_{\mathbb{C}}(\Gamma), \quad [A] \mtoo [(A_0 + iA_1,A_2 + iA_3)]
\]
induced by the inclusion $i:\A_{\bh}(\Gamma) \hookrightarrow \A_{\mathbb{C}}(\Gamma).$ We verify that $\Phi$ is a biholomorphism. 

The inclusion $\mathcal{A}_{\mathbb{H}}(\Gamma)\hookrightarrow\mathcal{B}_{\mathbb{H}}(\Gamma)\cong\mathcal{B}_{\mathbb{C}}^{2}(\Gamma)$ 
is smooth, and the projection $\mathcal{A}_{\mathbb{C}}(\Gamma)\longrightarrow\mathcal{M}_{\mathbb{C}}(\Gamma)$ 
is a smooth submersion: the linear differential arguments and slice decompositions of Section \ref{section 3} adapt directly to $\mathfrak{g}_{\mathbb{C}}$, while properness and 
freeness of the non-compact gauge action $\G_{\CC}^{0}(\Gamma)$ on the stable locus $\A_{\CC}(\Gamma)$ are guaranteed by Remark \ref{rem:complex_properness}. Composing, $\Phi$ is smooth. Moreover, the inverse $\Phi^{-1}(\alpha,\beta)$ is given by $h^{*}g\cdot(\alpha,\beta)\in \A_{\bh}(\Gamma),$ where $g\cdot(\alpha,\beta)_{|e} = (0,x^{e}), g_{e}(a_e) = 1$ and $h^{*}$ is the unique minimizer of $\bar{F}.$ The map $(\alpha,\beta) \too (x^{e})_{e\in E}$ is smooth by the ODE; the boundary data of $\bar{F}$ depend smoothly on $(\alpha,\beta)$; and the minimizer $p^{*} = p^{*}(\alpha,\beta)$ depends smoothly on these data. To see this, consider the map \[
\Psi: \A_{\mathbb{C}}(\Gamma) \times H^{\Gamma_{\nt}} \too T^{*}H^{\Gamma_{\nt}}, \quad \Psi((c,d),p) \mtoo d_{(c,d)}\bar{F}(p).
\] 
This map is smooth: $(x^{e})$ depends smoothly on $(c,d)$ by smooth dependence of ODE solutions on parameters, and $E_{x}(p,q)$ is jointly smooth in $(x,p,q)$ (Proposition \ref{jgc}, applied with $x$ as an additional parameter). At the critical point $p^{*} = p^{*}(\alpha,\beta),$ we have $d_{(\alpha,\beta)}\bar{F}(p^{*}) = 0,$ and the partial derivative $\partial_{p^*} \Psi$ equals $\mathrm{Hess}\bar{F}(p^*),$ which is positive definite as a bilinear form by strict geodesic convexity (Proposition \ref{sgc}), and hence invertible because $H^{\Gamma_{\nt}}$ is finite-dimensional. The implicit function theorem yields a smooth map $(c,d) \too p^{*}(c,d)$ locally near $(\alpha,\beta).$ Finally, the per-edge minimizers depend smoothly on $p^*$ by the last point of Lemma \ref{lem uni}. Composing, $\Phi^{-1}$ is smooth.\\

By Corollary \ref{kah mani}, $\A_{\mathbb{C}}(\Gamma) = \mu_{\mathbb{C}}^{-1}(0) = N^1$ is an $I_1$-complex Kähler Banach submanifold of $\B_{\bh}(\Gamma),$ and $\G^{0}_{\mathbb{C}}(\Gamma)$ acts holomorphically. Hence $\M_{\mathbb{C}}(\Gamma)\coloneqq \mu_{\mathbb{C}}^{-1}(0)/\G^{0}_{\mathbb{C}}(\Gamma)$ inherits a complex structure $\tilde{I}_1,$ characterised by the requirement that the projection $N^{1} \too \M_{\mathbb{C}}(\Gamma)$ be holomorphic.\\ On the other hand, the descended complex structure $I_{1,\text{red}}$ on $\M_{\bh}(\Gamma)$ defined by the quaternionic slice criterion (Proposition \ref{q slice}) and Lemma \ref{lemma 4.10} has the horizontal model \[
T_{[A]}\M_{\bh}(\Gamma) \cong T_{A}\A_{\bh}(\Gamma) \cap \Ker D_{A}^{*},
\]
on which $I_1$ acts (Remark \ref{rmq 4.12}). Since the inclusion $T_{A}\A_{\bh}(\Gamma) \hookrightarrow T_{A}N^{1} = \Ker d_{A}\mu_{\mathbb{C}}$ is $I_1$-linear (both are $I_1$-stable subspaces of $T_{A}\B_{\bh}(\Gamma)$), the differential \[
d_{[A]}\Phi: T_{[A]}\M_{\bh}(\Gamma) \too T_{[(\alpha,\beta)]}\M_{\mathbb{C}}(\Gamma)
\]
intertwines $I_{1,\text{red}}$ with $\tilde{I}_{1}.$ As $d_{[A]}\Phi$ is a bounded, $I_1$-linear isomorphism, $\Phi$ is a biholomorphism.
\end{proof}

\subsection{Edge-Length Independence of the Complex Moduli Space}
 
Let $\Gamma = (E, V, s, t)$ be an oriented connected quiver with boundary. To each edge $e \in E$, associate a compact interval $I_e = [a_e, b_e]$ with length $l_e := b_e - a_e > 0$, and denote by $l = (l_e)_{e \in E} \in \mathbb{R}_{>0}^E$ the length vector. We denote the corresponding affine configuration space and moduli spaces by $\mathcal{B}_{\mathbb{C}}(\Gamma_{l})$, $\mathcal{A}_{\mathbb{C}}(\Gamma_{l})$, and $\mathcal{M}_{\mathbb{C}}(\Gamma_{l}).$

\begin{Prop}[Holomorphic Symplectic Length Independence]
\label{prop:length-invariance-complex}
For any two length assignments $l,l_1 \in \mathbb{R}_{>0}^E$, there exists a canonical biholomorphism and holomorphic symplectomorphism
\[
\Phi_{l,l_1}^* : \mathcal{M}_{\mathbb{C}}(\Gamma_{l}) \xrightarrow{\ \sim\ } \M_{\mathbb{C}}(\Gamma_{l_1})
\]
preserving the boundary evaluation moment maps. Consequently, $\M_{\mathbb{C}}(\Gamma)$ is independent of the edge lengths.
\end{Prop}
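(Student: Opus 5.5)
The approach is to realize the identification by an explicit reparametrization of each edge, applied to the complex data only. For each edge $e$, let $\rho_e: I^{l_1}_e=[a'_e,b'_e]\to I^{l}_e=[a_e,b_e]$ be the orientation-preserving affine diffeomorphism with constant derivative $\lambda_e = l_e/l_{1,e}$. Define
\[
\Phi_{l,l_1}(\alpha,\beta)^e(t') \coloneqq \bigl(\lambda_e\,\alpha^e(\rho_e(t')),\ \beta^e(\rho_e(t'))\bigr).
\]
The connection component $\alpha$ is rescaled as a one-form, while the Higgs component $\beta$ is pulled back as a function. Then $\tfrac{d}{dt'}\beta^e\circ\rho_e = \lambda_e\,\dot\beta^e\circ\rho_e$, so the complex Nahm equation $\dot\beta+[\alpha,\beta]=0$ is carried to itself after multiplication by $\lambda_e$. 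Endpoint values of $\beta$ are unchanged, so the Kirchhoff condition for $\beta$ at interior vertices is preserved. Hence $\Phi_{l,l_1}$ maps $\A_{\CC}(\Gamma_l)$ onto $\A_{\CC}(\Gamma_{l_1})$, and it is clearly a $\CC$-linear Banach isomorphism with inverse $\Phi_{l_1,l}$. For gauge transformations, set $g\mapsto g\circ\rho$. This is a group isomorphism $\G^0_{\CC}(\Gamma_l)\to\G^0_{\CC}(\Gamma_{l_1})$, since vertex values are unchanged. The map is equivariant: $\dot g g^{-1}$ rescales by the same $\lambda_e$ as $\alpha$. Therefore $\Phi_{l,l_1}$ descends to a bijection $\Phi^*_{l,l_1}$ of the quotients, which is holomorphic with holomorphic inverse. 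This uses the complex manifold structure on $\M_{\CC}$ from Remark \ref{rem:complex_properness}, the fact that the map on $\A_{\CC}$ is complex-linear, and the fact that the projections are holomorphic submersions.

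The point needing care is the holomorphic symplectic form. I would write the complex-symplectic form on $\B_{\CC}$ (for $I_1$) as $\omega_{\CC}=\omega_2+i\omega_3$. A direct computation from Definition \ref{de 4.1} gives
\[
\omega_{\CC}((\dot\alpha_1,\dot\beta_1),(\dot\alpha_2,\dot\beta_2)) = c\int_I \langle \dot\alpha_1,\dot\beta_2\rangle-\langle\dot\alpha_2,\dot\beta_1\rangle\,dt,
\]
where $c$ is a constant and $\langle\cdot,\cdot\rangle$ is the complex-bilinear extension. Under $\Phi$, each integrand picks up the factor $\lambda_e$ from $\alpha$, and $dt=\lambda_e\,dt'$ contributes the compensating $\lambda_e^{-1}$. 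So $\Phi^*\omega_{\CC}=\omega_{\CC}$ on the nose, edge by edge. By contrast, $\|\alpha+\alpha^*\|^2$ in the metric has no compensating factor, which is consistent with the later length dependence of the metric. The reduced form satisfies $\pi^*\omega_{\CC,\mathrm{red}}=i^*\omega_{\CC}$, in the holomorphic analogue of Theorem \ref{infinite red}. Since $\Phi$ intertwines inclusions and projections, $\Phi^*_{l,l_1}$ is a holomorphic symplectomorphism.

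For the boundary moment maps, the residual $G_{\CC}^{\partial\Gamma}$-action is intertwined because $\Phi$ is equivariant for constant-at-the-boundary gauge transformations. The moment map for the action at $v\in\partial\Gamma$ is $\pm\beta^e$ evaluated at the endpoint, which comes from the boundary term in the integration by parts of Proposition \ref{moment iden}. These endpoint values are untouched by $\Phi$, so the moment maps are preserved. Canonicity means independence of choices: $\rho_e$ is the unique orientation-preserving affine map, and $\Phi_{l_1,l_2}\circ\Phi_{l,l_1}=\Phi_{l,l_2}$.

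I expect the main obstacle to be only the sign and normalization bookkeeping in identifying $\omega_2+i\omega_3$ with the pairing $\int\langle\delta\alpha,\delta\beta\rangle$. The key structural fact is that this pairing has exactly one $\alpha$ factor per term. An alternative check, useful if the computation is confusing, is to observe that $\Phi$ preserves $\mu_{\CC}$ up to the pairing-compatible rescaling. The moment-map identity then forces compatibility of the forms on the orbit directions, and the explicit formula handles the rest.
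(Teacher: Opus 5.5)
Your proposal is correct and follows essentially the same route as the paper: an edgewise affine reparametrization that pulls back $\alpha$ as a one-form and $\beta$ as a function. The Jacobian then cancels in $\int\langle\delta\alpha,\delta\beta'\rangle-\langle\delta\alpha',\delta\beta\rangle\,dt$, where your constant is in fact $c=1$. The differences are minor: the paper normalizes to $l_1=\mathbf{1}$ rather than mapping $l\to l_1$ directly, and your explicit check that the boundary moment maps $\pm\beta^e$ at the endpoints are unchanged fills in a point the paper's proof leaves implicit.
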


\begin{proof}
Without loss of generality, let $l_1 = \mathbf{1} = (1)_{e\in E}$ be the unit length vector with standard intervals $J_e = [0, 1]$. For each $e \in E$, consider the affine orientation-preserving diffeomorphism
\[
\phi_e : J_e \longrightarrow I_e, \qquad s \longmapsto a_e + s l_e.
\]
We identify the complex connection $\alpha^e$ with a Lie-algebra-valued $1$-form $\alpha_{\mathbb{C}}^e = \alpha^e(t)\,dt$ on $I_e$, and the Higgs field $\beta^e$ with a Lie-algebra-valued $0$-form (a section of $I_e \times \mathfrak{g}_{\mathbb{C}}$). We define the pullback map on fields $\Phi^* : \mathcal{B}_{\mathbb{C}}(\Gamma_{l}) \to \mathcal{B}_{\mathbb{C}}(\Gamma_{\mathbf{1}})$ edge-wise by
\[
\tilde{\alpha}^e(s)\,ds := \phi_e^*(\alpha^e(t)\,dt) = l_e\,\alpha^e(a_e + s l_e)\,ds, \qquad \tilde{\beta}^e(s) := \phi_e^*(\beta^e)(s) = \beta^e(a_e + s l_e).
\]
We verify the invariance of each component of the reduction:

\begin{enumerate}
    \item \textbf{Complex Nahm Equation:} The equation $\dot{\beta}^e + [\alpha^e, \beta^e] = 0$ is equivalent to the vanishing of the covariant exterior derivative $d_{\alpha_{\mathbb{C}}^e}\beta^e := d\beta^e + [\alpha_{\mathbb{C}}^e, \beta^e] = 0$ on $I_e$. Since the exterior derivative and the Lie bracket commute with smooth pullbacks,
    \[
    \phi_e^*(d\beta^e + [\alpha_{\mathbb{C}}^e, \beta^e]) = d\tilde{\beta}^e + [\tilde{\alpha}_{\mathbb{C}}^e, \tilde{\beta}^e] = \left(\frac{d\tilde{\beta}^e}{ds}(s) + [\tilde{\alpha}^e(s), \tilde{\beta}^e(s)]\right)ds = 0.
    \]
    Thus, $\Phi^*$ preserves the complex Nahm equation edge-wise.

    \item \textbf{Kirchhoff Matching:} The complex Kirchhoff condition at any $v \in \Gamma_{\mathrm{int}}$ depends only on endpoint values:
    \[
    \sum_{e \in s^{-1}(v)} \beta^e(a_e) - \sum_{e \in t^{-1}(v)} \beta^e(b_e) = \sum_{e \in s^{-1}(v)} \tilde{\beta}^e(0) - \sum_{e \in t^{-1}(v)} \tilde{\beta}^e(1) = 0.
    \]
    Hence, $\Phi^*(\mathcal{A}_{\mathbb{C}}(\Gamma_{l})) = \mathcal{A}_{\mathbb{C}}(\Gamma_{\mathbf{1}}).$

    \item \textbf{Gauge Equivariance:} The complex gauge group $\mathcal{G}_{\mathbb{C}}^0(\Gamma_{l})$ pulls back via $\tilde{g}^e(s) = g^e(\phi_e(s))$ to $\mathcal{G}_{\mathbb{C}}^0(\Gamma_{\mathbf{1}})$. Since $\phi_e(0) = a_e$ and $\phi_e(1) = b_e$, vertex matchings and boundary triviality conditions are preserved. The gauge action transforms as
    \[
    \phi_e^*(g \cdot \alpha_{\mathbb{C}}^e) = \phi_e^*(-\dot{g}g^{-1}dt + g\alpha^e g^{-1}dt) = -\left(\frac{d\tilde{g}}{ds}\tilde{g}^{-1}\right)ds + \tilde{g}\tilde{\alpha}^e\tilde{g}^{-1}ds = \tilde{g} \cdot \tilde{\alpha}_{\mathbb{C}}^e.
    \]
    Thus, $\Phi^*$ is $\mathcal{G}_{\mathbb{C}}^0$-equivariant.

    \item \textbf{Holomorphic Symplectic Form:} The holomorphic symplectic form $\Omega_{\mathbb{C}}^l = \omega_2 + i\omega_3$ on $\mathcal{B}_{\mathbb{C}}(\Gamma_{l})$ is given on variations $(\delta\alpha, \delta\beta), (\delta'\alpha, \delta'\beta)$ by
    \[
    \Omega_{\mathbb{C}}^{l}(\delta A, \delta' A) = \sum_{e\in E} \int_{a_e}^{b_e} \Big( \langle \delta\alpha^e(t), \delta'\beta^e(t) \rangle - \langle \delta'\alpha^e(t), \delta\beta^e(t) \rangle \Big)\,dt.
    \]
    Changing variables $t = a_e + s l_e$ with $dt = l_e\,ds$, and substituting $\delta\tilde{\alpha}^e(s) = l_e\,\delta\alpha^e(\phi_e(s))$ and $\delta\tilde{\beta}^e(s) = \delta\beta^e(\phi_e(s))$, the Jacobian factor $l_e$ cancels identically:
    \[
    \int_{a_e}^{b_e} \langle \delta\alpha^e(t), \delta'\beta^e(t) \rangle\,dt = \int_0^1 \left\langle \frac{1}{l_e}\delta\tilde{\alpha}^e(s), \delta'\tilde{\beta}^e(s) \right\rangle l_e\,ds = \int_0^1 \langle \delta\tilde{\alpha}^e(s), \delta'\tilde{\beta}^e(s) \rangle\,ds.
    \]
    Thus, $\Phi^* \Omega_{\mathbb{C}}^{l} = \Omega_{\mathbb{C}}^{\mathbf{1}}$.
\end{enumerate}
Since $\Phi^*$ is an equivariant, symplectic isomorphism of the ambient Banach manifolds preserving the moment map zero loci, it descends to a canonical holomorphic symplectomorphism $\mathcal{M}_{\mathbb{C}}(\Gamma_{l}) \cong \mathcal{M}_{\mathbb{C}}(\Gamma_{\mathbf{1}}).$
\end{proof}

\begin{Rem}[Finite-Dimensional Verification via Holonomy Reduction]
The result becomes equally transparent once $\mathcal{M}_{\mathbb{C}}(\Gamma)$ is identified with a finite-dimensional complex symplectic quotient (Theorem \ref{thm:finite-dim-reduction} below). On any edge $e$, fixing the gauge $\alpha^e \equiv 0$ implies that $\beta^e(t)$ is constant. The data on $e$ modulo based gauge equivalence are parameterized by the holonomy $g_e := g^{e}_{\alpha}(b_e)$ of Theorem \ref{thm:finite-dim-reduction} and $\xi_e = \beta^e(a_e) \in \mathfrak{g}_{\mathbb{C}}$, which identifies the unconstrained edge data with $T^* G_{\mathbb{C}}$. Since $\beta^{e}(b_e) = \Ad_{g_e}\xi_e$, the complex Kirchhoff conditions become
\[
\mu_v((g_e, \xi_e)_{e\in E}) = \sum_{e \in s^{-1}(v)} \xi_e - \sum_{e \in t^{-1}(v)} \mathrm{Ad}_{g_e}(\xi_e) = 0 \quad (\forall v \in \Gamma_{\mathrm{int}}).
\]
Because this algebraic condition and the canonical symplectic form on $(T^* G_{\mathbb{C}})^E$ do not depend on $l_e$, the reduction $(T^* G_{\mathbb{C}})^E /\!\!/\!\!/_{\mathbb{C}} G_{\mathbb{C}}^{\Gamma_{\mathrm{int}}} \cong \mathcal{M}_{\mathbb{C}}(\Gamma)$ is manifestly independent of the edge lengths.
\end{Rem}

\begin{Thm}[Identification with $(T^*G_{\mathbb{C}})^E /\!\!/\!\!/_{\mathbb{C}} G_{\mathbb{C}}^{\Gamma_{\mathrm{int}}}$]
\label{thm:finite-dim-reduction}
Let $\Gamma = (E, V, s, t)$ be a connected quiver with non-empty boundary. The map
\[
\varphi : \mathcal{M}_{\mathbb{C}}(\Gamma) \longrightarrow (T^*G_{\mathbb{C}})^E /\!\!/\!\!/_{\mathbb{C}} G_{\mathbb{C}}^{\Gamma_{\mathrm{int}}}, \qquad [(\alpha, \beta)] \longmapsto [(g_{\alpha}^e(b_e), \beta^e(a_e))_{e \in E}],
\]
is an isomorphism of complex Hamiltonian $G_{\mathbb{C}}^{\partial\Gamma}$-spaces, where $g_{\alpha}^e \in C^2(I_e, G_{\mathbb{C}})$ is the unique solution to the parallel transport equation $\dot{g}_{\alpha}^e + \alpha^e g_{\alpha}^e = 0$ with $g_{\alpha}^e(a_e) = \mathbf{1}$.
\end{Thm}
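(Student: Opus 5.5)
The plan is to realise $\varphi$ as the map induced on quotients by an equivariant map of configuration spaces, and then check in turn that it is bijective, smooth with smooth inverse, symplectic, and compatible with the boundary moment maps.

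\emph{Step 1: the map before quotienting.} Define
\[
\tilde\varphi:\A_{\CC}(\Gamma)\too (T^*G_{\CC})^E,\qquad (\alpha,\beta)\mtoo \bigl(g^e_\alpha(b_e),\beta^e(a_e)\bigr)_{e\in E}.
\]
Gauge the configuration by $(g^e_\alpha)^{-1}$ edge by edge. This kills $\alpha^e$, so the complex Nahm equation forces $\beta^e$ to be constant in the new gauge. Transporting back gives
\[
\beta^e(b_e)=\Ad_{g_e}\xi_e ,
\]
with the orientation conventions fixed so that this agrees with the Remark preceding the theorem. Hence the complex Kirchhoff condition at $v\in\Gamma_{\nt}$ becomes exactly $\mu_v(g,\xi)=0$, where $\mu_v$ is the moment map of the $G_{\CC}^{\Gamma_{\nt}}$-action on $(T^*G_{\CC})^E$ given by $k\cdot(g_e,\xi_e)=(k_{t(e)}g_ek_{s(e)}^{-1},\Ad_{k_{s(e)}}\xi_e)$. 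Interior components of $k$ act in this way; boundary components are frozen.

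\emph{Step 2: equivariance.} For $u\in\G^0_{\CC}(\Gamma)$ one has $g^e_{u\cdot\alpha}(t)=u(t)\,g^e_\alpha(t)\,u(a_e)^{-1}$. Therefore $\tilde\varphi(u\cdot(\alpha,\beta))=k\cdot\tilde\varphi(\alpha,\beta)$, where $k=(u(v))_{v\in\Gamma_{\nt}}$. The evaluation $\G^0_{\CC}(\Gamma)\to G_{\CC}^{\Gamma_{\nt}}$ is surjective and its kernel, the gauge transformations trivial at every vertex, acts trivially on the image. So $\tilde\varphi$ descends to $\varphi$.

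\emph{Step 3: bijectivity.} For injectivity, suppose $\tilde\varphi(\alpha,\beta)=k\cdot\tilde\varphi(\alpha',\beta')$. Choose $u\in\G^0_{\CC}$ with vertex values $k$ and correct it by an edge-wise gauge. The map $t\mapsto g^e_{\alpha'}(t)\,g^e_\alpha(t)^{-1}$, suitably conjugated, then intertwines the two configurations and matches at both endpoints; this uses that the holonomies agree.

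For surjectivity, take a point $(g_e,\xi_e)$ of $\mu^{-1}(0)$. Choose a $C^2$ path $\gamma_e$ in $G_{\CC}$ from $1$ to $g_e$, which exists because $G_{\CC}$ is connected. Set $\alpha^e=-\dot\gamma_e\gamma_e^{-1}$ and $\beta^e=\Ad_{\gamma_e}\xi_e$.

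\emph{Step 4: smooth and Hamiltonian structure.} Smoothness of $\varphi$ follows from smooth dependence of ODE solutions on parameters. For $\varphi^{-1}$, first choose a smooth local section $g\mapsto\gamma_g$, for instance $\gamma_g(t)=\exp(t\log g)$ near a base point composed with a fixed path. Then use that $\A_{\CC}(\Gamma)\to\M_{\CC}(\Gamma)$ is a submersion (Remark \ref{rem:complex_properness}). Transferring the quotient structure across the finite-dimensional side also needs freeness of the $G_{\CC}^{\Gamma_{\nt}}$-action at points of $\mu^{-1}(0)$. I would obtain this by the boundary-propagation argument of Proposition \ref{sgc}: along any edge with one endpoint fixed, $k_{t(e)}=g_ek_{s(e)}g_e^{-1}$, so triviality at the boundary propagates through the connected quiver.

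For the symplectic forms, I would reduce to a single edge with $\alpha\equiv0$ via Proposition \ref{prop:length-invariance-complex} and the equivariance above. There one must check that the pullback of the canonical form on $T^*G_{\CC}$ (in the left trivialisation) equals $\Omega_{\CC}$ restricted to horizontal tangent vectors. The identity to verify is
\[
\int_0^1\bigl(\langle\delta\alpha,\delta'\beta\rangle-\langle\delta'\alpha,\delta\beta\rangle\bigr)dt
=\langle g^{-1}\delta g,\delta'\xi\rangle-\langle g^{-1}\delta' g,\delta\xi\rangle+\langle\xi,[g^{-1}\delta g,g^{-1}\delta'g]\rangle .
\]
It should follow from the linearised complex Nahm equation and integration by parts, since $\delta g$ is the integral of $\delta\alpha$ along the edge.

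The boundary moment maps correspond under $\varphi$ because both are given by evaluation of $\beta$ at boundary vertices: $\xi_e$ at incoming boundaries and $\Ad_{g_e}\xi_e$ at outgoing ones.

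I expect the main obstacle to be this symplectic computation. The key step is justifying that the pulled-back infinite-dimensional form descends and agrees with the finite-dimensional reduced form on the quotient. This means checking that all the gauge-direction contributions cancel, so that the form is basic, and tracking sign conventions for the boundary terms at $a_e$ and $b_e$.
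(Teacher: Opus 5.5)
Your proposal is correct and follows the paper's proof essentially step by step: you gauge $\alpha$ to zero so that Kirchhoff becomes $\nu=0$, invert via chosen paths with smooth local sections, identify the forms by an explicit integration-by-parts computation, and read off the boundary moment maps from $\beta$ at the boundary vertices; the only differences are that you pull back $\Omega_{\mathrm{can}}$ along the canonical map $\tilde\varphi$ instead of computing $\psi^*\Omega_{\CC}$ along the explicit paths $e^{ty}e^{tsu}$, and that you add an injectivity argument and a freeness check that the paper omits. The step you flag as the main obstacle is not one: your identity uses only the linearised complex Nahm equation, so it holds on all of $T\A_{\CC}$, and the submersion $\A_{\CC}\to\M_{\CC}$ then gives descent directly (note also that with $\dot{\delta g}=-\delta\alpha$ the computation yields your right-hand side with an overall minus sign, a convention issue the paper's own computation glosses over as well).
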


\begin{proof}
We follow the construction in \cite[Section 5]{maiza2026lax}, adapted to the complex-holomorphic setting.\\

\textbf{Step 1: Well-definedness and Bijectivity.}
For any solution $(\alpha, \beta) \in \mathcal{A}_{\mathbb{C}}(\Gamma)$, the transformation $g_{\alpha}^{-1} \cdot (\alpha, \beta)$ gauges the connection to zero: $\alpha' = 0$. The complex Nahm equation then reduces to $\dot{\beta}' = 0$, so $\beta'^e(t) \equiv x_e := \beta^e(a_e) \in \mathfrak{g}_{\mathbb{C}}$ is constant along each edge. Transforming back gives $\beta^e(b_e) = \mathrm{Ad}_{g_{\alpha}^e(b_e)} x_e$. The complex Kirchhoff condition at $v \in \Gamma_{\mathrm{int}}$ becomes
\[
\sum_{e \in s^{-1}(v)} x_e - \sum_{e \in t^{-1}(v)} \mathrm{Ad}_{g_{\alpha}^e(b_e)} x_e = 0,
\]
which is precisely the zero locus of the holomorphic moment map $\nu : (T^*G_{\mathbb{C}})^E \to (\mathfrak{g}_{\mathbb{C}}^*)^{\Gamma_{\mathrm{int}}}$ for the diagonal action of $G_{\mathbb{C}}^{\Gamma_{\mathrm{int}}}$. If $(\alpha, \beta)$ is transformed by $h \in \mathcal{G}_{\mathbb{C}}^0(\Gamma)$, the holonomies transform by $h(t(e)) g_{\alpha}^e(b_e) h(s(e))^{-1}$ and $\beta^e(a_e)$ by $\mathrm{Ad}_{h(s(e))}$, which corresponds to the $G_{\mathbb{C}}^{\Gamma_{\mathrm{int}}}$-gauge orbit. Thus, $\varphi$ is well-defined.\\

To construct the smooth inverse, let $(a, x) \in \nu^{-1}(0)$. For each edge $e$, choose a path $\gamma_a^e \in C^2(I_e, G_{\mathbb{C}})$ with $\gamma_a^e(a_e) = \mathbf{1}$ and $\gamma_a^e(b_e) = a_e$. Define
\[
\psi : (T^*G_{\mathbb{C}})^E /\!\!/\!\!/_{\mathbb{C}} G_{\mathbb{C}}^{\Gamma_{\mathrm{int}}} \longrightarrow \mathcal{M}_{\mathbb{C}}(\Gamma), \qquad [(a, x)] \longmapsto [(\gamma_a \cdot 0, \gamma_a \cdot x)].
\]
Any other choice $\tilde{\gamma}_a^e$ satisfies $\tilde{\gamma}_a^e(\gamma_a^e)^{-1} \in \mathcal{G}_{\mathbb{C}}^0(\Gamma)$, so the class in $\mathcal{M}_{\mathbb{C}}(\Gamma)$ is independent of the choice of path. The moment condition $\nu(a, x) = 0$ guarantees that $\psi(a, x)$ satisfies the Kirchhoff matching at interior vertices. Smooth local sections of $\gamma \mapsto \gamma(b_e)$ guarantee that $\psi$ is smooth, which establishes that $\varphi$ is a biholomorphism.\\

\textbf{Step 2: Holomorphic Symplectic Identification.}
Using the standard coordinates on $T^*G_{\mathbb{C}}$, choose $\gamma_{a e^{s u}}(t) = e^{t y} e^{t s u}$ for $a = e^y$. The differential of $\psi$ is
\[
d\psi_{(a, x)}(u, v) = \left( \mathrm{Ad}_{\gamma_a} u, \, \mathrm{Ad}_{\gamma_a}(s [u, x] + v) \right).
\]
Evaluating the ambient holomorphic symplectic form $\Omega_{\mathbb{C}}$ on two tangent vectors yields
\begin{align*}
(\psi^* \Omega_{\mathbb{C}})_{(a, x)}((u_1, v_1), (u_2, v_2)) &= \int_I \Big( \langle u_1, s [u_2, x] + v_2 \rangle_{\mathbb{C}} - \langle u_2, s [u_1, x] + v_1 \rangle_{\mathbb{C}} \Big) ds \\
&= \langle u_1, v_2 \rangle_{\mathbb{C}} - \langle u_2, v_1 \rangle_{\mathbb{C}} + \langle x, [u_1, u_2] \rangle_{\mathbb{C}},
\end{align*}
which matches the canonical holomorphic symplectic form on $(T^*G_{\mathbb{C}})^E$. Finally, the unreduced boundary values give $\beta(v) = \mathrm{Ad}_{a_e} x_e$ for $v \in \partial\Gamma^+$ and $\beta(v) = x_e$ for $v \in \partial\Gamma^-$, intertwining the residual $G_{\mathbb{C}}^{\partial\Gamma}$ moment maps.
\end{proof}

\paragraph{\textbf{The Gluing Law via Reduction by Stages}}

Let $\Gamma_1$ and $\Gamma_2$ be two quivers with non-empty boundaries such that $ \partial\Gamma_1^+ = \partial\Gamma_2^- = D.$ Let $\Gamma = \Gamma_1 *_D \Gamma_2$ denote the quiver obtained by identifying the vertices in $D$ (see also \cite{maiza2026lax}, \cite[Subsection 6.3]{maiza2026multiplicative}).

\begin{Prop}[Symplectic Cutting and Gluing]
\label{thm:gluing-law}
There exists a canonical biholomorphism and holomorphic symplectomorphism
\[
\M_{{\CC}}(\Gamma_1 *_D \Gamma_2) \cong \left( \M_{\CC}(\Gamma_1) \times \M_{\CC}(\Gamma_2) \right) /\!\!/\!\!/_{\CC}\, G_{\CC}^D.
\]
\end{Prop}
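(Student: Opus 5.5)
The plan is to prove the gluing law entirely on the finite-dimensional side, using Theorem \ref{thm:finite-dim-reduction} to replace each of the three moduli spaces by a complex-symplectic quotient of a product of copies of $T^*G_{\CC}$, and then invoking reduction by stages. First I record the combinatorics of $\Gamma=\Gamma_1*_D\Gamma_2$. Its edge set is $E=E_1\sqcup E_2$. Its interior vertices are $\Gamma_{\nt}=(\Gamma_1)_{\nt}\sqcup(\Gamma_2)_{\nt}\sqcup D$, since each $v\in D$ becomes a vertex with exactly one incoming edge from $\Gamma_1$ and one outgoing edge into $\Gamma_2$, hence of degree $2$. Connectedness of $\Gamma$ follows if $\Gamma_1,\Gamma_2$ are connected and $D\neq\emptyset$, and $\Gamma$ still has non-empty boundary provided $\partial\Gamma_1^-\cup\partial\Gamma_2^+\neq\emptyset$. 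I would state this boundary hypothesis explicitly, or else treat the closed case separately in the Wehrheim--Woodward sense. By Theorem \ref{thm:finite-dim-reduction},
\[
\M_{\CC}(\Gamma)\cong (T^*G_{\CC})^{E_1}\times(T^*G_{\CC})^{E_2}\,/\!\!/\!\!/_{\CC}\,G_{\CC}^{(\Gamma_1)_{\nt}}\times G_{\CC}^{(\Gamma_2)_{\nt}}\times G_{\CC}^{D}.
\]

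Next I check that the moment map for the factor $G_{\CC}^D$ in this product splits correctly. At $v\in D$, with incoming edge $e_1\in E_1$ and outgoing edge $e_2\in E_2$, the Kirchhoff component reads $\xi_{e_2}-\Ad_{g_{e_1}}\xi_{e_1}$. By the final sentence of the proof of Theorem \ref{thm:finite-dim-reduction}, $\Ad_{g_{e_1}}\xi_{e_1}$ is exactly the residual moment map of $\M_{\CC}(\Gamma_1)$ at $v\in\partial\Gamma_1^+$, and $\xi_{e_2}$ is that of $\M_{\CC}(\Gamma_2)$ at $v\in\partial\Gamma_2^-$. The $G_{\CC}^D$-action induced from vertex gauge transformations is $g_{e_1}\mapsto h_vg_{e_1}$ on the $\Gamma_1$ side and $(g_{e_2},\xi_{e_2})\mapsto(g_{e_2}h_v^{-1},\Ad_{h_v}\xi_{e_2})$ on the $\Gamma_2$ side. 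This is the diagonal action of $G_{\CC}^D$ through the boundary actions of $G_{\CC}^{\partial\Gamma_1^+}$ and $G_{\CC}^{\partial\Gamma_2^-}$. The two boundary moment maps enter with opposite signs, and this is the anti-diagonal moment map of the diagonal action, up to the standard sign convention (inverting one side's action). The residual actions of $G_{\CC}^{\partial\Gamma_1^-}$ and $G_{\CC}^{\partial\Gamma_2^+}$ commute with everything and are untouched.

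Then I apply complex-symplectic reduction by stages to the normal subgroup $G_{\CC}^{(\Gamma_1)_{\nt}}\times G_{\CC}^{(\Gamma_2)_{\nt}}$. Reducing first by it gives $\M_{\CC}(\Gamma_1)\times\M_{\CC}(\Gamma_2)$, again by Theorem \ref{thm:finite-dim-reduction}, since the factors act separately on $E_1$ and $E_2$ data. The subsequent reduction by $G_{\CC}^D$ with the moment map identified above is exactly the right-hand side of the claim. Canonicity comes from the canonicity of $\varphi$; it remains to check that the composite isomorphism is induced, on the infinite-dimensional side, by concatenation of fields along the glued vertices. This holds because parallel transport and $\beta(a_e)$ are computed edge by edge.

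The main obstacle is regularity in reduction by stages. One must show that $G_{\CC}^D$ acts freely and properly on the level set inside $\M_{\CC}(\Gamma_1)\times\M_{\CC}(\Gamma_2)$, and that $0$ is a regular value, so that the quotient is a manifold and coincides with the one-step quotient. Freeness and regularity I would obtain by transporting the corresponding statements for $\Gamma$ (Theorem \ref{smooth quotient}, Remark \ref{rem:complex_properness}) through the identification: the one-step action is free and proper with regular level set, and this descends stagewise. For properness I would rely on Remark \ref{rem:complex_properness}. If this proves insufficient, I would interpret the right-hand side as a Wehrheim--Woodward composition, so that the statement is an identification of Lagrangian correspondences.
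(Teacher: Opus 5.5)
Your proposal is correct and follows the same route as the paper. The paper's proof is just the appeal to Theorem \ref{thm:finite-dim-reduction} together with reduction by stages for the product $G_{\CC}^{(\Gamma_1)_{\nt}}\times G_{\CC}^{(\Gamma_2)_{\nt}}\times G_{\CC}^{D}$. Your matching of the $D$-components of the Kirchhoff moment map with the boundary moment maps of $\M_{\CC}(\Gamma_1)$ and $\M_{\CC}(\Gamma_2)$ fills in details the paper leaves implicit, as does your caveat that $\Gamma_1 *_D \Gamma_2$ must keep a non-empty boundary.

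For the regularity step you do not need Remark \ref{rem:complex_properness}, which is only heuristic. In the finite-dimensional model, send $(g_e)_{e\in E}$ to the products of holonomies along spanning-tree paths from a fixed boundary vertex. This map intertwines the $G_{\CC}^{\Gamma_{\nt}}$-action with left multiplication on $G_{\CC}^{\Gamma_{\nt}}$, so the action is free and proper directly. That conclusion then descends to the $G_{\CC}^{D}$-stage exactly as you describe.
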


\begin{proof}
This follows immediately from Theorem \ref{thm:finite-dim-reduction} and reduction by stages.
\end{proof}

\subsection{Dependence of the Hyperkähler Metric on the Edge Lengths} \label{subsec:metric-length}

While the complex symplectic manifold $(\mathcal{M}_{\mathbb{C}}(\Gamma), \Omega_{\mathbb{C}})$ is topological with respect to edge metrics, the hyperkähler metric $g_{\mathbb{H}}$ on $\mathcal{M}_{\mathbb{H}}(\Gamma)$ depends on the edge lengths. To make this statement precise, we transport all the metrics to a fixed complex manifold.

Throughout this subsection, $\Gamma$ is connected with non-empty boundary, and we set
\[
X(\Gamma) \coloneqq (T^*G_{\mathbb{C}})^E /\!\!/\!\!/_{\mathbb{C}} G_{\mathbb{C}}^{\Gamma_{\mathrm{int}}}.
\]
For a length vector $\mathbf{l} \in \R^{E}_{>0}$, let
\[
\Psi_{\mathbf{l}} \coloneqq \varphi \circ \Phi : \M_{\bh}(\Gamma_{\mathbf{l}}) \too \M_{\CC}(\Gamma_{\mathbf{l}}) \too X(\Gamma),
\]
where $\Phi$ is the biholomorphism of Theorem \ref{complex q} and $\varphi$ is the isomorphism of Theorem \ref{thm:finite-dim-reduction}. Since the parallel transport of a connection along an edge does not depend on the parametrization of the edge, the target $X(\Gamma)$ does not depend on $\mathbf{l}$, and $\Psi_{\mathbf{l}}$ is a biholomorphism from $(\M_{\bh}(\Gamma_{\mathbf{l}}),I_1)$ onto $X(\Gamma)$. We write
\[
g^{X}_{\mathbf{l}} \coloneqq (\Psi_{\mathbf{l}}^{-1})^{*} g_{\bh,\mathbf{l}}
\]
for the hyperkähler metric transported to $X(\Gamma)$. For $c > 0$, the \emph{fibrewise dilation} $\delta_c[(g_e,\xi_e)_{e\in E}] \coloneqq [(g_e, c\,\xi_e)_{e\in E}]$ is a well-defined biholomorphism of $X(\Gamma)$ with $\delta_c^{*}\Omega_{\mathrm{can}} = c\,\Omega_{\mathrm{can}}$, since the moment map $\nu$ of Theorem \ref{thm:finite-dim-reduction} is linear in $\xi$, the $G_{\CC}^{\Gamma_{\nt}}$-action commutes with $\delta_c$, and the canonical form on $T^*G_{\CC} \cong G_{\CC}\times \g_{\CC}$ is linear in $\xi$. Finally, let
\[
K(\Gamma) \coloneqq \Big\{ \zeta = (\zeta_e)_{e\in E} \in \g_{\CC}^{E} : \sum_{e\in s^{-1}(v)} \zeta_e - \sum_{e\in t^{-1}(v)} \zeta_e = 0 \text{ for all } v\in \Gamma_{\nt} \Big\}.
\]
Applying Lemma \ref{useful lemma} with $B = 0$ and $W = \g_{\CC}$ regarded as a real vector space, the kernel of $R$ consists of the edgewise constant maps satisfying the Kirchhoff law, that is, of $K(\Gamma)$; hence $\dim_{\R} K(\Gamma) = 2(|E| - |\Gamma_{\nt}|)\dim G$.

\begin{Prop}[Metric Length Dependence]
\label{prop:metric-length-dependence}
Let $\Gamma$ be a connected quiver with non-empty boundary, and assume $\dim G > 0$ and $|E| > |\Gamma_{\nt}|$, so that $\dim \M_{\bh}(\Gamma) > 0$.
\begin{enumerate}
    \item \textbf{(Homothety under uniform rescaling.)} For every $\lambda > 0$ there is a $G^{\partial\Gamma}$-equivariant diffeomorphism $r_\lambda: \M_{\bh}(\Gamma_{\mathbf{l}}) \to \M_{\bh}(\Gamma_{\lambda\mathbf{l}})$ such that
    \[
    r_\lambda^{*} g_{\bh,\lambda\mathbf{l}} = \lambda^{-1} g_{\bh,\mathbf{l}}, \qquad r_\lambda^{*}\omega_{a,\lambda\mathbf{l}} = \lambda^{-1}\omega_{a,\mathbf{l}}, \qquad (r_{\lambda})_{*} I_a = I_a \quad (a=1,2,3),
    \]
    and $\Psi_{\lambda\mathbf{l}} \circ r_\lambda = \delta_{\lambda^{-1}} \circ \Psi_{\mathbf{l}}$. Consequently, $\delta_{\lambda^{-1}}^{*} g^{X}_{\lambda\mathbf{l}} = \lambda^{-1} g^{X}_{\mathbf{l}}$.
    \item \textbf{(Non-triviality.)} Let $x_0 \in X(\Gamma)$ be the class of $(\mathbf{1}_e, 0)_{e\in E}$. For every $\mathbf{l}$, $\Psi_{\mathbf{l}}([0]) = x_0$, and for every $\zeta \in K(\Gamma)$ the class $\hat{\zeta} \in T_{x_0}X(\Gamma)$ of the tangent vector $(0,\zeta_e)_{e\in E}$ satisfies
    \[
    g^{X}_{\mathbf{l}}(\hat{\zeta},\hat{\zeta}) = \sum_{e\in E} l_e\,\|\zeta_e\|^{2}.
    \]
    Consequently, $g^{X}_{\mathbf{l}} \neq g^{X}_{\mathbf{l}'}$ whenever $\sum_{e\in E}(l_e - l'_e)\|\zeta_e\|^2 \neq 0$ for some $\zeta\in K(\Gamma)$; in particular, $g^{X}_{\lambda\mathbf{l}} \neq g^{X}_{\mathbf{l}}$ for every $\lambda \neq 1$.
\end{enumerate}
\end{Prop}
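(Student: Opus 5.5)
For part (1), I would define $r_\lambda$ at the level of fields, using the scaling symmetry of Nahm's equations. For each edge let $\phi_e^\lambda: [\lambda a_e,\lambda b_e]\to[a_e,b_e]$, $t\mapsto t/\lambda$. Set
\[
(\rho_\lambda A)^e_\mu(t) = \lambda^{-1}A^e_\mu(t/\lambda), \qquad \mu=0,\dots,3.
\]
A direct check shows that $\rho_\lambda$ maps Nahm solutions on $\Gamma_{\mathbf l}$ to Nahm solutions on $\Gamma_{\lambda\mathbf l}$: both the derivative term and the bracket terms acquire the same factor $\lambda^{-2}$.

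The Kirchhoff law is linear in endpoint values, so it is preserved. The map intertwines the gauge actions via $\tilde g(t)=g(t/\lambda)$, since $\dot{\tilde g}\tilde g^{-1}=\lambda^{-1}(\dot g g^{-1})(t/\lambda)$. It is also $\mathbb{R}$-linear on tangent vectors and commutes with each $I_a$, because it acts as a scalar on the $\mathbb H$ factor. For the $L^2$ metric we get
\[
\int_{\lambda I}\lambda^{-2}\|X(t/\lambda)\|^2\,dt=\lambda^{-1}\int_I\|X\|^2 .
\]
It follows that $\rho_\lambda$ pulls back $\langle\cdot,\cdot\rangle$ and each $\omega_a$ to $\lambda^{-1}$ times themselves and preserves the horizontal spaces $\ker D_A^*$, because $D_A^*$ transforms by an overall scalar. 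It therefore descends to $r_\lambda$ with the stated properties; $G^{\partial\Gamma}$-equivariance holds because boundary values of gauge transformations are unchanged.

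For the intertwining $\Psi_{\lambda\mathbf l}\circ r_\lambda=\delta_{\lambda^{-1}}\circ\Psi_{\mathbf l}$, I would use the description of $\varphi$ through the holonomy $g^e_\alpha(b_e)$ and the initial value $\beta^e(a_e)$. Parallel transport of $\lambda^{-1}\alpha(t/\lambda)$ over $[\lambda a_e,\lambda b_e]$ equals that of $\alpha$ by reparametrization invariance, while $\beta^e(a_e)$ is multiplied by $\lambda^{-1}$. Since $\Phi$ is induced by the inclusion and $r_\lambda$ is defined on representatives, this gives the identity. The transported statement $\delta_{\lambda^{-1}}^*g^X_{\lambda\mathbf l}=\lambda^{-1}g^X_{\mathbf l}$ then follows formally from $g^X_{\mathbf l}=(\Psi_{\mathbf l}^{-1})^*g_{\bh,\mathbf l}$.

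For part (2), first observe that $A=0$ solves the Nahm and Kirchhoff equations, and $\Psi_{\mathbf l}([0])=[(\mathbf 1,0)]=x_0$. The tangent space at $A=0$ is $T_0\A_{\bh}=\{\dot Y_i=0\ (i\ge1),\ \text{Kirchhoff}\}$, and $D_0u=(-\dot u,0,0,0)$. The horizontal space $\ker D_0^*$ is obtained by imposing in addition $\dot Y_0=0$ and the Kirchhoff condition on $Y_0$. So it consists of edgewise constant quadruples satisfying Kirchhoff, which is $K(\Gamma)\otimes\mathbb H$ in real terms, with dimension matching Theorem~\ref{smooth quotient}.

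Now take $\zeta\in K(\Gamma)$ and the horizontal vector $Y$ with $Y_0=Y_1=0$ and $Y_2+iY_3=\zeta_e$ constant on each edge. Its image under $d\varphi\circ d\Phi$ is computed by linearizing: the holonomy does not change since $\delta\alpha=0$, and $\delta\beta(a_e)=\zeta_e$. So $Y$ maps to $\hat\zeta$. Its hyperkähler norm is
\[
\sum_e\int_{I_e}\bigl(\|Y_2\|^2+\|Y_3\|^2\bigr)\,dt=\sum_e l_e\|\zeta_e\|^2,
\]
using the Hermitian extension of the norm to $\g_\CC$.

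The non-triviality consequence is then immediate. If $\lambda\ne1$, choose any nonzero $\zeta\in K(\Gamma)$, which exists since $\dim_\R K(\Gamma)>0$ under the hypotheses. Then $\sum_e(\lambda-1)l_e\|\zeta_e\|^2\ne0$.

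The main obstacle is verifying that the horizontal lift of $\hat\zeta$ is exactly this constant field. This requires checking $Y\in\ker D_0^*$, which amounts to $Y_0=0$ together with a vacuous Kirchhoff condition on $Y_0$, and confirming that $d\Phi$ at the unitary point is just the inclusion. The latter holds because $0$ already solves the real equation, so no Kempf--Ness correction enters to first order.
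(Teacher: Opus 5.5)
Your proposal is correct and follows the paper's proof essentially step for step: the same rescaling $\lambda^{-1}A(t/\lambda)$ with $g(t/\lambda)$ on gauge groups, the same holonomy/initial-value computation giving $\Psi_{\lambda\mathbf{l}}\circ r_\lambda=\delta_{\lambda^{-1}}\circ\Psi_{\mathbf{l}}$, and the same constant horizontal vector $Y_2+iY_3\equiv\zeta_e$ at $A=0$ for part (2). Two small corrections. First, $D^*$ does not transform by a single scalar: its interior part picks up $\lambda^{-2}$ and its vertex part $\lambda^{-1}$. Its kernel is nonetheless preserved, which is all you need; the paper instead argues that $R_\lambda$ rescales the $L^2$ metric by a constant. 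Second, $d\Phi$ is induced by the inclusion simply because $\Phi$ itself is, so no Kempf--Ness consideration is needed at that step.
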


\begin{proof}
\textbf{(1)} For $\lambda > 0$, the edge $I_e = [a_e,b_e]$ of $\Gamma_{\mathbf{l}}$ corresponds to the edge $\lambda I_e = [\lambda a_e,\lambda b_e]$ of $\Gamma_{\lambda\mathbf{l}}$. Define
\[
R_\lambda : \B_{\bh}(\Gamma_{\mathbf{l}}) \too \B_{\bh}(\Gamma_{\lambda\mathbf{l}}), \qquad (R_\lambda A)^{e}_{\mu}(t) \coloneqq \lambda^{-1} A^{e}_{\mu}(t/\lambda), \quad \mu = 0,1,2,3,\ t\in \lambda I_e,
\]
and, on gauge groups, $(R_\lambda g)^{e}(t) \coloneqq g^{e}(t/\lambda)$. We check the required properties.
\begin{itemize}
    \item \emph{Nahm--Kirchhoff space.} For $i\in\{1,2,3\}$,
    \[
    \tfrac{d}{dt}(R_\lambda A)_{i} + [(R_\lambda A)_0,(R_\lambda A)_i] + [(R_\lambda A)_{i+1},(R_\lambda A)_{i+2}] = \lambda^{-2}\bigl(\dot{A}_i + [A_0,A_i] + [A_{i+1},A_{i+2}]\bigr)(t/\lambda),
    \]
    so $R_\lambda$ preserves the Nahm equations. The Kirchhoff conditions are linear and homogeneous in the endpoint values, which $R_\lambda$ multiplies by $\lambda^{-1}$; hence $R_\lambda(\A_{\bh}(\Gamma_{\mathbf{l}})) = \A_{\bh}(\Gamma_{\lambda\mathbf{l}})$.
    \item \emph{Gauge equivariance.} $R_\lambda g$ has the same vertex values as $g$, so $R_\lambda$ restricts to isomorphisms $\G(\Gamma_{\mathbf{l}}) \cong \G(\Gamma_{\lambda\mathbf{l}})$ and $\G_0(\Gamma_{\mathbf{l}}) \cong \G_0(\Gamma_{\lambda\mathbf{l}})$. Moreover,
    \[
    (R_\lambda g)\cdot(R_\lambda A)_0 = \lambda^{-1}\bigl(gA_0g^{-1}\bigr)(t/\lambda) - \lambda^{-1}\dot{g}(t/\lambda)g(t/\lambda)^{-1} = \bigl(R_\lambda(g\cdot A)\bigr)_0,
    \]
    and the Higgs components are clearly equivariant. Hence $R_\lambda(g\cdot A) = (R_\lambda g)\cdot (R_\lambda A)$.
    \item \emph{Hyperkähler structure.} By the change of variables $t = \lambda s$,
    \[
    \langle R_\lambda X, R_\lambda Y\rangle = \sum_{e\in E}\int_{\lambda I_e}\sum_{\mu=0}^{3} \lambda^{-2}\langle X_\mu(t/\lambda),Y_\mu(t/\lambda)\rangle\,dt = \lambda^{-1}\langle X,Y\rangle.
    \]
    Since $R_\lambda$ acts on all four components by the same operation, it commutes with $I_1,I_2,I_3$, and hence $R_\lambda^{*}\omega_a = \lambda^{-1}\omega_a$.
\end{itemize}
Consequently, $R_\lambda$ maps $\G_0$-orbits onto $\G_0$-orbits, and since it multiplies the $L^2$ metric by a constant, it maps the metric orthogonal complement $T_A\A_{\bh}(\Gamma_{\mathbf{l}})\cap \Ker D^{*}_{A}$ of the orbit onto $T_{R_\lambda A}\A_{\bh}(\Gamma_{\lambda\mathbf{l}})\cap \Ker D^{*}_{R_\lambda A}$. It therefore descends to a diffeomorphism $r_\lambda$ (with inverse induced by $R_{\lambda^{-1}}$) satisfying the three displayed identities. Equivariance with respect to $G^{\partial\Gamma}$ follows from the fact that $R_\lambda$ preserves boundary values of gauge transformations.

For the finite-dimensional model, write $\alpha = A_0 + iA_1$ and $\beta = A_2 + iA_3$, so that $R_\lambda\alpha = \lambda^{-1}\alpha(\cdot/\lambda)$ and $R_\lambda\beta = \lambda^{-1}\beta(\cdot/\lambda)$. The path $t\mapsto g^{e}_{\alpha}(t/\lambda)$ solves $\dot{g} + \lambda^{-1}\alpha^e(t/\lambda)g = 0$ with value $\mathbf{1}$ at $\lambda a_e$, so the holonomies are unchanged, while $\beta^{e}(\lambda a_e)$ is replaced by $\lambda^{-1}\beta^e(a_e)$. Since $\Phi$ is induced by the inclusion $\A_{\bh}\hookrightarrow \A_{\CC}$, this gives $\Psi_{\lambda\mathbf{l}}\circ r_\lambda = \delta_{\lambda^{-1}}\circ\Psi_{\mathbf{l}}$. Hence $\Psi_{\lambda\mathbf{l}}^{-1} = r_\lambda\circ \Psi_{\mathbf{l}}^{-1}\circ\delta_\lambda$, and
\[
g^{X}_{\lambda\mathbf{l}} = \delta_\lambda^{*}(\Psi_{\mathbf{l}}^{-1})^{*} r_\lambda^{*} g_{\bh,\lambda\mathbf{l}} = \lambda^{-1}\delta_\lambda^{*} g^{X}_{\mathbf{l}}.
\]
Applying $\delta_{\lambda^{-1}}^{*}$ to both sides gives $\delta_{\lambda^{-1}}^{*} g^{X}_{\lambda\mathbf{l}} = \lambda^{-1} g^{X}_{\mathbf{l}}$.

\textbf{(2)} The zero configuration $A = 0$ lies in $\A_{\bh}(\Gamma_{\mathbf{l}})$; its holonomies are trivial and its Higgs fields vanish, so $\Psi_{\mathbf{l}}([0]) = x_0$. Given $\zeta\in K(\Gamma)$, define $Y\in \B_{\bh}(\Gamma_{\mathbf{l}})$ by
\[
Y^{e}_0 = Y^{e}_1 = 0, \qquad Y^{e}_2 + iY^{e}_3 \equiv \zeta_e \quad \text{(constant on } I_e).
\]
At $A = 0$ the linearized Nahm equations reduce to $\dot{Y}_i = 0$, and the real and imaginary parts of $\zeta$ satisfy the Kirchhoff law, so $Y\in T_0\A_{\bh}(\Gamma_{\mathbf{l}})$. Moreover $D^{*}_0Y = (\dot{Y}_0, (\sum_{s^{-1}(v)}Y^{e}_0(a_e) - \sum_{t^{-1}(v)}Y^{e}_0(b_e))_v) = 0$, so $Y$ is horizontal. By definition of the reduced metric,
\[
g_{\bh,\mathbf{l}}\bigl(d\pi(Y),d\pi(Y)\bigr) = \|Y\|^2_{L^2} = \sum_{e\in E}\int_{I_e}\bigl(\|Y^{e}_2\|^2 + \|Y^{e}_3\|^2\bigr)dt = \sum_{e\in E} l_e\|\zeta_e\|^2,
\]
where $\|\zeta_e\|^2 = \|Y^{e}_2\|^2 + \|Y^e_3\|^2$ is the Hermitian norm on $\g_{\CC}$. The variation $Y$ leaves $\alpha$, hence the holonomies, unchanged and varies $\beta^{e}(a_e)$ by $\zeta_e$; thus $d\Psi_{\mathbf{l}}(d\pi(Y)) = \hat{\zeta}$, a tangent vector at $x_0$ which does not depend on $\mathbf{l}$. This proves the formula for $g^{X}_{\mathbf{l}}(\hat{\zeta},\hat{\zeta})$, and the first consequence follows by subtraction. For $\mathbf{l}' = \lambda\mathbf{l}$ with $\lambda\neq 1$, choose any $\zeta\in K(\Gamma)\setminus\{0\}$, which exists since $\dim_{\R}K(\Gamma) = 2(|E|-|\Gamma_{\nt}|)\dim G > 0$; then $\sum_e (l_e - \lambda l_e)\|\zeta_e\|^2 = (1-\lambda)\sum_e l_e\|\zeta_e\|^2 \neq 0$.
\end{proof}

\begin{Rem} \label{rem:homothety}
Part (1) shows that the inhomogeneous transformation laws obtained when only $A_0$ is treated as a $1$-form and $A_1,A_2,A_3$ as $0$-forms are not, in themselves, an obstruction: treating all four components as $1$-forms, Nahm's equations are invariant under rescaling, and a uniform rescaling of the edge lengths acts on the hyperkähler structure by a homothety, composed with the fibrewise dilation $\delta_{\lambda^{-1}}$. For a single edge, this recovers the fact that the Kronheimer metrics on $T^*G_{\CC}$ obtained from intervals of different lengths are homothetic. Part (2) shows that, beyond this homothety, the transported metrics $g^{X}_{\mathbf{l}}$ are genuinely different; Theorem \ref{thm:hklr-metric-gluing} below shows that subdividing an edge while preserving its total length does not change the metric.
\end{Rem}

\begin{Rem}[Rescaled per-edge functional] \label{rem:rescaled}
For an edge of length $l$ carrying $(\alpha,\beta) = (0,x)$, we write $E_x(p,q;l)$ for the infimum of $\int_0^l \bigl(\|\dot{h}\|^2_H + V_x(h)\bigr)dt$ over paths $h$ from $p$ to $q$. Setting $\tilde{h}(s) \coloneqq h(sl)$, $s\in[0,1]$, so that $\tilde{h}' = l\dot{h}$, one gets
\[
\int_0^l \bigl(\|\dot{h}\|^2_H + V_x(h)\bigr)dt = \int_0^1 \Bigl(\frac{1}{l}\|\tilde{h}'(s)\|^2_H + l\,V_x(\tilde{h}(s))\Bigr)ds,
\]
and the Euler--Lagrange equation \eqref{euler eq} becomes $\nabla_s\tilde{h}' = \frac{l^2}{2}\grad_H V_x(\tilde{h})$. By the Cauchy--Schwarz inequality, $E_x(p,q;l) \geq d_H(p,q)^2/l$.
\end{Rem}

\section{The 2D Hamiltonian TQFT via Complex Nahm--Kirchhoff Moduli Spaces}
\label{sec:2d-tqft-complex}

In this section, we provide the complete construction of a two-dimensional topological quantum field theory (2D TQFT) taking values in the category of complex Hamiltonian manifolds, using the moduli spaces $\M_{\CC}(\Gamma).$

\subsection{The Target Category $\mathbf{Ham}_{\mathbb{C}}$}

Following the framework of Moore--Tachikawa \cite{MooreTachikawa} and its categorical completion via Wehrheim--Woodward correspondences \cite{WehrheimWoodward2010Functoriality}, we first introduce a partial category of complex Hamiltonian manifolds and complete it to a symmetric monoidal category $\mathbf{Ham}_{\CC}$.

\begin{Def}[The Partial Category of Complex Hamiltonian Spaces]
The partial category $\mathbf{Ham}_{\mathbb{C}}^{\mathrm{pre}}$ is defined as follows:
\begin{itemize}
    \item \textbf{Objects:} Connected complex reductive Lie groups, with the trivial group $\{*\} = G^0$ serving as the monoidal unit.
    
    \item \textbf{Morphisms:} A morphism $M : G \to H$ is an isomorphism class of smooth holomorphic symplectic manifolds $(M, \Omega)$ equipped with a holomorphic Hamiltonian action of $G \times H$ and an equivariant holomorphic moment map
    \[
    \mu = (\mu_{\mathrm{in}}, \mu_{\mathrm{out}}) : M \longrightarrow \mathfrak{g}^* \times \mathfrak{h}^*.
    \]
    An isomorphism between morphisms $(M, \Omega, \mu)$ and $(M', \Omega', \mu')$ is an equivariant biholomorphism $f: M \to M'$ such that $f^*\Omega' = \Omega$ and $\mu' \circ f = \mu.$
    
    \item \textbf{Composition:} Two morphisms $M : G \to H$ and $N : H \to K$ are \textit{composable} if the diagonal action of $H$ on $\mu_{\Delta}^{-1}(0) \subset M \times N$ is free and proper, where
    \[
    \mu_{\Delta}(x, y) := \mu_{\mathrm{out}}(x) - \mu_{\mathrm{in}}(y) : M \times N \longrightarrow \mathfrak{h}^*.
    \]
    
    In this case, the composite morphism $N \circ M: G \to K$ is defined by the holomorphic symplectic quotient (Marsden--Weinstein reduction):
    \[
    N \circ M := (M \times N) /\!\!/\!\!/_{\mathbb{C}}\, H = \mu_{\Delta}^{-1}(0) / H_{\mathbb{C}}.
    \]
    The residual $G \times K$-action on $\mu_{\Delta}^{-1}(0)$ descends to a holomorphic Hamiltonian action on $N \circ M.$
    \item \textbf{Identity Morphisms:} For any complexified Lie group $G_{\mathbb{C}}$, the identity morphism  $\mathrm{Id}_{G_{\mathbb{C}}} \in \operatorname{Mor}(G_{\mathbb{C}}, G_{\mathbb{C}})$ is the cotangent bundle $T^* G_{\mathbb{C}} \cong G_{\mathbb{C}} \times \mathfrak{g}_{\mathbb{C}}$ equipped with its canonical holomorphic symplectic form $\Omega_{\mathrm{can}}$ and the Hamiltonian $G_{\mathbb{C}} \times G_{\mathbb{C}}$-action generated by
    \[
    \mu(g, \xi) = (-\xi, \, \mathrm{Ad}_g^* \xi).
    \]
\end{itemize}
\end{Def}

\begin{Def}[Categorical Completion $\mathbf{Ham}_{\mathbb{C}}$]
Applying the Wehrheim--Woodward construction \cite{WehrheimWoodward2010Functoriality} to the partial category $\mathbf{Ham}_{\mathbb{C}}^{\mathrm{pre}}$ yields the completed category $\mathbf{Ham}_{\CC}:$
\begin{itemize}
    \item The objects of $\mathbf{Ham}_{\mathbb{C}}$ are finite Cartesian products of connected complex reductive Lie groups $\prod_{i=1}^n G_{\mathbb{C}}^{(i)}.$
    \item A morphism from $G$ to $H$ is an equivalence class of formal finite sequences $(M_1, M_2, \dots, M_k)$ of composable basic morphisms, modulo the equivalence relation generated by composing adjacent pairs whenever the intermediate holomorphic reduction is smooth (see \cite{maiza2026multiplicative,CrooksMayrand2024MooreTachikawa}).
    \item The monoidal structure is given by the Cartesian product on objects and morphisms, making $\mathbf{Ham}_{\mathbb{C}}$ a symmetric monoidal category (see also \cite{maiza2026multiplicative,maiza2026lax}).
\end{itemize}
\end{Def}

\subsection{Cobordisms and Quivers}

Let $\mathbf{Cob}_2$ be the symmetric monoidal category whose objects are finite disjoint unions of oriented circles $\bigsqcup^n S^1$ and whose morphisms are diffeomorphism classes (relative to the boundary) of compact oriented 2-dimensional cobordisms $\Sigma : \overline{\Sigma}_{\mathrm{in}} \to \Sigma_{\mathrm{out}}.$

\begin{Def}
Every connected oriented cobordism $\Sigma$ with $m$ incoming and $n$ outgoing boundary components deformation-retracts onto an oriented quiver with boundary $\Gamma = (E, V, s, t)$ such that:
\begin{enumerate}
    \item the boundary vertices satisfy $|\partial\Gamma^-| = m$ and $|\partial\Gamma^+| = n;$
    \item the topological thickening $\Sigma(\Gamma)$, obtained by replacing edges with cylinders $I_e \times [-\epsilon, \epsilon]$ and interior vertices with oriented discs glued according to cyclic orderings of incident half-edges, is diffeomorphic to $\Sigma$.
\end{enumerate}
\end{Def}

Given two quivers $\Gamma_1, \Gamma_2$ with non-empty boundaries and an identification of boundary vertices $D \coloneqq  \partial\Gamma_1^+ = \partial\Gamma_2^-$, the glued quiver $\Gamma_1 *_D \Gamma_2$ is obtained by identifying each $v \in D$ with an interior vertex of the glued quiver.

\begin{Prop}[Symplectic Cutting and Gluing Law]
\label{thm:cutting-gluing}
Let $\Gamma_1$ and $\Gamma_2$ be connected quivers with $D := \partial\Gamma_1^+ = \partial\Gamma_2^- \neq \emptyset.$ Then there is a canonical isomorphism of complex Hamiltonian $G_{\CC}^{\partial\Gamma_1^-} \times G_{\CC}^{\partial\Gamma_2^+}$-spaces:
\[
\M_{\CC}(\Gamma_1 *_D \Gamma_2) \cong \left( \M_{\CC}(\Gamma_1) \times \M_{\CC}(\Gamma_2) \right) /\!\!/\!\!/_{\CC}\, G_{\mathbb{C}}^D,
\]
where the reduction is taken with respect to the moment map $\mu_{\Delta, D} = \mu_{\mathrm{out}}^{(1)} - \mu_{\mathrm{in}}^{(2)}.$
\end{Prop}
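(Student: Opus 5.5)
The plan is to pass to the finite-dimensional model and then apply reduction by stages. By Theorem \ref{thm:finite-dim-reduction}, applied to each of $\Gamma_1$, $\Gamma_2$ and $\Gamma = \Gamma_1 *_D \Gamma_2$, there are isomorphisms of complex Hamiltonian spaces
\[
\M_{\CC}(\Gamma_k) \cong (T^*G_{\CC})^{E_k}/\!\!/\!\!/_{\CC}\, G_{\CC}^{(\Gamma_k)_{\nt}}, \qquad \M_{\CC}(\Gamma) \cong (T^*G_{\CC})^{E_1\sqcup E_2}/\!\!/\!\!/_{\CC}\, G_{\CC}^{(\Gamma_1)_{\nt}\sqcup(\Gamma_2)_{\nt}\sqcup D}.
\]
This uses two combinatorial facts: the edge set of $\Gamma$ is $E_1 \sqcup E_2$, and the interior vertices of $\Gamma$ are those of $\Gamma_1$ and $\Gamma_2$ together with the glued vertices of $D$. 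Connectedness of $\Gamma$ follows from that of $\Gamma_1$ and $\Gamma_2$ together with $D \neq \emptyset$. Its boundary is $\partial\Gamma_1^- \sqcup \partial\Gamma_2^+$, and this is non-empty provided $\Gamma$ has boundary, which I will assume, as Theorem \ref{thm:finite-dim-reduction} requires.

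Next, I will identify the moment map for the extra factor $G_{\CC}^D$. Each $v \in D$ has exactly one incoming edge $e_1 \in E_1$ (since $v \in \partial\Gamma_1^+$) and exactly one outgoing edge $e_2 \in E_2$ (since $v \in \partial\Gamma_2^-$). The Kirchhoff moment map of $\Gamma$ at $v$ is therefore
\[
\nu_v = \xi_{e_2} - \Ad_{g_{e_1}}\xi_{e_1}.
\]
By the last line of the proof of Theorem \ref{thm:finite-dim-reduction}, $\Ad_{g_{e_1}}\xi_{e_1}$ is the outgoing boundary moment map $\mu^{(1)}_{\mathrm{out}}$ at $v$, and $\xi_{e_2}$ is the incoming one $\mu^{(2)}_{\mathrm{in}}$. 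Hence $\nu_v = -(\mu^{(1)}_{\mathrm{out}} - \mu^{(2)}_{\mathrm{in}})_v$, and the two level sets coincide. I also need to check that the $G_{\CC}^D$-action on $(T^*G_{\CC})^{E}$, namely $g_{e_1}\mapsto h_v g_{e_1}$ and $(g_{e_2},\xi_{e_2})\mapsto (g_{e_2}h_v^{-1},\Ad_{h_v}\xi_{e_2})$, commutes with the actions of $G_{\CC}^{(\Gamma_k)_{\nt}}$ and descends to the residual boundary actions on $\M_{\CC}(\Gamma_k)$.

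Then I will apply complex symplectic reduction by stages to the group $G_{\CC}^{(\Gamma_1)_{\nt}} \times G_{\CC}^{(\Gamma_2)_{\nt}} \times G_{\CC}^D$. Reducing first by the interior groups of $\Gamma_1$ and $\Gamma_2$ gives $\M_{\CC}(\Gamma_1)\times\M_{\CC}(\Gamma_2)$ with its residual $G_{\CC}^D$-moment map $\mu_{\Delta,D}$. Reducing that by $G_{\CC}^D$ gives the right-hand side, while reducing by the full product at once gives $\M_{\CC}(\Gamma)$. Since both reductions are performed in the category of Hamiltonian spaces, the resulting identification automatically intertwines the symplectic forms and the residual $G_{\CC}^{\partial\Gamma_1^-}\times G_{\CC}^{\partial\Gamma_2^+}$ moment maps. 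Canonicity follows because every map involved (holonomy, evaluation at $a_e$) is intrinsic.

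The main obstacle is the regularity hypothesis for reduction by stages. The diagonal $G_{\CC}^D$-action on $\mu_{\Delta,D}^{-1}(0)$ must be free and proper, and the quotient must be smooth. I would get this from the gauge-theoretic side rather than algebraically. Freeness and properness of the full action of $G_{\CC}^{\Gamma_{\nt}}$ on $\nu^{-1}(0)$ hold because $\M_{\CC}(\Gamma)$ is a smooth manifold (Remark \ref{rem:complex_properness} together with Theorem \ref{thm:finite-dim-reduction}). Freeness and properness for a subgroup acting on a partial quotient then follow formally: a stabilizer in $G_{\CC}^D$ of a point of the partial quotient lifts to a stabilizer of the full group, and properness passes to closed subgroups and to quotients by free proper actions. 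If this step resists, the fallback is to interpret the composite in the Wehrheim--Woodward completed category, where the formal sequence $(\M_{\CC}(\Gamma_1),\M_{\CC}(\Gamma_2))$ is equivalent to its composite precisely when the reduction is smooth.
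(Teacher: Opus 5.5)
Your proposal is correct and follows the paper's route (Theorem \ref{thm:finite-dim-reduction} plus reduction by stages for $G_{\CC}^{(\Gamma_1)_{\nt}}\times G_{\CC}^{(\Gamma_2)_{\nt}}\times G_{\CC}^{D}$). It also makes explicit three things the paper's one-line proof leaves implicit: the identification $\nu_v=-(\mu^{(1)}_{\mathrm{out}}-\mu^{(2)}_{\mathrm{in}})_v$ at the glued vertices, the extra requirement $\partial\Gamma_1^-\sqcup\partial\Gamma_2^+\neq\emptyset$, and freeness and properness at the second stage. For that last point, the base fact you rely on is most cleanly proved directly, rather than inferred from smoothness of $\M_{\CC}(\Gamma)$, which does not by itself imply it: the action $g_e\mapsto h_{t(e)}g_eh_{s(e)}^{-1}$ of $G_{\CC}^{\Gamma_{\nt}}$ on $G_{\CC}^{E}$ is already free and proper, by propagating from the boundary through the connected quiver.
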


\begin{proof}
This follows immediately from Theorem \ref{thm:finite-dim-reduction} and reduction by stages.
\end{proof}

\subsection{Quiver Independence (Topological Invariance)}

\begin{Def} \label{def:homotopic}
An \defn{edge contraction} of a quiver $\Gamma$ with boundary removes an edge $e_0$ whose endpoints are two \emph{distinct interior} vertices $v_1 = s(e_0) \neq v_2 = t(e_0)$ and identifies $v_1$ with $v_2$ (Figure \ref{fig:homotopy}); its inverse is called an \defn{edge expansion}. Two quivers $\Gamma_{1}$ and $\Gamma_2$ are \defn{homotopic} if one can be obtained from the other by a finite sequence of edge contractions and edge expansions.
\end{Def}

\begin{figure}[ht]
\centering
\begin{tikzpicture}[
    vertex/.style={circle, draw=black, line width=0.7pt, minimum size=9pt, inner sep=0pt},
    bdin/.style={vertex, fill=red!55},
    bdout/.style={vertex, fill=blue!55},
    int/.style={vertex, fill=black!70},
    contract/.style={vertex, fill=orange!65, draw=orange!40!black},
    arr/.style={-{Stealth[length=2.5mm, width=1.8mm]}, line width=0.7pt, shorten <=2pt, shorten >=2pt},
    contract_arr/.style={arr, orange!60!black, line width=1.1pt},
    lbl/.style={font=\footnotesize}
]
    \begin{scope}[shift={(0,0)}]
        \node[bdin] (a1) at (0, 1.5) {};
        \node[bdin] (a2) at (0, 0.5) {};
        \node[bdin] (a3) at (0, -0.5) {};
        \node[bdin] (a4) at (0, -1.5) {};
        
        \node[contract] (v1) at (2, 0) {};
        \node[contract] (v2) at (4, 0) {};
        
        \node[bdout] (b1) at (6, 1.5) {};
        \node[bdout] (b2) at (6, 0.5) {};
        \node[bdout] (b3) at (6, -0.5) {};
        \node[bdout] (b4) at (6, -1.5) {};
        
        \draw[arr] (a1) -- (v1);
        \draw[arr] (a2) -- (v1);
        \draw[arr] (a3) -- (v1);
        \draw[arr] (a4) -- (v1);
        
        \draw[contract_arr] (v1) -- (v2) node[midway, above=2pt, font=\scriptsize, orange!50!black] {$e_0$};
        
        \draw[arr] (v2) -- (b1);
        \draw[arr] (v2) -- (b2);
        \draw[arr] (v2) -- (b3);
        \draw[arr] (v2) -- (b4);
        
        \node[lbl, below left=0.1cm] at (v1) {$v_1$};
        \node[lbl, below right=0.1cm] at (v2) {$v_2$};
    \end{scope}
    
    \draw[{Stealth[length=3mm, width=2mm]}-{Stealth[length=3mm, width=2mm]}, line width=0.9pt] 
        (7, 0) -- (8.5, 0);
    
    \begin{scope}[shift={(9.5,0)}]
        \node[bdin] (ra1) at (0, 1.5) {};
        \node[bdin] (ra2) at (0, 0.5) {};
        \node[bdin] (ra3) at (0, -0.5) {};
        \node[bdin] (ra4) at (0, -1.5) {};
        
        \node[contract] (rv) at (2, 0) {};
        
        \node[bdout] (rb1) at (4, 1.5) {};
        \node[bdout] (rb2) at (4, 0.5) {};
        \node[bdout] (rb3) at (4, -0.5) {};
        \node[bdout] (rb4) at (4, -1.5) {};
        
        \draw[arr] (ra1) -- (rv);
        \draw[arr] (ra2) -- (rv);
        \draw[arr] (ra3) -- (rv);
        \draw[arr] (ra4) -- (rv);
        \draw[arr] (rv) -- (rb1);
        \draw[arr] (rv) -- (rb2);
        \draw[arr] (rv) -- (rb3);
        \draw[arr] (rv) -- (rb4);
        
        \node[lbl, below=0.15cm] at (rv) {$v_1$};
    \end{scope}
    
    \begin{scope}[shift={(4,-3.2)}]
        \node[bdin, minimum size=7pt] at (0, 0) {};
        \node[lbl, right=0.2cm] at (0.2, 0) {$\partial\Gamma^-$};
        
        \node[bdout, minimum size=7pt] at (2.5, 0) {};
        \node[lbl, right=0.2cm] at (2.7, 0) {$\partial\Gamma^+$};
        
        \node[contract, minimum size=7pt] at (5, 0) {};
        \node[lbl, right=0.2cm] at (5.2, 0) {$\Gamma_{\mathrm{int}}$};
    \end{scope}
\end{tikzpicture}
\caption{The homotopy move on quivers: contracting an edge $e_0$ between two interior vertices $v_1, v_2 \in \Gamma_{\mathrm{int}}$ yields a homotopy-equivalent quiver.}
\label{fig:homotopy}
\end{figure}

\begin{Prop}[\cite{maiza2026lax,maiza2026multiplicative}][Invariance under homotopy moves]
\label{thm:spine-invariance}
Let $\Gamma$ and $\Gamma_1$ be two homotopic oriented quivers. Then, there exists a canonical isomorphism of complex Hamiltonian spaces
\[
\M_{\CC}(\Gamma) \cong \M_{\CC}(\Gamma_1).
\]
Consequently, the isomorphism class of $\mathcal{M}_{\mathbb{C}}(\Sigma) := \mathcal{M}_{\mathbb{C}}(\Gamma)$ depends only on the underlying cobordism $\Sigma$.
\end{Prop}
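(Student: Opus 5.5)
It suffices to treat a single edge contraction, since homotopy is generated by contractions and expansions and isomorphisms compose. The plan is to pass to the finite-dimensional model of Theorem \ref{thm:finite-dim-reduction}, where $\M_{\CC}(\Gamma)\cong (T^*G_{\CC})^E/\!\!/\!\!/_{\CC} G_{\CC}^{\Gamma_{\nt}}$, and then perform the reduction by stages. Let $e_0$ join the distinct interior vertices $v_1=s(e_0)$ and $v_2=t(e_0)$. Let $\Gamma_1=\Gamma/e_0$, with merged vertex $v$, edge set $E\setminus\{e_0\}$ and interior vertices $(\Gamma_{\nt}\setminus\{v_1,v_2\})\cup\{v\}$. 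We first reduce $(T^*G_{\CC})^E$ by the factor $G_{\CC}^{\{v_2\}}$ only, and then by the rest.

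\textbf{Step 1: the stage at $v_2$.} The moment map component at $v_2$ reads
\[
\mu_{v_2}=\sum_{e\in s^{-1}(v_2)}\xi_e-\sum_{e\in t^{-1}(v_2),\,e\neq e_0}\Ad_{g_e}\xi_e-\Ad_{g_{e_0}}\xi_{e_0}.
\]
$G_{\CC}$ acts on the $e_0$ factor by left translation $g_{e_0}\mapsto h\,g_{e_0}$, and this action is free. We can therefore use it to fix $g_{e_0}=\mathbf 1$, which gives a global slice. The condition $\mu_{v_2}=0$ then solves $\xi_{e_0}$ uniquely in terms of the remaining edges at $v_2$. Consequently $\mu_{v_2}^{-1}(0)/G_{\CC}\cong (T^*G_{\CC})^{E\setminus\{e_0\}}$, and the action is free and proper, so no stability issue arises. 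On this slice the residual gauge group at $v_2$ is absorbed into that at $v_1$: the stabilizer of $g_{e_0}=\mathbf 1$ under $(h_{v_1},h_{v_2})$ is the diagonal $h_{v_1}=h_{v_2}$. Thus the edges formerly incident to $v_2$ become incident to $v_1$, which is exactly the merged vertex $v$.

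\textbf{Step 2: symplectic form and residual moment map.} We check that the identification in Step 1 is a holomorphic symplectomorphism, i.e.\ that $\Omega_{\mathrm{can}}$ restricted to the slice and pushed to the quotient equals the canonical form on $(T^*G_{\CC})^{E\setminus\{e_0\}}$. This is the standard fact that $T^*G_{\CC}\times M/\!\!/\!\!/ G_{\CC}\cong M$ (the identity property of $T^*G_{\CC}$ in $\mathbf{Ham}^{\mathrm{pre}}_{\CC}$), applied with $M$ equal to the product of the other edge factors carrying the $v_2$-action. Substituting the solved $\xi_{e_0}$ into $\mu_{v_1}$, we then compute that $\mu_{v_1}$ becomes the Kirchhoff sum over all edges at $v$, i.e.\ $\mu_{v_1}+\mu_{v_2}$ with the $e_0$ terms cancelled. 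Note that $-\Ad_{g_{e_0}}\xi_{e_0}$ and $+\xi_{e_0}$ cancel precisely because $g_{e_0}=\mathbf 1$. The remaining vertex moment maps and the boundary moment maps are unchanged.

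\textbf{Step 3: conclusion.} By reduction by stages, reducing the result by $G_{\CC}^{\Gamma_{\nt}\setminus\{v_2\}}$ gives $\M_{\CC}(\Gamma_1)$, compatibly with the $G_{\CC}^{\partial\Gamma}$-moment maps, since contraction does not touch boundary vertices. Canonicity follows because the slice is canonical. The last assertion then follows from the standard fact that any two quiver spines of the same cobordism are related by contractions and expansions between interior vertices, as in \cite{maiza2026lax}.

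\textbf{Main obstacle.} The main obstacle is bookkeeping rather than analysis: tracking orientation signs when $e_0$ is traversed, and verifying freeness and properness at each stage so that the quotients are smooth. I expect freeness to follow from the free translation action on the $e_0$ factor. A delicate case is a self-loop appearing after contraction, when $v_1$ and $v_2$ were joined by multiple edges. The formulas above still apply there, since Kirchhoff sums allow an edge with $s(e)=t(e)$, but this should be checked explicitly.
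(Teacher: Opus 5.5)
Your argument is correct, but it takes a different route from the paper. The paper gives no self-contained proof: it invokes the length-independence of $\M_{\CC}(\Gamma)$ (Proposition \ref{prop:length-invariance-complex}) and defers to the proof of \cite[Theorem 6.4]{maiza2026lax} for Lax--Kirchhoff spaces. Your proof needs neither ingredient.

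\textbf{How your route differs.} You work entirely in the finite-dimensional model $X(\Gamma)=(T^*G_{\CC})^E/\!\!/\!\!/_{\CC}G_{\CC}^{\Gamma_{\nt}}$ of Theorem \ref{thm:finite-dim-reduction}. That model applies to both $\Gamma$ and $\Gamma/\{e_0\}$, since contraction preserves connectedness and non-empty boundary. Lengths never appear there, so length-independence is automatic. The whole content of the move is then the identity property of $T^*G_{\CC}$, realized by the global slice $g_{e_0}=\mathbf{1}$ for the free left translation by $G_{\CC}^{\{v_2\}}$. This is the same normalization the paper itself uses in Proposition \ref{prop:edge-collapse}.

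This buys you several things:
\begin{itemize}
\item Freeness and properness of the extra reduction stage are immediate.
\item Canonicity is visible, since the isomorphism is the slice.
\item The residual diagonal action and the merged Kirchhoff moment map are computed explicitly.
\item Parallel edges between $v_1$ and $v_2$ are handled without extra work: the diagonal action on them is conjugation $g_e\mapsto hg_eh^{-1}$, which is exactly the loop action after contraction.
\end{itemize}
The paper's reference-based route stays closer to the gauge-theoretic edge data. For the purely complex-symplectic claim, however, your argument is the more transparent and self-contained one.

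\textbf{A caveat on the last sentence.} The fact you quote, that any two spines of a cobordism are related by contractions and expansions between interior vertices, is not literally true with Definition \ref{def:homotopic}. The one-edge spine $\Gamma_e$ of the cylinder has no interior vertices, so no move applies to it, and it is not homotopic to its subdivision $a\to v\to b$.

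In every other case the fact does hold. Boundary vertices are leaves, so the interior vertices span a connected subgraph. Contracting a spanning tree of that subgraph reduces any spine to a one-vertex rose, which is determined by $b_1$ and the numbers of incoming and outgoing leaves.

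The paper's statement carries the same imprecision, so this is not a defect relative to the paper. Your Step 1 also repairs it directly, because it only uses that $v_2$ is interior. Running it with $v_1$ a boundary vertex gives $(T^*G_{\CC}\times T^*G_{\CC})/\!\!/\!\!/_{\CC}G_{\CC}\cong T^*G_{\CC}$, with $\xi_{e_0}=\xi_e$ ensuring compatibility with the boundary moment map at $v_1$.
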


\begin{proof}
This follows from the length-independence of $\M_{\CC}(\Gamma)$    (Proposition \ref{prop:length-invariance-complex}) and the proof of \cite[\text{Theorem 6.4}]{maiza2026lax}. 
\end{proof}

\subsection{The 2D TQFT Functor}

\begin{Thm}[The 2D TQFT Functor]
\label{thm:2d-tqft-functor}

There exists a unique symmetric monoidal functor
\[
\mathcal{Z} : \mathbf{Cob}_2 \longrightarrow \mathbf{Ham}_{\mathbb{C}}
\]
defined by:
\begin{itemize}
    \item \textbf{On Objects:} For $n \ge 0$ disjoint circles,
    \[
    \mathcal{Z}\left(\bigsqcup^n S^1\right) := G_{\mathbb{C}}^n, \qquad \mathcal{Z}(\emptyset) := \{*\}.
    \]
    \item \textbf{On Morphisms:} For any cobordism $\Sigma : \bigsqcup^m S^1 \to \bigsqcup^n S^1$ represented by an oriented quiver with boundary $\Gamma$,
    \[
    \mathcal{Z}(\Sigma) := \mathcal{M}_{\mathbb{C}}(\Gamma),
    \]
    equipped with its descended holomorphic symplectic structure and boundary evaluation moment map $(\mu_{\mathrm{in}}, \mu_{\mathrm{out}}) : \mathcal{M}_{\mathbb{C}}(\Gamma) \to (\mathfrak{g}_{\mathbb{C}}^*)^m \times (\mathfrak{g}_{\mathbb{C}}^*)^n$.
\end{itemize}
\end{Thm}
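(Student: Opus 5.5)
The plan is to define $\mathcal{Z}$ on generators and relations, check well-definedness through the quiver invariance results, and then verify functoriality and monoidality via the gluing law.

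\textbf{Well-definedness on morphisms.} Every connected cobordism $\Sigma$ retracts onto an oriented quiver $\Gamma$ with $|\partial\Gamma^-|=m$ and $|\partial\Gamma^+|=n$. Two spines of diffeomorphic cobordisms (relative to the boundary) are related by edge contractions and expansions at interior vertices. This is the standard fact that ribbon-graph spines of a fixed surface are connected by such moves, the same input used in \cite{maiza2026lax}. Proposition \ref{thm:spine-invariance} then gives a canonical isomorphism of complex Hamiltonian $G_{\CC}^{m}\times G_{\CC}^{n}$-spaces, so the isomorphism class $\mathcal{Z}(\Sigma)=\M_{\CC}(\Gamma)$ is a well-defined morphism of $\mathbf{Ham}_{\CC}$. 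By Proposition \ref{prop:length-invariance-complex}, the choice of edge lengths does not matter either. For a disconnected $\Sigma$, I will set $\mathcal{Z}(\Sigma)$ to be the product over components. This is consistent with the definition of $\M_{\CC}$ on a disjoint union of quivers, which is visibly a product.

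\textbf{Functoriality.} For the identity cylinder I will use the single-edge spine. Theorem \ref{thm:finite-dim-reduction} with $\Gamma_{\nt}=\emptyset$ gives $\M_{\CC}(\Gamma)\cong T^*G_{\CC}$ with moment maps $(-\xi,\Ad_g\xi)$ up to the sign convention, which is exactly $\mathrm{Id}_{G_{\CC}}$. For a composite $\Sigma_2\circ\Sigma_1$, the spine is $\Gamma_1*_D\Gamma_2$, and Proposition \ref{thm:cutting-gluing} yields
\[
\M_{\CC}(\Gamma_1*_D\Gamma_2)\cong(\M_{\CC}(\Gamma_1)\times\M_{\CC}(\Gamma_2))\sll{\CC}G_{\CC}^D .
\]
When this reduction is free and proper, it is precisely the composition in $\mathbf{Ham}^{\mathrm{pre}}_{\CC}$. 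Otherwise, the Wehrheim--Woodward completion identifies the formal pair $(\M_{\CC}(\Gamma_1),\M_{\CC}(\Gamma_2))$ with the glued space, because the glued space is smooth: it is again a Nahm--Kirchhoff moduli space, smooth by Theorem \ref{smooth quotient} together with Theorem \ref{complex q}. Gluing along a disconnected $D$, where composites may produce loops and genus, is handled the same way, since $\Gamma_1*_D\Gamma_2$ is still a connected quiver with boundary whenever the result is connected.

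\textbf{Monoidal structure and uniqueness.} On objects, $\mathcal{Z}(\sqcup^nS^1)=G_{\CC}^n$ is multiplicative. Disjoint union of cobordisms corresponds to disjoint union of quivers, hence to Cartesian products of moduli spaces. The symmetry isomorphisms, given by cylinders permuting circles, are sent to products of $T^*G_{\CC}$ with permuted factors, which are the symmetry morphisms of $\mathbf{Ham}_{\CC}$. Uniqueness follows because the functor is prescribed on all objects and on all morphisms.

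\textbf{Main obstacle.} I expect the hardest step to be functoriality at the level of the completed category. Concretely, one must check that the composite is taken in the right equivalence class and that the gluing isomorphism is compatible with the chosen spine-invariance isomorphisms, so that no ambiguity arises from different decompositions of the same cobordism. I would first reduce everything to the finite-dimensional models $(T^*G_{\CC})^E\sll{\CC}G_{\CC}^{\Gamma_{\nt}}$. There, gluing is reduction by stages for the product group $G_{\CC}^{\Gamma_{1,\nt}}\times G_{\CC}^D\times G_{\CC}^{\Gamma_{2,\nt}}$, and coherence reduces to the associativity of reduction by stages.
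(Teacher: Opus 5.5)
Your outline follows the route the paper intends. Its proof defers to the same steps in the Lax--Kirchhoff and multiplicative papers: spine invariance, the cutting--gluing law, $T^*G_{\CC}$ for the cylinder, and products for disjoint unions. As written, however, the proposal has a genuine gap. The claim that every connected cobordism retracts onto a quiver with $|\partial\Gamma^-|=m$, $|\partial\Gamma^+|=n$ is false for the disc and for closed surfaces, which are morphisms of $\mathbf{Cob}_2$; the cap and cup are even generators. In a connected quiver with a single boundary vertex, all other vertices have degree $\geq 2$. Hence $2|E|\geq 2|V|-1$, so $|E|-|\Gamma_{\nt}|=|E|-|V|+1\geq 1$ and $\dim_{\CC}\M_{\CC}(\Gamma)\geq 2\dim G$. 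But the unit law $\mathcal{Z}(\text{pants})\circ(\mathcal{Z}(\text{cap})\times\mathrm{Id})=T^*G_{\CC}$, with $\dim_{\CC}\mathcal{Z}(\text{pants})=4\dim G$, forces $\dim\mathcal{Z}(\text{cap})=0$. So the cap and cup are not of the form $\M_{\CC}(\Gamma)$ and must be supplied separately. A point with zero moment map works: gluing it to a leg imposes $\xi_e=0$, and the free translation on $g_e$ then deletes the leg. Since the spaces $\M_{\CC}(\Gamma)$ correspond to surfaces with $\chi\leq 0$, the disc is not a composite of them, so uniqueness is not tautological either.

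Your assertion that $\Gamma_1*_D\Gamma_2$ is a connected quiver \emph{with boundary} whenever the result is connected also fails. The copairing (an interior vertex with two outgoing legs) followed by the pairing glues to a boundaryless $4$-cycle, i.e.\ the torus. For a boundaryless quiver, $\Gamma_{\nt}=V$, and the point $(\mathbf{1},0)\in\nu^{-1}(0)\subset(T^*G_{\CC})^E$ has the whole diagonal $G_{\CC}$ as stabilizer. The non-empty-boundary hypotheses behind Theorems \ref{smooth quotient}, \ref{kempf ness} and \ref{thm:finite-dim-reduction} therefore all fail. Closed components can only be sent to formal Wehrheim--Woodward sequences. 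You then owe an argument that different decompositions of the same closed surface give equivalent sequences, e.g.\ by regrouping through intermediate composites that are connected with non-empty boundary.

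Your Wehrheim--Woodward step is also misstated. The completion replaces an adjacent pair by its composite only when the diagonal reduction is free and proper. Smoothness of $\M_{\CC}(\Gamma_1*_D\Gamma_2)$, obtained by another route, does not license identifying a non-composable pair with it. What you need, and what reduction by stages in Proposition \ref{thm:cutting-gluing} supplies, is that the $G_{\CC}^D$-action on $\mu_{\Delta}^{-1}(0)$ is automatically free and proper when the glued quiver is connected with non-empty boundary. To see this, take holonomies along a spanning tree from a fixed boundary vertex. They give a $G_{\CC}^{\Gamma_{\nt}}$-equivariant map $(T^*G_{\CC})^E\to G_{\CC}^{\Gamma_{\nt}}$, where the target carries the free proper action by left translation. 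Your ``otherwise'' case therefore arises exactly for closed components, where the glued space is not smooth. Finally, the coherence issue you single out as the main obstacle is moot. Morphisms of $\mathbf{Ham}_{\CC}^{\mathrm{pre}}$ are isomorphism classes, so only the existence of the gluing isomorphisms matters; the real work lies in the cap, the cup and the closed surfaces.
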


\begin{proof}
The proof follows the same steps as the proofs of \cite[\text{Theorem 7.4}]{maiza2026multiplicative} and \cite[\text{Theorem 7.2}]{maiza2026lax}.
\end{proof}

\subsection{The Obstruction to a Hyperkähler TQFT}

Let $\Gamma_e$ denote the quiver with a single edge $e$ joining two boundary vertices, which is a spine of the cylinder, and let $\M_{\bh}(\Gamma_e,L)$ denote its Nahm--Kirchhoff moduli space for $I_e = [0,L]$. It carries the residual Hamiltonian action of $G\times G = G^{\partial\Gamma_e}$. Repeating the integration by parts of the proof of Proposition \ref{moment iden} for $u\in \Lie(\G(\Gamma_e))$ not vanishing at $\partial\Gamma_e$, the boundary terms do not cancel, and one finds that, up to an overall sign convention, the hyperkähler moment map of this action is
\[
\mu^{\partial}_a([A]) = \bigl(-A_a(0),\, A_a(L)\bigr) \in \g\times\g \cong \g^*\times\g^*, \qquad a = 1,2,3.
\]
We denote by $g_{\mathrm{bi}}$ the bi-invariant metric on $G$ induced by the $\Ad_G$-invariant inner product on $\g$.

\begin{Prop} \label{prop:cylinder-length}
Assume $\dim G > 0$.
\begin{enumerate}
    \item The zero fibre $Z_L \coloneqq (\mu^{\partial})^{-1}(0) \subset \M_{\bh}(\Gamma_e,L)$ is the set of classes $[A]$ with $A_1 = A_2 = A_3 = 0$. It is a single $G\times G$-orbit, and the holonomy map $Z_L \to G$ is a diffeomorphism which is an isometry from the induced metric onto $(G, L^{-1}g_{\mathrm{bi}})$.
    \item For $L\neq L'$, the spaces $\M_{\bh}(\Gamma_e,L)$ and $\M_{\bh}(\Gamma_e,L')$ are not isomorphic as hyperkähler Hamiltonian $G\times G$-spaces, i.e.\ there is no $G\times G$-equivariant hyperkähler isometry between them intertwining the moment maps.
\end{enumerate}
\end{Prop}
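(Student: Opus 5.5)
For part (1), I would work at the level of representatives in $\A_{\bh}(\Gamma_e)$ with $I_e=[0,L]$. Here $\Gamma_e$ has no interior vertices, so $\G_0(\Gamma_e)$ consists of gauge transformations trivial at both endpoints. The condition $\mu^{\partial}([A])=0$ says $A_a(0)=A_a(L)=0$ for $a=1,2,3$.

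I would first use $\G_0$ to put $A_0=0$: take $g$ solving $\dot g g^{-1}=gA_0g^{-1}$ with $g(0)=1$. The value $g(L)$ need not be $1$, so instead I would work with the full group $\G(\Gamma_e)$ and track the holonomy. In temporal gauge, Nahm's equations become $\dot A_i=-[A_{i+1},A_{i+2}]$.

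The key observation is the conserved quantity. The function $\sum_i\|A_i\|^2$ has derivative $-2\sum_i\langle A_i,[A_{i+1},A_{i+2}]\rangle=-6\langle A_1,[A_2,A_3]\rangle$, which is not obviously zero. So instead I would use ODE uniqueness: $A_i(0)=0$ together with the autonomous ODE gives $A_i\equiv 0$. This holds in temporal gauge, and hence in any gauge, since the $A_i$ transform by conjugation. Thus $Z_L=\{[A]:A_1=A_2=A_3=0\}$.

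Such a class is determined by the holonomy $\mathrm{hol}(A)\in G$ of $A_0$. Gauge transformations trivial at the boundary preserve the holonomy, and any two $A_0$ with the same holonomy are $\G_0$-equivalent, so $Z_L\cong G$. The boundary group $G\times G$ acts by $(k_0,k_1)\cdot h=k_1hk_0^{-1}$, which is transitive, so $Z_L$ is a single orbit.

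For the metric, I would take a tangent vector at $[A_0]$ with holonomy $h$ and compute its horizontal representative. This is a variation $Y=(Y_0,0,0,0)$ lying in $\Ker D_A^*$. With $A_i=0$, the condition reduces to $\nabla_{A_0}Y_0=0$, that is, $Y_0$ is covariantly constant. In temporal gauge $Y_0$ is then constant, equal to some $\eta\in\g$, and the holonomy of $A_0+\epsilon\eta$ changes by $d\,\mathrm{hol}=-L\eta$ (right-translated). Therefore $\|Y\|^2=L\|\eta\|^2=L^{-1}\|d\,\mathrm{hol}\|^2_{\mathrm{bi}}$, which gives the metric $L^{-1}g_{\mathrm{bi}}$.

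The point I expect to need the most care is confirming that this $Y$ is actually tangent to $\A_{\bh}$ and horizontal. Tangency holds because the linearized equations reduce to $[Y_0,A_i]=0$, which is automatic when $A_i=0$. Horizontality is the check above. Note also that $Z_L$ sits inside $\M_{\bh}$ with the induced Riemannian metric, so its metric is exactly the reduced one at these points.

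For part (2), I would suppose $F:\M_{\bh}(\Gamma_e,L)\to\M_{\bh}(\Gamma_e,L')$ is a $G\times G$-equivariant hyperkähler isometry intertwining the moment maps. Then $F$ maps $Z_L$ onto $Z_{L'}$ isometrically and equivariantly. Composing with the holonomy identifications of part (1), $F$ becomes an equivariant isometry $(G,L^{-1}g_{\mathrm{bi}})\to(G,L'^{-1}g_{\mathrm{bi}})$ commuting with the two-sided action. Equivariance forces the map to be $h\mapsto h\,c$ for some $c$ in the centre of $G$, with $c$ commuting with everything. Such a map is an isometry of $g_{\mathrm{bi}}$, so we would need $L^{-1}g_{\mathrm{bi}}=L'^{-1}g_{\mathrm{bi}}$. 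This is impossible for $L\neq L'$ since $\dim G>0$. Alternatively, one can compare total volumes, $L^{-\dim G/2}\mathrm{vol}(G)$, which avoids the equivariance analysis.
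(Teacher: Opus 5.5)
Your proposal is correct and follows the paper's proof in all essentials. The differences are minor: the paper gets $A_2=A_3=0$ from the complex Nahm equation, $\beta=\Ad_{g_\alpha}\beta(0)$, and then $A_1\equiv 0$ by parallel transport, where you use ODE uniqueness in temporal gauge. It also computes the metric at constant representatives $A_0\equiv\xi$ with horizontal vectors $Y_0=\Ad_{\exp(-t\xi)}c$, and it proves (2) by exactly the volume comparison you give as your alternative; your equivariance argument forcing $h\mapsto hc$ with $c$ central is also valid. You should make explicit two points that the paper states in one line each. First, $D_Au$ has vanishing Higgs components when $A_1=A_2=A_3=0$, so horizontal lifts of vectors tangent to $Z_L$ really are of the form $(Y_0,0,0,0)$. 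Second, $Z_L$ is an orbit of the compact group $G\times G$, hence an embedded submanifold, so the smooth equivariant bijection to $G$ is a diffeomorphism.
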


\begin{proof}
(1) Let $[A]\in Z_L$, so that $A_a(0) = A_a(L) = 0$ for $a = 1,2,3$. With $\alpha = A_0 + iA_1$ and $\beta = A_2 + iA_3$, the complex Nahm equation $\dot{\beta} + [\alpha,\beta] = 0$ gives $\beta(t) = \Ad_{g_\alpha(t)}\beta(0) = 0$, so $A_2 = A_3 = 0$. The first Nahm equation then reads $\dot{A}_1 + [A_0,A_1] = 0$, so $A_1$ is parallel and $A_1(0) = 0$ forces $A_1 \equiv 0$. Conversely, every $A = (A_0,0,0,0)$ solves Nahm's equations (there are no interior vertices). Two connections $A_0, A'_0$ on $[0,L]$ are related by a gauge transformation trivial at both ends if and only if they have the same holonomy $h = g_{A_0}(L)$, and $(g_0,g_1)\in G\times G$ acts on holonomies by $h\mapsto g_1hg_0^{-1}$. Hence the holonomy map $Z_L\to G$ is a $G\times G$-equivariant bijection onto a homogeneous space, so $Z_L$ is a single orbit of a compact group, hence a compact embedded submanifold, and the holonomy map is a diffeomorphism.

To compute the induced metric, note that since $G$ is compact and connected, every $h\in G$ is of the form $\exp(-L\xi)$ for some $\xi\in\g$, so every point of $Z_L$ is represented by a constant $A_0\equiv\xi$. The horizontal tangent vectors to $Z_L$ at $[A]$ are the $Y = (Y_0,0,0,0)$ with $D^{*}_AY = \dot{Y}_0 + [\xi,Y_0] = 0$ (there are no vertex terms, and the orbit directions $D_Au$ also have vanishing Higgs components), that is, $Y_0(t) = \Ad_{\exp(-t\xi)}c$ for some $c\in\g$, with $\|Y\|^2_{L^2} = L\|c\|^2$. Writing the variation of the holonomy as $\delta g_{A_0}(t) = g_{A_0}(t)\eta(t)$ with $g_{A_0}(t) = \exp(-t\xi)$, the linearized transport equation gives $\dot{\eta} = -g_{A_0}^{-1}Y_0\,g_{A_0} = -c$, so $\eta(L) = -Lc$ and $\|\delta h\|_{g_{\mathrm{bi}}} = L\|c\|$. Therefore the induced metric is $\|\delta h\|^2_{g_{\mathrm{bi}}}/L$, as claimed.

(2) An isomorphism $f$ of hyperkähler Hamiltonian $G\times G$-spaces satisfies $\mu'^{\partial}\circ f = \mu^{\partial}$, hence maps $Z_L$ onto $Z_{L'}$ by a Riemannian isometry. By (1), $\mathrm{vol}(Z_L) = L^{-\dim G/2}\,\mathrm{vol}_{g_{\mathrm{bi}}}(G)$, which differs from $\mathrm{vol}(Z_{L'})$ when $L\neq L'$ because $\dim G > 0$; contradiction.
\end{proof}

\begin{Rem}[The obstruction to a hyperkähler TQFT]
\label{rem:obstruction}
One might ask whether the hyperkähler moduli spaces $\mathcal{M}_{\mathbb{H}}(\Gamma)$ assemble into a 2D TQFT valued in a hyperkähler category whose morphisms are hyperkähler Hamiltonian manifolds composed by hyperkähler reduction $/\!\!/\!\!/_{\mathbb{H}}$, by setting $\Sigma\mapsto \M_{\bh}(\Gamma)$ for any spine $\Gamma$ of $\Sigma$.
\begin{enumerate}
    \item This assignment is not well defined on $\mathbf{Cob}_2$: the cylinder admits the spines $(\Gamma_e,L)$ for every $L > 0$, and by Proposition \ref{prop:cylinder-length} the corresponding hyperkähler spaces are pairwise non-isomorphic.
    \item The obstruction is due entirely to the total length of the edges: by Theorem \ref{thm:hklr-metric-gluing}, subdividing an edge into two sub-edges whose lengths add up to the original length is a hyperkähler isometry. It is the total length, which $\mathbf{Cob}_2$ does not record, that changes the hyperkähler isomorphism class.
    \item By Proposition \ref{prop:metric-length-dependence}(1), a uniform rescaling of all lengths acts by a homothety. Hence the obstruction for the cylinder disappears if hyperkähler spaces are only considered up to homothety. For general spines, Proposition \ref{prop:metric-length-dependence}(2) shows that the transported metrics $g^{X}_{\mathbf{l}}$ are not related by the canonical identifications; we make no claim about the existence of other identifications.
\end{enumerate}
This motivates retaining the lengths as part of the data, which we do in Section \ref{section 6}.
\end{Rem}

\section{Metric Cobordisms, Hyperkähler Geometry, and Tropical Families} \label{section 6}

In this section, we formulate the metric refinement of the 2D field theory using the Nahm--Kirchhoff moduli spaces $\M_{\bh}(\Gamma, \mathbf{l}).$ While topological invariance fails for the hyperkähler metric (Proposition \ref{prop:cylinder-length}), the moduli spaces assemble functorially into a \emph{metric field theory}, namely a functor of non-unital monoidal categories, and over each cone of the tropical moduli space of curves they form a family with fixed complex-symplectic fibre.

\subsection{The Category $\mathbf{MetCob}_2$ of Metric Cobordisms}

\begin{Def}[Metric Cobordisms]
The non-unital monoidal category of metric 2-cobordisms, denoted $\mathbf{MetCob}_2$, is defined as follows:
\begin{itemize}
    \item \textbf{Objects:} Finite disjoint unions of metric circles
    \[
    \bigsqcup_{i=1}^n (S^1, L_i), \qquad L_i \in \mathbb{R}_{>0},
    \]
    where $L_i$ denotes the assigned metric perimeter of the $i$-th circle. The empty union $\emptyset$ serves as the monoidal unit.
    \item \textbf{Morphisms:} A morphism from $\bigsqcup_{i=1}^m (S^1, L_i^{\mathrm{in}})$ to $\bigsqcup_{j=1}^n (S^1, L_j^{\mathrm{out}})$ is an equivalence class of triples $(\Sigma, \Gamma, \mathbf{l})$, where:
    \begin{enumerate}
        \item $\Sigma$ is a smooth compact oriented 2-manifold with boundary decomposed as $\partial\Sigma = \overline{\Sigma}_{\mathrm{in}} \sqcup \Sigma_{\mathrm{out}};$
        \item $\Gamma = (E, V, s, t)$ is an embedded quiver retract of $\Sigma$ with boundary vertices $\partial\Gamma = \partial\Gamma^- \sqcup \partial\Gamma^+$ satisfying $|\partial\Gamma^-| = m$ and $|\partial\Gamma^+| = n;$
        \item $\mathbf{l} : E \to \mathbb{R}_{>0}$, $e \mapsto l_e$, is an edge-length function such that the boundary leaves $e_v$ incident to $v \in \partial\Gamma$ satisfy $l_{e_v} = L_v.$
    \end{enumerate}
    \item \textbf{Equivalence Relation:} Two metric oriented quivers $(\Gamma_1, \mathbf{l}_1)$ and $(\Gamma_2, \mathbf{l}_2)$ representing the same topological surface $\Sigma$ are equivalent if they are related by a finite sequence of \emph{metric edge subdivisions and amalgamations} ($S$-moves), where a subdivision replaces an edge $e$ of length $l_e$ by two edges $e_1, e_2$ with $l_e = l_{e_1} + l_{e_2}$ ($l_{e_1}, l_{e_2} > 0$) joined at a new interior vertex.
    \item \textbf{Composition (Metric Concatenation):} Given $(\Sigma_1, \Gamma_1, \mathbf{l}_1) : \mathbf{L}_1 \to \mathbf{L}_2$ and $(\Sigma_2, \Gamma_2, \mathbf{l}_2) : \mathbf{L}_2 \to \mathbf{L}_3$, the composite morphism $(\Sigma_{12}, \Gamma_{12}, \mathbf{l}_{12})$ is formed by gluing $\Sigma_1$ and $\Sigma_2$ along the common boundary circles. At the level of quivers, for each identified boundary component with perimeter $L$, the outgoing edge $e_1 \in \partial\Gamma_1^+$ and the incoming edge $e_2 \in \partial\Gamma_2^-$ are concatenated into a single edge $e = e_1 \cup e_2$ with assigned length
    \[
    l_e := l_{e_1} + l_{e_2} = 2L.
    \]
    \item \textbf{Monoidal Structure:} Disjoint union of metric circles and disjoint union of metric surfaces.
    \item \textbf{Absence of identities:} The total length $\sum_{e\in E} l_e$ is invariant under $S$-moves, and the total length of a composite is the sum of the total lengths of its factors. Since all lengths are positive, no morphism can act as an identity. Composition is associative, so $\mathbf{MetCob}_2$ is a non-unital monoidal category (a monoidal semicategory), as is usual for bordism categories with metric data.
\end{itemize}
\end{Def}

\subsection{The Hyperkähler Target Category $\mathbf{HK}$}

\begin{Def}[The partial category $\mathbf{HK}^{\text{pre}}$]
The non-unital partial category $\mathbf{HK}^{\text{pre}}$ is defined by:
\begin{itemize}
    \item \textbf{Objects:} Compact connected Lie groups $G$ (with $G^0 = \{*\}$).
    \item \textbf{Morphisms:} A morphism from $G^m$ to $G^n$ is an isomorphism class of smooth (not necessarily complete) hyperkähler manifolds $(M, g_{\mathbb{H}}, I_1, I_2, I_3, \omega_1, \omega_2, \omega_3)$ equipped with a tri-Hamiltonian isometric action of $G^m \times G^n$ admitting an equivariant hyperkähler moment map
    \[
    \mu = (\mu_1, \mu_2, \mu_3) : M \longrightarrow (\mathfrak{g}^* \times \mathfrak{g}^* \times \mathfrak{g}^*)^{m+n}.
    \]
    An isomorphism between two morphisms is a $G^m\times G^n$-equivariant hyperkähler isometry intertwining the moment maps.
    \item \textbf{Composition:} Let $M \in \operatorname{Mor}(G^m, G^k)$ and $N \in \operatorname{Mor}(G^k, G^n)$. If the diagonal action of $G^k$ on $\mu_{\Delta}^{-1}(0)$ is free and proper, the composition $N \circ M \in \operatorname{Mor}(G^m, G^n)$ is defined by the HKLR hyperkähler quotient
    \[
    N \circ M := (M \times N) /\!\!/\!\!/_{\mathbb{H}}\, G^k := \left( \bigcap_{a=1}^3 \mu_{\Delta, a}^{-1}(0) \right) / G^k,
    \]
    where $\mu_{\Delta, a}(x, y) = \mu_{\mathrm{out}, a}^{(M)}(x) - \mu_{\mathrm{in}, a}^{(N)}(y)$ for $a \in \{1, 2, 3\}$.
    \item \textbf{No identities:} We do not adjoin identity morphisms; see Remark \ref{rem:no-identity} below.
\end{itemize}
\end{Def}

\begin{Def}[Categorical Completion $\mathbf{HK}$]
Applying the Wehrheim--Woodward construction \cite{WehrheimWoodward2010Functoriality} to the composition law of $\mathbf{HK}^{\mathrm{pre}}$, using only non-empty formal sequences (so that no identity morphisms are adjoined), yields the non-unital category $\mathbf{HK}:$
\begin{itemize}
    \item The objects of $\mathbf{HK}$ are finite Cartesian products of connected compact Lie groups $\prod_{i=1}^n G^{(i)}.$
    \item A morphism from $G$ to $H$ is an equivalence class of non-empty formal finite sequences $(M_1, M_2, \dots, M_k)$ of composable basic morphisms, modulo the equivalence relation generated by composing adjacent pairs whenever the intermediate hyperkähler reduction is smooth (see \cite{maiza2026multiplicative,CrooksMayrand2024MooreTachikawa}).
    \item The Cartesian product on objects and morphisms makes $\mathbf{HK}$ a non-unital monoidal category (see also \cite{maiza2026multiplicative,maiza2026lax}).
\end{itemize}
\end{Def}

\begin{Rem} \label{rem:no-identity}
The single-edge spaces $\M_{\bh}(\Gamma_e,L)$ cannot serve as identity morphisms. Indeed, by Theorem \ref{thm:hklr-metric-gluing} below, $\M_{\bh}(\Gamma_e,L')\circ \M_{\bh}(\Gamma_e,L) \cong \M_{\bh}(\Gamma_e,L+L')$, which is not isomorphic to $\M_{\bh}(\Gamma_e,L')$ in $\mathbf{HK}^{\mathrm{pre}}$ by Proposition \ref{prop:cylinder-length}. This is the hyperkähler counterpart of the absence of identities in $\mathbf{MetCob}_2$.
\end{Rem}

\subsection{Metric Gluing via Hyperkähler Reduction}

\begin{Thm} \label{thm:hklr-metric-gluing}

Let $\Gamma_1$ and $\Gamma_2$ be metric quivers with length assignments $\mathbf{l}_1, \mathbf{l}_2$. Let $v_1 \in \partial\Gamma_1^+$ and $v_2 \in \partial\Gamma_2^-$ be boundary vertices with incident edges $e_1, e_2$ of lengths $l_{e_1}, l_{e_2}$. Let $\Gamma_1 *_{v} \Gamma_2$ be the quiver obtained by identifying $v_1$ and $v_2$ into an interior vertex $v$, and let $(\Gamma_{12}, \mathbf{l}_{12})$ be the quiver obtained by amalgamating $e_1$ and $e_2$ into a single edge $e = e_1 \cup e_2$ of length $l_e = l_{e_1} + l_{e_2}$. 

Then there are canonical hyperkähler isometries
\[
(\mathcal{M}_{\mathbb{H}}(\Gamma_1, \mathbf{l}_1) \times \mathcal{M}_{\mathbb{H}}(\Gamma_2, \mathbf{l}_2)) /\!\!/\!\!/_{\mathbb{H}}\, G_v \;\cong\; \mathcal{M}_{\mathbb{H}}(\Gamma_1 *_v \Gamma_2, \mathbf{l}_1 \cup \mathbf{l}_2) \;\cong\; \mathcal{M}_{\mathbb{H}}(\Gamma_{12}, \mathbf{l}_{12}).
\]
\end{Thm}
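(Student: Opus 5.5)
I would prove the two isometries separately, both by working directly with the infinite-dimensional HKLR description of Section \ref{section 4} rather than with the finite-dimensional complex model. Reduction by stages is the common thread in both.

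\textbf{First isometry: reduction by stages.} The configuration space of $\Gamma_1 *_v \Gamma_2$ is $\B_{\bh}(\Gamma_1)\times\B_{\bh}(\Gamma_2)$, with exactly the product $L^2$ metric and product complex structures $I_a$, since edges are unchanged. The gauge group $\G_0(\Gamma_1*_v\Gamma_2)$ sits in an exact sequence
\[
1\too \G_0(\Gamma_1)\times\G_0(\Gamma_2)\too \G_0(\Gamma_1*_v\Gamma_2)\too G_v\too 1,
\]
the last map being evaluation at $v$. The key computation is the moment map. Repeating the integration by parts of Proposition \ref{moment iden} with $u$ not vanishing at $v$ shows that the Kirchhoff component of $\mu_a$ at $v$ equals $\mu^{(1)}_{\mathrm{out},a}-\mu^{(2)}_{\mathrm{in},a}$, i.e.\ $\mu_{\Delta,a}$, with sign conventions as for the cylinder. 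So $\A_{\bh}(\Gamma_1*_v\Gamma_2)$ is the $\mu_\Delta$-zero set inside $\A_{\bh}(\Gamma_1)\times\A_{\bh}(\Gamma_2)$. Quotienting first by the normal subgroup and then by $G_v$ identifies the two quotients as sets and as smooth manifolds. Freeness and properness of $G_v$ come from Proposition \ref{proper gauge} for the glued quiver.

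For the metric, the horizontal space of the big quotient is $\Ker D^*_A$ for the glued quiver. This is the set of $\Ker D^*$-horizontal vectors for $\Gamma_1,\Gamma_2$ that are additionally $L^2$-orthogonal to the $G_v$-orbit directions. That is exactly the HKLR horizontal space computing the metric on the $G_v$-quotient of $\M_{\bh}(\Gamma_1)\times\M_{\bh}(\Gamma_2)$. The isometry then follows, and the $I_a$ are matched by Remark \ref{rmq 4.12}.

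\textbf{Second isometry: removal of a degree-two interior vertex.} I would not go through the finite model here, since it does not see the metric. Instead I would parametrize $e_1$ by $[a,c]$ and $e_2$ by $[c,b]$ with $b-a=l_{e_1}+l_{e_2}$, and define a map $\A_{\bh}(\Gamma_{12})\to\A_{\bh}(\Gamma_1*_v\Gamma_2)$ by restricting the fields on $e$ to the two subintervals. At the degree-two vertex $v$, Kirchhoff reads $A^{e_1}_i(c)=A^{e_2}_i(c)$ for $i=1,2,3$, i.e.\ continuity of the Higgs fields. So restriction lands in the Nahm--Kirchhoff space and is an $L^2$-isometric, $I_a$-linear embedding.

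Restriction is not onto, however: $A_0$ may jump at $c$, and the pieces are only $C^1$ on each side. The key step is to show that every glued configuration is gauge equivalent, under $\G_0(\Gamma_1*_v\Gamma_2)$, to one in the image. To do this, gauge $A_0$ to zero near $c$ on both sides, using gauge transformations that agree at $v$. The Nahm equations then force $\dot A_i$ to agree on both sides at $c$, and bootstrapping gives $C^1$ fields across $c$. Gauge transformations of the glued quiver restricted from $\G_0(\Gamma_{12})$ cover $\G_0(\Gamma_{12})$, and conversely modulo this regularization. This yields a diffeomorphism of moduli spaces.

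Horizontality is preserved: $D^*_A$ for the glued quiver has the vertex term $Y^{e_2}_0(c)-Y^{e_1}_0(c)$, which is exactly continuity of $Y_0$. Hence $\Ker D^*_A$ matches, the induced maps on horizontal spaces are $L^2$-isometries, and the three Kähler forms correspond.

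\textbf{Main obstacle.} I expect the regularity and gauge-fixing step at the new vertex to be the hardest. The reason is that the gauge groups are $C^2$ on each edge but only $C^0$-matched at vertices. I would handle it with the slice isomorphism $L_A$ of Proposition \ref{prop 3.6}, or more concretely by the explicit gauge solving $\dot g=gA_0$ locally, checking the $C^2$ matching by hand.
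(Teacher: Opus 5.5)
Your proposal is correct and follows essentially the paper's route. The first isometry comes from reduction by stages along $1\to\G_0(\Gamma_1)\times\G_0(\Gamma_2)\to\G_0(\Gamma_1*_v\Gamma_2)\to G_v\to 1$, with the $G_v$-moment map identified with the Kirchhoff term at $v$; the second comes from concatenation at the degree-two vertex together with the splitting of the $L^2$ integrals. Your explicit gauge regularization across $c$ (needed because $A_0$ may jump there) and your matching of horizontal spaces via continuity of $Y_0$ fill in details the paper only asserts.

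One sign caveat, which the paper shares: with the cylinder convention $\mu^{\partial}_a=(-A_a(0),A_a(L))$, the Kirchhoff term $\pm\bigl(A^{e_2}_a(0)-A^{e_1}_a(l_{e_1})\bigr)$ equals the sum $\mu^{(1)}_{\mathrm{out},a}+\mu^{(2)}_{\mathrm{in},a}$, which is the moment map of the genuine diagonal action. It does not equal the difference $\mu^{(1)}_{\mathrm{out},a}-\mu^{(2)}_{\mathrm{in},a}$ that you wrote.
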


\begin{proof}
Let $I_1 = [0, l_{e_1}]$ and $I_2 = [0, l_{e_2}]$. The unglued ambient space of fields decomposes as
\[
\mathcal{B}_{\mathbb{H}}(\Gamma_1) \times \mathcal{B}_{\mathbb{H}}(\Gamma_2) = C^1(I_1, \mathfrak{g}^4)^{E_1} \times C^1(I_2, \mathfrak{g}^4)^{E_2}.
\]
Evaluating the diagonal hyperkähler moment map $\mu_{\Delta} = (\mu_{\Delta, 1}, \mu_{\Delta, 2}, \mu_{\Delta, 3})$ at $v$ yields
\[
\mu_{\Delta, a}(A_1, A_2) = A_a^{e_2}(0) - A_a^{e_1}(l_{e_1}) \qquad (a = 1, 2, 3).
\]
Setting $\mu_{\Delta, a} = 0$ for all $a \in \{1, 2, 3\}$ forces exact continuity of the three Higgs fields across the junction:
\[
A_a^{e_1}(l_{e_1}) = A_a^{e_2}(0) \qquad (a \in \{1, 2, 3\}).
\]
Furthermore, the gauge group $G_v$ acts on connection components by $g_v \cdot A_0^{e_2}(0) = \mathrm{Ad}_{g_v}(A_0^{e_2}(0)) - \dot{g}_v g_v^{-1}$. Taking the quotient identifies gauge transformations across the vertex, matching the boundary trivialization sequence
\[
1 \longrightarrow \mathcal{G}_0(\Gamma_1) \times \mathcal{G}_0(\Gamma_2) \longrightarrow \mathcal{G}_0(\Gamma_1 *_v \Gamma_2) \longrightarrow G_v \longrightarrow 1.
\]
By infinite-dimensional Marsden--Weinstein reduction by stages applied to each of the three symplectic forms $\omega_1, \omega_2, \omega_3$ (Theorem \ref{infinite red}) and the quaternionic slice criterion (Proposition \ref{q slice}), the intermediate reduction yields
\[
(\mathcal{M}_{\mathbb{H}}(\Gamma_1, \mathbf{l}_1) \times \mathcal{M}_{\mathbb{H}}(\Gamma_2, \mathbf{l}_2)) /\!\!/\!\!/_{\mathbb{H}}\, G_v \cong \mathcal{M}_{\mathbb{H}}(\Gamma_1 *_v \Gamma_2, \mathbf{l}_1 \cup \mathbf{l}_2).
\]
Finally, on $\Gamma_1 *_v \Gamma_2$, $v$ is an interior vertex incident to the edges $e_1, e_2$. The Kirchhoff matching conditions at $v$ are
\[
A_a^{e_1}(l_{e_1}) = A_a^{e_2}(0) \qquad (a=1,2,3).
\]
A pair of $C^1$ solutions $A^{e_1} \in C^1([0, l_{e_1}], \mathfrak{g}^4)$ and $A^{e_2} \in C^1([0, l_{e_2}], \mathfrak{g}^4)$ satisfying these matching conditions, modulo gauge transformations matching at $v$, is uniquely equivalent to a single $C^1$ solution on the concatenated interval $[0, l_{e_1} + l_{e_2}]$ modulo gauge transformations on $[0, l_{e_1} + l_{e_2}]$. The descended $L^2$-metrics agree:
\[
\int_0^{l_{e_1}} \sum_{i=0}^3 \|X_i\|^2 dt + \int_0^{l_{e_2}} \sum_{i=0}^3 \|X_i\|^2 dt = \int_0^{l_{e_1} + l_{e_2}} \sum_{i=0}^3 \|X_i\|^2 dt.
\]
Hence, the identification with $\mathcal{M}_{\mathbb{H}}(\Gamma_{12}, \mathbf{l}_{12})$ is a hyperkähler isometry.
\end{proof}

\subsection{The Metric Field Theory Functor}

\begin{Thm} \label{thm:metric-functor}
The assignment
\[
\mathcal{Z}_{\mathbb{H}} : \mathbf{MetCob}_2 \longrightarrow \mathbf{HK}
\]
defined by
\begin{itemize}
    \item on objects: $\mathcal{Z}_{\mathbb{H}}\left(\bigsqcup_{i=1}^n (S^1, L_i)\right) := G^n$, with $\mathcal{Z}_{\mathbb{H}}(\emptyset) := \{*\}$,
    \item on morphisms: for any metric cobordism $(\Sigma, \Gamma, \mathbf{l})$,
    \[
    \mathcal{Z}_{\mathbb{H}}(\Sigma, \Gamma, \mathbf{l}) := \mathcal{M}_{\mathbb{H}}(\Gamma, \mathbf{l}),
    \]
    equipped with its descended hyperkähler structure $(g_{\mathbb{H}}, I_a, \omega_a)$ and boundary hyperkähler moment map,
\end{itemize}
is a well-defined functor of non-unital monoidal categories: it is compatible with composition and with disjoint unions.
\end{Thm}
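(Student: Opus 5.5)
The proof checks four things in turn: that $\mathcal{Z}_{\mathbb{H}}$ is well defined on equivalence classes of metric cobordisms, that it takes values in $\mathbf{HK}$, that it respects composition, and that it respects the monoidal structure.

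\emph{Values in $\mathbf{HK}$.} For a representative $(\Sigma,\Gamma,\mathbf{l})$ we argue componentwise, so we may take $\Sigma$ connected. Theorem \ref{smooth quotient} shows that $\M_{\bh}(\Gamma,\mathbf{l})$ is a smooth finite-dimensional manifold. Section \ref{section 4}, through Proposition \ref{moment iden}, Lemma \ref{lemma 4.10}, Remark \ref{rmq 4.12} and Proposition \ref{q slice}, equips it with a hyperkähler structure. The residual group $\G(\Gamma)/\G_0(\Gamma)\cong G^{\partial\Gamma}=G^m\times G^n$ acts on it by tri-Hamiltonian isometries. The moment map is the boundary evaluation $\mu^\partial_a([A])=(-A_a|_{\partial\Gamma^-},A_a|_{\partial\Gamma^+})$. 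To obtain this formula, repeat the integration by parts in the proof of Proposition \ref{moment iden} with $u$ not required to vanish on $\partial\Gamma$, exactly as was done for $\Gamma_e$ before Proposition \ref{prop:cylinder-length}. Equivariance holds because the pairing is $\Ad$-invariant. This makes $\M_{\bh}(\Gamma,\mathbf{l})$ a morphism of $\mathbf{HK}^{\rm pre}$.

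\emph{Well-definedness.} Every $S$-move is an edge subdivision or amalgamation that preserves total length. Theorem \ref{thm:hklr-metric-gluing}, applied in its second isomorphism $\M_{\bh}(\Gamma_1*_v\Gamma_2)\cong\M_{\bh}(\Gamma_{12})$, turns each such move into a canonical hyperkähler isometry. We also need this isometry to be $G^{\partial}$-equivariant and to intertwine the boundary moment maps. Both hold because concatenation of solutions leaves every boundary edge germ, and hence the boundary values $A_a$ at $\partial\Gamma$, untouched. A different choice of embedded retract in the same class is, by definition of the equivalence, related by $S$-moves, so the isomorphism class in $\mathbf{HK}^{\rm pre}$ depends only on the morphism of $\mathbf{MetCob}_2$.

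\emph{Composition.} Take composable $(\Sigma_1,\Gamma_1,\mathbf{l}_1)$ and $(\Sigma_2,\Gamma_2,\mathbf{l}_2)$ glued along circles $D=\partial\Gamma_1^+=\partial\Gamma_2^-$. We iterate the first isomorphism of Theorem \ref{thm:hklr-metric-gluing} one vertex at a time; this is reduction by stages, since the $G_v$ for $v\in D$ commute. The result is $(\M_{\bh}(\Gamma_1)\times\M_{\bh}(\Gamma_2))/\!\!/\!\!/_{\bh}G^D\cong \M_{\bh}(\Gamma_1*_D\Gamma_2)$, and amalgamating each pair $e_1\cup e_2$ gives exactly the quiver $\Gamma_{12}$ with lengths $\mathbf{l}_{12}$ used for composition in $\mathbf{MetCob}_2$. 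The hypotheses of $\mathbf{HK}^{\rm pre}$ composition must also be verified: the action of $G^D$ on $\mu_\Delta^{-1}(0)$ has to be free and proper. We derive this from Proposition \ref{proper gauge} applied to $\Gamma_1*_D\Gamma_2$, using the exact sequence $1\to\G_0(\Gamma_1)\times\G_0(\Gamma_2)\to\G_0(\Gamma_1*_D\Gamma_2)\to G^D\to1$. Thus the HKLR quotient is smooth and composition holds already in $\mathbf{HK}^{\rm pre}$, hence in the Wehrheim--Woodward completion; no formal sequences are needed. If $\Gamma_1*_D\Gamma_2$ is disconnected, the same argument is applied componentwise. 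Associativity is inherited, since both categories are non-unital and no identities have to be checked.

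\emph{Monoidality and main difficulty.} A disjoint union of quivers gives a product of configuration spaces, gauge groups and $L^2$ metrics, so $\M_{\bh}(\Gamma\sqcup\Gamma')=\M_{\bh}(\Gamma)\times\M_{\bh}(\Gamma')$ as hyperkähler Hamiltonian spaces, and $\emptyset\mapsto\{*\}$. The step I expect to be the main obstacle is showing that the reduction-by-stages isomorphism in the composition step is an isometry intertwining the residual moment maps. In infinite dimensions one must check that the horizontal space $T_A\A_{\bh}\cap\Ker D_A^*$ of the glued quiver maps onto the horizontal space of the $G^D$-reduction of the product of horizontal spaces. I would argue this as follows. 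A vector that is horizontal for $\G_0(\Gamma_1*_D\Gamma_2)$ is horizontal for each factor group $\G_0(\Gamma_i)$. It is also $L^2$-orthogonal to the $G^D$-directions, because the vertex term of $D^*_A$ at $v\in D$ is precisely the pairing with the infinitesimal $G_v$-action. The metrics then agree by additivity of the integral.
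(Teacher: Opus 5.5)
Your proposal follows the paper's proof: invariance under $S$-moves and compatibility with composition both come from Theorem~\ref{thm:hklr-metric-gluing}, and monoidality comes from the product structure of disjoint unions. Your additional checks (equivariance of the $S$-move isometries, freeness of $G^D$ via the exact sequence of gauge groups, and the matching of horizontal spaces under reduction by stages) make explicit what the paper's short proof leaves implicit.

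There is one caveat, which the paper shares. Your componentwise freeness argument needs every component of $\Gamma_1 *_D \Gamma_2$ to keep a boundary vertex. This fails, for instance, when the cup $\emptyset\to S^1\sqcup S^1$ (quiver $w\to v_1$, $w\to v_2$) is composed with the corresponding cap: the diagonal $G\subset G^D$ then fixes $([0],[0])\in\mu_\Delta^{-1}(0)$. So closed components have to be excluded for the statement to hold.
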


\begin{proof}
We verify the axioms; there are no identity axioms to check, since neither category has identities.
\begin{enumerate}
    \item \textbf{Well-definedness on Morphisms:} By Theorem~\ref{thm:hklr-metric-gluing}, subdividing or amalgamating edges ($S$-moves) preserves the isomorphism class in $\mathbf{HK}$, so $\mathcal{Z}_{\mathbb{H}}$ is independent of the representative quiver in the equivalence class.
    \item \textbf{Compatibility with composition:} Let $(\Sigma_{12}, \Gamma_{12}, \mathbf{l}_{12})$ be the composition of $(\Sigma_1, \Gamma_1, \mathbf{l}_1)$ and $(\Sigma_2, \Gamma_2, \mathbf{l}_2)$ along $k$ boundary circles. Concatenating the boundary edges corresponds to hyperkähler reduction by $G^k$ at the identified junctions by Theorem~\ref{thm:hklr-metric-gluing}:
    \[
    \mathcal{Z}_{\mathbb{H}}(\Sigma_2 \circ \Sigma_1) \cong \mathcal{Z}_{\mathbb{H}}(\Sigma_2) \circ \mathcal{Z}_{\mathbb{H}}(\Sigma_1).
    \]
    \item \textbf{Monoidal Multiplicativity:} Disjoint unions of quivers have decoupled Nahm equations and orthogonal $L^2$-metrics, yielding $\mathcal{Z}_{\mathbb{H}}(\Sigma_1 \sqcup \Sigma_2) \cong \mathcal{Z}_{\mathbb{H}}(\Sigma_1) \times \mathcal{Z}_{\mathbb{H}}(\Sigma_2)$.
\end{enumerate}
Thus, $\mathcal{Z}_{\mathbb{H}}$ is a functor of non-unital monoidal categories.
\end{proof}

\subsection{Families over the Tropical Moduli Space $M_{g,n}^{\mathrm{trop}}$}

Let $\Sigma_{g, n}$ be a fixed topological surface of genus $g$ with $n$ labeled boundary components.

\begin{Def}[Tropical Moduli Space]
The \emph{tropical moduli space} $M_{g,n}^{\mathrm{trop}}$ is the polyhedral cone complex parameterizing isomorphism classes of metric ribbon trees/graphs $(\Gamma, \mathbf{l})$ of genus $g$ with $n$ boundary leaves:
\[
M_{g,n}^{\mathrm{trop}} := \left( \bigsqcup_{\Gamma} \mathbb{R}_{>0}^{|E_{\mathrm{int}}(\Gamma)|} \right) \Big/ \sim_{\mathrm{trop}}
\]
where the disjoint union runs over all combinatorial ribbon graph types $\Gamma$ with $\Sigma(\Gamma) \cong \Sigma_{g,n}$, and $\sim_{\mathrm{trop}}$ is the equivalence relation induced by graph automorphisms and identifying faces when internal edge lengths tend to zero (tropical edge contraction). Here $E_{\mathrm{int}}(\Gamma)$ is the set of edges joining two interior vertices; the lengths of the boundary leaves are fixed by the boundary data.
\end{Def}

\begin{Thm}[Hyperkähler families over the cones of $M_{g,n}^{\mathrm{trop}}$]
\label{thm:tropical-bundle}
Fix positive leaf lengths, and let $\Gamma$ be a combinatorial type with $\Sigma(\Gamma)\cong \Sigma_{g,n}$. Let $\mathcal{C}_{\Gamma} \coloneqq \R^{E_{\mathrm{int}}(\Gamma)}_{>0}$ be the corresponding open cone, and for $\mathbf{l}\in \mathcal{C}_\Gamma$ extend $\mathbf{l}$ to all of $E$ by the fixed leaf lengths.
\begin{enumerate}
    \item For every $\mathbf{l}\in\mathcal{C}_\Gamma$, the map $\Psi_{\mathbf{l}}:\M_{\bh}(\Gamma,\mathbf{l})\to X(\Gamma)$ of Subsection \ref{subsec:metric-length} is a biholomorphism for $I_1$ which intertwines $\omega_2 + i\omega_3$ with the reduced holomorphic symplectic form, and $X(\Gamma)\cong \M_{\CC}(\Sigma_{g,n})$ by Proposition \ref{thm:spine-invariance}. Hence the family $\{\M_{\bh}(\Gamma,\mathbf{l})\}_{\mathbf{l}\in\mathcal{C}_\Gamma}$ is identified with the product $\mathcal{C}_\Gamma\times \M_{\CC}(\Sigma_{g,n})$ as a family of complex-symplectic manifolds, carrying the fibrewise hyperkähler metrics $g^{X}_{\mathbf{l}}$.
    \item Assume $\dim G > 0$ and $E_{\mathrm{int}}(\Gamma)\neq\emptyset$. For $\mathbf{l}\in\mathcal{C}_\Gamma$ and $\lambda > 0$, let $\mathbf{l}^{\lambda}$ be obtained from $\mathbf{l}$ by multiplying the internal edge lengths by $\lambda$. If $\lambda\neq 1$, then $g^{X}_{\mathbf{l}^{\lambda}}\neq g^{X}_{\mathbf{l}}$. In particular, the hyperkähler metric varies non-trivially along every ray of $\mathcal{C}_\Gamma$.
\end{enumerate}
\end{Thm}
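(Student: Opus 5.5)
Part (1) is assembled from results already proved, so the plan is to chain them. For each $\mathbf{l}\in\mathcal{C}_\Gamma$, Theorem \ref{complex q} gives the biholomorphism $\Phi:(\M_{\bh}(\Gamma,\mathbf{l}),I_1)\to\M_{\CC}(\Gamma_{\mathbf{l}})$. It is induced by the inclusion $\A_{\bh}\hookrightarrow\A_{\CC}$, and the holomorphic form $\Omega_{\CC}=\omega_2+i\omega_3$ on $\B_{\CC}$ restricts on horizontal vectors to the reduced forms. Hence $\Phi$ intertwines $\omega_{2,\mathrm{red}}+i\omega_{3,\mathrm{red}}$ with the descended $\Omega_{\CC}$. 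This should be a direct check on the horizontal model $T_A\A_{\bh}\cap\Ker D^*_A$, using that $\operatorname{im}D_A$ is $\Omega_{\CC}$-orthogonal to $T_A\A_{\CC}$ by Lemma \ref{lemma 4.10}. Composing with $\varphi$ from Theorem \ref{thm:finite-dim-reduction}, which is a holomorphic symplectic isomorphism onto $X(\Gamma)$, gives $\Psi_{\mathbf{l}}$ with the required properties. The target $X(\Gamma)$ involves only holonomies and $\beta^e(a_e)$, so it is literally independent of $\mathbf{l}$, as noted in Subsection \ref{subsec:metric-length}. Proposition \ref{thm:spine-invariance} identifies $X(\Gamma)\cong\M_{\CC}(\Gamma)$ with $\M_{\CC}(\Sigma_{g,n})$. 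The product structure is then the map $(\mathbf{l},[A])\mapsto(\mathbf{l},\Psi_{\mathbf{l}}[A])$, and the fibre metrics are $g^X_{\mathbf{l}}$ by definition. If we also want continuity or smoothness in $\mathbf{l}$, we would use the smooth dependence of the Kempf--Ness minimizer on parameters, as in the proof of Theorem \ref{complex q}, with $\mathbf{l}$ entering through Remark \ref{rem:rescaled}. However, the statement only requires the fibrewise identification.

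For part (2), the plan is to use the explicit formula of Proposition \ref{prop:metric-length-dependence}(2): $g^X_{\mathbf{l}}(\hat\zeta,\hat\zeta)=\sum_e l_e\|\zeta_e\|^2$ for $\zeta\in K(\Gamma)$. Taking the difference gives
\[
g^X_{\mathbf{l}^\lambda}(\hat\zeta,\hat\zeta)-g^X_{\mathbf{l}}(\hat\zeta,\hat\zeta)=(\lambda-1)\sum_{e\in E_{\mathrm{int}}(\Gamma)} l_e\|\zeta_e\|^2,
\]
since the leaf lengths are unchanged. It therefore suffices to find $\zeta\in K(\Gamma)$ with $\zeta_{e_0}\neq0$ for some internal edge $e_0$. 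The proposition requires $|E|>|\Gamma_{\nt}|$, and this follows from connectedness with nonempty boundary, since $|E|\ge|V|-1=|\Gamma_{\nt}|+|\partial\Gamma|-1$. The degenerate case of a single boundary vertex is excluded by the hypothesis $E_{\mathrm{int}}\neq\emptyset$, because then there are at least two interior vertices joined by an edge.

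The main obstacle is constructing $\zeta$ supported nontrivially on an internal edge. We need a $\g_{\CC}$-valued flow on $\Gamma$ satisfying Kirchhoff's law at every interior vertex, with boundary vertices acting as free sources and sinks, that is nonzero on $e_0=(v_1,v_2)$. The plan is to pick $0\neq c\in\g_{\CC}$, which exists since $\dim G>0$, and use connectedness. Remove $e_0$ and choose paths in $\Gamma$ from $v_1$ to a boundary vertex and from $v_2$ to a boundary vertex, both avoiding $e_0$ where possible. Put $\zeta=c$ along the concatenated path, signed by orientation, so that the flow is conserved at each interior vertex it passes through. If $e_0$ is a bridge whose removal leaves $v_1$'s side with no boundary, we will instead use a cycle through $e_0$. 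In a ribbon spine of $\Sigma_{g,n}$ every interior vertex has a route to the boundary or lies on a cycle through $e_0$, so one of these two constructions always applies. If the paths overlap, take a simple path (or cycle), so the contributions do not cancel. This yields $\sum_{\mathrm{int}}l_e\|\zeta_e\|^2\ge l_{e_0}\|c\|^2>0$, and hence $g^X_{\mathbf{l}^\lambda}\neq g^X_{\mathbf{l}}$ for $\lambda\neq1$.
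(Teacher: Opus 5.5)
Your part (1) matches the paper's proof. You compose $\Phi$ from Theorem \ref{complex q} with $\varphi$ from Theorem \ref{thm:finite-dim-reduction}, and both reduced holomorphic forms are fixed by restricting $\Omega_{\CC}=\omega_2+i\omega_3$ to $\A_{\bh}(\Gamma)\subset\A_{\CC}(\Gamma)$. The orthogonality you need there is only the easy moment-map inclusion (Step 2 of Lemma \ref{lemma 4.10}, applied to $\mu_{\CC}$).

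Part (2) has the right skeleton: the difference formula from Proposition \ref{prop:metric-length-dependence}(2) and signed path/cycle flows. But the combinatorial step fails in exactly the case you single out. You fix an internal edge $e_0=(v_1,v_2)$ and try to make $\zeta_{e_0}\neq 0$. When $e_0$ is a bridge and the component $C_1$ of $\Gamma\setminus\{e_0\}$ containing $v_1$ has no boundary vertex, you propose ``a cycle through $e_0$''. A bridge lies on no cycle, so this does not exist. In fact nothing can work on that edge. Sum the Kirchhoff equations over the vertices of $C_1$, all of which are interior: every edge inside $C_1$ appears once with each sign and cancels, and only $e_0$ survives. Hence $\zeta_{e_0}=0$ for every $\zeta\in K(\Gamma)$. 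Such combinatorial types are admissible. For example, take a vertex carrying a boundary leaf and loops, and apply an edge expansion that separates the leaf from the loops. So your claim that ``one of these two constructions always applies'' is false.

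The missing idea is to give up on $e_0$ and look inside $C_1$. Every vertex of $C_1$ is interior, so it has degree at least $2$ in $\Gamma$. Therefore every vertex of $C_1$ other than $v_1$ has degree at least $2$ in $C_1$. If $C_1$ is a single vertex, it carries a loop. Otherwise $C_1$ is not a tree, since a tree with at least two vertices has two leaves and only one of them can be $v_1$. So $C_1$ contains a cycle whose edges are all internal. The signed flow $\pm w$ along that cycle lies in $K(\Gamma)$ and gives $\sum_{e\in E_{\mathrm{int}}}l_e\|\zeta_e\|^2>0$. This is the paper's argument.

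Separately, your justification of $|E|>|\Gamma_{\nt}|$ in the case $|\partial\Gamma|=1$ is wrong: having two interior vertices joined by an edge does not give the inequality. The correct reason is that a connected quiver with a single boundary vertex cannot be a tree, so $|E|\geq|V|$. In any case, producing a nonzero $\zeta\in K(\Gamma)$ already forces $\dim_{\mathbb{R}}K(\Gamma)>0$.
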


\begin{proof}
(1) The map $\Psi_{\mathbf{l}}$ is a biholomorphism by Theorems \ref{complex q} and \ref{thm:finite-dim-reduction}. The forms $\omega_2 + i\omega_3$ on $\M_{\bh}(\Gamma,\mathbf{l})$ and the reduced holomorphic symplectic form on $\M_{\CC}(\Gamma,\mathbf{l})$ are both characterized by the property that their pullbacks to $\A_{\bh}(\Gamma)\subset \A_{\CC}(\Gamma)$ coincide with the restriction of the ambient form $\omega_2 + i\omega_3 = \Omega_{\CC}$, so $\Phi$ intertwines them; $\varphi$ is a holomorphic symplectomorphism by Theorem \ref{thm:finite-dim-reduction}. The target $X(\Gamma)$ does not depend on $\mathbf{l}$ (Proposition \ref{prop:length-invariance-complex} and Subsection \ref{subsec:metric-length}).

(2) By Proposition \ref{prop:metric-length-dependence}(2), for every $\zeta\in K(\Gamma)$,
\[
g^{X}_{\mathbf{l}^{\lambda}}(\hat{\zeta},\hat{\zeta}) - g^{X}_{\mathbf{l}}(\hat{\zeta},\hat{\zeta}) = (\lambda - 1)\sum_{e\in E_{\mathrm{int}}(\Gamma)} l_e\,\|\zeta_e\|^2.
\]
It therefore suffices to find $\zeta\in K(\Gamma)$ with $\zeta_{e}\neq 0$ for some internal edge $e$. Fix $w\in\g_{\CC}\setminus\{0\}$. If $C$ is a cycle, or a path joining two boundary vertices, in the underlying undirected graph of $\Gamma$, set $\zeta_e \coloneqq \pm w$ for $e\in C$, with the sign $+$ if and only if the orientation of $e$ agrees with the direction of traversal, and $\zeta_e \coloneqq 0$ otherwise. At each interior vertex of $C$ one traversed edge enters and one leaves, so their contributions to the Kirchhoff sum cancel, and $\zeta\in K(\Gamma)$. It remains to show that some internal edge lies on such a $C$. Let $e_1$ be an internal edge. If $e_1$ lies on a cycle, or on a path between two boundary vertices, we are done. Otherwise, $e_1$ is a bridge and one of the two components, say $C_1$, of the graph obtained by deleting $e_1$ contains no boundary vertex. Let $w_0$ be the endpoint of $e_1$ in $C_1$. Every vertex of $C_1$ is interior and therefore has degree at least $2$ in $\Gamma$; hence every vertex of $C_1$ other than $w_0$ has degree at least $2$ in $C_1$, and $w_0$ has degree at least $1$ in $C_1$. If $C_1$ has a single vertex, this forces a loop at $w_0$. Otherwise, if $C_1$ were a tree it would have at least two vertices of degree $1$, one of which would differ from $w_0$, a contradiction. In both cases $C_1$ contains a cycle, all of whose edges join interior vertices, i.e.\ are internal. The corresponding $\zeta$ has $\zeta_e = \pm w\neq 0$ on these internal edges, and $\sum_{e\in E_{\mathrm{int}}}l_e\|\zeta_e\|^2 > 0$.
\end{proof}

\begin{Rem}
Theorem \ref{thm:tropical-bundle} describes the family over each open cone of $M^{\mathrm{trop}}_{g,n}$ separately. Passing to $M^{\mathrm{trop}}_{g,n}$ itself requires, in addition, dividing by the automorphisms of $\Gamma$, which act compatibly on $\mathcal{C}_\Gamma$ and on $X(\Gamma)$ by permuting edges, and gluing the families across the faces where internal edges collapse. The behaviour of the harmonic representatives across such faces is described in Proposition \ref{prop:edge-collapse} below; the corresponding statement for the metrics is Conjecture \ref{conj:edge-collapse}. We therefore do not assert the existence of a global smooth fibre bundle over the cone complex $M^{\mathrm{trop}}_{g,n}$.
\end{Rem}

\subsection{Asymptotic Limits and Tropical Degenerations}

\begin{Prop}[Edge collapse: convergence of harmonic representatives] \label{prop:edge-collapse}
Let $e_0\in E$ be an edge joining two distinct interior vertices $v_1 = s(e_0)\neq v_2 = t(e_0)$, and let $\Gamma/\{e_0\}$ be the contracted quiver, which is connected with non-empty boundary. Fix all edge lengths except $\ell \coloneqq l_{e_0}\in(0,1]$, and fix the data $(x_e)_{e\in E}$ and the boundary values defining the reduced Kempf--Ness functionals, in a gauge in which the holonomy along $e_0$ is trivial (this can be achieved with the factor $G_{\CC}^{v_2}$ of the gauge action), so that the term of $e_0$ in $\bar{F}_\ell$ is $E_{x_{e_0}}(p_{v_1},p_{v_2};\ell)$, with the notation of Remark \ref{rem:rescaled}.
\begin{enumerate}
    \item For every compact $K\subset H$ there is $C_K > 0$ such that, for all $p,q\in K$ and $\ell\in(0,1]$, the rescaled minimizer $\tilde{h}^{*}_\ell$ of Remark \ref{rem:rescaled} and the geodesic $\gamma_{p,q}:[0,1]\to H$ from $p$ to $q$ satisfy
    \[
    \sup_{s\in[0,1]} d_H\bigl(\tilde{h}^{*}_\ell(s),\gamma_{p,q}(s)\bigr) \leq C_K\,\ell^2,
    \qquad
    \Bigl| E_{x_{e_0}}(p,q;\ell) - \frac{d_H(p,q)^2}{\ell} - \ell\int_0^1 V_{x_{e_0}}(\gamma_{p,q}(s))\,ds \Bigr| \leq C_K\,\ell^3.
    \]
    \item Let $p^{*}(\ell)\in H^{\Gamma_{\nt}}$ be the unique minimizer of $\bar{F}_\ell$, and let $p^{*}_{\mathrm{col}}$ be the unique minimizer of the reduced Kempf--Ness functional $\bar{F}_{\mathrm{col}}$ of $\Gamma/\{e_0\}$ with the same data. Then $d_H(p^{*}_{v_1}(\ell),p^{*}_{v_2}(\ell)) = O(\ell^{1/2})$, and $p^{*}(\ell)$ converges, as $\ell\to 0$, to the point of $H^{\Gamma_{\nt}}$ obtained from $p^{*}_{\mathrm{col}}$ by assigning to both $v_1$ and $v_2$ the value of $p^{*}_{\mathrm{col}}$ at the merged vertex.
\end{enumerate}
\end{Prop}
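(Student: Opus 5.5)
For part (1) I will work with the rescaled problem of Remark \ref{rem:rescaled}. The minimizer $\tilde h^*_\ell$ minimizes $\int_0^1 \bigl(\ell^{-1}\|\tilde h'\|^2 + \ell V_x(\tilde h)\bigr)ds$, equivalently $\int_0^1 \|\tilde h'\|^2 + \ell^2 V_x(\tilde h)$, and satisfies $\nabla_s\tilde h' = \tfrac{\ell^2}{2}\grad V_x(\tilde h)$.

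The first step is an a priori confinement. Comparing with the geodesic $\gamma_{p,q}$ gives $\int\|\tilde h'\|^2 \le d_H(p,q)^2 + \ell^2\max_{\gamma}V_x$, which is bounded uniformly for $p,q\in K$. Hence $\tilde h^*_\ell$ stays in a fixed compact set $K'$ (a ball around $K$). On $K'$, $\grad V_x$ is bounded by some $M_{K'}$, so the geodesic curvature of $\tilde h^*_\ell$ is $O(\ell^2)$.

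Next I compare $\tilde h^*_\ell$ with $\gamma_{p,q}$. On the Cartan--Hadamard manifold $H$ the function $\rho(s) = d_H(\tilde h^*_\ell(s),\gamma_{p,q}(s))$ vanishes at $s=0,1$. By the standard second-variation comparison in non-positive curvature it satisfies $\rho'' \ge -\|\nabla_s\tilde h'\| \ge -\tfrac{\ell^2}{2}M_{K'}$ in the barrier/distributional sense. The maximum principle then gives $\rho \le \tfrac{\ell^2}{16}M_{K'}$, which is the first estimate.

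For the energy expansion, I write $\ell E_x(p,q;\ell) = \int\|\tilde h'\|^2 + \ell^2\int V_x(\tilde h)$. The upper bound $\le d^2 + \ell^2\int V_x(\gamma)$ comes from testing with $\gamma$. For the lower bound I use $\int\|\tilde h'\|^2 \ge d^2$ together with $|V_x(\tilde h)-V_x(\gamma)|\le \mathrm{Lip}_{K'}(V_x)\,\rho = O(\ell^2)$. Dividing by $\ell$ gives the $O(\ell^3)$ remainder.

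For part (2) I will compare $\bar F_\ell$ with the collapsed functional $\bar F_{\mathrm{col}}$, which is defined on $H^{\Gamma_{\nt}}$ with $p_{v_1}=p_{v_2}$. First, an upper bound: evaluating $\bar F_\ell$ at the doubled point $\hat p_{\mathrm{col}}$ kills the $d^2/\ell$ term and gives $\bar F_\ell(\hat p_{\mathrm{col}}) = \min\bar F_{\mathrm{col}} + O(\ell)$. This uses part (1) with $p=q$, plus the trivial-holonomy gauge, so that the remaining terms coincide with those of $\Gamma/\{e_0\}$.

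Second, compactness. Because $E_{x_{e_0}}\ge d^2/\ell\ge 0$, the remaining terms of $\bar F_\ell(p^*(\ell))$ are bounded by the first step's bound. The properness argument of Proposition \ref{proper}, applied to the graph where $v_1,v_2$ are linked through the remaining edges or via the $e_0$ term, shows that $p^*(\ell)$ stays in a compact set. Then $d^2/\ell \le \bar F_\ell(p^*(\ell))\le C$ gives $d_H(p^*_{v_1},p^*_{v_2}) = O(\ell^{1/2})$.

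Third, $\Gamma$-convergence. Take any limit point $\bar p$ of $p^*(\ell)$; it has $\bar p_{v_1}=\bar p_{v_2}$. Lower semicontinuity of the other terms, which are continuous in the endpoints since $E_x$ is smooth by Lemma \ref{lem uni}, together with $E_{x_{e_0}}\ge 0$, gives $\bar F_{\mathrm{col}}(\bar p)\le \liminf \bar F_\ell(p^*(\ell)) \le \min\bar F_{\mathrm{col}}$. Uniqueness of the minimizer of $\bar F_{\mathrm{col}}$ (Theorem \ref{kempf ness} applied to the connected quiver $\Gamma/\{e_0\}$) then forces $\bar p = \hat p_{\mathrm{col}}$, so the whole family converges.

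I expect the main obstacle to be the uniform compactness in the second step. Properness of $\bar F_\ell$ must hold uniformly in $\ell$, and the fixed holonomy data on the other edges must be handled carefully: the gauge normalization along $e_0$ has to stay consistent as $\ell$ varies. The barrier inequality for $\rho$ in part (1) also needs care where $\rho$ vanishes, and I would handle that by working with $\sqrt{\rho^2+\epsilon}$.
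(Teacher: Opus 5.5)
Your proposal is correct and follows essentially the same route as the paper. For (1), both confine $\tilde h^{*}_\ell$ to a fixed ball by testing against $\gamma_{p,q}$, bound the acceleration by $\tfrac{\ell^2}{2}\sup\|\grad V_x\|$, and apply a maximum principle to $d_H(\tilde h^{*}_\ell,\gamma_{p,q})$. For (2), both use the upper bound at the doubled point, the $\ell$-uniform proper lower bound $\bar F_\ell \ge \sum_{e\neq e_0} d_H^2/l_e + d_H(p_{v_1},p_{v_2})^2$ (valid for $\ell\le 1$), and uniqueness of the minimizer of $\bar F_{\mathrm{col}}$. The only cosmetic difference is that the paper avoids your $\sqrt{\rho^2+\epsilon}$ regularization with the barrier $(\tfrac{\ell^2}{4}\sup\|\grad V_x\|+\delta)\,s(1-s)$, whose positive excess can only occur where $\rho>0$, so $\rho$ is smooth there.
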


\begin{proof}
(1) Write $x = x_{e_0}$. Choose $o\in H$ and $\rho_K > 0$ with $K\subset \bar{B}(o,\rho_K)$. Closed balls in the Cartan--Hadamard manifold $H$ are geodesically convex and compact, so $M_K \coloneqq \sup_{\bar{B}(o,\rho_K)}V_x < \infty$ and $\gamma_{p,q}$ stays in $\bar{B}(o,\rho_K)$. Testing the rescaled functional on $\gamma_{p,q}$, whose speed is $d_H(p,q)$, gives
\begin{equation} \label{eq:collapse-upper}
E_x(p,q;\ell) \leq \frac{d_H(p,q)^2}{\ell} + \ell\int_0^1 V_x(\gamma_{p,q}(s))\,ds \leq \frac{d_H(p,q)^2}{\ell} + \ell M_K.
\end{equation}
Since $V_x\geq 0$, this yields $\int_0^1\|(\tilde{h}^{*}_\ell)'\|^2\,ds \leq \ell E_x(p,q;\ell) \leq 4\rho_K^2 + M_K$, so the length of $\tilde{h}^{*}_\ell$ is at most $\Lambda_K \coloneqq (4\rho_K^2 + M_K)^{1/2}$, and $\tilde{h}^{*}_\ell$ takes values in the compact convex set $K' \coloneqq \bar{B}(o,\rho_K + \Lambda_K)$. Let $G_K \coloneqq \sup_{K'}\|\grad V_x\|$, which also bounds the Lipschitz constant of $V_x$ on $K'$. By the rescaled Euler--Lagrange equation, $\|\nabla_s(\tilde{h}^{*}_\ell)'\| \leq \frac{\ell^2}{2}G_K$.

Let $f(s) \coloneqq d_H(\tilde{h}^{*}_\ell(s),\gamma_{p,q}(s))$, so that $f(0) = f(1) = 0$. On a Cartan--Hadamard manifold, the distance function $d_H$ on $H\times H$ is convex, smooth away from the diagonal, and its gradient with respect to each argument has norm $1$ there. Hence, wherever $f > 0$,
\[
f'' = \mathrm{Hess}\,d_H\bigl(((\tilde{h}^{*}_\ell)',\gamma_{p,q}'),((\tilde{h}^{*}_\ell)',\gamma_{p,q}')\bigr) + \bigl\langle \grad d_H, (\nabla_s(\tilde{h}^{*}_\ell)', \nabla_s\gamma_{p,q}')\bigr\rangle \geq -\tfrac{\ell^2}{2}G_K,
\]
since $\gamma_{p,q}$ is a geodesic. For $\delta > 0$ let $u_\delta(s) \coloneqq (\frac{\ell^2}{4}G_K + \delta)s(1-s)$, so that $u_\delta'' = -\frac{\ell^2}{2}G_K - 2\delta$. If $f - u_\delta$ had a positive maximum, it would be attained at an interior point where $f > u_\delta \geq 0$, so that $f$ is smooth there and $(f - u_\delta)'' \geq 2\delta > 0$, a contradiction. Letting $\delta\to 0$ gives $f \leq \frac{\ell^2}{4}G_K\,s(1-s) \leq \frac{\ell^2}{16}G_K$, which is the first estimate with any $C_K\geq G_K/16$.

For the second estimate, the length of $\tilde{h}^{*}_\ell$ is at least $d_H(p,q)$, so $\int_0^1\|(\tilde{h}^{*}_\ell)'\|^2 \geq d_H(p,q)^2$ by Cauchy--Schwarz, and therefore
\[
E_x(p,q;\ell) \geq \frac{d_H(p,q)^2}{\ell} + \ell\int_0^1 V_x(\tilde{h}^{*}_\ell(s))\,ds \geq \frac{d_H(p,q)^2}{\ell} + \ell\int_0^1 V_x(\gamma_{p,q}(s))\,ds - \ell\cdot G_K\cdot\frac{\ell^2}{16}G_K.
\]
Together with \eqref{eq:collapse-upper}, this gives the second estimate with $C_K \coloneqq \max(G_K/16, G_K^2/16)$.

(2) For edges $e\neq e_0$, Remark \ref{rem:rescaled} gives $E_{x_e}(p,q;l_e)\geq d_H(p,q)^2/l_e$; for $e_0$ and $\ell\leq 1$, $E_{x_{e_0}}(p,q;\ell)\geq d_H(p,q)^2/\ell\geq d_H(p,q)^2$. Hence, for $\ell\leq 1$,
\[
\bar{F}_\ell(p) \geq D(p) \coloneqq \sum_{e\neq e_0}\frac{d_H(p_{s(e)},p_{t(e)})^2}{l_e} + d_H(p_{v_1},p_{v_2})^2,
\]
and $D$ is proper on $H^{\Gamma_{\nt}}$ by the argument of Proposition \ref{proper}. Let $p^{c}\in H^{\Gamma_{\nt}}$ be obtained from $p^{*}_{\mathrm{col}}$ by assigning its value $y$ at the merged vertex to both $v_1$ and $v_2$. Using the constant path on $e_0$, $E_{x_{e_0}}(y,y;\ell)\leq \ell V_{x_{e_0}}(y)$, so
\begin{equation} \label{eq:collapse-bound}
\bar{F}_\ell(p^{*}(\ell)) \leq \bar{F}_\ell(p^{c}) \leq \bar{F}_{\mathrm{col}}(p^{*}_{\mathrm{col}}) + \ell V_{x_{e_0}}(y) \leq C_0 \coloneqq \bar{F}_{\mathrm{col}}(p^{*}_{\mathrm{col}}) + V_{x_{e_0}}(y).
\end{equation}
Since all terms of $\bar{F}_\ell$ are non-negative, $d_H(p^{*}_{v_1}(\ell),p^{*}_{v_2}(\ell))^2/\ell\leq C_0$, which is the first claim. Moreover, $\{p^{*}(\ell)\}_{\ell\in(0,1]}\subset\{D\leq C_0\}$, a compact set. Let $\bar{p}$ be the limit of a sequence $p^{*}(\ell_k)$ with $\ell_k\to 0$. Then $\bar{p}_{v_1} = \bar{p}_{v_2}$, so $\bar{p}$ defines a point of $H^{(\Gamma/\{e_0\})_{\nt}}$, and by continuity of the convex functions $E_{x_e}(\cdot,\cdot;l_e)$ (Proposition \ref{jgc}) and \eqref{eq:collapse-bound},
\[
\bar{F}_{\mathrm{col}}(\bar{p}) = \lim_{k\to\infty}\sum_{e\neq e_0}E_{x_e}\bigl(p^{*}_{s(e)}(\ell_k),p^{*}_{t(e)}(\ell_k);l_e\bigr) \leq \liminf_{k\to\infty}\bar{F}_{\ell_k}(p^{*}(\ell_k)) \leq \bar{F}_{\mathrm{col}}(p^{*}_{\mathrm{col}}).
\]
Since $\Gamma/\{e_0\}$ is connected with non-empty boundary, $p^{*}_{\mathrm{col}}$ is the unique minimizer of $\bar{F}_{\mathrm{col}}$ (Theorem \ref{kempf ness}), so $\bar{p} = p^{c}$. As every sequence $\ell_k\to 0$ has a subsequence along which $p^{*}(\ell_k)\to p^{c}$, we conclude that $p^{*}(\ell)\to p^{c}$.
\end{proof}

By Proposition \ref{thm:spine-invariance}, $X(\Gamma)\cong X(\Gamma/\{e_0\})$, so the transported metrics of $\Gamma$ and of $\Gamma/\{e_0\}$ live on the same complex manifold. Proposition \ref{prop:edge-collapse} controls the vertex values of the harmonic representatives; controlling the metric additionally requires uniform estimates for the horizontal spaces $T_A\A_{\bh}(\Gamma)\cap\Ker D^{*}_A$ along $e_0$, which we do not establish here.

\begin{Conj}[Edge collapse of the metric] \label{conj:edge-collapse}
In the situation of Proposition \ref{prop:edge-collapse}, the transported hyperkähler metrics $g^{X}_{\mathbf{l}}$ converge, as $l_{e_0}\to 0$ and uniformly on compact subsets of $X(\Gamma)\cong X(\Gamma/\{e_0\})$, to the transported metric of $\Gamma/\{e_0\}$ with the induced lengths. In particular, the pointed hyperkähler manifolds $(\M_{\bh}(\Gamma,\mathbf{l}),\Psi_{\mathbf{l}}^{-1}(x))$ converge in the pointed Gromov--Hausdorff sense, for every $x\in X(\Gamma)$.
\end{Conj}

We now turn to long edges. Recall that $H = G\backslash G_{\CC}$ is the space of cosets $Gh$, that right translations by elements of $G_{\CC}$ act on $H$ by isometries (the geodesics through $[h]$ are $s\mapsto G\exp(isw)h$), and that $V_x(Gh) = \|\Ad_h x\|^2$. Let $x\in\g_{\CC}$ be regular semisimple. Then $x = \Ad_k x_0$ for some $k\in G_{\CC}$ and $x_0\in\t_{\CC}$, where $\t$ is the Lie algebra of a maximal torus $T\subset G$, and $V_x(Gh) = V_{x_0}(Ghk)$. Since right translation by $k$ is an isometry of $H$, we may and do assume $x\in\t_{\CC}$ in the next two lemmas. The centralizer of $x$ in $\g_{\CC}$ is then $\t_{\CC}$, and we set
\[
Z(x) \coloneqq \{G\tau : \tau\in T_{\CC}\} = \{G\exp(iY) : Y\in\t\}\subset H,
\]
which is the orbit through $[\mathbf{1}]$ of the right action of $\exp(i\t)$. Let $\g^{\perp} \coloneqq \bigl(\bigoplus_{\alpha\in\Delta^+}(\g_\alpha\oplus\g_{-\alpha})\bigr)\cap\g$ be the sum of the root spaces, so that $\g = \t\oplus\g^{\perp}$ orthogonally.

\begin{Lem} \label{lem 6.10}
Let $x\in\t_{\CC}$ be regular semisimple, and set $c_1 \coloneqq \min\{\|[w,x]\|^2 : w\in\g^{\perp},\ \|w\| = 1\}$. Then:
\begin{enumerate}
    \item $Z(x)$ is a complete, closed, totally geodesic, flat submanifold of $H$, the normal exponential map $\exp^{\perp}:\nu(Z(x))\to H$ is a diffeomorphism, and $V_x$ attains its minimum $\|x\|^2$ at every point of $Z(x)$; moreover $c_1 > 0$.
    \item There is $\epsilon\in(0,1]$ such that, for every unit-speed geodesic $\sigma$ with $\sigma(0)\in Z(x)$ and $\dot{\sigma}(0)\perp T_{\sigma(0)}Z(x)$, one has $\mathrm{Hess}\,V_x(\dot{\sigma}(s),\dot{\sigma}(s))\geq c_1$ for $s\in[0,\epsilon]$.
    \item For every $h\in H$ with $d_H(h,Z(x))\leq\epsilon$, $V_x(h) - \|x\|^2\geq \frac{c_1}{2}\,d_H(h,Z(x))^2$.
\end{enumerate}
\end{Lem}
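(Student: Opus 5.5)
The plan is to work at the base point $[\mathbf{1}]$ and transport everything along $Z(x)$ using the right action of $T_{\CC}$. The key observation is that for $\tau\in T_{\CC}$ we have $\Ad_\tau x = x$, since $x\in\t_{\CC}$ and $T_{\CC}$ is abelian. Hence right translation $R_\tau$ is an isometry of $H$ that preserves $V_x$ (because $V_x(Gh\tau) = \|\Ad_h\Ad_\tau x\|^2 = V_x(Gh)$) and maps $Z(x)$ to itself. Every point of $Z(x)$ has the form $G\exp(iY)$, and the subgroup $\exp(i\t)\subset T_{\CC}$ acts transitively on $Z(x)$. So every pointwise statement along $Z(x)$ reduces to the point $[\mathbf{1}]$, and the constants obtained there are automatically uniform along $Z(x)$. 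This homogeneity is what makes the $\epsilon$ and $c_1$ of the lemma independent of the base point.

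\emph{Step 1 (part (1)).}
\begin{itemize}
\item \emph{Geometry of $Z(x)$.} $Z(x)$ is the orbit of $[\mathbf{1}]$ under the abelian group $\exp(i\t)$, and its tangent space at $[\mathbf{1}]$ is $i\t$. Since $\t$ is abelian, $[i\t,[i\t,i\t]]=0$, so $i\t$ is a Lie triple system. Hence $Z(x)$ is a complete totally geodesic submanifold; it is flat by the curvature formula $K(u,v) = -\tfrac14\|[u,v]\|^2 = 0$, and it is isometric to $\t$ via $Y\mapsto G\exp(iY)$. It is closed, being the image of a proper embedding of a Euclidean space.
\item \emph{Normal exponential map.} In a Cartan--Hadamard manifold, a closed complete totally geodesic submanifold is a closed convex set. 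I will invoke the standard theory of nearest-point projection onto closed convex subsets of nonpositively curved spaces (e.g.\ Bridson--Haefliger): every $h\in H$ has a unique nearest point $\pi(h)\in Z(x)$, the minimizing geodesic from $\pi(h)$ to $h$ leaves $Z(x)$ orthogonally, and there are no focal points because the curvature is $\le 0$. Together these give that $\exp^{\perp}$ is a bijective local diffeomorphism, hence a diffeomorphism. I expect this step to be the main obstacle, and I will handle it by quoting the convex-projection theory rather than proving it.
\item \emph{Minimum of $V_x$.} On $Z(x)$, $V_x\equiv \|x\|^2$ because $\Ad_\tau x = x$. To show this is the global minimum, I use that $V_x$ is geodesically convex (Proposition \ref{potential}), so it suffices to show that $[\mathbf{1}]$ is a critical point. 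The first derivative of $s\mapsto\|\Ad_{\exp(isw)}x\|^2$ at $s=0$ is $2\,\mathrm{Re}\langle i[w,x],x\rangle$. This vanishes because $[\t,x]=0$ and $[\g^{\perp},x]\subset\g^{\perp}_{\CC}\perp\t_{\CC}\ni x$.
\item \emph{Positivity of $c_1$.} If $w\in\g^{\perp}$ and $[w,x]=0$, then $w$ lies in the centralizer of $x$, which is $\t_{\CC}$ by regularity, so $w=0$. Therefore $w\mapsto\|[w,x]\|^2$ is positive on the compact unit sphere of $\g^{\perp}$, and its minimum $c_1$ is $>0$.
\end{itemize}

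\emph{Step 2 (part (2)).} By homogeneity it suffices to take $\sigma(0)=[\mathbf{1}]$. The unit normal vectors there correspond to $w\in\g^{\perp}$ with $\|w\|=1$, and $\sigma(s)=G\exp(isw)$. By the formula in the proof of Proposition \ref{potential},
\[
\mathrm{Hess}\,V_x(\dot\sigma(s),\dot\sigma(s)) = 2\|[w,\Ad_{\exp(isw)}x]\|^2 .
\]
This expression is jointly continuous in $(s,w)$ and is $\ge 2c_1$ at $s=0$. Compactness of the unit sphere of $\g^{\perp}$ then yields $\epsilon\in(0,1]$ such that the Hessian is $\ge c_1$ for all $s\in[0,\epsilon]$. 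Transport by $R_\tau$ makes the same $\epsilon$ work at every point of $Z(x)$.

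\emph{Step 3 (part (3)).} Let $r = d_H(h,Z(x))\le\epsilon$. By Step 1, $h=\sigma(r)$ for a unit-speed normal geodesic $\sigma$ starting at $\pi(h)$. Set $f(s)=V_x(\sigma(s))$. Then $f(0)=\|x\|^2$, and $f'(0)=0$ because $\pi(h)$ is a minimum of $V_x$. By Step 2, $f''\ge c_1$ on $[0,r]$. Taylor's formula with integral remainder then gives
\[
f(r)\ge \|x\|^2+\tfrac{c_1}{2}r^2 ,
\]
which is the claimed inequality.
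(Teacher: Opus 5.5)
Your proposal is correct and follows the paper's proof essentially step for step. Like the paper, you use homogeneity under the isometric right $\exp(i\t)$-action to reduce everything to $[\mathbf{1}]$; you get the minimum from the vanishing first derivative together with geodesic convexity of $V_x$; you get $c_1>0$ and $\epsilon$ from compactness of the unit sphere of $\g^{\perp}$; and you prove part (3) by integrating $\psi''\geq c_1$ twice along the normal geodesic. The only difference is that you justify why $\exp^{\perp}$ is a diffeomorphism, via nearest-point projection onto a closed convex set and the absence of focal points in nonpositive curvature, whereas the paper simply asserts this from the convexity of $Z(x)$.
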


\begin{proof}
(1) The right action of $\exp(i\t)$ on $H$ is isometric, and $Z(x)$ is its orbit through $[\mathbf{1}]$. The curves $s\mapsto G\exp(isY)$, $Y\in\t$, are geodesics, and since $\t$ is abelian the sectional curvature $-\frac{1}{4}\|[u,v]\|^2$ vanishes on $T_{[\mathbf{1}]}Z(x) = i\t$; hence $Z(x)$ is the image of the totally geodesic flat $\exp(i\t)\cong\R^{\mathrm{rank}\,G}$, which is complete and closed. A closed, complete, totally geodesic submanifold of a Cartan--Hadamard manifold is convex, so its normal exponential map is a diffeomorphism. Since $\tau\in T_{\CC}$ centralizes $x$, $V_x(G\tau) = \|x\|^2$, and $V_x$ is invariant under the right action of $T_{\CC}$. At $[\mathbf{1}]$ and for $w\in\g$, decompose $w = w_{\t} + w^{\perp}$; then $[iw_{\t},x] = 0$ and $[iw^{\perp},x]$ lies in the sum of the root spaces of $\g_{\CC}$, which is orthogonal to $\t_{\CC}\ni x$ for the Hermitian extension of the inner product. Hence
\[
\frac{d}{ds}\Big|_{s=0}V_x(G\exp(isw)) = 2\,\mathrm{Re}\,\langle [iw,x],x\rangle = 0,
\]
so $[\mathbf{1}]$, and by invariance every point of $Z(x)$, is a critical point of $V_x$. Since $V_x$ is geodesically convex (Proposition \ref{potential}) on the Cartan--Hadamard manifold $H$, every critical point is a global minimum. Finally, $w\in\g^{\perp}\setminus\{0\}$ does not lie in the centralizer $\t_{\CC}$ of $x$, so $[w,x]\neq 0$; by compactness of the unit sphere of $\g^{\perp}$, $c_1 > 0$.

(2) By invariance of $V_x$ and of the metric under the right action of $\exp(i\t)$, which is transitive on $Z(x)$, it suffices to treat $\sigma(0) = [\mathbf{1}]$, where $\nu_{[\mathbf{1}]}Z(x) = i\g^{\perp}$. For a unit vector $n = iw$, $w\in\g^{\perp}$, let $\sigma_n(s) = G\exp(isw)$. By Proposition \ref{potential} at $s = 0$, $\mathrm{Hess}\,V_x(\dot{\sigma}_n(0),\dot{\sigma}_n(0)) = 2\|[w,x]\|^2\geq 2c_1$. The function $(n,s)\mapsto\mathrm{Hess}\,V_x(\dot{\sigma}_n(s),\dot{\sigma}_n(s))$ is continuous on the compact set $S(i\g^{\perp})\times[0,1]$, hence uniformly continuous, so there is $\epsilon\in(0,1]$ such that it is at least $c_1$ for $s\in[0,\epsilon]$.

(3) Let $r \coloneqq d_H(h,Z(x))\in(0,\epsilon]$ (the case $r = 0$ is trivial). By (1), $h = \sigma(r)$ for a unit-speed normal geodesic $\sigma$ from the closest point $z = \sigma(0)\in Z(x)$. Set $\psi(s) \coloneqq V_x(\sigma(s))$. Then $\psi(0) = \|x\|^2$, $\psi'(0) = 0$ by (1), and $\psi''(s) = \mathrm{Hess}\,V_x(\dot{\sigma},\dot{\sigma})\geq c_1$ on $[0,r]$ by (2). Integrating twice gives $\psi(r) - \psi(0)\geq \frac{c_1}{2}r^2$.
\end{proof}

\begin{Lem} \label{lem:transverse_decay}
Let $x\in\t_{\CC}$ be regular semisimple, let $c_1,\epsilon$ be as in Lemma \ref{lem 6.10}, and let $h^{*}\in C^2([0,l],H)$ be the minimizer of $\int_0^l(\|\dot{h}\|^2_H + V_x(h))\,dt$ with $h^{*}(0) = p$ and $h^{*}(l) = q$. Set $d_p \coloneqq d_H(p,Z(x))$, $d_q \coloneqq d_H(q,Z(x))$,
\[
R \coloneqq \max(d_p,d_q,\epsilon), \qquad \lambda \coloneqq \Bigl(\frac{c_1\,\epsilon}{2R}\Bigr)^{1/2} > 0.
\]
Then $\rho(t) \coloneqq d_H(h^{*}(t),Z(x))\leq R$ for all $t\in[0,l]$, and
\begin{equation}
\label{eq:barrier_estimate}
\rho(t) \le \frac{\sinh(\lambda(l - t))}{\sinh(\lambda l)}\, d_p + \frac{\sinh(\lambda t)}{\sinh(\lambda l)}\, d_q \leq \frac{d_p\,e^{-\lambda t} + d_q\,e^{-\lambda(l-t)}}{1 - e^{-2\lambda l}} \qquad (t\in[0,l]).
\end{equation}
In particular, if $d_p,d_q\leq\epsilon$, one may take $\lambda = (c_1/2)^{1/2}$.
\end{Lem}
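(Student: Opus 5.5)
The plan is a maximum-principle comparison for $\rho(t)=d_H(h^{*}(t),Z(x))$. I will show that $\rho$ is a subsolution of $\rho''\geq\lambda^2\rho$ wherever $\rho>0$. Then I compare $\rho$ with the solution $b(t)=\frac{\sinh(\lambda(l-t))}{\sinh(\lambda l)}d_p+\frac{\sinh(\lambda t)}{\sinh(\lambda l)}d_q$ of $b''=\lambda^2 b$, which has the same boundary values $b(0)=d_p$, $b(l)=d_q$.

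\emph{Step 1 (second derivative of $\rho$).} By Lemma \ref{lem 6.10}(1), $Z(x)$ is closed, complete, totally geodesic and convex, and $\exp^{\perp}$ is a diffeomorphism. Hence $d_Z\coloneqq d_H(\cdot,Z(x))$ is smooth off $Z(x)$, has unit gradient there, and is convex, so $\mathrm{Hess}\,d_Z\geq0$ since $H$ has non-positive curvature. Wherever $\rho(t)>0$ this gives
\[
\rho''=\mathrm{Hess}\,d_Z(\dot h^{*},\dot h^{*})+\langle\grad d_Z,\nabla_t\dot h^{*}\rangle\geq\tfrac12\langle\grad d_Z,\grad V_x\rangle(h^{*}(t)),
\]
where the last step uses the Euler--Lagrange equation \eqref{euler eq}.

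\emph{Step 2 (radial growth of $V_x$).} Fix $h$ with $r=d_Z(h)>0$, and let $\sigma$ be the unit-speed normal geodesic from the nearest point $z\in Z(x)$, so that $\sigma(r)=h$ and $\dot\sigma(r)=\grad d_Z(h)$. Set $\psi(s)=V_x(\sigma(s))$. Then $\psi'(r)=\langle\grad V_x,\grad d_Z\rangle(h)$. We have $\psi'(0)=0$ by Lemma \ref{lem 6.10}(1), and $\psi$ is convex by Proposition \ref{potential}, so $\psi'$ is non-decreasing and non-negative. Lemma \ref{lem 6.10}(2) gives $\psi''\geq c_1$ on $[0,\epsilon]$, hence $\psi'(r)\geq c_1 r$ for $r\leq\epsilon$. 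For $\epsilon<r\leq R$ monotonicity gives $\psi'(r)\geq\psi'(\epsilon)\geq c_1\epsilon\geq c_1\epsilon\,r/R$. Since $\epsilon\leq R$, in all cases $\psi'(r)\geq(c_1\epsilon/R)\,r=2\lambda^2 r$ for $0<r\leq R$.

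\emph{Step 3 (a priori bound and comparison).} Step 1 together with $\psi'\geq0$ shows $\rho''\geq0$ wherever $\rho>0$. Suppose $\rho$ had an interior positive maximum exceeding $\max(d_p,d_q)$. Near that point $\rho$ is smooth and convex, and a barrier $\rho-\delta t(l-t)$ contradicts the maximum; letting $\delta\to0$ gives $\rho\leq\max(d_p,d_q)\leq R$. Combining Steps 1--2 then yields $\rho''\geq\lambda^2\rho$ wherever $\rho>0$.

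Now let $w=\rho-b$, so $w(0)=w(l)=0$. Suppose $w$ has a positive maximum. It is interior, and there $\rho>b\geq0$, so $\rho$ is smooth at that point and $w''\geq\lambda^2(\rho-b)=\lambda^2 w>0$, which is impossible at a maximum. Hence $\rho\leq b$, which is the first inequality in \eqref{eq:barrier_estimate}. The second inequality is elementary: $\sinh(\lambda(l-t))/\sinh(\lambda l)=(e^{-\lambda t}-e^{-\lambda(2l-t)})/(1-e^{-2\lambda l})\leq e^{-\lambda t}/(1-e^{-2\lambda l})$, and symmetrically for the other term. Finally, if $d_p,d_q\leq\epsilon$ then $R=\epsilon$, so $\lambda=(c_1/2)^{1/2}$.

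The main obstacle is Step 1. It needs the convexity and smoothness of $d_Z$ off $Z(x)$ for a totally geodesic convex flat in a Cartan--Hadamard manifold, and it needs non-smooth points of $\rho$ (where $\rho=0$) to be handled correctly in the maximum principles. The comparison in Step 3 avoids them because any positive maximum of $w$ occurs where $\rho>0$.
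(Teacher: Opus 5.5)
Your proof is correct and follows the paper's argument: you derive the same differential inequality for $\rho$ from the Euler--Lagrange equation and the convexity of $d_H(\cdot,Z(x))$, bound the radial derivative of $V_x$ via $\psi(s)=V_x(\sigma(s))$ in the same way, establish the a priori bound $\rho\le\max(d_p,d_q)$, and run the same $\sinh$ comparison. The only differences are cosmetic: you fold the linearization $\min(\rho,\epsilon)\ge(\epsilon/R)\rho$ into the bound on $\psi'$, and you obtain the a priori bound from $\rho''\ge 0$ plus a barrier, whereas the paper uses the strict inequality $\ddot\rho\ge\frac{c_1}{2}\min(\rho,\epsilon)>0$ directly.
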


\begin{proof}
Let $r \coloneqq d_H(\cdot,Z(x))$. By Lemma \ref{lem 6.10}(1), $r$ is smooth on $H\setminus Z(x)$, with $\grad_H r(\sigma(s)) = \dot{\sigma}(s)$ along every unit-speed normal geodesic $\sigma$ from $Z(x)$, and $r$ is geodesically convex, being the distance to a closed convex subset of a Cartan--Hadamard manifold; in particular $\mathrm{Hess}_H r\geq 0$ on $H\setminus Z(x)$.

\medskip
\noindent\textbf{Step 1: A differential inequality.} Let $h\in H\setminus Z(x)$, write $h = \sigma(r(h))$ for a unit-speed normal geodesic $\sigma$, and set $\psi(s) \coloneqq V_x(\sigma(s))$. As in the proof of Lemma \ref{lem 6.10}(3), $\psi'(0) = 0$, $\psi''\geq 0$ everywhere by geodesic convexity of $V_x$, and $\psi''\geq c_1$ on $[0,\epsilon]$. Hence
\begin{equation} \label{eq:radial-derivative}
\langle\grad_H V_x(h),\grad_H r(h)\rangle = \psi'(r(h)) = \int_0^{r(h)}\psi''(s)\,ds \geq c_1\min(r(h),\epsilon).
\end{equation}
At every $t$ with $\rho(t) > 0$, $\rho$ is $C^2$ near $t$, and using the Euler--Lagrange equation $\nabla_t\dot{h}^{*} = \frac{1}{2}\grad_H V_x(h^{*})$,
\[
\ddot{\rho}(t) = \mathrm{Hess}_H r(\dot{h}^{*},\dot{h}^{*}) + \langle\grad_H r,\nabla_t\dot{h}^{*}\rangle \geq \frac{1}{2}\langle\grad_H r,\grad_H V_x\rangle \geq \frac{c_1}{2}\min(\rho(t),\epsilon).
\]

\medskip
\noindent\textbf{Step 2: $\rho\leq R$.} If $\rho$ attained a maximum $\rho(t_0) > \max(d_p,d_q)$ at some $t_0\in(0,l)$, then $\rho(t_0) > 0$, so $\ddot{\rho}(t_0)\geq\frac{c_1}{2}\min(\rho(t_0),\epsilon) > 0$, contradicting maximality. Hence $\rho\leq\max(d_p,d_q)\leq R$.

\medskip
\noindent\textbf{Step 3: A linear inequality.} For $0 < \rho\leq R$ we have $\min(\rho,\epsilon)\geq\frac{\epsilon}{R}\rho$: if $\rho\leq\epsilon$ this follows from $\epsilon\leq R$, and if $\rho > \epsilon$ from $\rho\leq R$. Hence, by Steps 1 and 2,
\begin{equation} \label{eq:diff_ineq}
\ddot{\rho}(t)\geq\lambda^2\rho(t) \quad\text{at every } t\in(0,l) \text{ with } \rho(t) > 0.
\end{equation}

\medskip
\noindent\textbf{Step 4: Comparison.} Let $u$ be the solution of $\ddot{u} = \lambda^2u$ on $[0,l]$ with $u(0) = d_p$ and $u(l) = d_q$, namely
\begin{equation} \label{eq:explicit_u}
u(t) = \frac{\sinh(\lambda(l-t))}{\sinh(\lambda l)}\,d_p + \frac{\sinh(\lambda t)}{\sinh(\lambda l)}\,d_q \geq 0.
\end{equation}
Let $w \coloneqq \rho - u$, so that $w(0) = w(l) = 0$. Suppose $w$ attains a positive maximum at $t_0\in(0,l)$. Then $\rho(t_0) > u(t_0)\geq 0$, so \eqref{eq:diff_ineq} applies at $t_0$, giving $\ddot{w}(t_0)\geq\lambda^2 w(t_0) > 0$, which contradicts $\ddot{w}(t_0)\leq 0$. Hence $\rho\leq u$, which is the first inequality in \eqref{eq:barrier_estimate}. For the second, if $0\leq a\leq b$ then $\sinh a\leq\frac{1}{2}e^{a}$ and $\sinh b = \frac{1}{2}e^{b}(1 - e^{-2b})$, so $\sinh a/\sinh b\leq e^{a-b}/(1 - e^{-2b})$; apply this with $(a,b) = (\lambda(l-t),\lambda l)$ and $(a,b) = (\lambda t,\lambda l)$. If $d_p,d_q\leq\epsilon$, then $R = \epsilon$ and $\lambda = (c_1/2)^{1/2}$.
\end{proof}

\begin{Cor} \label{cor:middle}
In the situation of Lemma \ref{lem:transverse_decay}, for every $\delta\in(0,\frac{1}{2})$,
\[
\sup_{t\in[\delta l,(1-\delta)l]} d_H\bigl(h^{*}(t),Z(x)\bigr)\leq\frac{(d_p + d_q)\,e^{-\lambda\delta l}}{1 - e^{-2\lambda l}}.
\]
In particular, for fixed endpoints $p,q$, the minimizer converges to $Z(x)$ exponentially fast in $l$, uniformly on the middle portion $[\delta l,(1-\delta)l]$ of the edge.
\end{Cor}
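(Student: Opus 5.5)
The corollary follows from the second inequality in \eqref{eq:barrier_estimate} of Lemma \ref{lem:transverse_decay}, so the plan is a direct bound on the two exponential terms over the middle interval. Fix $\delta\in(0,\frac12)$ and $t\in[\delta l,(1-\delta)l]$. Lemma \ref{lem:transverse_decay} gives
\[
d_H\bigl(h^{*}(t),Z(x)\bigr) = \rho(t) \leq \frac{d_p\,e^{-\lambda t} + d_q\,e^{-\lambda(l-t)}}{1 - e^{-2\lambda l}}.
\]
Since $t\geq\delta l$ and $l-t\geq\delta l$, and $\lambda>0$, we have $e^{-\lambda t}\leq e^{-\lambda\delta l}$ and $e^{-\lambda(l-t)}\leq e^{-\lambda\delta l}$. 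The numerator is therefore at most $(d_p+d_q)e^{-\lambda\delta l}$. The denominator $1-e^{-2\lambda l}$ is positive and does not depend on $t$, so the bound is uniform in $t$. Taking the supremum over $t$ gives the displayed inequality.

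For the ``in particular'' clause, fix $p,q$. Then $d_p,d_q$ are fixed, and so are $R=\max(d_p,d_q,\epsilon)$ and $\lambda=(c_1\epsilon/2R)^{1/2}$. None of these depend on $l$, because $c_1$ and $\epsilon$ come from Lemma \ref{lem 6.10} and involve only $x$. As $l\to\infty$ the denominator increases to $1$; for $l\geq l_0>0$ it is bounded below by $1-e^{-2\lambda l_0}$. The right-hand side is therefore $O(e^{-\lambda\delta l})$, which is exponential decay in $l$, uniform on $[\delta l,(1-\delta)l]$.

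There is no substantial obstacle. The one point that needs care is that $\lambda$ must not depend on $l$. This holds because $R$ is defined from the endpoint distances and $\epsilon$ alone, not from $l$. It is also what makes the decay rate honest in the long-edge limit.
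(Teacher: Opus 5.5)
Your proposal is correct and follows the paper's proof. Like the paper, you use the second inequality of \eqref{eq:barrier_estimate} with $t,\,l-t\geq\delta l$ on the middle interval, and you note that $\lambda$ depends only on $x$, $d_p$, $d_q$ (through $c_1,\epsilon,R$) and not on $l$.
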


\begin{proof}
For $t\in[\delta l,(1-\delta)l]$, both $t$ and $l - t$ are at least $\delta l$; apply \eqref{eq:barrier_estimate}. Since $\lambda$ depends only on $x$, $d_p$ and $d_q$, it does not depend on $l$.
\end{proof}

\begin{Rem}[Long edges: what remains open] \label{rem:long-edge}
Let $e_0$ be a separating internal edge, and consider the limit $l_{e_0}\to\infty$ at a point of $X(\Gamma)$ where $x_{e_0}$ is regular semisimple.
\begin{enumerate}
    \item Lemma \ref{lem:transverse_decay} and Corollary \ref{cor:middle} concern a single edge with fixed endpoint values. In the quiver, the endpoint values $p^{*}_{v_1},p^{*}_{v_2}$ of the global minimizer depend on $l_{e_0}$; bounds on $d_H(p^{*}_{v_i},Z(x_{e_0}))$ that are uniform as $l_{e_0}\to\infty$ are needed to apply them to $\M_{\bh}(\Gamma,\mathbf{l})$.
    \item One expects the Nahm data on the long edge to become asymptotically abelian, with root-space components decaying exponentially away from the endpoints. Deducing this from Corollary \ref{cor:middle} requires, in addition, a bound on the $Z(x_{e_0})$-component of the minimizer that is uniform in $l_{e_0}$ (the operators $\Ad_{\exp(iY)}$, $Y\in\t$, are unbounded on the root spaces), together with an estimate for the transverse velocity of the minimizer. We do not establish these here.
    \item Let $\Gamma_1,\Gamma_2$ be obtained by cutting $e_0$ at an interior point, so that $\Gamma$ is recovered from $\Gamma_1*_v\Gamma_2$ by an $S$-move. Since $|E_1| + |E_2| = |E| + 1$ and $|(\Gamma_1)_{\nt}| + |(\Gamma_2)_{\nt}| = |\Gamma_{\nt}|$, Theorem \ref{smooth quotient} gives $\dim\M_{\bh}(\Gamma_1) + \dim\M_{\bh}(\Gamma_2) = \dim\M_{\bh}(\Gamma) + 4\dim G$. Consequently $(\M_{\bh}(\Gamma_1)\times\M_{\bh}(\Gamma_2))/\!\!/\!\!/_{\bh}T_G$ has dimension $\dim\M_{\bh}(\Gamma) + 4(\dim G - \mathrm{rank}\,G)$, which exceeds $\dim\M_{\bh}(\Gamma)$ when $G$ is non-abelian. A splitting of the form $\lim_{l_{e_0}\to\infty}\M_{\bh}(\Gamma,\mathbf{l})\cong(\M_{\bh}(\Gamma_1)\times\M_{\bh}(\Gamma_2))/\!\!/\!\!/_{\bh}T_G$ therefore cannot hold with these factors. A correct formulation should replace them by moduli spaces of Nahm data on the two halves with a semi-infinite edge and regular semisimple asymptotics, in the sense of Kronheimer \cite{Kronheimer1990coadjoint} (see also \cite{Biquard1996Nahm,Kovalev1996Nahm}), glued along the residual torus symmetry. We leave this to future work.
\end{enumerate}
\end{Rem}

\bibliographystyle{plain}
\bibliography{Nahm.bib}
\end{document}